\newcommand{\N}{\mathbb{N}}
\newcommand{\Z}{\mathbb{Z}}
\newcommand{\R}{\mathbb{R}}
\newcommand{\Q}{\mathbb{Q}}
\newcommand{\C}{\mathbb{C}}
\newcommand{\T}{\mathbb{T}}
\newcommand{\Hawaii}{Hawai\kern.05em`\kern.05em\relax i}
\theoremstyle{plain}
\newtheorem{theorem}{Theorem}[section]
\newtheorem{lemma}[theorem]{Lemma}
\newtheorem{corollary}[theorem]{Corollary}
\newtheorem{proposition}[theorem]{Proposition}
\newtheorem{question}[theorem]{Question}
\newtheorem{definition-theorem}[theorem]{Definition / Theorem}
\newtheorem*{conjecture*}{Conjecture}
\newtheorem*{theorem*}{Theorem}
\theoremstyle{definition}
\newtheorem{definition}[theorem]{Definition}
\newtheorem{example}[theorem]{Example}
\theoremstyle{remark}
\newtheorem{remark}[theorem]{Remark}
\newtheorem*{example*}{Example}  
\newtheorem*{remark*}{Remark}
\begin{document}
\title{Conditional representation stability, classification of $*$-homomorphisms, and relative eta invariants}
\author{Rufus Willett}
%\date{}

\maketitle

\abstract{A quasi-representation is a map from a group into a finite-dimensional unitary group (or similar object) that approximately satisfies the relations needed to be a representation.  This paper studies whether quasi-representations can be approximated in operator norm, and on finite subsets of the group, by honest representations.  Starting with work of Kazhdan and Voiculescu, there are known topological obstructions to the existence of such approximations.

The purpose of this paper is to explore whether approximation is possible if the known obstructions vanish, partially generalizing work of Gong-Lin and Eilers-Loring-Pedersen for the free abelian group of rank two, and the Klein bottle group.  We show that this is possible, at least in a weak sense, for some `low-dimensional' groups, including many one-relator groups, and fundamental groups of manifolds of dimension at most three.

The techniques used in the paper are $K$-theoretic: they have their origin in Baum-Connes-Kasparov type assembly maps, and in the Elliott program to classify $C^*$-algebras; Kasparov's bivariant KK-theory is a crucial tool.  The key new technical ingredients are: a stable uniqueness theorem in the sense of Dadarlat-Eilers and Lin that works for non-exact $C^*$-algebras; and an analysis of maps on $K$-theory with finite coefficients in terms of the relative eta invariants of Atiyah-Patodi-Singer.  Although the proofs go through $K$-theoretic machinery, results of Dadarlat can be used to reexpress the main theorems in elementary topological terms.}

\tableofcontents

\section{Introduction}

The goal of this paper is to study an instance of the following general question: if one has an approximate solution to an equation, must it be close to an actual solution?  

We study this in the context of unitary representations of discrete groups, and where the approximations take place in the operator norm.  Starting with work of Voiculescu \cite{Voiculescu:1983km} and Kazhdan \cite{Kazhdan:1982aa}, it is known that there are topological obstructions to a positive answer.  The purpose of the current paper is to explore what happens when the known obstructions vanish: specifically, is vanishing sufficient for an approximation by an actual solution to exist?

\subsection{Almost commuting unitaries}

The main topic of this paper is finite dimensional unitary representations, and approximate representations, of discrete groups.  However, we will start with a more basic question about matrices that motivates our main questions and results; informally, it asks if any pair of approximately commuting unitary matrices can be approximated by an actually commuting pair.

All norms in this paper are operator norms on spaces of bounded operators on (possibly finite dimensional) Hilbert spaces.

\begin{question}
For any $\epsilon>0$, does there exist $\delta>0$ with the following property? 

For any $n$\footnote{The order of quantifiers is crucial: if $\delta$ is allowed to depend on $n$, the question is simpler, and the answer is different.}, if $u,v\in M_n(\C)$ are unitary matrices satisfying 
$$
\|uv-vu\|<\delta,
$$
then there are unitary matrices $u',v'\in M_n(\C)$ satisfying 
$$
\|u'-u\|<\epsilon,\quad \|v'-v\|<\epsilon, \quad \text{and}\quad u'v'=v'u'.
$$
\end{question}

The question is completely answered by the following result.  To state it, we need some notation.  Let $u,v\in M_n(\C)$ be unitary matrices such that $\|uv-vu\|<1$.  The function 
\begin{equation}\label{basic path}
[0,1]\owns t\mapsto \text{det}\big(t+(1-t)uvu^{-1}v^{-1}\big)
\end{equation}
then defines a path in the complex numbers that starts and ends at $1$, and avoids $0$.  Hence we may define $w(u,v)$ to be the winding number of this path.  

\begin{theorem}[Several contributors - see below]\label{alm com}
\begin{enumerate}[(i)]
\item \label{no go} There is $\epsilon>0$ such that for any $n$, if $u,v\in M_n(\C)$ are unitary matrices such that $\|uv-vu\|<1$ and $w(u,v)\neq 0$, then there do not exist unitary matrices $u',v'\in M_n(\C)$ with $\|u'-u\|<\epsilon$, $\|v'-v\|<\epsilon$ and $u'v'=v'u'$. 
\item \label{go} For any $\epsilon>0$ there exists $\delta>0$ such that for any $n$, if $u,v\in M_n(\C)$ are unitary matrices satisfying $\|uv-vu\|<\delta$ \text{and $w(u,v)=0$}, then there are unitary matrices $u'$ and $v'$ satisfying $\|u'-u\|<\epsilon$, $\|v'-v\|<\epsilon$ and $u'v'=v'u'$.
\end{enumerate}
\noindent Moreover, for any $\delta>0$ and $k\in \Z$ there exist $n$ and unitary matrices $u,v\in M_n(\C)$ with $\|uv-vu\|<\delta$ and $w(u,v)=k$.
\end{theorem}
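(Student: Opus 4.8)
The plan is to build the required pairs from the classical Voiculescu ``clock and shift'' unitaries. We may assume $\delta\in(0,1]$: decreasing $\delta$ only strengthens the conclusion, and $w(u,v)$ is in any case defined only when $\|uv-vu\|<1$. For $k=0$ take $n=1$ and $u=v=1$; then the path \eqref{basic path} is constant and $w(u,v)=0$.

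For $k\geq 1$, choose $n_0$ large enough that $\omega:=e^{2\pi i/n_0}$ satisfies $|1-\omega|=2\sin(\pi/n_0)<\delta$; note this already forces $n_0\geq 7$, in particular $n_0\geq 3$. On $\C^{n_0}$ with orthonormal basis $(e_j)_{j\in\Z/n_0}$, let $a$ be the diagonal unitary with $ae_j=\omega^j e_j$ and $b$ the cyclic shift with $be_j=e_{j+1}$. A direct computation gives $(ab-ba)e_j=\omega^j(\omega-1)e_{j+1}$ (cyclically), so $\|ab-ba\|=|1-\omega|<\delta$, and $aba^{-1}b^{-1}e_j=\omega e_j$, i.e.\ $aba^{-1}b^{-1}=\omega\cdot 1$. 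Hence the path \eqref{basic path} for $(a,b)$ is $t\mapsto\big(t+(1-t)\omega\big)^{n_0}$. The segment $t\mapsto t+(1-t)\omega$ from $\omega$ to $1$ has imaginary part $(1-t)\sin(2\pi/n_0)\geq 0$, vanishing only at $t=1$ where the value is $1$; so it avoids $0$ and the negative real axis, and along a continuous branch of the argument the argument changes by $-2\pi/n_0$. Taking $n_0$th powers, \eqref{basic path} for $(a,b)$ has winding number $-1$. Since replacing $(a,b)$ by $(b,a)$ replaces $aba^{-1}b^{-1}$ by its inverse, and hence (using $\det\big(t+(1-t)M^{-1}\big)=\det\big((1-t)+tM\big)$ when $\det M=1$) reverses the orientation of \eqref{basic path} and negates the winding number, after possibly performing this swap we may assume $w(a,b)=1$.

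Now set $n=kn_0$ and let $u=a^{\oplus k}$, $v=b^{\oplus k}$ be the $k$-fold block-diagonal direct sums. Then $uv-vu=(ab-ba)^{\oplus k}$, so $\|uv-vu\|=\|ab-ba\|<\delta$, while $uvu^{-1}v^{-1}=(aba^{-1}b^{-1})^{\oplus k}$, so by multiplicativity of $\det$ the path \eqref{basic path} for $(u,v)$ is the pointwise $k$th power of the corresponding path for $(a,b)$. The winding number is additive under pointwise products of loops (continuous arguments add), so $w(u,v)=k\cdot w(a,b)=k$. Finally, for $k\leq -1$ apply the case $-k\geq 1$ to obtain $(u',v')$ with $\|u'v'-v'u'\|<\delta$ and $w(u',v')=-k$, and put $(u,v)=(v',u')$: then $\|uv-vu\|=\|u'v'-v'u'\|<\delta$ and $w(u,v)=-w(u',v')=k$.

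There is no serious obstacle here. The only points that deserve care are the two elementary identities for the clock/shift matrices and the claim that $t\mapsto\big(t+(1-t)\omega\big)^{n_0}$ winds exactly once around $0$ — equivalently that the segment from $\omega$ to $1$ misses the origin, which fails only for $n_0=2$ (irrelevant here, since $|1-\omega|<1$ already forces $n_0\geq 7$). The remaining steps are routine bookkeeping with direct sums and the additivity of the winding number.
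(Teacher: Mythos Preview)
Your proposal addresses only the ``Moreover'' clause of the theorem --- the existence, for every $\delta>0$ and $k\in\Z$, of almost-commuting pairs with prescribed winding number. Parts~\eqref{no go} and~\eqref{go} are not touched at all. Those are the substantive parts of the theorem: part~\eqref{no go} requires showing that $w(u,v)$ is a robust homotopy invariant that obstructs approximation by commuting pairs, and part~\eqref{go} is a genuinely deep result requiring the machinery of Gong--Lin or Eilers--Loring--Pedersen. If your intention was to prove the full statement, this is a major gap.

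That said, the paper itself does not prove any part of this theorem. It is stated as background and attributed entirely to the literature: part~\eqref{no go} to Voiculescu, Loring, Exel--Loring, and Kazhdan; part~\eqref{go} to Gong--Lin and Eilers--Loring--Pedersen. The ``Moreover'' clause is implicit in Voiculescu's original construction. So there is no in-paper proof to compare against.

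Your argument for the ``Moreover'' clause is correct and is exactly the classical Voiculescu construction. The computation of $w(a,b)=-1$ for the clock-and-shift pair is right (the segment from $\omega$ to $1$ stays in the closed upper half-plane, so the argument drops by $2\pi/n_0$, and the $n_0$-th power gives total change $-2\pi$); the swap identity $\det(t+(1-t)M^{-1})=\det((1-t)+tM)$ when $\det M=1$ correctly gives the reverse-parametrised loop and hence negates the winding number; and the block-sum step is fine since the determinant of a block sum is the product of determinants, making the path a pointwise $k$-th power. One small stylistic point: writing ``after possibly performing this swap we may assume $w(a,b)=1$'' silently renames $(b,a)$ back to $(a,b)$, which is harmless but could be stated more cleanly.
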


In brief, Theorem \ref{alm com} says there is a robust integer-valued topological invariant $w(u,v)$ that completely determines whether an almost solution of the equation ``$uv=vu$'' is close to an actual solution.

Part \eqref{no go} has its roots in work of Voiculescu \cite{Voiculescu:1983km}.  The interpretation in terms of winding numbers comes from work of Loring \cite{Loring:1985ud} and of Exel and Loring \cite{Exel:1991aa} in this context, and also of Kazhdan in a slightly different setting \cite{Kazhdan:1982aa}.  See also \cite{Enders:2023aa} for a far-reaching recent approach to this and related questions.

Part \eqref{go} is due independently to Gong and Lin \cite[Corollary M3]{Gong:1998aa}, and to Eilers, Loring, and Pedersen \cite[Corollary 6.15]{Eilers:1999ln}.  It seems less well-known than part \eqref{no go}.  The reader might compare it to \cite[Main Theorem on page 746]{Arzhantseva:2015aa} (respectively, \cite[Theorem 2]{Glebsky:2010aa}): this establishes a similar result for almost commuting elements of permutation groups with respect to the normalized Hamming distance (respectively, almost commuting unitaries with respect to the normalized Hilbert-Schmidt norm) that holds without any analogue of the vanishing condition on the winding number.

Part \eqref{no go} of Theorem \ref{alm com} has been generalized extensively.  Indeed, it can be reframed as asking whether an approximate unitary representation of the group $\Z^2$ is close to an actual representation.  Generalized from here to other discrete groups, it becomes a fundamental statement about `stability' of group representations: see for example \cite{Arzhantseva:2014aa} and \cite{Thom:2018aa} for surveys.  Generalizations of part \eqref{go} of Theorem \ref{alm com} have not received so much attention: it is the aim of this paper to say something in this direction.

\subsection{Representation stability}

We now reformulate Theorem \ref{alm com} in terms of representation theory and in the process bring some topology into play; this needs some terminology.  Throughout, $H$ denotes a Hilbert space, $\mathcal{B}(H)$ the bounded operators on $H$, and $\mathcal{B}(H)_1$ the closed unit ball of $\mathcal{B}(H)$.   We will mainly be interested in the case $H=\C^n$ so that $\mathcal{B}(H)=M_n(\C)$.

\begin{definition}\label{quasi rep}
Let $\Gamma$ be a discrete group, let $S$ be a subset of $\Gamma$, and let $\epsilon>0$.  An \emph{($S$,$\epsilon$)-representation} of $\Gamma$ is a unital map
$$
\pi:\Gamma\to \mathcal{B}(H)_1 \footnote{It is probably more common in the literature to force a quasi-representation $\phi$ to have values in the group of unitary operators on the Hilbert space.  The two definitions are equivalent up to an approximation: see Lemma \ref{uni qr} below.  The extra flexibility allowed by our definition is important to us mainly when we need to consider ucp quasi-representations as in Definition \ref{ucp qr} below: see Proposition \ref{ucp extend}, part \eqref{ucp is rep} below, which says that the theory of unitary-valued ucp quasi-representations is essentially trivial.}
$$
such that 
$$
\|\pi(s)\pi(t)-\pi(st)\|<\epsilon
$$
for all $s,t\in S$.

If we do not want to specify the pair $(S,\epsilon)$, we will just say that $\phi$ is a \emph{quasi-representation}\footnote{A quasi-representation is therefore just a unital map $\Gamma\to\mathcal{B}(H)_1$.  It may seem a bit silly to introduce terminology for this; the point is to emphasize that we are currently thinking of the map as an approximate representation.}.
\end{definition}

\begin{definition}\label{stable}
A group $\Gamma$ is \emph{stable}\footnote{In some other references (for example, \cite{Eilers:2018ab} or \cite{Dadarlat:384aa}), the property defined here is called ``matricial stability''.  It is also sometimes called ``operator norm stability'' if other norms on matrix algebras are under consideration.  We do not use the additional adjectives in this paper, as we will not study any other kinds of stability.} if for any finite subset $S$ of $\Gamma$ and $\epsilon>0$ there exists a finite subset $T$ of $\Gamma$ and $\delta>0$ such that if $\phi:\Gamma\to M_n(\C)_1$ is a $(T,\delta)$-representation in the sense of Definition \ref{quasi rep}, then there exists a unitary representation $\pi:\Gamma\to M_n(\C)$ such that 
$$
\|\phi(s)-\pi(s)\|<\epsilon
$$
for all $s\in S$.

If $P$ is a property of quasi-representations, then we say that $\Gamma$ is \emph{stable, conditional on $P$} if for any finite subset $S$ of $\Gamma$ and $\epsilon>0$ there exists a finite subset $T$ of $\Gamma$ and $\delta>0$ such that if $\phi:\Gamma\to M_n(\C)_1$ is a $(T,\delta)$-representation satisfying $P$, then there exists a unitary representation $\pi:\Gamma\to M_n(\C)$ such that 
$$
\|\phi(s)-\pi(s)\|<\epsilon
$$
for all $s\in S$.
\end{definition}

Group theoretic analogues of Definition \ref{stable} (with more general `targets' than matrix algebras) can be found in \cite[Definition 3.1]{Arzhantseva:2015aa} and \cite[Definition 1.9]{Chiffre:2018ds}.  Purely $C^*$-algebraic analogues (with more general `domains' than groups), can be found in \cite[Definition 2.2.7 and Proposition 2.2.9]{Eilers:1998aa}.  See \cite[Sections 2.1 and 2.2]{Eilers:2018ab} for a more recent survey of these $C^*$-algebraic notions and their translation to group theory; in particular, see \cite[Proposition 2.16]{Eilers:2018ab} for a reformulation in terms of generators and relations for finitely presented groups.  Compare also \cite[Section 5.2]{Eilers:1999aa} for a purely $C^*$-algebraic analogue of conditional stability, which the authors of  \cite{Eilers:1999aa} call \emph{stability with contingencies}.

Switching to representation theory from the matrices of Theorem \ref{alm com}, we state another illustrative result that was a major motivation for us.  Let $\Z^2$ be the fundamental group of the two-torus $\T^2$, and let $\Z\rtimes \Z$ denote the fundamental group of the Klein bottle $K$.  These spaces can be illustrated as CW complexes in a standard way:
\begin{center}
\includegraphics[width=5cm]{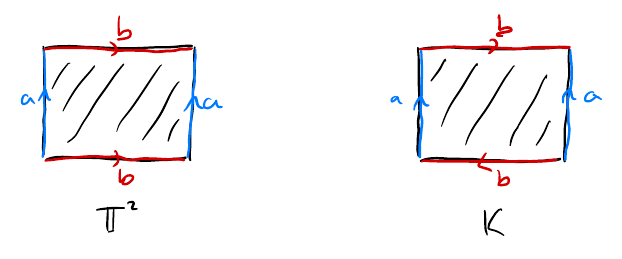}
\end{center} 
This leads to the presentations $\Z^2=\langle a,b\mid aba^{-1}b^{-1}\rangle$ and $\Z\rtimes \Z=\langle a,b\mid aba^{-1}b\rangle$ defined by reading off the edges around the $2$-disk.  The fact that the relation ``$aba^{-1}b^{-1}$'' is a commutator leads to the fact that $H_2(\T^2)=\Z$; as $aba^{-1}b$ is not a commutator, $H_2(K)=0$.  Relatedly, the fact that ``$aba^{-1}b^{-1}$'' is a commutator is important in well-definedness of the winding number of the path in line \eqref{basic path}, while the fact that ``$aba^{-1}b$'' is not a commutator means that the path defined by
$$
[0,1]\owns t\mapsto \text{det}\big(t+(1-t)uvu^{-1}v\big)
$$ 
for unitaries $u,v\in M_n(\C)$ does not usually have a well-defined winding number, even if $uvu^{-1}v$ is very close to $1$, as it does not start and end at the same place.  These connections to (co)homology are already noted in the original papers of Voiculescu \cite[page 431]{Voiculescu:1983km} and Kazhdan \cite[page 321]{Kazhdan:1982aa}.

One has in fact the following result.

\begin{theorem}[Gong-Lin, Eilers-Loring-Pedersen]\label{torus and klein}
\begin{enumerate}[(i)]
\item \label{z2 the} The group $\Z^2$ is stable, conditional on vanishing of the winding number invariant $w(u,v)$. 
\item \label{klein the} The group $\Z\rtimes \Z$ is stable.
\end{enumerate}
\end{theorem}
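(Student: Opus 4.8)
Part \eqref{z2 the} is, in essence, a direct translation of Theorem \ref{alm com}\eqref{go}. The plan: given $S$ and $\epsilon$, take a suitable finite $T\subseteq\Z^2$ containing $a,b$ and enough partial products to control the words appearing in $S$, and let $\delta$ be small. Given a $(T,\delta)$-representation $\phi:\Z^2\to M_n(\C)_1$ with vanishing winding number, first use Lemma \ref{uni qr} to replace $\phi$ by a nearby unitary-valued quasi-representation, so that $u:=\phi(a)$ and $v:=\phi(b)$ become honest unitaries with $\|uv-vu\|$ as small as we please (controlled by $\delta$ and the length of the relator $aba^{-1}b^{-1}$); since the winding number is locally constant in $(u,v)$, it is still $0$. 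Theorem \ref{alm com}\eqref{go} then yields commuting unitaries $u',v'$ with $\|u-u'\|,\|v-v'\|$ small, and $\pi(a):=u'$, $\pi(b):=v'$ extends to a genuine unitary representation of $\Z^2$; as every $s\in S$ is a bounded-length word in $a^{\pm1},b^{\pm1}$, choosing $\delta$ small enough forces $\|\phi(s)-\pi(s)\|<\epsilon$ for all $s\in S$.

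For part \eqref{klein the}, the structural reason the statement should hold \emph{unconditionally} is that the relator $aba^{-1}b$ of $\Z\rtimes\Z=\langle a,b\mid aba^{-1}b\rangle$ is not a commutator --- equivalently $H_2(\Z\rtimes\Z)=0$ --- so no winding-number-type class is available to obstruct anything. Concretely: fix $S,\epsilon$, pick a suitable finite $T\ni a,b$, and let $\phi$ be a $(T,\delta)$-representation; unitarize via Lemma \ref{uni qr} to get unitaries $u=\phi(a)$, $v=\phi(b)$ with $\|uvu^{-1}-v^{-1}\|<\delta'$, where $\delta'$ is controlled by $\delta$. As conjugation preserves spectra, $\mathrm{spec}(v)=\mathrm{spec}(uvu^{-1})$ lies within Hausdorff distance $\delta'$ of $\mathrm{spec}(v^{-1})=\overline{\mathrm{spec}(v)}$, so $\mathrm{spec}(v)$ is approximately invariant under complex conjugation. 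The plan is then a spectral surgery. Split $\C^n$ along the spectrum of $v$ into (1) the spectral subspaces of $v$ for small arcs around $\pm1$, on which $v\approx\pm I$ and where we simply perturb $v$ to be exactly $\pm I$ and leave $u$ alone (anything conjugates $\pm I$ to its inverse); and (2) the rest, where $\mathrm{spec}(v)$ is bounded away from $\pm1$. On part (2) the approximate conjugation-symmetry pairs each cluster of eigenvalues of $v$ near a non-real point $e^{i\theta}$ with the conjugate cluster near $e^{-i\theta}$; these two spectral subspaces are well separated, and $u$ maps one approximately onto the other. Equalizing the two cluster sizes, perturbing $v$ on such a pair to have eigenvalues exactly $e^{\pm i\theta}$, and correcting $u$ to carry the one cluster-subspace exactly onto the other makes the Klein bottle relation hold exactly on each block; reassembling over all clusters and the $\pm1$-part gives unitaries $u',v'$ with $u'v'(u')^{-1}v'=1$, close to $u,v$, hence a genuine representation $\pi$ of $\Z\rtimes\Z$ that is $\epsilon$-close to $\phi$ on $S$.

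The main obstacle is making this surgery work \emph{uniformly in $n$}. Two points need care independently of $n$: the arc/cluster decomposition must be chosen so that $\mathrm{spec}(v)$ does not pile up near the cut points (so that the relevant spectral projections of $v$, $v^{-1}$ and $uvu^{-1}$ are genuinely close and have equal traces), and when many eigenvalues of $v$ cluster near a single non-real point the correction of $u$ on that block must still have small norm. The latter is plausible precisely because, away from $\pm1$, complex conjugation has no fixed point, so there is no degenerate direction obstructing the swap --- in contrast to the near-$\pm1$ part, which is why that part was disposed of separately; reconciling all of this is where the real content of Gong--Lin and Eilers--Loring--Pedersen lies. An alternative route, closer to the spirit of the rest of this paper, is to reformulate matricial stability of $\Z\rtimes\Z$ as stability of the relations presenting $C^*(\Z\rtimes\Z)\cong C(\T)\rtimes_{\mathrm{flip}}\Z$, and to establish the latter by realizing this ($2$-subhomogeneous) $C^*$-algebra as a pullback of semiprojective building blocks assembled from matrix algebras, $C_0$ of an interval, and copies of $C(\T)$ at the two fixed points of the flip. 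Either way the moral is the same: the vanishing of the low-dimensional homology of $\Z\rtimes\Z$ is what both removes the need for a hypothesis like $w(u,v)=0$ and makes the perturbation always achievable.
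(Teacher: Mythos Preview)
The paper does not actually prove this theorem: immediately after the statement it simply says that part \eqref{z2 the} is a restatement of Theorem \ref{alm com}\eqref{go} and that part \eqref{klein the} is due to Eilers--Loring--Pedersen, with a citation. So there is no argument in the paper to compare against beyond those attributions.

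Your treatment of part \eqref{z2 the} --- reducing to Theorem \ref{alm com}\eqref{go} via Lemma \ref{uni qr} and a standard word-length argument --- is exactly the reduction the paper has in mind, and is correct. For part \eqref{klein the}, your direct spectral-surgery sketch is not a proof, and you say as much: the uniformity-in-$n$ issues you flag (controlling cut points and the block-correction of $u$ when many eigenvalues cluster) are the real content, and nothing in your outline handles them. Your \emph{alternative} route --- passing to $C^*(\Z\rtimes\Z)\cong C(\T)\rtimes_{\mathrm{flip}}\Z$ and exhibiting it as a pullback of semiprojective building blocks --- is precisely the Eilers--Loring--Pedersen strategy that the paper cites for this part. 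So the proposal is on target in identifying the correct approach, but the spectral sketch should be regarded as motivation rather than argument; the actual proof lives in the semiprojectivity analysis you allude to at the end.
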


Part \eqref{z2 the} is just a restatement of Theorem \ref{alm com}, part \eqref{go}.  Part \eqref{klein the} is due to Eilers, Loring, and Pedersen \cite[Theorem 4.3.1 and Corollary 8.2.2; see also table on page 139]{Eilers:1998aa}.

Our original motivation for writing this paper was to establish an analogue of Theorem \ref{torus and klein} for higher genus surfaces.  Here a direct analogue of Theorem \ref{alm com} part \eqref{no go} is known (and essentially due to Kazhdan \cite{Kazhdan:1982aa}), but part \eqref{go} was open.  

Our main results give partial generalizations of Theorem \ref{torus and klein} to several interesting classes of groups which contain the fundamental groups of the torus and Klein bottle: for example many one-relator groups, and fundamental groups of manifolds of dimension at most three.   Moreover, most of the groups in these classes are non-amenable, which requires new arguments.  We set up the necessary terminology in the next section.

\subsection{Basic notions, and surface groups}

We first recall a slight generalization of a notion due to Dadarlat \cite[page 2]{Dadarlat:384aa}, which Dadarlat calls \emph{weak stability}.  We use slightly different terminology both to allow for some variants, and to avoid confusion with other (mutually inconsistent) uses of the terminology ``weak stability'' in the literature on related problems: see for example \cite[Definition 2.10]{Eilers:2018ab} (and also \cite[Section 4.1]{Loring:1997aa}), and \cite[Definition 7.1]{Arzhantseva:2015aa} (and also \cite[Definition 1.5]{Dogon:2023aa}).

\begin{definition}\label{ws}
Let $\Gamma$ be a group, and let $\mathcal{R}$ be a class of finite-dimensional representations of $\Gamma$.  For each finite subset $S$ of $\Gamma$ and $\epsilon>0$, let $\mathcal{Q}(S,\epsilon)$ be a class of quasi-representations, possibly depending on $S$ and $\epsilon$.

The group $\Gamma$ is \emph{$\mathcal{Q}$-$\mathcal{R}$-stable} if for any finite subset $S$ of $\Gamma$ and $\epsilon>0$ there exists a finite subset $T$ of $\Gamma$ and $\delta>0$ such that if $\phi:\Gamma\to M_n(\C)_1$ is a $(T,\delta)$-representation, then there exists a map $\theta:\Gamma\to M_k(\C)$ in $\mathcal{Q}(S,\epsilon)$ and a representation $\pi:\Gamma\to M_{n+k}(\C)$ in $\mathcal{R}$ such that 
$$
\|(\phi(s)\oplus \theta(s))-\pi(s)\|<\epsilon
$$
for all $s\in S$.

If $P$ is a property of quasi-representations, then we say that $\Gamma$ is \emph{$\mathcal{Q}$-$\mathcal{R}$-stable, conditional on $P$} if for any finite subset $S$ of $\Gamma$ and $\epsilon>0$ there exists a finite subset $T$ of $\Gamma$ and $\delta>0$ such that if $\phi:\Gamma\to M_n(\C)_1$ is a $(T,\delta)$-representation satisfying $P$, then there exist then there exists a map $\theta:\Gamma\to M_k(\C)$ in $\mathcal{Q}(S,\epsilon)$ and a representation $\pi:\Gamma\to M_{n+k}(\C)$ in $\mathcal{R}$ such that 
$$
\|(\phi(s)\oplus \theta(s))-\pi(s)\|<\epsilon
$$
for all $s\in S$.

If $\mathcal{Q}=\mathcal{R}$, we just say that $\Gamma$ is \emph{$\mathcal{R}$-stable (conditional on $P$)}.
\end{definition}

\begin{remark}\label{qrstab vs stab}
Stability of $\Gamma$ is the case of $\mathcal{Q}$-$\mathcal{R}$-stability where $\mathcal{Q}$ is the zero quasi-representation on the zero vector space, and $\mathcal{R}$ consists of all finite-dimensional representations.
\end{remark}

\begin{remark}
We will mainly be interested in the case when $\mathcal{Q}$ (as well as $\mathcal{R}$) is a class of honest representations.  Specifically, the main case we will use is the class $\mathcal{R}_q$ of finite-dimensional representations that factor through a finite quotient of $\Gamma$, and $\mathcal{Q}=\mathcal{R}=\mathcal{R}_q$.
\end{remark}

\begin{remark}\label{vfs rem}
The  notion of $\mathcal{Q}$-$\mathcal{R}$-stability contains that of \emph{very flexible stability}\footnote{Thanks to Tatiana Shulman and Francesco Fournier-Facio for explaining this to me.  Being able to make this comparison is the main reason for introducing $\mathcal{Q}$-$\mathcal{R}$ stability rather than just $\mathcal{R}$-stability.}, introduced by Becker and Lubotzky in \cite[Section 4.4]{Becker:2020aa}.  Very flexible stability (for the operator norm) is equivalent to the case of $\mathcal{Q}$-$\mathcal{R}$-stability where $\mathcal{Q}(S,\epsilon)$ is the class of $(S,\epsilon)$-representations, and $\mathcal{R}$ is the class of all finite-dimensional representations.  See \cite[Section 7]{Fournier-Facio:2026aa} for related discussion.
\end{remark}

%The method of proof of Theorem \ref{alm com} part \eqref{no go} gives that $\Gamma=\Z^2$ is not even $\mathcal{F}d$-stable (and a fortiori not $\mathcal{F}q$-stable).  There have been many other `no go' results along these lines: see for example \cite{Eilers:2018ab,Dadarlat:384aa,Glebe:2024aa,Bader:2023aa}.

Let us now state some of our main results.  The following theorem was our original motivation for developing the ideas in this paper.

\begin{theorem}\label{intro surf the}
\begin{enumerate}[(i)]
\item \label{surf or} Let 
$$
\Gamma=\Bigg\langle a_1,...,a_g,b_1,...,b_g~\Big|~ \prod_{i=1}^g [a_i,b_i]\Bigg\rangle
$$
be the fundamental group of a closed orientable surface of genus $g>0$.  For any quasi-representation $\phi:\Gamma\to M_n(\C)$ such that 
$$
\Bigg\|\prod_{i=1}^g[\phi(a_i),\phi(b_i)]-1\Bigg\|<1,
$$
define $w(\phi)\in \Z$ to be the winding number of the path 
\begin{equation}\label{surf wind}
[0,1]\owns t\mapsto \text{det}\Bigg( t+(1-t) \prod_{i=1}^g[\phi(a_i),\phi(b_i)]\Bigg).
\end{equation}
Then $\Gamma$ is $\mathcal{R}_q$-stable, conditional on $w(\phi)=0$.  
\item \label{surf nor} Let 
$$
\Gamma=\Bigg\langle a_1,...,a_{g}~\Big|~ \prod_{i=1}^{g} a_i^2\Bigg\rangle
$$
be the fundamental group a closed non-orientable surface of (non-orientable) genus $g>0$.  Then $\Gamma$ is $\mathcal{R}_q$-stable.
\end{enumerate}
\end{theorem}

\begin{remark}\label{surf ns}
That the vanishing condition in Theorem \ref{intro surf the} part \eqref{surf or} is \emph{necessary} for some sort of stability result is well-known.  This was observed by multiple authors starting (essentially) with the work of Kazhdan \cite{Kazhdan:1982aa}, and more recently made explicit by Dadarlat \cite[Section 4]{Dadarlat:2011kx} and Eilers, Shulman, and S\o{}rensen \cite[Section 4.3]{Eilers:2018ab}.   There have been many other `no go' results along these lines: see for example \cite{Eilers:2018ab,Dadarlat:384aa,Glebe:2024aa,Bader:2023aa}.

Thus part \eqref{surf or} of Theorem \ref{intro surf the} should be viewed as giving a sufficient condition for a quasi-representation of a surface group to be close (in some weak sense) to an actual representation that matches the known necessary condition.
\end{remark}

\begin{remark}\label{r stab rem}
We do not if ``$\mathcal{R}_q$-stable'' can be replaced by ``stable'' in Theorem \ref{intro surf the} other than in the case of the torus or Klein bottle covered by Theorem \ref{torus and klein}, and the elementary case of the projective plane. 
\end{remark}

\begin{remark}\label{llm rem}
Theorem \ref{intro surf the} can be usefully compared to the result of Lazarovich, Levit, and Minsky \cite{Lazarovich:2019aa} on quasi-representations of surface groups with values in permutation groups.  In the language of \cite[Section 1]{Lazarovich:2019aa}, the main result of that paper shows that surface groups are \emph{flexibly stable} for quasi-representations in permutation groups.  Apart from the focus on permutation representations, the result of \cite{Lazarovich:2019aa} differs from ours in two important ways: first it is `absolute', i.e.\ works for all permutation quasi-representations without topological conditions; second the notion of ``flexible stability'' from \cite[Section 1]{Lazarovich:2019aa} is different from $\mathcal{R}_q$-stability as in Definition \ref{ws} in that flexible stability only allows block sum with an auxiliary trivial representation which is `small' relative to the dimension of the quasi-representation one starts with (Theorem \ref{intro surf the} gives no control on the size of the auxiliary representation $\theta$ appearing in Definition \ref{ws}).  
\end{remark}

\subsection{Free-by-cyclic groups, one relator groups, and three manifold groups}

The winding number invariant appearing in Theorem \ref{intro surf the} is due to Kazhdan \cite{Kazhdan:1982aa}.  Dadarlat gives alternative interpretations in terms of index theory and Chern classes in \cite[Section 4]{Dadarlat:2011kx}; these results, and Dadarlat's more general results in \cite{Dadarlat:2022aa}, are important for the proof of Theorem \ref{intro surf the}, and of the related results we give below.  For these theorems, we recall that if $c\in H_2(\Gamma)$ is a group homology class, then Dadarlat \cite{Dadarlat:2022aa} shows how to define a winding number invariant $w(c,\cdot)$ of (suitable) quasi-representations: see Definition \ref{dad wn} and Theorem \ref{dad 2d} below for details.

Recall that a \emph{free-by-cyclic group} is a group of the form $F\rtimes_{\alpha} \Z$, where $F$ is a finite rank free group, and $\Z$ acts on $F$ by an automorphism $\alpha$; note that this class of groups includes the torus and Klein bottle groups of Theorem \ref{torus and klein} as the two special cases where $F=\Z$ and $\alpha$ is either the trivial or non-trivial automorphism.

\begin{theorem}\label{fbc intro}
Let $\Gamma=F\rtimes_\alpha \Z$ be a free-by-cyclic group.  

Let $n$ be the rank of $F$, and let $\phi_*:\R^n\to \R^n$ be the map induced by $\phi$ on the first real homology group.  Then $H_2(\Gamma)$ is free abelian, with rank equal to the multiplicity $m$ of the eigenvalue $1$ of $\alpha_*$; choose a basis $c_1,...,c_m$ for $H_2(\Gamma)$.  

The group $\Gamma$ is $\mathcal{R}_q$-stable, conditional on vanishing of the winding number invariants $w(c_i,\cdot)$ for all $i\in \{1,...,m\}$.  In particular, if $\alpha_*$ does not have 1 as an eigenvalue, then $\Gamma$ is $\mathcal{R}_q$-stable.
\end{theorem}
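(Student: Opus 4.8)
The plan is to leverage the Baum--Connes/assembly-map machinery together with the stable uniqueness theorem and the eta-invariant analysis promised in the introduction, rather than to argue directly with matrices. First I would observe that a free-by-cyclic group $\Gamma = F \rtimes_\phi \Z$ is the fundamental group of a compact aspherical $2$-complex (the mapping torus of a wedge of circles), so it has a $2$-dimensional classifying space and hence $H_k(\Gamma) = 0$ for $k \geq 3$; the Wang sequence for the extension $1 \to F \to \Gamma \to \Z \to 1$ then computes $H_1(\Gamma)$ and $H_2(\Gamma) = \ker(\mathrm{id} - \phi_* : \Z^n \to \Z^n)$, which is free abelian of rank $m = \dim_\R \ker(\mathrm{id} - \phi_* : \R^n \to \R^n)$, the multiplicity of the eigenvalue $1$. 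This identifies the invariants $w(c_i,\phi)$ via Dadarlat's construction (Definition \ref{dad wn}, Theorem \ref{dad 2d}) as pairing a $K$-homology-type class built from $\phi$ against a basis of $H_2(\Gamma)$.

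Next I would set up the $C^*$-algebraic translation: a ucp quasi-representation of $\Gamma$ gives, after a standard completely-positive-approximation argument, an asymptotic morphism (or a ucp map close to a $*$-homomorphism) from a suitable completion into $M_n(\C)$, and weak stability amounts to showing that such a map is, up to a block sum with an honest representation and a unitary conjugation, close to a genuine $*$-homomorphism. The obstruction to this is governed by $K$-theory: using the stable uniqueness theorem for possibly non-exact $C^*$-algebras (the key new ingredient advertised in the abstract), two $*$-homomorphisms that agree on $K$-theory are approximately unitarily equivalent after adding a large ``absorbing'' representation. So it suffices to show that when all $w(c_i,\phi)$ vanish, the $K$-theoretic invariant induced by $\phi$ agrees with that of an actual representation. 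Here the two-dimensionality of $\Gamma$ is decisive: the relevant $K$-theory with finite coefficients is detected by $H_0$ and $H_2$ (via a Chern-character/assembly argument), the $H_0$ part is automatically matched (it records the dimension $n$), and the $H_2$ part is exactly captured by the winding numbers $w(c_i,\phi)$ through the eta-invariant analysis of Atiyah--Patodi--Singer. When these all vanish, the $K$-theoretic data is that of a representation, and the stable uniqueness theorem finishes the argument.

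The main obstacle, I expect, is the passage through the non-exact stable uniqueness theorem and making sure the absorption can be achieved with a \emph{genuine finite-dimensional unitary representation} $\theta$ of $\Gamma$ (not merely a $*$-homomorphism of some auxiliary $C^*$-algebra), since $\Gamma$ is non-amenable and need not have many finite-dimensional representations a priori; one likely has to exploit that $\Gamma$ is residually finite (free-by-cyclic groups are) to manufacture enough finite-dimensional representations to serve as absorbing summands, and to control the relevant $KK$-classes. A secondary technical point is the interface between the ``ucp'' hypothesis and the genuinely quasi-representation-theoretic conclusion, i.e.\ upgrading from ucp quasi-representations to arbitrary ones — though for the free-by-cyclic statement as phrased (``weakly stable'', not ``weakly ucp stable'') this is presumably handled by a general lemma relating the two notions for this class of groups. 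The final sentence of the theorem, that $\Gamma$ is weakly stable when $\phi_*$ lacks the eigenvalue $1$, is then immediate: in that case $m = 0$, $H_2(\Gamma) = 0$, there are no winding-number conditions to impose, and the conditional statement becomes unconditional.
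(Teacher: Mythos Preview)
Your proposal is correct and follows essentially the same route as the paper: establish that $\Gamma$ has a two-dimensional classifying space (the mapping torus), compute $H_2(\Gamma)$ via the Wang sequence, then feed this into the general low-dimensional machinery (Theorem~\ref{low main}) built on the stable uniqueness theorem, Dadarlat's winding-number identification, and the eta-invariant analysis.

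Two points where you are imprecise, and where the paper is explicit. First, residual finiteness alone is not what is needed to ``manufacture enough finite-dimensional representations'': the stable uniqueness argument requires $C^*(\Gamma)$ to be RFD (Definition~\ref{rfd def}), which for non-amenable groups is strictly stronger than residual finiteness of $\Gamma$. Free-by-cyclic groups do have this, but via Lubotzky--Shalom's property FD (Remark~\ref{rfd rem}), not merely via residual finiteness. Second, your ``general lemma relating the two notions'' that removes the ucp hypothesis is precisely the LLP (Definition~\ref{llp}, Proposition~\ref{llp approx}); free-by-cyclic groups have it because the LLP passes to semidirect products by amenable groups (Remark~\ref{llp rem}), and this is exactly why Theorem~\ref{fbc intro} says ``weakly stable'' rather than ``weakly ucp stable''. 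With those two properties named, your sketch is the paper's proof.
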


The next theorem looks at a large class of one-relator groups.   See Remark \ref{o r summary} below.

\begin{theorem}\label{o r intro}
Let $\Gamma=\langle s_1,...,s_m\mid r\rangle$ be a torsion-free one-relator group, and assume that any two non-trivial elements of $\Gamma$ generate a free group.  
\begin{enumerate}[(i)]
\item If $r$ is in the commutator subgroup of the free group on $s_1,...,s_n$, then $H_2(\Gamma)\cong \Z$.  Let $c\in H_2(\Gamma)$ be a generator, and let $w(c,\cdot)$ be the associated winding number invariant.  Then $\Gamma$ is $\mathcal{R}_q$-stable, conditional on vanishing of $w(c,\cdot)$.
\item  If $r$ is not in the commutator subgroup of the free group on $s_1,...,s_n$, then $\Gamma$ is weakly stable.
\end{enumerate}
\end{theorem}

Let us conclude this section with a theorem for three-manifold groups.  This is a bit less general than our main result on three-manifold groups: see Theorem \ref{three man} for that.

\begin{theorem}\label{intro 3man the}
Let $\Gamma$ be the fundamental group of a closed aspherical three manifold $M$.  Choose a decomposition $H_2(M)=F\oplus T$ into the sum of a free subgroup and a torsion subgroup\footnote{If $M$ is orientable, $T$ is trivial, but not in general.}, choose a basis $c_1,...,c_m$ for the free summand $F$, and let $w(c_i,\cdot)$ be the associated winding number invariants.  Then $\Gamma$ is $\mathcal{R}_q$-stable, conditional on vanishing of $w(c_i,\cdot)$ for $i\in \{1,...,m\}$.  In particular, if $H_2(M)$ is torsion (or trivial) then $\Gamma$ is $\mathcal{R}_q$-stable.
\end{theorem}

\begin{remark}\label{gen ns}
Analogously to Remark \ref{surf ns} above, the fact that the vanishing conditions in Theorems \ref{fbc intro}, \ref{o r intro}, and \ref{intro 3man the} are necessary for $\mathcal{R}_q$-stability to hold is well-known.  This follows from the main result of \cite{Dadarlat:2022aa}, and also from \cite[Theorem 4.10]{Eilers:2018ab}.
\end{remark}

\begin{remark}
The conclusion of Theorem \ref{intro 3man the} cannot be strengthened to (conditional) stability (compare Remark \ref{qrstab vs stab} above).  Indeed, Theorem \ref{intro 3man the} applies to $\Z^3$, but this group is known not to be conditionally stable with respect to the winding number vanishing condition in Theorem \ref{intro 3man the} by \cite[Theorem 4.2]{Gong:1998aa}; compare also \cite[Theorem 3.13]{Eilers:2018ab} on crystallographic groups in this regard.  

We do not know if the conclusions of Theorems \ref{intro surf the}, \ref{fbc intro}, or \ref{o r intro} can be strengthened to (conditional) stability.
\end{remark}

\begin{remark}\label{vfx rem 2}
Analogously to Remark \ref{vfs rem} above, all the groups appearing in Theorems \ref{intro surf the}, \ref{fbc intro}, \ref{o r intro}, and \ref{intro 3man the} are known to be very flexibly stable, regardless of any topological conditions: see \cite[Section 7]{Fournier-Facio:2026aa}.
\end{remark}

\subsection{$C^*$-algebra $K$-theory and stable uniqueness}\label{k stab sec}

We stated our results in the previous section in terms of winding numbers as this seemed more elementary and explicit.  Underlying these winding numbers are $K$-theory classes, however, and it is probably fair to say that $K$-theory is more directly relevant to the problem\footnote{Our opinions (arguably, biases) on this are influenced by Atiyah: compare \cite[page 245]{Atiyah:1967aa}.}: this is evidenced by Loring's analysis \cite{Loring:1985ud} of Voiculescu's example \cite{Voiculescu:1983km} in terms of Bott periodicity, and computations showing that one can deduce Bott periodicity from properties of almost commuting matrices \cite{Willett:2020ab}.

We will thus work with $K$-theory of $C^*$-algebras, as well as the dual $K$-homology theory.  See \cite{Rordam:2000mz} for background on $C^*$-algebra $K$-theory, and \cite{Higson:2000bs} or \cite{Blackadar:1998yq} for background on $C^*$-algebra ($K$-theory and) $K$-homology.  

Recall that a \emph{$C^*$-algebra} is a norm-closed and $*$-closed subalgebra of the $*$-algebra of bounded operator on a Hilbert space.  Let $C^*(\Gamma)$ be the maximal, or full, group $C^*$-algebra of $\Gamma$, i.e.\ the completion of the complex group algebra $\C[\Gamma]$ in the norm given by the supremum of the norms in all unitary representations of $\C[\Gamma]$.  Then $C^*(\Gamma)$ has the universal property that any unitary representation of $\Gamma$ extends uniquely by linearity and continuity to a $*$-representation of $C^*(\Gamma)$.   Let $K_0(C^*(\Gamma))$ be the (algebraic) $K_0$ group of $C^*(\Gamma)$, and let $K_0(\C)$ be the $K_0$ group of the complex numbers (which is isomorphic to $\Z$); by Morita invariance, $K_0(\C)$ agrees with $K_0(M_n(\C))$ for all $n$.  

Now, given a quasi-representation $\phi$, we aim to:
\begin{enumerate}[(i)]
\item \label{strati} show that it in some sense defines an element 
$$\phi_*\in\text{Hom}(K_0(C^*(\Gamma)),K_0(\C))~;$$
\item \label{stratii} show that this element is equivalent to an honest representation $\pi$ (in an appropriate weak sense) as long as 
$$
\phi_*=\pi_*\quad  \text{in}\quad \text{Hom}(K_0(C^*(\Gamma),K_0(\C)).
$$
\end{enumerate}
Such theorems exist in the $C^*$-algebra literature as part of the classification program for simple nuclear $C^*$-algebras: see for example \cite{Elliott:1995dq} for the set-up of this program, and \cite{White:2022aa} for a survey of recent progress.  They are called \emph{stable\footnote{``Stable'' is here used in the $K$-theoretic sense of `up to block sum with something'; it is not directly connected to a group being stable as in Definition \ref{stable}.} uniqueness theorems} after work of Lin \cite{Lin:2002aa} and Dadarlat-Eilers \cite[Section 4]{Dadarlat:2001aa}.  There are numerous technicalities involved here: indeed, to establish \eqref{strati} one seems to need additional positivity properties of $\phi$; and to deal with \eqref{stratii}, $K$-theory with integral coefficients is not enough, as one instead needs to work with integer coefficients and with all finite coefficients at once.

Now, if we `only' wanted to work with amenable groups, one could adapt the stable uniqueness theorems of Dadarlat-Eilers \cite[Section 4]{Dadarlat:2001aa} to a theorem of the necessary form.  Indeed, this is essentially what is done in \cite[Theorem 1.5]{Dadarlat:384aa}. %which is closely related to the special case of Theorem \ref{intro gp main} where all possible obstructions vanish.

However, this will not suffice for us as our key examples (such as those in Theorems \ref{intro surf the}, \ref{fbc intro}, \ref{o r intro}, and \ref{intro 3man the}) involve non-amenable groups.  The existing techniques do not seem to work in this case as the maximal group $C^*$-algebra $C^*(\Gamma)$ will not even be \emph{exact}\footnote{Exactness of $C^*(\Gamma)$ is a stronger property than the more widely studied exactness of the `reduced' $C^*$-algebra $C^*_r(\Gamma)$ and should not be confused with this.  Indeed, exactness of $C^*_r(\Gamma)$ is equivalent to Yu's \emph{property A} (see \cite[Definition 2.1]{Yu:200ve}) for $\Gamma$ by the main result of \cite{Ozawa:2000th}, while for residually finite $\Gamma$, exactness of $C^*(\Gamma)$ is equivalent to amenability for $\Gamma$ by \cite[Theorem 7.5]{Kirchberg:1993aa} (see also \cite[Proposition 3.7.11]{Brown:2008qy}).}, a technical assumption needed to establish the stable uniqueness theorems: compare for example \cite[Theorem 4.15]{Dadarlat:2001aa} and \cite[Theorem 5.4]{Dadarlat:384aa}.   We thus give a different approach to stable uniqueness theorems based on a new model of $K$-homology \cite{Willett:2020aa} established by the author and Yu; this has the advantage that it allows one to work directly with approximate representations (under additional assumptions).  This new stable uniqueness theorem is probably the most technically novel aspect of the current paper: see Theorem \ref{stab main} below

We now state the main stable uniqueness result in the context of group $C^*$-algebras.  Unfortunately, this needs a bit of an alphabet soup of technical conditions on $\Gamma$: FD, UCT, and LLP from \cite{Lubotzky:2004xw}, \cite{Rosenberg:1987bh}, and \cite{Kirchberg:1993aa} respectively.  For now we just note that all the groups in Theorems \ref{intro surf the}, \ref{fbc intro}, \ref{o r intro}, and \ref{intro 3man the} above satisfy all of these conditions and all are satisfied by amenable residually finite groups.  We refer forward in the paper for details: see Section \ref{main sec}.

Here is a simplified version of the main theorem where we make an additional assumption on the odd-dimensional $K$-theory group $K_1(C^*(\Gamma))$ to obviate the need for $K$-theory with finite coefficients: see Theorem \ref{gp main} below for the full version and Theorem \ref{stab main} for the purely $C^*$-algebraic theorem that gives rise to it.

%\begin{theorem}\label{intro gp main}
%Let $\Gamma$ be a discrete group with a fixed finite generating set $S$.  Assume that $\Gamma$ is RFD, UCT, and also that $K_1(C^*(\Gamma))$ is torsion-free.   Then for any $\epsilon>0$ there exist $\delta>0$ and a finite subset $P$ of $K_0(C^*(\Gamma))$ with the following property.

%Let $\phi:\Gamma\to M_n(\C)$ be a ucp $(S,\delta)$-representation in the sense of Definition \ref{quasi rep}.  Then $\phi$ induces a well-defined `partial homomorphism' from $P$ to $K_0(\C)$.  Moreover, if $\pi:\Gamma\to M_n(\C)$ is a finite-dimensional unitary representation such that the induced map $\pi_*:K_0(C^*(\Gamma))\to K_0(\C)$ agrees with $\phi_*$ on $P$, then there is a finite dimensional unitary representation $\theta:\Gamma\to M_k(\C)$ and a unitary $u\in M_{n+k}(\C)$ such that 
%$$
%\|u(\phi(s)\oplus \theta(s))u^*-(\pi(s)\oplus \theta(s))\|<\epsilon
%$$
%for all $s\in S$. 
%\end{theorem}

%Complementing the RFD and UCT properties from Definitions \ref{rfd def} and \ref{intro uct}, there is a third important acronym for us: the \emph{local lifting property} (LLP) of Kirchberg \cite[Section 2]{Kirchberg:1993aa}.  If $C^*(\Gamma)$ satisfies the LLP, then one can drop the condition on $\phi$ being ucp in Theorem \ref{intro gp main} as in the following theorem.

\begin{theorem}\label{intro gp main 2}
Let $\Gamma$ be a countable discrete group.   Assume moreover that $\Gamma$ is FD, UCT, and LLP, and that $K_1(C^*(\Gamma))$ is torsion-free.

Then for any finite subset $S$ of $\Gamma$ and any $\epsilon>0$ there exist a finite subset $T$ of $\Gamma$, $\delta>0$, and a finite subset $P$ of $K_0(C^*(\Gamma))$ with the following property.

Let $\phi:C^*(\Gamma)\to M_n(\C)$ be a $(T,\delta)$-representation in the sense of Definition \ref{quasi rep}.  Then $\phi$ induces a well-defined `partial homomorphism' from $P$ to $K_0(\C)$.  Moreover, if $\pi:\Gamma\to M_n(\C)$ is a finite-dimensional unitary representation such that the induced map $\pi_*:K_0(C^*(\Gamma))\to K_0(\C)$ agrees with $\phi_*$ on $P$, then there is a unitary representation $\theta:\Gamma\to M_k(\C)$ that factors through a finite quotient of $\Gamma$, and a unitary $u\in M_{n+k}(\C)$ such that 
$$
\|u(\phi(s)\oplus \theta(s))u^*-(\pi(s)\oplus \theta(s))\|<\epsilon
$$
for all $s\in S$. 
\end{theorem}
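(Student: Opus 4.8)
The plan is to deduce Theorem~\ref{intro gp main 2} from Theorem~\ref{intro gp main} --- more precisely, from the version of it valid for countable, not necessarily finitely generated, groups, which is established in the body of the paper --- by using the LLP to replace an arbitrary quasi-representation by a nearby ucp one. The hypotheses that $\Gamma$ is RFD, UCT, and that $K_1(C^*(\Gamma))$ is torsion-free will be used only through that stable uniqueness theorem, whereas the LLP is exactly the ingredient that powers the reduction. The crux is the following approximation principle, which is also what lends meaning to the assertion that a general (non-ucp) $\phi$ ``induces a well-defined partial homomorphism'': \emph{if $C^*(\Gamma)$ has the LLP, then for every finite $S\subseteq\Gamma$ and $\epsilon>0$ there are a finite $T\supseteq S$ and $\delta>0$ such that every $(T,\delta)$-representation $\phi:\Gamma\to M_n(\C)$ admits a ucp $(S,\epsilon)$-representation $\psi:\Gamma\to M_n(\C)$ with $\|\psi(s)-\phi(s)\|<\epsilon$ for all $s\in S$}. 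Granting this, fix $S$ and $\epsilon$; apply the countable-group stable uniqueness theorem to $\Gamma$, $S$ and $\epsilon/2$ to get $\delta_0>0$, a finite $T_0\subseteq\Gamma$ and a finite $P\subseteq K_0(C^*(\Gamma))$; then apply the approximation principle with $T_0$ in place of $S$ and $\min(\delta_0,\epsilon/2)$ in place of $\epsilon$ to obtain the $T$ and $\delta$ of the theorem. For a $(T,\delta)$-representation $\phi$, this yields a ucp $(T_0,\delta_0)$-representation $\psi$ that is $(\epsilon/2)$-close to $\phi$ on $S$; one defines $\phi_*$ on $P$ to be $\psi_*$, well-definedness following because any two ucp quasi-representations that are sufficiently close on a sufficiently large finite set induce the same partial homomorphism on $P$. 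If $\pi$ is a representation with $\pi_*=\phi_*=\psi_*$ on $P$, the stable uniqueness theorem applied to $\psi$ gives a representation $\theta:\Gamma\to M_k(\C)$ and a unitary $u$ with $\|u(\psi(s)\oplus\theta(s))u^*-(\pi(s)\oplus\theta(s))\|<\epsilon/2$ for $s\in S$; as conjugation by $u$ and block sum with $\theta$ are isometric, replacing $\psi$ by $\phi$ on $S$ costs a further $\epsilon/2$, giving the theorem.

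To prove the approximation principle I would argue by contradiction. If it failed for some $(S,\epsilon)$, there would be $(T_k,\delta_k)$-representations $\phi_k:\Gamma\to M_{n_k}(\C)$ with $T_k\uparrow\Gamma$, $\delta_k\downarrow 0$, $1\in T_k=T_k^{-1}$, and no ucp $(S,\epsilon)$-representation $\epsilon$-close to $\phi_k$ on $S$. Assembling the $\phi_k$ coordinatewise gives a unital map $\Gamma\to\prod_k M_{n_k}(\C)/\bigoplus_k M_{n_k}(\C)$ into the unitary group (the values are unitary because each $\phi_k(s)$ is, up to a vanishing error, inverted by $\phi_k(s^{-1})$), and it is a genuine unitary representation because the multiplicativity defects tend to $0$; hence it extends to a unital $*$-homomorphism $\widetilde\Phi:C^*(\Gamma)\to\prod_k M_{n_k}(\C)/\bigoplus_k M_{n_k}(\C)$. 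For the finite-dimensional operator system $E_m\subseteq C^*(\Gamma)$ spanned by $1$ and the images of $T_m\cup\{st:s,t\in T_m\}$, the LLP of $C^*(\Gamma)$ provides a ucp lift of $\widetilde\Phi|_{E_m}$ into $\prod_k M_{n_k}(\C)$, which I extend to a ucp map $C^*(\Gamma)\to\prod_k M_{n_k}(\C)$ using injectivity of $\prod_k M_{n_k}(\C)$ and the Arveson extension theorem; writing its coordinates as ucp maps $\psi_{m,k}:C^*(\Gamma)\to M_{n_k}(\C)$, the fact that they lift $\widetilde\Phi$ on $E_m$ gives $\|\psi_{m,k}(s)-\phi_k(s)\|\to 0$ and $\|\psi_{m,k}(st)-\psi_{m,k}(s)\psi_{m,k}(t)\|\to 0$ as $k\to\infty$, for all $s,t\in T_m$. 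Taking $m$ with $S\subseteq T_m$ and $1/m<\epsilon$, and then $k=k(m)$ large enough, the restriction of $\psi_{m,k(m)}$ to $\Gamma$ is a ucp $(S,\epsilon)$-representation that is $\epsilon$-close to $\phi_{k(m)}$ on $S$ --- a contradiction. This is the usual way the LLP is deployed; compare Dadarlat~\cite{Dadarlat:384aa} and Eilers-Shulman-S\o{}rensen~\cite{Eilers:2018ab}.

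I expect the LLP step --- the approximation principle --- to be the main obstacle, and it is also the reason the ``ucp'' hypothesis cannot be removed unconditionally: without the LLP there is no mechanism to correct an arbitrary almost-multiplicative unital map into $M_n(\C)$ to a completely positive one, so even the partial homomorphism $\phi_*$ ceases to be definable. The remaining work is bookkeeping: tracking the finitely many enlargements of the finite set and the finitely many halvings of $\epsilon$, and invoking the countable-group form of the stable uniqueness theorem. One point that deserves care is that the auxiliary representation $\theta$ produced by that theorem must be a representation of all of $\Gamma$, not merely of the finitely generated subgroup $\langle S\rangle$; this is why one works with the countable-group version directly rather than reducing to the finitely generated case by restriction to $\langle S\rangle$.
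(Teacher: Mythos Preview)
Your proposal is correct and follows essentially the same route as the paper. The approximation principle you isolate is exactly Proposition~\ref{llp approx}, proved there by the same contradiction argument through $\prod_k M_{n_k}(\C)/\bigoplus_k M_{n_k}(\C)$; the well-definedness of $\phi_*$ via any ucp approximant is Lemma~\ref{to ucp} combined with Lemma~\ref{partial kappa}\eqref{pk2}; and the final deduction is the LLP clause of Theorem~\ref{gp main}, argued just as you do by applying the ucp stable uniqueness theorem to $\psi$ and absorbing the $\epsilon/2$ discrepancy between $\phi$ and $\psi$.
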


%For an amenable group $\Gamma$, the LLP holds for $C^*(\Gamma)$ by the Choi-Effros lifting theorem \cite[Theorem 3.10]{Choi:1976aa} (see also the simpler proof given in \cite[Theorem 7]{Arveson:1977aa}) for separable nuclear $C^*$-algebras, and Lance's theorem that $C^*(\Gamma)$ is nuclear for amenable $\Gamma$ \cite[Theorem 4.2]{Lance:1973aa}.    The LLP holds more generally than for amenable $\Gamma$: see Remark \ref{llp rem} below for more examples.  However, the LLP does not seem to have received much attention from group theorists and relatively little is known here: for example, it is open if surface groups or fundamental groups of hyperbolic $3$-manifolds satisfy the LLP, which is the reason ``ucp'' appears in the conclusions of Theorems \ref{intro surf the}, \ref{bs intro}, and \ref{intro 3man the} above; it could be dropped if these groups were known to satisfy the LLP and we conjecture that this is indeed the case.

\subsection{The Baum-Connes conjecture and index theory}

In order to deduce Theorems \ref{intro surf the}, \ref{fbc intro}, \ref{o r intro}, and \ref{intro 3man the} from Theorem \ref{intro gp main 2}, we need to show that for any quasi-representation $\phi:\Gamma\to M_n(\C)$ and finite subset $P$ of $K_0(C^*(\Gamma))$ satisfying appropriate conditions, there exists an honest representation $\pi:\Gamma\to M_n(\C)$ such that the map $\pi_*:K_0(C^*(\Gamma))\to K_0(\C)$ induced by $\pi$ agrees with the partial homomorphism $\phi_*:P\to K_0(\C)$ induced by $\phi$.

Assuming that $\Gamma$ is torsion free, the key ingredient for this is the \emph{Baum-Connes-Kasparov assembly map} \cite{Kasparov:1988dw,Baum:1994pr}
\begin{equation}\label{bc}
\mu:RK_*(B\Gamma)\to K_*(C^*(\Gamma)),
\end{equation}
which relates the topologically defined $K$-homology group $RK_*(B\Gamma)$ with the $K$-theory of group $C^*$-algebra; see \cite{Aparicio:2020aa} for a survey on this.  Following Baum-Douglas \cite{Baum:1980pt}, the left hand side of line \eqref{bc} above consists very roughly of continuous maps from closed manifolds (plus some orientation and bundle data) to the classifying space $B\Gamma$ modulo appropriate equivalence relations.  If $\mu$ is an isomorphism, this in principle allows one to compute the map $\pi_*:K_0(C^*(\Gamma))\to K_0(\C)$ in terms of index theory pairings between Dirac operators on the manifolds making up cycles for $RK_*(B\Gamma)$ with the flat bundle associated to the representation.  

Much of this analysis has been done in a beautiful series of papers by Dadarlat (see for example \cite{Dadarlat:2011kx,Dadarlat:384aa,Dadarlat:2022aa}), although what we need to understand the maps $\pi_*:K_0(C^*(\Gamma))\to K_0(\C)$ is relatively simple and essentially part of the folklore of the subject.

The main new index-theoretic observations we make concern finite coefficients.  Indeed, we have mentioned above that one should be working not just with the $K$-theory group $K_0(C^*(\Gamma))$ with integer coefficients, but also with $K$-theory with finite coefficients $K_0(C^*(\Gamma);\Z/n)$, as introduced by Schochet \cite{Schochet:1984ab}.  The assumption in Theorem \ref{intro gp main 2} that $K_1(C^*(\Gamma))$ is torsion free is made precisely as it allows us to avoid this issue.

In general, however, given a quasi-representation $\phi:\Gamma\to M_n(\C)$ it also induces partially defined maps $\phi_*$ on $K$-theory with finite coefficients, and we need to find a representation $\pi:\Gamma\to M_n(\C)$ such that $\pi_*:K_0(C^*(\Gamma);\Z/n)\to K_0(\C;\Z/n)$ agrees with $\phi_*$ (where the latter is defined) for all $n\geq 2$.  We do this computation using the \emph{relative eta invariants} of Atiyah, Patodi, and Singer \cite{Atiyah:1975aa}.  Very roughly, relative eta invariants measure how the spectrum of a differential operator shifts when it is `twisted' by a finite-dimensional representation.  The key technical results are Theorem \ref{eta inv} which relates the map on $K$-theory with finite coefficients induced by a representation to relative eta invariants; and Theorem \ref{match the} which uses this to show that the map induced on $K$-theory, including $K$-theory with finite coefficients, can be `matched' by the map induced by an honest representation, under appropriate assumptions.  It is important here that all the groups in Theorems \ref{intro surf the}, \ref{fbc intro}, \ref{o r intro}, and \ref{intro 3man the} have classifying spaces of dimension at most three: our methods using relative eta invariants are currently limited to low dimensions.

After our new stable uniqueness theorem, this application of relative eta invariants to representation theory is the second main technical novelty in the current paper.  We suspect, however, that some of the computations we need here may be know to experts: in particular Theorem \ref{eta inv} is in some sense implicit in the original three papers of Atiyah, Patodi, and Singer \cite{Atiyah:1975ab,Atiyah:1975aa,Atiyah:1976aa}.

\subsection{Further questions}

Here we summarize (a small selection of the!) issues that we do not even start to address in this paper; we mention these mainly to highlight problems that we consider interesting for future work.

One of the most interesting applications of representation stability is in obstructing approximation results for groups.  The paradigm here, following \cite{Chiffre:2018ds}, proceeds roughly as follows.  Assume one wants to show that a particular group $\Gamma$ does not admit a separating family of suitably good quasi-representations.  Roughly, this follows if $\Gamma$ is stable, and also if it does not admit a separating family of honest representations.  Our methods \emph{require} that $\Gamma$ has a separating family of honest representations, and so are not useful\footnote{Probably not useful: it is conceivable there could be useful `relative versions' or similar.} for results along these lines.

A second limitation to our methods is the reliance on low-dimensionality assumptions (on classifying spaces) for concrete examples.  There seems to be a lot of potential for higher-dimensional computations, but this would require new ideas, particularly around the computation of relative eta invariants, or other approaches to $K$-theory with finite coefficients.  We do at least get `rational' results that work regardless of dimension: see Theorem \ref{high main} below.

Another issue we have nothing to say about is what happens in the presence of finite subgroups.  There is probably quite a lot that can be said here, perhaps in terms of Lott's \emph{delocalized invariants} \cite{Lott:1999aa} or related machinery.  We did not attempt to address this here, but this is only due to our limited knowledge (and energy), not because we are aware of serious obstructions.  There is a partial obstruction in that our methods require finite-dimensional classifying spaces for concrete examples, and the classifying space of a group with torsion is always infinite-dimensional.  However, it is plausibly possible to get around this by considering the classifying space for proper actions or something similar.  This again seems an interesting avenue for further work: indeed, Weinberger, Wu, and Yu have announced results in this direction, using fairly different methods from ours.

%A final issue that seems quite mysterious (and that this paper has nothing new to say about) is the extent of validity of our RFD, LLP, and UCT assumptions.   The status is perhaps worst for the LLP, where very little seems to be known for interesting groups of geometric or combinatorial origin like surface groups and other one-relator groups.  As already commented, RFD, LLP, and UCT are all known for amenable groups (one needs to also assume MAP for RFD), and all are known to hold at least a little more generally than this: see Remarks \ref{llp rem}, \ref{uct rem}, and  \ref{rfd rem} below.  Moreover it seems (some version of) property (T) obstructs (some version of) each of these properties: see for example \cite{Bekka:1999kx} for the RFD property, \cite{Ioana:2020aa} for the LLP, and \cite{Skandalis:1988rr} for the UCT.  However, there are no really general obstruction results along these lines that we are aware of.  It is also worth pointing out that the versions of the LLP and UCT we need are a priori weaker than the full versions (see Definitions \ref{llp} and \ref{kh auct}), but to what extent they are `really' weaker is quite unclear to us.  It would be very interesting to see further progress along these lines.  

\subsection{Outline of the paper}

Section \ref{ucp sec} discusses quasi-representations in more detail: in particular, we introduce ucp quasi-representations and discuss the role of Kirchberg's LLP in allowing the approximation of general quasi-representations by ucp quasi-representations.  

Section \ref{ckh sec} recalls the controlled $K$-homology groups from \cite{Willett:2020aa}, and relates them to homomorphisms between $K$-theory groups using the universal multicoefficient theorem of Dadarlat and Loring \cite{Dadarlat:1996aa} and Dadarlat's work on the topology of $KK$-theory \cite{Dadarlat:2005aa}.  This is then used to establish our $C^*$-algebraic stable uniqueness theorem in Section \ref{su sec}.  Sections \ref{ckh sec} and \ref{su sec} are written in the language of abstract $C^*$-algebras; in Section \ref{main sec} we specialize to group $C^*$-algebras and state our main $K$-theoretic result for quasi-representations of groups.  Section \ref{main sec} also surveys when the various assumptions (FD, UCT, LLP) going into the main theorem are known for some concrete classes of groups: all three assumptions are known for amenable groups (for FD one also needs residually finite) and somewhat beyond this class, but the state of knowledge is only partial, and there is strong evidence all are obstructed by appropriate forms of property (T).

Section \ref{bc sec} brings the Baum-Connes conjecture and index-theoretic methods into play.  The main technical result is Theorem \ref{match the} which lets us match the $K$-theoretic data associated to a quasi-representation with $K$-theoretic data from an honest representation.  Finally, Section \ref{example sec} first establishes a general `rational' stability result, and then specializes to low-dimensional (meaning the classifying space is low-dimensional) examples where explicit computations are possible: in particular, it establishes Theorems \ref{intro surf the}, \ref{fbc intro}, \ref{o r intro}, and \ref{intro 3man the} above.

\subsection{Notation and conventions}

We write $K_*(A):=K_0(A)\oplus K_1(A)$ for the $\Z/2$-graded $K$-theory group of a $C^*$-algebra $A$, and similarly $K^*(A):=K^0(A)\oplus K^1(A)$ denotes the $\Z/2$-graded $K$-homology group.  For a compact metrizable space $X$, we write $K^*(X)$ and $K_*(X)$ for $K$-theory and $K$-homology; thus $K^*(X)=K_*(C(X))$ and $K_*(X)=K^*(C(X))$ where $C(X)$ is the $C^*$-algebra of continuous complex-valued functions on $X$ (the position of the stars records whether the corresponding functor on spaces or $C^*$-algebras is covariant or contravariant).  A homomorphism between $\Z/2$-graded groups is \emph{graded} if it splits as a direct sum of homomorphisms that preserve the components.

If $A$ is a unital $C^*$-algebra, we write $1_n$ and $0_n$ for the unit and zero element of $M_n(A)$.  For a $C^*$-algebra $A$, $a\in M_n(A)$ and $b\in M_m(A)$, define $a\oplus b:=\begin{psmallmatrix} a & 0 \\ 0 & b\end{psmallmatrix}\in M_{n+m}(A)$ for the usual block sum matrix.

Tensor products between $C^*$-algebras will always be the minimal or spatial tensor product: see \cite[Chapter 3]{Brown:2008qy} for background.  At least one of the $C^*$-algebras in any tensor product we take will typically be nuclear, however, so the choice of tensor product does not matter.

We always assume the groups $\Gamma$ we work with are discrete, and generally also assume they are countable.  Countability is probably not strictly necessary -- we would guess that most arguments could be extended to the general case through taking appropriate limits -- but the countable case covers all the examples we are interested in, and doing everything in general would require a lot more pedantry with $KK$-theory and the UCT (amongst other things) than seemed likely to be helpful to the reader.  

A subset $S$ of a group is called \emph{symmetric} if it is closed under taking inverses.  A subset $X$ of a $C^*$-algebra is \emph{symmetric} if it is closed under taking adjoints.

Throughout the paper, $\tau:\Gamma\to \C$ is the trivial representation of a group $\Gamma$, and $\tau^{(n)}:\Gamma\to M_n(\C)$ is the $n$-dimensional trivial representation; we will occasionally also write $\tau$ for $\tau^{(n)}$ if there does not seem to be a risk of confusion.  We also use the same notation for the maps induced on $C^*(\Gamma)$ by these representations.

We will sometimes need to use Kasparov's $KK$-theory: see \cite[Chapter VIII]{Blackadar:1998yq} or \cite[Section 2]{Kasparov:1988dw} for background.  For any pair of separable $C^*$-algebras $A$, $B$, $KK$-theory assigns an abelian group $KK(A,B)$ in such a way that $KK(\C,A)$ and $KK(A,\C)$ canonically identify with the $K$-theory $K_0(A)$ and $K$-homology $K^0(A)$ respectvely.  If $C$, $D$, $E$ are separable $C^*$-algebras and ``$\otimes$'' denotes the spatial tensor product of $C^*$-algebras or tensor product of abelian groups as appropriate, then there is a pairing 
\begin{equation}\label{kk pair}
KK(A,B\otimes C)\otimes KK(C\otimes D,E)\to KK(A\otimes D,B\otimes E)
\end{equation}
with strong formal properties including appropriate versions of associativity: see \cite[Definition 2.12 and Theorem 2.14]{Kasparov:1988dw} or \cite[Section 18.9]{Blackadar:1998yq}.  The most important of these pairings for us will be the pairing
$$
KK(\C,A)\otimes KK(A,\C)\to KK(\C,\C)=\Z,
$$
between $K$-theory and $K$-homology.  Another important special case for us are the pairings
$$
KK(\C,A\otimes O_{n+1})\otimes KK(A,\C)\to KK(\C,O_{n+1})= \Z/n
$$
where $O_{n+1}$ is the Cuntz algebra of \cite{Cuntz:1977aa}.  These will be useful for us when we discuss $K$-theory with $\Z/n$ coefficients, as for us $KK(\C,A\otimes O_{n+1})$ equals the $K$-theory group of $A$ with $\Z/n$ coefficients, denoted $K_0(A;\Z/n)$.  Finally, we note that the group $KK(A,B)$ is often denoted $KK_0(A,B)$, and thought of as the \emph{even} $KK$-theory group; there is also an \emph{odd} group $KK_1(A,B)$, and a pairing 
$$
KK_i(A,B\otimes C)\otimes KK_j(C\otimes D,E)\to KK_{i+j \text{ mod } 2}(A\otimes D,B\otimes E)
$$
defined for any separable $C^*$-algebras.

\subsection{Acknowledgments}

I would like to thank Goulnara Arzhantseva, Jos\'{e} Carri\'{o}n, Marius Dadarlat, Robin Deeley, Alon Dogon, S\o{}ren Eilers, Francesco Fournier-Facio, James Gabe, Guihua Gong, Asaf Hadari, Huaxin Lin, Christopher Schafhauser, Dimitri Shlyakhtenko, Tatiana Shulman, Andreas Thom, Jianchao Wu, and Guoliang Yu for helpful comments, explanations, and / or corrections during the writing of this paper. I am also grateful to the anonymous referee for several useful comments. 

I would like to thank the Isaac Newton Institute for Mathematical Sciences, Cambridge, for support and hospitality during the program Operators, Graphs, Groups: this led to a collaboration \cite{Fournier-Facio:2026aa} with Francesco Fournier-Facio that (among other things) greatly improved the range of validity of the main results of this paper as compared to earlier drafts.  

Support from the US NSF (DMS 2247968) and Simons Foundation (MP-TSM-00002363) is also gratefully acknowledged.

\section{Ucp quasi-representations and the local lifting property}\label{ucp sec}

In this section we discuss some general considerations about the relationship between different types of quasi-representation.  We believe the results here are essentially folklore; we give proofs where we could not find a result in the literature.

The following lemma will not be used directly in the paper: the point of including it is to relate our notion of unit ball-valued quasi-representations from Definition \ref{quasi rep} with more the more usual unitary-valued version.

\begin{lemma}\label{uni qr}
Let $\Gamma$ be a discrete group, let $S\subseteq \Gamma$ be symmetric, and let $\epsilon\in (0,1)$.  Let $\phi:\Gamma\to\mathcal{B}(H)_1$ be an $(S,\epsilon)$-representation as in Definition \ref{quasi rep}.  Then there exists an $(S,6\epsilon)$-representation $\sigma:\Gamma\to\mathcal{B}(H)_1$ such that $\|\sigma(s)-\phi(s)\|<\epsilon$ for all $s\in S$ and such that $\sigma$ takes values in the unitary operators.
\end{lemma}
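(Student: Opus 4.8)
The plan is to replace each $\pi(s)$, for $s\in S$, by the unitary part of its polar decomposition, and then to extend this pointwise assignment to a unital, unitary-valued map on all of $\Gamma$ using products.

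First I would check that $\pi(s)$ is invertible for every $s\in S$. Since $S$ is symmetric, $s^{-1}\in S$, so applying the defect inequality to the pairs $(s,s^{-1})$ and $(s^{-1},s)$ and using unitality (so $\pi(ss^{-1})=\pi(e)=1$) gives $\|\pi(s)\pi(s^{-1})-1\|<\epsilon<1$ and $\|\pi(s^{-1})\pi(s)-1\|<\epsilon<1$; hence $\pi(s)$ has a two-sided inverse, and a Neumann-series estimate combined with $\|\pi(s^{-1})\|\le 1$ bounds $\|\pi(s)^{-1}\|<(1-\epsilon)^{-1}$. Writing the polar decomposition $\pi(s)=u_s|\pi(s)|$ with $u_s$ unitary, the bounds $\|\pi(s)\|\le 1$ and $\|\pi(s)^{-1}\|<(1-\epsilon)^{-1}$ force the spectrum of the positive operator $|\pi(s)|$ into $(1-\epsilon,1]$, so that $\|u_s-\pi(s)\|=\||\pi(s)|-1\|<\epsilon$; I also set $u_e:=1$, which is consistent since $\pi(e)=1$.

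Next I would define $\sigma$ as follows: $\sigma(e):=1$; $\sigma(g):=u_g$ for $g\in S\setminus\{e\}$; for each remaining $g$ that admits a factorization $g=st$ with $s,t\in S$, fix one such factorization and set $\sigma(g):=u_su_t$; and $\sigma(g):=1$ otherwise. Then each value of $\sigma$ is a product of unitaries, so $\sigma$ is unitary-valued and unital, and $\|\sigma(s)-\pi(s)\|<\epsilon$ on $S$ by the previous step. To bound the defect: for $s,t\in S$ we have $\sigma(s)\sigma(t)=u_su_t$, and $\|u_su_t-\pi(st)\|\le\|u_s-\pi(s)\|+\|u_t-\pi(t)\|+\|\pi(s)\pi(t)-\pi(st)\|<3\epsilon$; meanwhile $\sigma(st)$ equals either $1=\pi(e)$ (if $st=e$), or the polar part $u_{st}$ with $\|u_{st}-\pi(st)\|<\epsilon$ (if $st\in S$), or a product $u_{s'}u_{t'}$ with $\|u_{s'}u_{t'}-\pi(st)\|<3\epsilon$ (if $st$ is handled through a factorization $st=s't'$). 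In every case $\|\sigma(s)\sigma(t)-\sigma(st)\|<6\epsilon$, so $\sigma$ is an $(S,6\epsilon)$-representation.

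The part that needs a little care — and the only part that does — is the extension in the third paragraph: one has to commit to a single unitary value $\sigma(g)$ for every $g\in\Gamma$, compatibly with unitality and with the prescribed values on $S$, while still controlling the defect on products $st$ that leave $S$. The key point making this work is that every admissible candidate for $\sigma(st)$ lies within $3\epsilon$ of $\pi(st)$, so any two of them differ by less than $6\epsilon$; all the remaining estimates are routine manipulations with polar decompositions and Neumann series.
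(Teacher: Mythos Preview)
Your proof is correct and follows essentially the same approach as the paper: invertibility of $\pi(s)$ via Neumann series, polar decomposition to get unitaries $u_s$ with $\|u_s-\pi(s)\|<\epsilon$, extension to $S^2\setminus(S\cup\{e\})$ by fixing a factorization, and identity elsewhere. Your organization is if anything slightly cleaner than the paper's, since you uniformly route the defect estimate through $\pi(st)$ (both $\sigma(s)\sigma(t)$ and $\sigma(st)$ lie within $3\epsilon$ of it), whereas the paper carries out the three cases for $\sigma(st)$ separately with marginally different intermediate constants.
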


\begin{proof}
For each $s\in S$, we have that $\|\phi(s)\phi(s^{-1})-1\|<\epsilon$ and $\|\phi(s^{-1})\phi(s)-1\|<\epsilon$.  Let $b=\phi(s^{-1})\phi(s)$.  Then $b$ is invertible with
$$
b^{-1}=\sum_{n=0}^\infty (1-b)^n
$$
whence $\|b^{-1}\|< (1-\epsilon)^{-1}$.  Hence $\phi(s)$ is invertible with $\phi(s)^{-1}=\phi(s)b^{-1}$, whence also 
$$
\|\phi(s)^{-1}\|\leq \|\phi(s)\|\|b^{-1}\|< (1-\epsilon)^{-1}.
$$
Hence 
$$
1=\|\phi(s)\phi(s)^{-1}\|< \|\phi(s)\|(1-\epsilon)^{-1},
$$
and so $\|\phi(s)\|> 1-\epsilon$.  

Let now $\phi(s)=u_sa_s$ be the polar decomposition of $\phi(s)$ (see for example \cite[pages 21-23]{Blackadar:2006eq}) where $u_s$ is unitary and $a_s$ is positive and invertible.  We have $\|a_s\|=\|\phi(s)\|\leq 1$, and that $\phi(s)^{-1}=a_s^{-1}u_s^*$ whence $\|a_s^{-1}\|=\|\phi(s)^{-1}\|< (1-\epsilon)^{-1}$.  As $a_s$ and $a_s^{-1}$ are positive, their norms are the maximal elements of their spectrum so these inequalities imply that the spectrum of $a_s$ is contained in $(1-\epsilon,1]$.  The functional calculus then implies that $\|a_s-1\|<\epsilon$.  Hence 
\begin{equation}\label{uni close}
\|\phi(s)-u_s\|<\epsilon
\end{equation}
for each $s\in S$.

Now, for each $t\in S^2\setminus (S\cup \{e\})$, choose $l(t)$ and $r(t)$ in $S$ with $t=l(t)r(t)$.  Define $\sigma:\Gamma\to \mathcal{U}(H)$ 
$$
\sigma(g)=\left\{\begin{array}{ll} u_s & g=s\in S \\ u_{l(t)}u_{r(t)} & g=t\in S^2\setminus (S\cup \{e\}) \\ 1 & \text{otherwise}\end{array}\right.
$$
From line \eqref{uni close}, we have $\|\phi(s)-\sigma(s)\|<\epsilon$ for all $s\in S$.  To check that $\sigma$ is an $(S,6\epsilon)$-representation, we need to check that $\|\sigma(s_1)\sigma(s_2)-\sigma(s_1s_2)\|<6\epsilon$ for all $s_1,s_2\in S$.  Computing:
\begin{align}\label{inq}
\|\sigma(s_1)\sigma(s_2)-\sigma(s_1s_2)\| & \leq \|\phi(s_1)\phi(s_2)-\sigma(s_1s_2)\|+\|(\phi(s_1)-u_{s_1})\phi(s_2)\| \nonumber \\ & \quad +\|u_{s_1}(\phi(s_2)-u_{s_2})\| \nonumber \\
& <2\epsilon+\|\phi(s_1)\phi(s_2)-\sigma(s_1s_2)\|.
\end{align}
There are three cases: if $s_1=s_2^{-1}$, then $\sigma(s_1s_2)=1$ and $\|\phi(s_1)\phi(s_2)-1\|=\|\phi(s_1s_2)-\phi(s_1)\phi(s_2)\|<\epsilon$, and we are done by line \eqref{inq}; if $s_1s_2=s\in S$, then we have 
$$
\|\phi(s_1)\phi(s_2)-\sigma(s_1s_2)\|\leq \|\phi(s)-\sigma(s)-(\phi(s)-\phi(s_1s_2))\|<2\epsilon
$$
and again are done by line \eqref{inq}; finally, if $s_1s_2=t=l(t)r(t)\in S^2\setminus (S\cup \{e\})$, then 
\begin{align*}
\|\phi(s_1)\phi(s_2)-\sigma(s_1s_2)\| & \leq \|\phi(s_1)\pi(s_2)-\phi(t)\|+\|\phi(t)-\phi(l(t))\phi(r(t))\| \\ & \quad +\|(\phi(l(t))-u_{l(t)})\phi(r(t))\|+\|u_{l(t)}(\phi(r(t))-u_{r(t)})\| \\
& < 4\epsilon
\end{align*}
and again we are done by line \eqref{inq}.
\end{proof}

We need a special class of quasi-representation (see Definition \ref{quasi rep} for the general case).  This is classical, going back (at least) to Naimark \cite{Naimark:1943aa}.

\begin{definition}\label{ucp qr}
An $(S,\epsilon)$-representation $\phi:\Gamma\to \mathcal{B}(H)_1$ is \emph{unital\footnote{The word ``unital'' is redundant -- all quasi-representations preserve identities for us -- but we keep as it is the standard convention in the $C^*$-algebra literature.} completely positive} (ucp) if for any finite subset $F$ of $\Gamma$, the operator matrix
$$
(\phi(g^{-1}h))_{g,h\in F}\in M_{|F|}(\mathcal{B}(H))
$$
is positive.  
\end{definition}

One of the main reasons that ucp quasi-representations are useful is the next result, which is classical; the reader might also compare \cite[Proposition 2.3]{Kirchberg:1994ez}, which gives a more detailed result for  tracial approximations (and when the group has property (T)).  To state it, we recall that a linear map $\phi:A\to \mathcal{B}(H)$ from a complex $*$-algebra $A$ to $\mathcal{B}(H)$ is \emph{unital completely positive} (ucp) if it is unital, and if for all $n$ and any $a\in M_n(A)$, if $\phi_n:M_n(A)\to M_n(\mathcal{B}(H))$ is the map defined by applying $\phi$ entrywise, then the element $\phi(a^*a)\in M_n(\mathcal{B}(H))$ is a positive operator.

For more background on ucp maps between $C^*$-algebras and related concepts we use below like operator systems and the LLP, we recommend for example \cite{Brown:2008qy},  \cite{Pisier:2020aa}, or \cite{Paulsen:2003ib}.

\begin{proposition}\label{ucp extend}
Let $\Gamma$ be a discrete group, and let $\phi:\Gamma\to \mathcal{B}(H)_1$ be a ucp quasi-representation in the sense of Definition \ref{ucp qr}.
\begin{enumerate}[(i)]
\item \label{phi comp} There exists another Hilbert space $H'$, an isometry $v:H\to H'$, and a unitary representation $\pi:\Gamma\to \mathcal{B}(H')$ such that $\phi(g)=v^*\pi(g)v$ for all $g\in \Gamma$.
\item \label{phi ext} There is a unique ucp map $\phi:C^*(\Gamma)\to \mathcal{B}(H)$ extending $\phi$ by linearity and continuity.
\item \label{approx mult dom} If $S$ is a symmetric subset of $\Gamma$ and if $\epsilon>0$ is such that 
$$
\|\phi(s)^*\phi(s)-1\|\leq \epsilon\quad \text{and}\quad \|\phi(s)\phi(s)^*-1\|\leq \epsilon
$$
for all $s\in S$, then
$$
\|\phi(gs)-\phi(g)\phi(s)\|\leq \sqrt{\epsilon}\quad \text{and} \quad \|\phi(sg)-\phi(s)\phi(g)\|\leq \sqrt{\epsilon}
$$
for all $g\in \Gamma$ and $s\in S$.  In particular, this holds if $\phi$ is an $(S,\epsilon)$-representation as in Definition \ref{quasi rep}.
\item \label{ucp is rep} If $\phi$ is unitary-valued, then it is a representation.
\end{enumerate}
\end{proposition}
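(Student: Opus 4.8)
The plan is to reduce everything to the (classical) fact that unitarity of a ucp map forces multiplicativity, via the Stinespring dilation already produced in part~\eqref{phi comp}.

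First, using part~\eqref{phi ext}, extend $\phi$ to a ucp map $C^*(\Gamma)\to\mathcal{B}(H)$, still denoted $\phi$; it suffices to prove $\phi(gh)=\phi(g)\phi(h)$ for all $g,h\in\Gamma$. Apply part~\eqref{phi comp} to write $\phi(x)=v^*\pi(x)v$ for all $x\in C^*(\Gamma)$, where $v\colon H\to H'$ is an isometry and $\pi\colon\Gamma\to\mathcal{B}(H')$ is a unitary representation (extended to $C^*(\Gamma)$). Let $p=vv^*\in\mathcal{B}(H')$ be the orthogonal projection onto the closed subspace $v(H)$.

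The key step is to show that $v(H)$ reduces $\pi$. Fix $g\in\Gamma$ and $\xi\in H$. Since $\phi(g)$ and $\pi(g)$ are unitary, $v$ is an isometry, and $v^*$ is a contraction,
\[
\|\xi\|=\|\phi(g)\xi\|=\|v^*\pi(g)v\xi\|\le\|\pi(g)v\xi\|=\|v\xi\|=\|\xi\|,
\]
so $\|v^*(\pi(g)v\xi)\|=\|\pi(g)v\xi\|$. Because $1-p$ is the projection onto $v(H)^\perp=\ker v^*$, equality here forces $\pi(g)v\xi\in v(H)$. Thus $\pi(g)v(H)\subseteq v(H)$ for every $g\in\Gamma$; applying this also to $g^{-1}$ gives $\pi(g)v(H)=v(H)$, i.e.\ $p$ commutes with $\pi(g)$ for all $g\in\Gamma$. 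Then for $g,h\in\Gamma$,
\[
\phi(g)\phi(h)=v^*\pi(g)\,vv^*\,\pi(h)v=v^*\pi(g)\,p\,\pi(h)v=v^*p\,\pi(g)\pi(h)v=v^*\pi(gh)v=\phi(gh),
\]
using $p\pi(g)=\pi(g)p$, $v^*p=v^*vv^*=v^*$, and $v^*v=1_H$. Hence $\phi$ restricted to $\Gamma$ is a representation.

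I do not expect a genuine obstacle: the only point needing (a little) care is the implication ``$\|v^*\eta\|=\|\eta\|\Rightarrow\eta\in v(H)$,'' which is just the Pythagorean identity $\|\eta\|^2=\|p\eta\|^2+\|(1-p)\eta\|^2$ together with $\|v^*\eta\|=\|p\eta\|$. (Alternatively one can quote the standard multiplicative-domain theorem for ucp maps: since $\phi$ is unital and $g^*g=gg^*=e$ in $C^*(\Gamma)$ while $\phi(g)$ is unitary, the identities $\phi(g^*g)=1=\phi(g)^*\phi(g)$ and $\phi(gg^*)=1=\phi(g)\phi(g)^*$ place $g$ in the multiplicative domain of $\phi$, and hence $\phi(gh)=\phi(g)\phi(h)$ for all $h$.) This matches the footnote's assertion that unitary-valued ucp quasi-representations are essentially trivial.
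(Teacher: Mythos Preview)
Your proof of part~\eqref{ucp is rep} is correct and uses essentially the same mechanism as the paper. The paper deduces \eqref{ucp is rep} from \eqref{approx mult dom} by taking $S=\Gamma$ and letting $\epsilon\to 0$; but the paper's proof of \eqref{approx mult dom} itself proceeds by Stinespring dilation and the estimate $\|(1-p)\pi(s)p\|<\sqrt{\epsilon}$, which in the exact case is precisely your reducing-subspace argument. So you have simply short-circuited the route through \eqref{approx mult dom} and gone straight to the $\epsilon=0$ endpoint of the same computation. Your parenthetical multiplicative-domain argument is of course also valid and is the textbook one-liner for this fact.
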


\begin{proof}
Part \eqref{phi comp} is essentially due to Naimark \cite{Naimark:1943aa}; see for example \cite[Theorem 4.8]{Paulsen:2003ib} for a modern proof of the precise statement we give.  Part \eqref{phi ext} is a direct consequence of part \eqref{phi comp}: indeed, $\pi:\Gamma\to \mathcal{B}(H')$ extends uniquely to a $*$-homomorphism $\pi:C^*(\Gamma)\to \mathcal{B}(H)$, and one checks directly that the formula $\phi(a):=v^*\pi(a)v$ defines a ucp extension of $\phi$ to $C^*(\Gamma)$.  

For part \eqref{approx mult dom}, let $v$ and $\pi$ be as in part \eqref{phi comp} and define $p:=vv^*$, which is a projection on $H'$.  Then for any $s\in S$, 
$$
\|p\pi(s)^*(1-p)\pi(s)p\|=\|v(v^*v-v\pi(s)^*vv^*\pi(s)v)v^*\|=\|1-\phi(s)^*\phi(s)\|\leq \epsilon.
$$
Hence by the $C^*$-identity, $\|(1-p)\pi(s)p\|\leq \sqrt{\epsilon}$.  For any $g\in \Gamma$ and $s\in S$, we therefore have 
\begin{align*}
\|\phi(gs)-\phi(g)\phi(s)\| & =\|v^*\pi(g)\pi(s)v-v^*\pi(g)vv^*\pi(s)v\| \\ & =\|v^*\pi(g)(1-p)\pi(s)v\|.
\end{align*}
As $pv=v$, the right hand side above is at most $\sqrt{\epsilon}$.  The inequality $\|\phi(gs)-\phi(g)\phi(s)\|\leq \sqrt{\epsilon}$ follows similarly on reversing the roles of $\phi(s)$ and $\phi(s)^*$.

Finally, part \eqref{ucp is rep} follows from part \eqref{approx mult dom} on taking $S=\Gamma$.
\end{proof}

\begin{remark}\label{ucp alm mult}
Analogously to part \eqref{approx mult dom} of Proposition \ref{ucp extend}, if $\phi:A\to B$ is a general ucp map between unital $C^*$-algebras and $X$ is a subset of $A$ such that $\|\phi(aa^*)-\phi(a)\phi(a)^*\|\leq \epsilon$ and $\|\phi(a^*a)-\phi(a)^*\phi(a)\|\leq \epsilon$ for all $a\in X$, then 
$$
\|\phi(ab)-\phi(a)\phi(b)\|\leq \sqrt{\epsilon}\|b\|\quad \text{and} \quad \|\phi(ba)-\phi(b)\phi(a)\|\leq \sqrt{\epsilon}\|b\|
$$
for all $b\in A$ and $a\in X$.  The proof is essentially the same.
\end{remark}

From Proposition \ref{ucp extend}, we see that the ucp condition is a strong one to put on quasi-representations.  Remarkably, however, under an assumption on $C^*(\Gamma)$ that holds quite widely, any suitably multiplicative quasi-representation can be approximated by a ucp quasi-representation.  The goal of the rest of this section is to establish this.  

We next give the key definition, which is due to Kirchberg \cite[Section 2]{Kirchberg:1993aa}.  For the statement, recall that an \emph{operator system} is a norm-closed and adjoint-closed subspace of a $C^*$-algebra that contains the unit.  Recall also if $(n_m)_{m=1}^\infty$ is a sequence of natural numbers that $\prod_{m=1}^\infty M_{n_m}(\C)$ denotes the  $C^*$-algebra of bounded sequences $(a_m)$ with $a_m\in M_{n_m}(\C)$, and $\bigoplus_{m=1}^\infty M_{n_m}(\C)$ denotes the ideal in this consisting of sequences $(a_m)$ such that $\|a_m\|\to 0$ as $m\to\infty$.

\begin{definition}\label{llp}
A unital $C^*$-algebra $A$ has the \emph{local lifting property} (LLP) if for any ucp map $\phi:A\to B/J$ into a quotient $C^*$-algebra and any finite-dimensional operator system $E\subseteq A$, there exists a ucp map $\psi:E\to B$ that lifts $\phi$, i.e.\ so that the following diagram commutes
$$
\xymatrix{ & & B \ar[d] \\
E \ar@{-->}[urr]^-\psi \ar[r] & A \ar[r]^-\phi & B/J}
$$
(here the unlabeled maps are the canonical inclusion and quotient).

The $C^*$-algebra has the \emph{weak matricial LLP} if the above conclusion holds in the special case that $B=\prod_{m=1}^\infty M_{n_m}(\C)$ for some sequence $(n_m)_{m=1}^\infty$ of natural numbers, $J$ is the ideal $\bigoplus_{m=1}^\infty M_{n_m}(\C)$, and $\phi$ is a unital $*$-homomorphism.

A group $\Gamma$ has the \emph{(weak matricial) LLP} if its maximal group $C^*$-algebra $C^*(\Gamma)$ has the (weak matricial) LLP.
\end{definition}

See Remark \ref{llp rem} below for a brief survey of the class of groups that are known to have the LLP. 

We next need to recall the notation of an injective $C^*$-algebra: see for example \cite[Section IV.2.1]{Blackadar:2006eq} for more background.

\begin{definition}\label{inj}
A unital $C^*$-algebra $I$ is \emph{injective} if for any operator system $E$ in a unital $C^*$-algebra $A$ and any ucp map $\phi:E\to I$, there exists a ucp extension $\widetilde{\phi}:A\to I$, i.e.\ so that the diagram
$$
\xymatrix{ A \ar@{-->}[dr]^-{\widetilde{\phi}} & \\ E\ar[u] \ar[r]_-\phi & I }
$$
commutes.  
\end{definition}

The only example of injective $C^*$-algebras we will need are products of the form $\prod_{m=1}^\infty M_{n_m}(\C)$: injectivity of such $C^*$-algebras is a direct consequence of the finite-dimensional case of Arveson's extension theorem as in \cite[Theorem 1.2.3]{Arveson:1969aa} (see for example \cite[Theorem 6.2]{Paulsen:2003ib} or \cite[Corollary 1.5.16]{Brown:2008qy} for textbook treatments).  

Variants of the next result are well-known: we got the idea from \cite[Lemma 2.1]{Lin:1995aa}; \cite[Corollary 1.7]{Ioana:2020aa} is also related.  We could not find exactly what we need in the literature, so provide a proof for the reader's convenience.

\begin{proposition}\label{llp approx}
Let $\Gamma$ be a countable group with the weak matricial LLP.   Then for any finite symmetric subset $S$ of $\Gamma$ and any $\epsilon>0$ there exists a finite subset $T$ of $\Gamma$ and $\delta>0$ such that if $\phi:\Gamma\to M_n(\C)$ is a $(T,\delta)$-representation in the sense of Definition \ref{quasi rep} then there exists a ucp $(S,\epsilon)$-representation $\psi:\Gamma\to M_n(\C)$ such that $\|\psi(s)-\phi(s)\|<\epsilon$ for all $s\in S$.
\end{proposition}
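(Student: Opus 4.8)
The plan is to argue by contradiction, turning a hypothetical sequence of bad quasi-representations into a genuine $*$-homomorphism out of $C^*(\Gamma)$ and then applying the weak matricial LLP to it. Suppose the conclusion fails for some finite symmetric $S\subseteq\Gamma$ and some $\epsilon>0$. Enumerate $\Gamma=\{g_1,g_2,\dots\}$, put $T_m=\{g_1,\dots,g_m\}$ and $\delta_m=1/m$; then for each $m$ we get a $(T_m,\delta_m)$-representation $\phi_m\colon\Gamma\to M_{n_m}(\C)$ such that no ucp $(S,\epsilon)$-representation $\Gamma\to M_{n_m}(\C)$ is within $\epsilon$ of $\phi_m$ on $S$. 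Form $B=\prod_{m=1}^\infty M_{n_m}(\C)$, its ideal $J=\bigoplus_{m=1}^\infty M_{n_m}(\C)$, the quotient map $q\colon B\to B/J$, and the unital map $\Phi\colon\Gamma\to B$, $\Phi(g)=(\phi_m(g))_m$ (a contraction, since each $\phi_m$ takes values in the unit ball). For fixed $g,h\in\Gamma$ one has $\|\phi_m(g)\phi_m(h)-\phi_m(gh)\|<1/m$ once $g,h\in T_m$, so $\Phi(g)\Phi(h)-\Phi(gh)\in J$; hence $\overline\Phi:=q\circ\Phi$ is a group homomorphism into $B/J$. Since $\Phi(e)=1_B$ and each $\overline\Phi(g)$ is an invertible contraction with contractive inverse $\overline\Phi(g^{-1})$, each $\overline\Phi(g)$ is unitary, so $\overline\Phi$ is a unitary representation and extends uniquely to a unital $*$-homomorphism $\overline\Phi\colon C^*(\Gamma)\to B/J$.

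Next I would feed $\overline\Phi$ into the weak matricial LLP. Let $E\subseteq C^*(\Gamma)$ be the span of $\{1\}\cup\{u_s: s\in S\}$, where $u_s$ denotes the image of $s$; since $S$ is finite and symmetric, $E$ is a finite-dimensional operator system. Definition \ref{llp} (applied with $A=C^*(\Gamma)$, the specific $B$ and $J$ above, and the unital $*$-homomorphism $\overline\Phi$) produces a ucp map $\Psi\colon E\to B$ with $q\circ\Psi=\overline\Phi|_E$. As $B$ is a product of matrix algebras it is injective (by the finite-dimensional case of Arveson's extension theorem; see Definition \ref{inj}), so $\Psi$ extends to a ucp map $\widetilde\Psi\colon C^*(\Gamma)\to B$; composing with the coordinate $*$-homomorphisms $B\to M_{n_m}(\C)$ gives ucp maps $\widetilde\Psi_m\colon C^*(\Gamma)\to M_{n_m}(\C)$, and restricting to group elements defines maps $\psi_m\colon\Gamma\to M_{n_m}(\C)$, $\psi_m(g)=\widetilde\Psi_m(u_g)$. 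Each $\psi_m$ is a ucp quasi-representation in the sense of Definition \ref{ucp qr}, as $\widetilde\Psi_m$ is unital and positive.

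Finally, I would check the two estimates needed to close the argument, both as $m\to\infty$. First, because $q\circ\widetilde\Psi$ agrees with $\overline\Phi=q\circ\Phi$ on $E$, we get $\widetilde\Psi(u_s)-\Phi(u_s)\in J$ for $s\in S$, i.e.\ $\|\psi_m(s)-\phi_m(s)\|\to0$; as $S$ is finite, for large $m$ this is $<\epsilon$ for all $s\in S$. Second, $\psi_m(s)^*\psi_m(s)=\widetilde\Psi_m(u_s)^*\widetilde\Psi_m(u_s)=\psi_m(s^{-1})\psi_m(s)$ (using that positive maps are $*$-preserving and $s^{-1}\in S$), and since $\|\psi_m(s^{-1})-\phi_m(s^{-1})\|\to0$, $\|\psi_m(s)-\phi_m(s)\|\to0$, and $\|\phi_m(s^{-1})\phi_m(s)-1\|\to0$ (once $s,s^{-1}\in T_m$), we obtain $\|\psi_m(s)^*\psi_m(s)-1\|\to0$, and symmetrically $\|\psi_m(s)\psi_m(s)^*-1\|\to0$. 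So for $m$ large these quantities are all $<\epsilon^2$, and Proposition \ref{ucp extend}\eqref{approx mult dom} applied to $\psi_m$ and the symmetric set $S$ gives $\|\psi_m(s_1s_2)-\psi_m(s_1)\psi_m(s_2)\|<\epsilon$ for all $s_1,s_2\in S$. Thus $\psi_m$ is a ucp $(S,\epsilon)$-representation within $\epsilon$ of $\phi_m$ on $S$, contradicting the choice of $\phi_m$.

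I do not expect a single hard step: the proof is an exercise in assembling the correct objects, the real content being that the weak matricial LLP, together with injectivity of $\prod_m M_{n_m}(\C)$, is precisely what lets one promote an approximate representation to a ucp one of the same dimension. The points that need care are lining up the order of quantifiers in the negated statement with the enumeration $(T_m,\delta_m)$, verifying that $\overline\Phi$ actually takes unitary (not merely contractive) values in $B/J$, and confirming that $E$ is finite-dimensional so that the weak matricial LLP genuinely applies to it.
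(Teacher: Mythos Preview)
Your proof is correct and follows essentially the same contradiction-and-product-quotient strategy as the paper. The one noteworthy difference is the choice of operator system: the paper takes $E=\mathrm{span}(\{1\}\cup S\cup S^2)$, so that the lift agrees with $\Phi$ on $u_{st}$ for $s,t\in S$ and the $(S,\epsilon)$-multiplicativity of $\psi_m$ follows by directly comparing $\psi_m(st)$ with $\phi_m(st)$; you instead take the smaller $E=\mathrm{span}(\{1\}\cup S)$ and recover multiplicativity via Proposition~\ref{ucp extend}\eqref{approx mult dom}. Both routes are valid and of comparable length; yours trades a slightly larger $E$ for an appeal to the approximate-multiplicative-domain estimate.
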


\begin{proof}
Assume for contradiction that the statement fails.  Let $T_1\subseteq T_2\subseteq T_3\subseteq \cdots $ be a nested collection of finite symmetric subsets of $\Gamma$ with union all of $\Gamma$.  Then there exists a finite symmetric subset $S$ of $\Gamma$ and $\epsilon>0$ such that for any $m\geq 1$ there are $n_m\in \N$ and a $(T_m,1/m)$-representation $\phi_m:\Gamma\to M_{n_m}(\C)$ such that for any ucp $(S,\epsilon)$-representation $\psi:\Gamma\to M_{n_m}(\C)$, we have $\|\psi(s)-\phi_m(s)\|\geq \epsilon$ for all $s\in S$ 

Define $M$ to be the $C^*$-algebra $\prod_{m=1}^\infty M_{n_m}$, and let $M_0$ be the ideal $\bigoplus_{m=1}^\infty M_{n_m}(\C)$.  Define $M_\infty:=M/M_0$ and define 
$$
\Phi:\Gamma\to M_\infty,\quad g\mapsto [\phi_1(g),\phi_2(g),\cdots].
$$
This is a homomorphism into the unitary group of $M_\infty$, so extends uniquely by linearity and continuity to a unital $*$-homomorphism $\Phi:C^*(\Gamma)\to M_\infty$.  Let $E\subseteq C^*(\Gamma)$ be the (finite-dimensional) operator system spanned by $\{1\}\cup S\cup S^2$.  Then by the weak matricial LLP for $C^*(\Gamma)$ there is a ucp map $\Psi:E\to M$ such that $\Psi$ lifts $\Phi|_E$.  As $M$ is an injective $C^*$-algebra, we may extend $\Psi$ to a ucp map $\Psi:C^*(\Gamma)\to M$.  Write $\psi_m:C^*(\Gamma)\to M_{n_m}$ for the composition of $\Psi$ with the canonical quotient map $M\to M_{n_m}$ defined by evaluating at the $m^\text{th}$ coordinate; note that $\psi_m$ is also ucp.  

Note now that the sequence $(\phi_m(s))_{m=1}^\infty\in M$ is bounded by definition of a quasi-representation and in particular, it is an element of $M$ that lifts $\Phi(s)$.  As $\Psi(s)$ also lifts $\Phi(s)$, we have that $\Psi(s)-(\phi_m(s))\in M_0$, i.e.\ that $\psi_m(s)-\phi_m(s)\to 0$ as $m\to\infty$.  It follows that for suitably large $m$, $\psi_m$ is a ucp $(S,\epsilon)$-representation, and satisfies $\|\psi_m(s)-\phi_m(s)\|<\epsilon$ for all $s\in S$, contradicting the assumption.
\end{proof}

Let us make a few comments on the relation of the LLP and weak-matricial LLP.

\begin{remark}\label{weak llp}
The version of the LLP where one stipulates that \emph{ucp} maps from $A$ into $B/J$, where $B=\prod_{m=1}^\infty M_{n_m}(\C)$ and $J=\oplus_{m=1}^\infty M_{n_m}(\C)$, lift to ucp maps is equivalent to the general case of the LLP: this follows from arguments of Ozawa in \cite{Ozawa:2001aa}; see particularly \cite[Remark 2.11]{Ozawa:2001aa}.  

On the other hand, the weak matricial LLP is genuinely weaker than the usual LLP.  Indeed, there are $C^*$-algebras without the LLP that do not admit any non-zero $*$-homomorphisms into $B/J$ as above.  For example, any $C^*$-algebra admitting such a $*$-homomorphism has a non-zero stably finite quotient.  In particular, this is not the case for $\mathcal{B}(\ell^2(\N))$.  On the other hand, $\mathcal{B}(\ell^2(\N))$ is known not to have the LLP as a consequence of \cite[Corollary 3.1]{Junge:1995aa} (see also \cite[Corollary 13.2.5 and Theorem 13.5.1]{Brown:2008qy} for a textbook exposition).

We do not know if the weak matricial LLP is genuinely weaker than the LLP for group $C^*$-algebras.   It was pointed out to us by Tatiana Shulman that the weak matricial LLP for $A$ is equivalent to the Brown-Douglas-Fillmore semigroup $Ext(A)$ (see for example \cite[Chapter 2]{Higson:2000bs}) being a group whenever $A$ is an RFD\footnote{See Example \ref{good ex} below for the definition of RFD.} $C^*$-algebra, and that it is open whether $Ext(A)$ being a group is equivalent to the LLP in this level of generality.
\end{remark}

\section{Controlled $K$-homology, $KL$-theory, and total $K$-theory}\label{ckh sec}

Our goal in this section is to recall the definition of controlled $K$-homology and its relationship with $KL$-homology from \cite{Willett:2020aa}.  We then relate that to the action of $KL$-theory on total $K$-theory from \cite{Dadarlat:1996aa,Dadarlat:2005aa}.  Throughout this section, anything called $A$ is a separable unital $C^*$-algebra; we are only interested in the case that $A=C^*(\Gamma)$, but the extra generality makes no difference.

This section is perhaps the most technical of the paper, and we allow ourselves to skip some standard $K$-theoretic details where this seemed unlikely to cause confusion.  We hope a background in $C^*$-algebra $K$-theory at the level of \cite{Rordam:2000mz} should be enough to understand this material: although we use some more advanced material like $K$-homology, $KL$-homology, and $KK$-theory, we only really need their formal properties.

\subsection{Controlled $K$-homology}

We start with the definition of controlled $K$-homology from \cite{Willett:2020aa}.   Let us say that a representation of a $C^*$-algebra is \emph{infinitely amplified} if it is a countably infinite direct sum of some other representation.

\begin{definition}\label{con k hom}
Let $A$ be a separable unital $C^*$-algebra.  Fix a faithful, unital, infinitely amplified representation $A\subseteq \mathcal{B}(H)$ of $A$ on a separable Hilbert space $H$.

For $\epsilon>0$ and a finite subset $X$ of the unit ball of $A$, define  
$$
\mathcal{P}_\epsilon(X):=\left\{\begin{array}{l|l} p\in M_2(\mathcal{B}(H)) &  p=p^*=p^2,~p-\begin{psmallmatrix} 1 & 0 \\ 0 & 0 \end{psmallmatrix}\in M_2(\mathcal{K}(H)), \\
& \text{and }\|[p,x]\|<\epsilon \text{ for all } x\in X\end{array}\right\}.
$$
Define the \emph{controlled (even) $K$-homology group} 
$$
K^0_\epsilon(X):=\pi_0(\mathcal{P}_\epsilon(X))
$$ 
to be the set of path components of $\mathcal{P}_\epsilon(X)$.  The group operation on $K^0_\epsilon(X)$ is defined by 
$$
[p]+[q]:=[sps^*+tqt^*]
$$
where $s$ and $t$ are isometries in the commutant of $A$ satisfying the \emph{Cuntz $O_2$-relation} $ss^*+tt^*=1$ (the choice of such isometries does not matter: we refer to \cite[Proposition 6.5]{Willett:2020aa} for details, but will not need to use the specifics).
\end{definition}

With this structure, each $K^0_\epsilon(X)$ is a countable abelian group.  It does not depend on the choice of representation by (a slight variant of) \cite[Lemma 6.6]{Willett:2020aa}.

\begin{definition}\label{x eps hom}
Let $\phi:A\to B$ be a linear map from a $*$-algebra to a $C^*$-algebra, let $X$ be a finite subset of $A$, and let $\epsilon>0$.   Then $\phi$ is called an \emph{$(X,\epsilon)$-$*$-homomorphism} if
$$
\|\phi(x^*)-\phi(x)^*\|<\epsilon\footnote{We will typically apply this definition to ucp maps, in which case ``$\phi(x^*)=\phi(x)^*$'' holds automatically.  For $*$-preserving linear maps, what we call ``being an $(X,\epsilon)$-$*$-homomorphism'' is often referred to as being ``$(X,\epsilon)$-multiplicative'' in the literature.} \quad \text{and}\quad \|\phi(xy)-\phi(x)\phi(y)\|<\epsilon.
$$
for all $x,y\in X$.
\end{definition}

\begin{example}\label{xem ex}
The most important examples of $(X,\epsilon)$-$*$-homomorphisms for us come from almost commuting projections.  Indeed, let $X$ be a finite subset of the unit ball of a $C^*$-algebra $A$, and let $\pi:A\to B$ be a $*$-homomorphism.  Let $\epsilon>0$ and let $p\in B$ be a projection such that $\|[p,x]\|<\epsilon$ for all $x\in X$.  Then one checks directly that the map
$$
\phi:A\to B,\quad a\mapsto p\pi(a)p
$$
is an $(X,\epsilon)$-$*$-homomorphism.  A map of the form above is moreover ucp, and in fact any ucp $(X,\epsilon)$-$*$-homomorphism arises like this (at the price of making $\epsilon$ a little bigger): compare the proof of Lemma \ref{ucp mult com} below.
\end{example}

\begin{definition}\label{dir set}
Let $A$ be a separable, unital $C^*$-algebra.  Let $\mathcal{I}_A$ be the set of ordered pairs $(X,\epsilon)$ where $X$ is a finite subset of the unit ball of $A$, and $\epsilon>0$.  We order $\mathcal{I}_A$ by stipulating that $(X,\epsilon)\leq (Y,\delta)$ if whenever $\phi:A\to B$ is a ucp  $(Y,\delta)$-$*$-homomorphism in the sense of Definition \ref{x eps hom}, then $\phi$ is also an $(X,\epsilon)$-$*$-homomorphism.
\end{definition}

To establish conventions, it will be convenient to recall a standard algebraic construction.

\begin{definition}\label{inv lim}
Let $I$ be a directed set, and let $(G_i)_{i\in I}$ be an inverse system of abelian groups, i.e.\ for each $j\geq i$, there are homomorphisms $\phi_{ij}:G_j\to G_i$ such that $\phi_{ij}\circ \phi_{jk}=\phi_{ik}$ for all $k\geq j \geq i$, and such that $\phi_{ii}$ is the identity on $G_i$ for all $i\in I$.

The \emph{inverse limit} of an inverse system $(G_i)_{i\in I}$ with connecting maps $(\phi_{ij})_{i,j\in \mathcal{I},j\geq i}$ is defined to be 
$$
\lim_{\leftarrow} G_i:=\Bigg\{(g_i)\in \prod_{i\in I} G_i \mid \phi_{ij}(g_j)=g_i \text{ for all } j\geq i\Bigg\}.
$$
If each $G_i$ is a topological\footnote{Typically discrete in our applications.} abelian group then ${\displaystyle \lim_{\leftarrow} G_i}$ is equipped with the subspace topology it inherits from the product topology on $\prod G_i$.
\end{definition}

\begin{remark}\label{dir set rem}
Let $\mathcal{I}_A$ be as in Definition \ref{dir set}. 
\begin{enumerate}[(i)]
\item The set $\mathcal{I}_A$ is directed: an upper bound for $(X,\epsilon)$ and $(Y,\delta)$ is $(X\cup Y,\min\{\epsilon,\delta\})$.
\item \label{inc part} If $(X,\epsilon)\leq (Y,\delta)$ then we have an inclusion 
$$
\mathcal{P}_{\delta}(Y)\subseteq \mathcal{P}_\epsilon(X)
$$
(consider the ucp map defined by compression by an element of $\mathcal{P}_\delta(Y)$ as in Example \ref{xem ex}).  Thus there is a canonical `forgetful map'
$$
\theta:K^0_\delta(Y)\to K^0_\epsilon(X)
$$
induced by the identity inclusion $\mathcal{P}_\delta(Y) \to \mathcal{P}_\epsilon(X)$.  These forgetful maps make the family of groups $(K^0_\epsilon(X))_{(X,\epsilon)\in \mathcal{I}_A}$ into an inverse system.
\item Recall that a subset $S$ of a directed set $I$ is \emph{cofinal} if for all $i\in I$ there is $s\in S$ with $s\geq i$.  The directed set $\mathcal{I}_A$ has cofinal sequences: for example, if $X_1\subseteq X_2\subseteq \cdots$ is a nested sequence of finite subsets of the unit ball of $A$ with dense union and $\epsilon_n\searrow 0$, then $((X_n,\epsilon_n))_{n=1}^\infty$ is cofinal.   Moreover, if $A$ is finitely generated by some subset $X$ of its unit ball, then any sequence $((X,\epsilon_n))_{n=1}^\infty$ with $\epsilon_n\searrow 0$ is cofinal.  

Passing to a cofinal subset does not affect an inverse limit.  As such, for our applications, we could replace $\mathcal{I}_A$ by a sequence; this is occasionally technically useful, but we will not need it in this paper.
\end{enumerate}
\end{remark}

\begin{definition}\label{con k hom lim}
We write ${\displaystyle \lim_{\leftarrow}K^0_\epsilon(X)}$ for the inverse limit of the inverse system $(K^0_\epsilon(X))_{(X,\epsilon)\in \mathcal{I}_A}$ from Remark \ref{dir set rem} part \eqref{inc part}, equipped with the topology it inherits by equipping each $K^0_\epsilon(X)$ with the discrete topology. 
\end{definition}

We now recall the relation of the group from Definition \ref{con k hom lim} to analytic $K$-homology.  Analytic $K$-homology was introduced by Kasparov \cite{Kasparov:1975ht} and Brown-Douglas-Fillmore \cite{Brown:1977qa}; see \cite{Higson:2000bs} for a textbook treatment.  The (even) $K$-homology group $K^0(A)$ of a separable $C^*$-algebra is an abelian group equipped with a canonical (possibly non-Hausdorff) topology: see \cite[8.5]{Brown:1977qa} for the original version of this topology, and \cite{Dadarlat:2005aa} for a definitive modern treatment (in a more general context).  Let $\overline{\{0\}}$ denote the closure of $0$ in $K^0(A)$, and following Dadarlat \cite[Section 5]{Dadarlat:2005aa}\footnote{The $KL$ groups were originally introduced by R\o{}rdam under more stringent conditions on $A$ using algebraic ideas: see \cite[Section 5]{Rordam:1995aa}.  The equivalence of the definition in line \eqref{kl def} with R\o{}rdam's (under R\o{}rdam's conditions on $A$) follows from \cite[Theorem 3.3]{Schochet:2002aa} or \cite[Corollary 4.6]{Dadarlat:1999ab}.} define 
\begin{equation}\label{kl def}
KL^0(A):=K^0(A)/\overline{\{0\}}
\end{equation}
to be the associated maximal Hausdorff quotient group. We call $KL^0(A)$ the \emph{$KL$-homology group} of $A$.

The following result is (a special case of part of) \cite[Proposition A.18]{Willett:2020aa}.

\begin{theorem}\label{ck kl}
For any unital separable $C^*$-algebra $A$, there is a canonical homeomorphism 
$$
\Theta:KL^0(A)\to \displaystyle \lim_{\leftarrow}K^0_\epsilon(X). \eqno\qed
$$
\end{theorem}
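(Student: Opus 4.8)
The plan is to realize $\Theta$ as the map induced on maximal Hausdorff quotients by a natural continuous comparison homomorphism
$$
c\colon K^0(A)\longrightarrow \lim_{\leftarrow}K^0_\epsilon(X),
$$
and then to show that $c$ is surjective, open onto its image, and has kernel exactly $\overline{\{0\}}$. To build $c$, for each $(X,\epsilon)\in\mathcal I_A$ I would define $c_{X,\epsilon}\colon K^0(A)\to K^0_\epsilon(X)$ as follows: represent a given class by a projection $P\in M_2(\mathcal B(H))$ that is a compact perturbation of $\begin{psmallmatrix}1&0\\0&0\end{psmallmatrix}$ with $[P,a]\in M_2(\mathcal K(H))$ for all $a\in A$ (possible on the fixed infinitely amplified $H$); then, by a standard quasicentral-approximate-unit perturbation carried out within the operator-homotopy class of $P$, arrange $\|[P,x]\|<\epsilon$ for all $x\in X$, and set $c_{X,\epsilon}([P])$ to be the resulting class in $\pi_0(\mathcal P_\epsilon(X))$. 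The lengthy but routine verification is that this is well defined, additive, and compatible with the forgetful maps of Remark \ref{dir set rem} part \eqref{inc part}, so that the $c_{X,\epsilon}$ assemble into $c$.

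Since each $K^0_\epsilon(X)$ is discrete, I would next check that $c$ is continuous for the Brown-Douglas-Fillmore topology on $K^0(A)$, i.e.\ that each $\ker c_{X,\epsilon}$ is open; this is where Dadarlat's description of that topology \cite{Dadarlat:2005aa} enters --- a net tending to $0$ has representatives that are, in Dadarlat's sense, arbitrarily well approximable by degenerate modules, which forces the compressed classes to vanish eventually. As ${\displaystyle\lim_{\leftarrow}K^0_\epsilon(X)}$ is Hausdorff (an inverse limit of discrete groups), continuity forces $c(\overline{\{0\}})=0$, so $c$ descends to a continuous homomorphism $\Theta\colon KL^0(A)\to\lim_{\leftarrow}K^0_\epsilon(X)$.

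The crux of the argument is surjectivity, and I expect it to be the main obstacle. Fix a cofinal sequence $(X_n,\epsilon_n)$ in $\mathcal I_A$ with $\epsilon_n\searrow 0$ and $\bigcup_nX_n$ dense in the unit ball of $A$. A point of $\lim_{\leftarrow}K^0_\epsilon(X)$ is determined by a compatible sequence $g_n\in K^0_{\epsilon_n}(X_n)$; choosing representatives $p_n\in\mathcal P_{\epsilon_n}(X_n)$, compatibility says $p_n$ and $p_{n+1}$ lie in the same path component of $\mathcal P_{\epsilon_n}(X_n)$. Using the Cuntz isometries $s,t$ of Definition \ref{con k hom} to make room, I would splice these connecting paths into a single projection $P\in M_2(\mathcal B(H))$ that is a compact perturbation of $\begin{psmallmatrix}1&0\\0&0\end{psmallmatrix}$ and satisfies $[P,x]\in M_2(\mathcal K(H))$ for every $x\in\bigcup_nX_n$, hence for all $a\in A$ by density --- a genuine even Fredholm module whose class maps to the prescribed point. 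The analytic content is the estimate that the spliced operator is indeed a compact perturbation of the degenerate one and commutes with $A$ modulo compacts; this telescoping construction, which is exactly where the infinite amplification of the ambient representation is used, is the delicate step. For injectivity one runs the same machine in reverse: if $\Theta(\bar x)=0$, lift to $x$ with $c_{X,\epsilon}(x)=0$ for all $(X,\epsilon)$, so a representative $P$ is at each scale $(X_n,\epsilon_n)$ path-connected inside $\mathcal P_{\epsilon_n}(X_n)$ to a trivial projection; splicing these contractions exhibits $x$ as a Brown-Douglas-Fillmore limit of $0$, so $x\in\overline{\{0\}}$ and $\bar x=0$.

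Finally, to upgrade the continuous bijection $\Theta$ to a homeomorphism it suffices to show it is open, equivalently that the subgroups $\ker c_{X,\epsilon}$ form a neighborhood basis of $0$ in $K^0(A)$ modulo $\overline{\{0\}}$ --- this is the surjectivity construction read backwards (a class killed by all but finitely many coordinates is a small topological perturbation of $0$), again combined with Dadarlat's description of the topology. Passing to quotients, $\Theta$ is open, hence a homeomorphism, and canonicity is immediate from the construction of $c$. (The full details are carried out in \cite[Appendix A]{Willett:2020aa}.)
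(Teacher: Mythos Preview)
The paper does not prove this theorem: it is stated with a \qed{} and cited as \cite[Proposition A.18]{Willett:2020aa}. However, the proof of Theorem~\ref{kl ck hom} later in the paper reveals the actual construction of $\Theta$ used in that reference, and it differs substantially from your sketch. There $\Theta$ is built as the composite
\[
KL^0(A)\;\xleftarrow{\ \alpha_1\ }\;K^0(A)\;\xrightarrow{\ \alpha_2\ }\;K_0(C^*_{L,uc}(A))\;\xrightarrow{\ \alpha_3\ }\;K_0(C^*_{L,c}(A))\;\xrightarrow{\ \alpha_4\ }\;\lim_{\leftarrow}K^0_\epsilon(X),
\]
where $C^*_{L,c}(A)$ is the localization algebra of bounded continuous functions $b\colon[1,\infty)\to\mathcal K(H)$ with $[b(t),a]\to 0$, $\alpha_2$ is the Dadarlat--Willett--Yu isomorphism, and $\alpha_4$ is ``evaluate at large $t$''. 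The passage to $KL$ and the homeomorphism statement are then extracted from \cite[Theorem 4.14 and Proposition A.18]{Willett:2020aa}. In other words, the paper's route packages the entire comparison inside a single $C^*$-algebra and computes its $K$-theory, rather than building level-wise maps $c_{X,\epsilon}$ by hand.

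Your direct approach is plausible in spirit, but the surjectivity step contains a genuine gap as written. Given a compatible sequence $p_n\in\mathcal P_{\epsilon_n}(X_n)$ together with connecting homotopies in $\mathcal P_{\epsilon_n}(X_n)$, ``splicing these paths into a single projection $P\in M_2(\mathcal B(H))$'' is not a construction that produces a Fredholm module: what the splicing naturally yields is a continuous path $t\mapsto p_t$ of projections with $\|[p_t,a]\|\to 0$, i.e.\ precisely an element of $K_0(C^*_{L,c}(A))$. To promote that to a class in $K^0(A)$ you need exactly the isomorphism $\alpha_2,\alpha_3$ above, which is the nontrivial content of the cited references and not a routine telescoping estimate. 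The Cuntz isometries in Definition~\ref{con k hom} do not help here: an infinite orthogonal sum of the $p_n$ along isometries in $A'$ is not a compact perturbation of $\begin{psmallmatrix}1&0\\0&0\end{psmallmatrix}$, since the compact pieces $p_n-\begin{psmallmatrix}1&0\\0&0\end{psmallmatrix}$ do not tend to zero. A secondary issue is well-definedness of your $c_{X,\epsilon}$: two Fredholm-module representatives of the same $K$-homology class are homotopic through Fredholm modules (compact commutators), but there is no a priori reason such a homotopy can be pushed into $\mathcal P_\epsilon(X)$ (small commutators) without again invoking something equivalent to the localization-algebra machinery.
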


\subsection{Controlled $K$-theory with finite coefficients}

In this subsection, we recall the definition of $K$-theory with finite coefficients and introduce a controlled variant of this.  We then recall the coefficient change operations and use these to bundle all the $K$-theory groups with various coefficients into a single object and discuss homomorphisms between these objects.  Finally, we relate the group ${\displaystyle \lim_{\leftarrow} K^0_\epsilon(X)}$ of Definition \ref{con k hom lim} above to this object.

$K$-theory with finite coefficients was introduced into $C^*$-algebra theory by Schochet \cite{Schochet:1984ab}, based on a construction of Dold for generalized homology theories \cite[Section 1]{Dold:1962aa}.  The original definition (see \cite[Definition 1.2]{Schochet:1984ab}) is $K_*(A;\Z/n):=K_*(A\otimes C_n)$ for a particular commutative, non-unital $C^*$-algebra $C_n$  that satisfies 
\begin{equation}\label{right k}
K_0(C_n)=\Z/n \quad \text{and} \quad K_1(C_n)=0.
\end{equation}
One can replace $C_n$ with any `reasonable' $C^*$-algebra with the $K$-theory groups as in line \eqref{right k} above: see \cite[Theorem 6.4]{Schochet:1984ab}.  For us, it will be convenient to use the Cuntz algebras $O_{n+1}$ for $2\leq n<\infty$, primarily as they are unital\footnote{There is no unital commutative $C^*$-algebra satisfying the conditions in line \eqref{right k}, so we cannot use something commutative.}, and also as the Kirchberg-Phillips classification theorem \cite{Phillips-documenta,Kirchberg-ICM} guarantees that any homomorphism $\Z/n\to \Z/m$ is the map induced on $K$-theory by a unital homomorphism between matrix algebras over Cuntz algebras.  See \cite{Cuntz:1977aa} for the original definition of Cuntz algebras, and \cite[Section 4.2]{Rordam:2002cs} for further background.

\begin{definition}\label{kthy coeff}
Let $A$ be a $C^*$-algebra and let $n\geq 2$.  The \emph{$K$-theory of $A$ with coefficients in $\Z/n$} is defined to be 
$$
K_*(A;\Z/n):=K_*(A\otimes O_{n+1}).
$$
\end{definition}

We will next introduce a `controlled' variant of $K$-theory, possibly with finite coefficients.  The original version of controlled $K$-theory is from \cite{Yu:1998wj}, and is adapted to geometric motivations; the version we give below is adapted to abstract $C^*$-algebras and ucp $(X,\epsilon)$-$*$-homomorphisms in the sense of Definition \ref{x eps hom}.  There are also antecedents for this from the classification theory of $C^*$-algebras: see for example \cite[Subsection 3.3]{Dadarlat:2002aa}.

\begin{definition}\label{x eps com}
Let $A$ and $B$ be $C^*$-algebras with $A$ unital, let $X$ be a finite subset of $A$, and let $\epsilon>0$.  A projection $p\in A\otimes B$ is \emph{$(X,\epsilon)$-compatible} if whenever $\phi:A\to C$ is a ucp $(X,\epsilon)$-$*$-homomorphism to another $C^*$-algebra $C$ in the sense of Definition \ref{x eps hom}, we have that\footnote{As $\phi$ is contractive and positive, the spectrum is automatically contained in $[0,1]$; the condition is thus saying that the spectrum avoids $[1/4,3/4]$.  The choice of $1/4$ and $3/4$ is not important: we just need to ensure that there is some gap around $1/2$.} 
$$
\text{spectrum}\big((\phi\otimes\text{id}_B)(p)\big)\subseteq [0,1/4)\cup (3/4,1].
$$
Write $\text{Proj}^{X,\epsilon}(A\otimes B)$ for the set of $(X,\epsilon)$-compatible projections in $A\otimes B$.  
\end{definition}

\begin{definition}\label{x eps com 2}
Let $A$ be a unital $C^*$-algebra, let $X$ be a finite subset of the unit ball of $A$ such that $X=X^*$, and let $\epsilon>0$.  For each $n\geq 2$ define 
$$
\mathcal{Q}^\epsilon(X;\Z/n):=\bigsqcup_{k=1}^\infty \text{Proj}^{X,\epsilon}(A\otimes M_k(\C)\otimes O_{n+1}).
$$
Let $\sim$ be the equivalence relation on $\mathcal{Q}^\epsilon(X;\Z/n)$ generated by the conditions below: 
\begin{enumerate}[(i)]
\item $p\sim q$ if both are in the same path component of some $\text{Proj}^{X,\epsilon}(A\otimes M_k(\C)\otimes O_{n+1})$;
\item $p\sim q$ if $p\in \text{Proj}^{X,\epsilon}(A\otimes M_k(\C)\otimes O_{n+1})$ and there is $m\geq 1$ such that $q\in \text{Proj}^{X,\epsilon}(A\otimes M_{k+m}(\C)\otimes O_{n+1})$ and $q=p\oplus 0_m$.
\end{enumerate}
Define $\mathcal{V}^\epsilon(X;\Z/n)$ to be $\mathcal{Q}^\epsilon(X;\Z/n)/\sim$, and define $K^\epsilon_0(X;\Z/n)$ to be the Grothendieck group of $\mathcal{V}^\epsilon(X;\Z/n)$.

Define $\mathcal{Q}^\epsilon(X)$, $\mathcal{V}^\epsilon(X)$, and $K_0^\epsilon(X)$ analogously, but omitting the $O_{n+1}$.

We call the groups $K^\epsilon_0(X;\,\cdot\,)$ \emph{controlled $K$-theory groups}.
\end{definition}

The definition of $K^\epsilon_0(X;\,\cdot\,)$ is motivated by the following basic lemma, whose proof we leave to the reader.

\begin{lemma}\label{func lem}
Let $A$ be a unital $C^*$-algebra, let $X$ be a finite subset of the unit ball of $A$ such that $X=X^*$, and let $\epsilon>0$.  Let $\phi:A\to B$ be a ucp $(X,\epsilon)$-$*$-homomorphism as in Definition \ref{x eps hom}.  Let $\chi$ be the characteristic function of $(1/2,\infty)$.  

For any $n\geq 2$ and any $(X,\epsilon)$-compatible projection $p\in \text{Proj}^{X,\epsilon}(A\otimes M_k(\C)\otimes O_{n+1})$, define 
$$
\phi_*(p):=\chi((\phi\otimes \text{id}_{M_k(\C)\otimes O_{n+1}})(p))\in B\otimes M_k(\C)\otimes O_{n+1}.
$$
Then the assignment $[p]\mapsto [\phi_*(p)]$ uniquely determines a homomorphism 
$$
\phi_*:K_0^\epsilon(X;\Z/n)\to K_0(B;\Z/n).
$$
The same construction with $O_{n+1}$ omitted similarly uniquely determines a homomorphism
$$
\phi_*:K_0^\epsilon(X)\to K_0(B). \eqno\qed
$$
\end{lemma}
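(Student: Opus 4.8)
The plan is to verify the following in sequence: that $\phi_*(p)$ is an honest projection, that the rule $[p]\mapsto[\phi_*(p)]$ is constant on the equivalence classes making up $\mathcal{V}^\epsilon(X;\Z/n)$, that it is additive under block sums, and then to invoke the universal property of the Grothendieck group; uniqueness will then be automatic. The statement for integral coefficients is word-for-word the same argument with the tensor factor $O_{n+1}$ suppressed, so I will only treat the $\Z/n$ case. For the first point: given an $(X,\epsilon)$-compatible projection $p\in A\otimes M_k(\C)\otimes O_{n+1}$, apply Definition \ref{x eps com} with the ``$B$'' there taken to be $M_k(\C)\otimes O_{n+1}$ and the ucp $(X,\epsilon)$-$*$-homomorphism taken to be the given $\phi$; this yields $\text{spectrum}\big((\phi\otimes\text{id})(p)\big)\subseteq[0,1/4)\cup(3/4,1]$, where $\phi\otimes\text{id}_{M_k(\C)\otimes O_{n+1}}$ denotes the ucp map obtained by amplifying $\phi$ over the (minimal) tensor products. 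Since the cut-off $\chi$ restricts to a continuous $\{0,1\}$-valued function on this spectral set, continuous functional calculus applied to the self-adjoint element $(\phi\otimes\text{id})(p)$ returns an honest projection $\phi_*(p)\in B\otimes M_k(\C)\otimes O_{n+1}$.

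For invariance under $\sim$, the device is to replace the discontinuous $\chi$ by a fixed continuous function $f\colon[0,1]\to[0,1]$ with $f\equiv0$ on $[0,1/4]$ and $f\equiv1$ on $[3/4,1]$; since $f$ agrees with $\chi$ on $[0,1/4]\cup[3/4,1]$, one has $f\big((\phi\otimes\text{id})(q)\big)=\phi_*(q)$ for every $(X,\epsilon)$-compatible $q$. If $p$ and $q$ lie in a common path component of $\text{Proj}^{\epsilon,X}(A\otimes M_k(\C)\otimes O_{n+1})$, joined by a norm-continuous path $(p_t)$, then $t\mapsto f\big((\phi\otimes\text{id})(p_t)\big)$ is a norm-continuous path with endpoints $\phi_*(p)$ and $\phi_*(q)$, and each of its values is a projection because $(X,\epsilon)$-compatibility of $p_t$ keeps the spectrum of $(\phi\otimes\text{id})(p_t)$ off $[1/4,3/4]$ for every $t$; hence $\phi_*(p)$ and $\phi_*(q)$ are homotopic and define the same class in $K_0$. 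For the other generator of $\sim$ one has $(\phi\otimes\text{id})(p\oplus0_m)=(\phi\otimes\text{id})(p)\oplus0_m$, so $\phi_*(p\oplus0_m)=\phi_*(p)\oplus0_m$, which represents the same $K_0$-class as $\phi_*(p)$. As the rule respects both generators, it respects the equivalence relation they generate.

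Finally, since $\chi$ of a block-diagonal self-adjoint is the block diagonal of the $\chi$'s and $\phi\otimes\text{id}$ preserves the block decomposition $M_k(\C)\oplus M_m(\C)\subseteq M_{k+m}(\C)$, we obtain $\phi_*(p\oplus q)=\phi_*(p)\oplus\phi_*(q)$. Thus $[p]\mapsto[\phi_*(p)]$ descends to an additive map from $\mathcal{V}^\epsilon(X;\Z/n)$ into the monoid of Murray--von Neumann equivalence classes of projections over $B\otimes O_{n+1}$, and composing with the canonical map into $K_0(B\otimes O_{n+1})=K_0(B;\Z/n)$ gives a monoid homomorphism into a group. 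Since the target is a group, this factors through the Grothendieck group $K_0^\epsilon(X;\Z/n)$ of $\mathcal{V}^\epsilon(X;\Z/n)$, producing the desired homomorphism $\phi_*$; and $\phi_*$ is unique because $K_0^\epsilon(X;\Z/n)$ is generated as a group by the image of $\mathcal{V}^\epsilon(X;\Z/n)$, on which $\phi_*$ is already prescribed.

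I expect no step to be genuinely hard; the one place needing care is the invariance argument, specifically the interplay between the discontinuous cut-off $\chi$ appearing in the definition of $\phi_*(p)$ and the continuous auxiliary function $f$ needed to run a continuous-functional-calculus homotopy. One must note that $f$ reproduces $\phi_*$ exactly at the endpoints (their spectral data lies in the region where $f=\chi$) and that $(X,\epsilon)$-compatibility is available all along the path, not merely at its ends, so that $f$ of the path is genuinely a path of projections. Everything else is a routine unwinding of the definitions of Section \ref{ckh sec}.
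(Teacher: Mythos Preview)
Your proposal is correct and is precisely the routine verification the paper has in mind: the lemma is stated with a terminal \qed\ and the surrounding text explicitly leaves the proof to the reader, so there is no alternative approach to compare against. The one subtlety you flag---replacing $\chi$ by a continuous $f$ agreeing with it on $[0,1/4]\cup[3/4,1]$ so that functional calculus carries paths of $(X,\epsilon)$-compatible projections to paths of projections---is exactly the point that needs care, and you handle it correctly.
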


Note that if we drop the condition that the projections and homotopies used to define $K_0^\epsilon(X;\,\cdot\,)$ above are $(X,\epsilon)$-compatible, then we just get $K_0(A;\,\cdot\,)$.  Thus Lemma \ref{func lem} says that the groups $K^\epsilon_0(X;\,\cdot\,)$ are in some sense `the part\footnote{\label{part fn} The word ``part'' may be misleading: there is a canonical map $K^\epsilon_0(X)\to K_0(A)$, but it is probably not injective in general.} of $K_0(A;\,\cdot\,)$ on which ucp $(X,\epsilon)$-$*$-homomorphisms act'.

Having introduced these groups, we introduce natural transformations between them.  There are several equivalent ways to do this: see for example \cite[Section 1.2]{Dadarlat:1996aa} or \cite[Section 6]{Dadarlat:2012aa}. The discussion in \cite[Appendix A]{Carrion:2020aa} gives a recent overview, and in particular \cite[Lemma A.4]{Carrion:2020aa} gives a proof that all `reasonable' approaches give the same outcome.

\begin{definition}\label{lambda0}
For notational convenience, define $B_0=\C$, and $B_n=O_{n+1}$ for $n\geq 2$.  Let $\mathcal{N}:=\{0\}\cup \{n\in \Z\mid n\geq 2\}$.  

Let $\Lambda_0$ be the (small) category with object set $\mathcal{N}$, and with morphisms given by $\text{Hom}(n,m):=KK(B_n,B_m)$.  Composition of morphisms is given by the Kasparov product.  

A \emph{$\Lambda_0$-module} is a functor from the category $\Lambda_0$ to the category of abelian groups: more concretely, a $\Lambda_0$-module is a sequence $\underline{G}=(G_n)_{n\in \mathcal{N}}$ of abelian groups such that elements of $KK(B_n,B_m)$ define morphisms from $G_n$ to $G_m$, compatibly with the relations between these morphisms in the $KK$-category.  A \emph{$\Lambda_0$-submodule} $\underline{H}$ of $\underline{G}$ consists of a collection of subgroups $H_n$ of $G_n$ that are preserved by the morphisms from $\Lambda_0$.

Given $\Lambda_0$-modules $\underline{G}$ and $\underline{H}$, we define $\text{Hom}_{\Lambda_0}(\underline{G},\underline{H})$ to be the set of natural transformations from $\underline{G}$ to $\underline{H}$: more concretely, an element of this Hom set is a sequence of group homomorphisms $(\alpha_n:G_n\to H_n)_{n\in \mathcal{N}}$ that intertwine the morphisms coming from $KK(B_n,B_m)$.  We equip $\text{Hom}_{\Lambda_0}(\underline{G},\underline{H})$ with the abelian group structure defined by 
$$
(\alpha_n)+(\beta_n):=(\alpha_n+\beta_n)
$$
and with the topology of pointwise convergence, i.e.\ a net $(\alpha^{(i)})_{i\in I}$ converges to $\alpha$ if and only if for every $n$, and every $g\in G_n$ there exists $i_{n,x}\in I$ such that for all $i\geq i_{n,x}$, $\alpha_n^{(i)}(g)=\alpha_n(g)$.
\end{definition}

\begin{example}\label{basic lambda 0}
Let $A$ be a $C^*$-algebra, and let $\underline{K}_0(A)$ denote the sequence of abelian groups $(G_n)_{n\in \mathcal{N}}$ with $G_0=K_0(A)$ and $G_n=K_0(A;\Z/n)$ for $n\geq 2$.  Then $\underline{K}_0(A)$ is a $\Lambda_0$-module, with the action defined by Kasparov product.  We call $\underline{K}_0(A)$ the \emph{total $K$-theory} of $A$.

Moreover, a homomorphism $\phi:A\to B$ (or more generally, an element of $KK(A,B)$) induces an element $\phi_*$ of $\text{Hom}_{\Lambda_0}(\underline{K}_0(A),\underline{K}_0(B))$ by associativity of the Kasparov product.
\end{example}

\begin{remark}\label{only zero}
We warn the reader that what we call ``$\text{Hom}_{\Lambda_0}(\underline{K}_0(A),\underline{K}_0(B))$'' is usually not used in favor of a richer object denoted $\text{Hom}_{\Lambda}(\underline{K}_*(A),\underline{K}_*(B))$.  The latter also involves information from the $K_1$-groups, and additional (`Bockstein') natural transformations that switch degrees, i.e.\ that map between between $K_0(A;\,\cdot\,)$ and $K_1(A;\,\cdot\,)$ and vice versa: see \cite[Section 1.4]{Dadarlat:1996aa}.  In our language, $\Lambda$ can be described as the category with objects $\{(i,n)\mid i\in \Z/2, n\in \mathcal{N}\}$, and where $\text{Hom}((i,n),(j,m)):=KK_{i+j}(B_n,B_m)$; the group $\text{Hom}_{\Lambda}(\underline{K}_*(A),\underline{K}_*(B))$ is then defined analogously to $\text{Hom}_{\Lambda_0}(\underline{K}_0(A),\underline{K}_0(B))$.  

However, in this paper the only example we actually use will be $B=\C$, in which case the natural forgetful map 
$$
\text{Hom}_{\Lambda}(\underline{K}_*(A),\underline{K}_*(\C))\to \text{Hom}_{\Lambda_0}(\underline{K}_0(A),\underline{K}_0(\C))
$$
is an isomorphism; thus we hope our approach should cause no confusion.
\end{remark}

\begin{remark}\label{l0n}
It will also occasionally be useful for us to work with the following `truncations' of the category $\Lambda_0$.  For $n=0$, or $n\geq 2$, let $\Lambda_0^{(n)}$ denote the full subcategory of $\Lambda_0$ on the subset of $\mathcal{N}$ consisting of $0$ and all $k\geq 2$ such that $k$ divides $n$.

We abuse notation by also writing $\underline{K}_0(A)$ for the image of the $\Lambda_0$-module of Remark \ref{basic lambda 0} under the natural forgetful map from $\Lambda_0$-modules to $\Lambda_0^{(n)}$-modules.
\end{remark}

%\begin{remark}\label{kkb}
%Let $B_n$ be the $C^*$-algebra from Definition \ref{lambda0}.  Then $B_n$ is in the bootstrap class of Rosenberg-Schochet \cite{Rosenberg:1987bh}.  It thus satisfies the UCT exact sequence of \cite[1.17, page 439]{Rosenberg:1987bh}; as $K_1(B_n)=0$, the UCT exact sequence just reduces to an isomorphism $KK(B_n,B_m)\cong \text{Hom}(K_0(B_n),K_0(B_m))$.  Hence $KK(B_n,B_m)\cong \text{Hom}(\Z/n,\Z/m)$ (here ``$\Z/0$'' should be interpreted to mean $\Z$).  These identifications are moreover compatible with composition of $KK$ elements.
%\end{remark}

%\begin{remark}\label{bs rem}
%Recall also (see for example \cite[Proposition 1.6]{Schochet:1984ab} or \cite[line (1.6)]{Dadarlat:1996aa}) that that for a $C^*$-algebra $A$ and $n\geq 2$ there is a \emph{Bockstein six-term exact sequence}
%\begin{equation}\label{bs0}
%\xymatrix{ K_0(A) \ar[r]^-{\times n} & K_0(A) \ar[r]^-{\rho_n} & K_0(A;\Z/n) \ar[d]^{\beta_n} \\
%K_1(A;\Z/n) \ar[u]^-{\beta_n} & K_1(A) \ar[l]^-{\rho_n} & K_1(A) \ar[l]^-{\times n} }.
%\end{equation}
%The map labeled $\rho_n$ is induced BLAH BLAH
%\end{remark}

\begin{remark}\label{lambda0 homs}
Let $B_n$ be the $C^*$-algebra from Definition \ref{lambda0}.  It follows from the Kirchberg-Phillips classification theorem \cite{Phillips-documenta,Kirchberg-ICM}\footnote{We only need this for Cuntz algebras, in which case the essential results were known earlier \cite{Rordam:1995aa,Elliott:1995aa}; however, it is probably simpler to just use the full Kirchberg-Phillips theorem.  See \cite{Gabe:2019ws} for a recent approach that is somewhat simpler than the original one of Kirchberg and Phillips.} that for $n,m\geq 2$, any non-zero element of $KK(B_n,B_m)$ can be realized by a unital $*$-homomorphism $\phi:B_n\to M_k(B_m)$ for an appropriate $k\in \N$\footnote{Including matrix algebras is necessary to ensure that the units go to the correct place.} (see for example \cite[Theorem A]{Gabe:2019ws}), and moreover that any such homomorphism is unique up to asymptotic unitary equivalence (see for example \cite[Theorem B]{Gabe:2019ws}).   

On the other hand, any element of $KK(B_1,B_n)$ can be realized by the unique unital homomorphism $\phi:\C\to M_k(O_{m+1})$ for some $k$, and $KK(B_n,B_1)=0$ for $n\geq 2$.
\end{remark}

For later purposes, let us record a basic algebraic property of the $\Lambda_0$-modules $\underline{K}_0(A)$.  Analogous facts are probable known to experts, but we were unable to find a reference in the literature, so sketch a proof.

\begin{lemma}\label{l0 fg}
Let $A$ be a unital $C^*$-algebra, and let $B_n$ and $\mathcal{N}$ be as in Definition \ref{lambda0}.  Let $P$ be a finite collection of projections, each in $A\otimes M_k(\C)\otimes B_{n(p)}$ for some $k\geq 1$ and $n(p)\in \mathcal{N}$, and where we assume that $P$ contains the unit of $A=A\otimes M_1(\C)\otimes B_0$.  

Let $N$ be the least common multiple of the numbers $n(p)$ as $p$ ranges over $P$ assuming at least one $n(p)$ is positive, and let $N=0$ if $n(p)=0$ for all $p\in P$.  With notation as in Remark \ref{l0n}, let $\langle P\rangle$ be the $\Lambda_0^{(N)}$-submodule of $\underline{K}_0(A)$ generated by $P$, so $\langle P\rangle$ consists of a finite collection $\{P_n\mid n=0, \text{ or } n\geq 2\text{ and } n|N\}$ with each $P_n$ a subgroup of $K_0(A\otimes B_n)$.

Then for each $n$, each element of $P_n$ can be represented by a formal difference of projections of the form 
\begin{equation}\label{sm rep}
\bigoplus _{i=1}^l (\text{id}_{M_{k_i}(A)}\otimes \lambda_i)(p_i)
\end{equation}
where $l\in \N$, $p_i\in P$ and $\lambda_i$ is a $*$-homomorphism representing a morphism in $\Lambda_0^{(N)}$ as in Remark \ref{lambda0 homs}.  

Moreover, $\langle P\rangle$ is `finitely presented' in the following sense: there is a finite set $H$ of homotopies between elements of the form in line \eqref{sm rep} such that if for some $l,n\in \N$ and $[\lambda_i]$ a morphism for $\Lambda_0^{(N)}$ we have 
\begin{equation}\label{sm rel}
\sum_{i=1}^l [\lambda_i][p_i]=0 \quad \text{in}\quad K_0(A\otimes B_n)
\end{equation}
then there is $m\geq 0$, $S\subseteq \{1,...,l\}$ and a homotopy between 
$$
\bigoplus_{i\in S} (\text{id}_{M_{k_i}(A)}\otimes \lambda_i)(p_i)\oplus 1_m,\quad  \bigoplus_{i\not\in S} (\text{id}_{M_{k_i}(A)}\otimes \lambda_i)(p_i)\oplus 1_m
$$
that can be approximated arbitrarily well by block sums of images of the homotopies from the finite set $H$ under $*$-homomorphisms representing the elements of $\Lambda_0^{(N)}$ as in Remark \ref{lambda0 homs}.
\end{lemma}

\begin{proof}
The existence of representatives as in line \eqref{sm rep} is straightforward.

For the statement on homotopies, we first note that $B_n$ is in the bootstrap class of Rosenberg-Schochet \cite{Rosenberg:1987bh}.  It thus satisfies the UCT exact sequence of \cite[1.17, page 439]{Rosenberg:1987bh}; as $K_1(B_n)=0$, the UCT exact sequence just reduces to an isomorphism $KK(B_n,B_m)\cong \text{Hom}(K_0(B_n),K_0(B_m))$.  Hence $KK(B_n,B_m)\cong \text{Hom}(\Z/n,\Z/m)$ (here ``$\Z/0$'' should be interpreted to mean $\Z$).  These identifications are moreover compatible with composition of $KK$ elements.  

It is then not too difficult to see from this that the category of $\Lambda_0^{(N)}$-modules is equivalent to the category of (left) modules over the ring $R:=\text{End}(\Z\oplus \bigoplus_{n|N}\Z/n)$ of endomorphisms of the abelian group $\Z\oplus \bigoplus_{n|N}\Z/n$.  Note that as a (left) module over $\Z$, $R$ is finitely generated; it is thus Noetherian as a module over $\Z$, and hence (left) Noetherian as a ring.  As $R$ is left Noetherian, finitely generated left modules over $R$ are finitely presented; the statement on only needing finitely many homotopies follows from this and a little algebra. 
\end{proof}

We now show that the controlled $K$-theory groups of Definition \ref{x eps com 2} are $\Lambda_0$-modules.

\begin{lemma}\label{x eps lambda module}
Let $A$ be a unital $C^*$-algebra, let $X$ be a finite subset of the unit ball of $A$ such that $X=X^*$, and let $\epsilon>0$.  Let $\underline{K}_0^\epsilon(X)$ denote the sequence of abelian groups $(G_n)_{n=1}^\infty$ with $G_1=K_0^\epsilon(X)$ and $G_n=K_0^\epsilon(X;\Z/n)$ for $n\geq 2$.  

Then $\underline{K}_0^\epsilon(X)$ can be made into a $\Lambda_0$-module in a canonical way.  Moreover, if $\phi:A\to B$ is a ucp $(X,\epsilon)$-$*$-homomorphism as in Definition \ref{x eps hom}, then the induced maps $\phi_*$ of Lemma \ref{func lem} define an element 
$$
\phi_*\in \text{Hom}_{\Lambda_0}(\underline{K}_0^\epsilon(X),\underline{K}_0(B)).
$$
\end{lemma}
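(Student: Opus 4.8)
The plan is to run the same argument as in the uncontrolled case of Example \ref{basic lambda 0}, the only genuinely new point being to track $(X,\epsilon)$-compatibility of projections through the relevant operations. First I would enlarge the definition: for an \emph{arbitrary} separable $C^*$-algebra $D$, let $K_0^\epsilon(X;D)$ be the Grothendieck group built exactly as in Definition \ref{x eps com 2} but with $O_{n+1}$ replaced by $D$ throughout, so that $K_0^\epsilon(X;\C)=K_0^\epsilon(X)$ and $K_0^\epsilon(X;O_{n+1})=K_0^\epsilon(X;\Z/n)$; thus $\underline{K}_0^\epsilon(X)=(K_0^\epsilon(X;B_n))_{n\geq 1}$. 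The key observation is that if $f:D\to D'$ is a $*$-homomorphism and $p\in A\otimes M_k(\C)\otimes D$ is $(X,\epsilon)$-compatible, then $(\mathrm{id}_{A\otimes M_k(\C)}\otimes f)(p)$ is again $(X,\epsilon)$-compatible: for any ucp $(X,\epsilon)$-$*$-homomorphism $\psi:A\to C$ one has $(\psi\otimes\mathrm{id})\big((\mathrm{id}\otimes f)(p)\big)=(\mathrm{id}_{C\otimes M_k(\C)}\otimes f)\big((\psi\otimes\mathrm{id})(p)\big)$ since $\psi$ and $f$ act on disjoint tensor legs, and applying the $*$-homomorphism $\mathrm{id}\otimes f$ can only shrink the spectrum (possibly adjoining $0$), so it still avoids $[1/4,3/4]$. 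Hence $D\mapsto K_0^\epsilon(X;D)$ is a functor from separable $C^*$-algebras to abelian groups.

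Next I would check that this functor in the $D$-variable is homotopy invariant, $\mathcal{K}$-stable, and split exact, by copying the standard proofs of these facts for ordinary $K_0$ and noting at each step that every projection and homotopy produced stays $(X,\epsilon)$-compatible --- again because the test maps $\psi$ only see the untouched $A$-leg, and $*$-homomorphisms do not enlarge spectra. Higson's characterisation of $KK$ (a stable, split-exact, homotopy-invariant functor on separable $C^*$-algebras factors through the $KK$-category) then shows $D\mapsto K_0^\epsilon(X;D)$ descends to a functor on $KK$; restricting to the objects $B_1=\C$, $B_n=O_{n+1}$ and the morphism groups $KK(B_n,B_m)$ equips $\underline{K}_0^\epsilon(X)$ with a $\Lambda_0$-module structure, and functoriality of the action (associativity of Kasparov products) is then automatic. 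An alternative, if one prefers to avoid invoking Higson's theorem, is to define the action directly on a standard finite generating set of $\Lambda_0$-morphisms (integer multiplications, the reduction maps induced by the unital inclusions $\C\hookrightarrow O_{n+1}$, and representatives for the maps among the $O_{n+1}$ inducing $\mathrm{Hom}(\Z/n,\Z/m)$ on $K$-theory) and check the defining relations of $\Lambda_0$ by hand; the compatibility-preservation observation above is exactly what makes these well defined on the equivalence classes of Definition \ref{x eps com 2}.

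Finally, for a ucp $(X,\epsilon)$-$*$-homomorphism $\phi:A\to B$, the maps $\phi_*:K_0^\epsilon(X;\Z/n)\to K_0(B;\Z/n)$ of Lemma \ref{func lem} are the components of a natural transformation from the coefficient-functor $D\mapsto K_0^\epsilon(X;D)$ to the coefficient-functor $D\mapsto K_0(B\otimes D)$: $\phi_*$ sends a compatible projection $p$ to $\chi\big((\phi\otimes\mathrm{id})(p)\big)$ --- legitimate since $(X,\epsilon)$-compatibility guarantees the spectral gap needed for $\chi$ --- and this commutes with applying any $*$-homomorphism in the coefficient variable because $\phi$ and that homomorphism act on disjoint tensor legs; homotopy invariance then forces $\phi_*$ to intertwine arbitrary $KK$-classes in the coefficient variable. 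A natural transformation of functors on $KK$ restricts to a morphism of $\Lambda_0$-modules, giving $\phi_*\in\mathrm{Hom}_{\Lambda_0}(\underline{K}_0^\epsilon(X),\underline{K}_0(B))$.

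The step I expect to be the real obstacle is establishing split exactness of the controlled functor $D\mapsto K_0^\epsilon(X;D)$: one must show that a compatible projection over $D$ that becomes controlled-trivial over a quotient $D''$ can be joined, \emph{through $(X,\epsilon)$-compatible projections}, to one coming from the ideal. This is the one place where a genuine piece of $K$-theory bookkeeping must be redone carrying the extra compatibility constraint rather than quoted off the shelf; if one instead follows the generators-and-relations route, the analogous obstacle is verifying independence of the hand-defined operations from the chosen morphism representatives and from the chosen projections in the controlled Grothendieck group.
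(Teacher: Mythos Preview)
Your approach is sound in outline and shares with the paper the crucial observation that maps on the coefficient leg commute with maps on the $A$-leg, so $(X,\epsilon)$-compatibility is preserved. However, the paper takes a markedly more direct route that sidesteps the obstacle you correctly identify.

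Rather than building a functor $D\mapsto K_0^\epsilon(X;D)$ on all separable $C^*$-algebras and invoking Higson's universal property of $KK$, the paper exploits the very special nature of the $C^*$-algebras $B_n$ (namely $\C$ and the Cuntz algebras $O_{n+1}$). By the Kirchberg--Phillips classification theorem, every class in $KK(B_n,B_m)$ is represented by a unital $*$-homomorphism $B_n\to M_k(B_m)$ for some $k$, and any two such representatives are asymptotically unitarily equivalent. Since $*$-homomorphisms in the coefficient variable manifestly preserve $(X,\epsilon)$-compatibility (your own observation), this immediately gives a well-defined action of each $KK(B_n,B_m)$ on the controlled groups, and the asymptotic-unitary-equivalence uniqueness handles well-definedness with no further work. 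Compatibility with $\phi_*$ then follows from the same disjoint-tensor-legs identity you use.

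What the paper's approach buys is that it never needs to verify split exactness of the controlled functor, which as you say is the genuinely delicate point of your strategy: one would have to show that controlled-$K$-theory homotopies over a quotient lift to controlled homotopies, and while this is plausible for split extensions it is not immediate. Your route, by contrast, would yield a more general statement (a $KK$-action on controlled $K$-theory with arbitrary coefficients, not just the $B_n$), which is not needed here but could be of independent interest.
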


\begin{proof}
As in Remark \ref{lambda0 homs} every element of $\Lambda_0$ is induced by a $*$-homomorphism that is unique up to approximate unitary equivalence.  One checks directly that unital $*$-homomorphisms induce maps between controlled $K$-theory groups.  Moreover, the uniqueness up to approximate unitary equivalence guarantees that the induced maps on $K$-theory do not depend on the choice of inducing $*$-homomoprhism.

Compatibility of this structure with the maps induced by an $(X,\epsilon)$-$*$-homomorphism follows as if $\phi:A\to B$ and $\psi:C\to D$ are ucp maps between $C^*$-algebras, then 
$$
(\text{id}_B\otimes \psi)\circ (\phi\otimes\text{id}_C) =(\phi\otimes \text{id}_D)\circ (\text{id}_A\otimes \psi). \qedhere
$$
\end{proof}

Our next task is to relate the $\Lambda_0$-modules $\underline{K}_0(A)$ of Example \ref{basic lambda 0} and $\underline{K}_0^\epsilon(X)$ of Lemma \ref{x eps lambda module}.   

Let $\mathcal{I}_A$ be the directed set of Definition \ref{dir set}.  Then for $(X,\epsilon)\leq (Y,\delta)$ in $\mathcal{I}_A$ there are canonical maps
$$
K_0^\epsilon(X;\Z/n)\to K_0^\delta(Y;\Z/n)
$$
induced by the identity inclusions $\mathcal{Q}^\epsilon(X;\Z/n)\to \mathcal{Q}^\delta(Y;\Z/n)$, and similarly for the case with integer coefficients.  These maps fit together into maps of $\Lambda_0$-modules, so we get an inverse system 
$$
(\underline{K}_0^\epsilon(X))_{(X,\epsilon)\in \mathcal{I}_A}
$$
in the category of $\Lambda_0$-modules.  For each $(X,\epsilon)$ there are moreover canonical homomorphisms
\begin{equation}\label{dir lim maps 0}
K_0^\epsilon(X;\Z/n)\to K_0(A;\Z/n)
\end{equation}
induced by the identity inclusions of $(X,\epsilon)$-compatible projections into all projections, and similarly for the case with integer coefficients.  These again fit together to give maps of $\Lambda_0$-modules
\begin{equation}\label{dir lim maps}
\underline{K}_0^\epsilon(X)\to \underline{K}_0(A).
\end{equation}

The following lemma can be compared to the analogous results in other versions of controlled $K$-theory: see for example \cite[Proposition 4.9]{Guentner:2014bh} or \cite[Discussion around Remark 1.18]{Oyono-Oyono:2011fk}. 

\begin{lemma}\label{dir lim k thy}
Let $A$ be a unital $C^*$-algebra.  The maps of line \eqref{dir lim maps} induce canonical isomorphisms
\begin{equation}\label{dir lim zn}
\lim_{\rightarrow} K_0^\epsilon(X;\Z/n)\cong  K_0(A;\Z/n)
\end{equation}
and similarly 
\begin{equation}\label{dir lim int}
\lim_{\rightarrow} K_0^\epsilon(X)\cong  K_0(A).
\end{equation}
These maps fit together to define an isomorphism 
\begin{equation}\label{dir lim ex}
\lim_{\rightarrow} \underline{K}_0^\epsilon(X)\cong  \underline{K}_0(A)
\end{equation}
in the category of $\Lambda_0$-modules.  Finally, this isomorphism induces a canonical algebraic isomorphism 
\begin{equation}\label{kappa def}
\kappa : \lim_{\leftarrow}\text{Hom}_{\Lambda_0}(\underline{K}^\epsilon_0(X),\underline{K}_0(B))\cong \text{Hom}_{\Lambda_0}(\underline{K}_0(A),\underline{K}_0(B)).
\end{equation}
that is moreover a homeomorphism if each of the $\text{Hom}_{\Lambda_0}$ groups involved is equipped with the topology of pointwise convergence, and the left hand side is equipped with the inverse limit topology. 
\end{lemma}

\begin{proof}
For the isomorphisms of lines \eqref{dir lim zn} and \eqref{dir lim int}, the essential point is that any finite set of projections in $A\otimes M_k(\C)\otimes O_{n+1}$ is $(X,\epsilon)$-compatible for a large enough element of the set $\mathcal{I}_A$ of Definition \ref{dir set}: to see this, approximate the projections by finite sums of elementary tensors, take $X$ to contain all of the elements of $A$ appearing in such tensors (we may assume all of these to be in the unit ball of $A$), and take $\epsilon$ suitably small; we leave the elementary-but-messy details to the reader.  Using compactness of the unit interval $[0,1]$, this also implies that any homotopy of projections is $(X,\epsilon)$-compatible for suitable $(X,\epsilon)$.  The direct limit isomorphisms of lines \eqref{dir lim zn} and \eqref{dir lim int} follow readily from these observations: we leave the remaining details to the reader.

We also leave the direct checks that these isomorphisms induce an isomorphism in the category of $\Lambda_0$-modules to the reader.  The isomorphism of line \eqref{kappa def} is essentially just a restatement of the universal property of the direct limit in line \eqref{dir lim ex}, and the statement that it is a homeomorphism also follows directly from the definitions.
\end{proof}

We now come to the action of the controlled $K$-homology groups of Definition \ref{con k hom} on the controlled $K$-theory groups of Definition \ref{x eps com}.

\begin{lemma}\label{big Phi map 0}
Let $A$ be a unital $C^*$-algebra, let $X$ be a finite subset of the unit ball of $A$ such that $X=X^*$, and let $\epsilon\in (0,1/24)$.  Let $p$ be an element of the set $\mathcal{P}_\epsilon(X)$ of Definition \ref{con k hom}, and let $q$ be an element of the set $\mathcal{Q}^\epsilon(X;\Z/n\Z)$ of Definition \ref{x eps com}.  In particular, recall that $q\in A\otimes M_k(\C)\otimes O_{n+1}$ and $p\in M_2(\C)\otimes \mathcal{B}(H)$, where $A\subseteq \mathcal{B}(H)$ is a fixed infinitely amplified, faithful, unital representation, and $p$ is of the form $p=\begin{psmallmatrix} 1 & 0 \\ 0 & 0 \end{psmallmatrix} +b$ for some $b\in M_2(\mathcal{K}(H))$.  Finally, let $\chi$ be the characteristic function of $(1/2,\infty)$.

Then the formula 
\begin{equation}\label{pair form}
\langle p,q\rangle:=\chi\Big(\begin{psmallmatrix} 1 & 0 \\ 0 & 0 \end{psmallmatrix} + (1_{M_2(\C)}\otimes q)(b\otimes 1_{M_k(\C)\otimes O_{n+1}})(1_{M_2(\C)}\otimes q)\Big).
\end{equation}
gives a well-defined projection in $M_2(\mathcal{B}(H)\otimes M_k(\C)\otimes O_{n+1})$ whose difference with $\begin{psmallmatrix} 1 & 0 \\ 0 & 0 \end{psmallmatrix}$ is in $M_2(\mathcal{K}(H)\otimes M_k(\C)\otimes O_{n+1})$.

This works completely analogously on omitting $O_{n+1}$.
\end{lemma}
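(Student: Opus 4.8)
The plan is to analyse the self-adjoint element
\[
T\ :=\ \begin{psmallmatrix}1&0\\0&0\end{psmallmatrix}+(1_{M_2(\C)}\otimes q)(b\otimes 1_{M_k(\C)\otimes O_{n+1}})(1_{M_2(\C)}\otimes q)
\]
by a block decomposition, and then to do functional calculus on it. Work in $\mathcal{M}:=M_2(\mathcal{B}(H)\otimes M_k(\C)\otimes O_{n+1})$, and set $e:=\begin{psmallmatrix}1&0\\0&0\end{psmallmatrix}\otimes 1_{M_k(\C)\otimes O_{n+1}}$, $P:=1_{M_2(\C)}\otimes q$ and $\hat p:=p\otimes 1_{M_k(\C)\otimes O_{n+1}}$, which are projections in $\mathcal{M}$. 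Since $p=\begin{psmallmatrix}1&0\\0&0\end{psmallmatrix}+b$ we have $b\otimes 1=\hat p-e$, so $T=e+P(\hat p-e)P$. The first, routine, observations are: $e$ and $P$ commute, as they act on disjoint tensor legs, so $T=e(1-P)+P\hat pP$ and the off-diagonal parts of $T$ with respect to the decomposition $1=(1-P)\oplus P$ vanish, i.e.\ $T=e(1-P)\oplus P\hat pP$; and $T-e=P(b\otimes 1)P$ lies in the closed ideal $J:=M_2(\mathcal{K}(H)\otimes M_k(\C)\otimes O_{n+1})$ of $\mathcal{M}$ (because $b\otimes 1$ does, and $J$ is an ideal).

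The crux is to show $\operatorname{spec}(T)\cap[1/4,3/4]=\emptyset$. The block $e(1-P)$ is a product of commuting projections, hence a projection, with spectrum in $\{0,1\}$. For the block $P\hat pP$ I would first use the identity $\operatorname{spec}(ab)\setminus\{0\}=\operatorname{spec}(ba)\setminus\{0\}$ with $a=P\hat p$, $b=\hat pP$ to pass from $P\hat pP$ to $\hat pP\hat p$, and then recognise $\hat pP\hat p$ as a corner of $(\phi_p\otimes\text{id})(q)$, where $\phi_p\colon A\to M_2(\mathcal{B}(H))$ is the unital completely positive map $\phi_p(a):=p(a\otimes 1_{M_2(\C)})p+(1-p)(a\otimes 1_{M_2(\C)})(1-p)$. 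A direct commutator computation (the linear terms cancel because of the two-sided cut-down structure) gives $\phi_p(xy)-\phi_p(x)\phi_p(y)=-[x\otimes 1_{M_2(\C)},p]\,[y\otimes 1_{M_2(\C)},p]$ for $x,y\in X$, of norm $<\epsilon^2<\epsilon$; since also $\phi_p(x^*)=\phi_p(x)^*$, this exhibits $\phi_p$ as a ucp $(X,\epsilon)$-$*$-homomorphism in the sense of Definition~\ref{x eps hom}. As $q$ is $(X,\epsilon)$-compatible (Definition~\ref{x eps com}, since $q\in\mathcal{Q}^\epsilon(X;\Z/n)$), it follows that $\operatorname{spec}\big((\phi_p\otimes\text{id})(q)\big)\subseteq[0,1/4)\cup(3/4,1]$. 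Unwinding tensor legs identifies $(\phi_p\otimes\text{id})(q)$ with $\hat pP\hat p\oplus(1-\hat p)P(1-\hat p)$ relative to $1=\hat p\oplus(1-\hat p)$, so $\operatorname{spec}(\hat pP\hat p)$ computed in the corner $\hat p\mathcal{M}\hat p$ lies in $[0,1/4)\cup(3/4,1]$; feeding this back through the $ab/ba$ reduction and the standard relations between spectra in $\mathcal{M}$ and in its corners, we get $\operatorname{spec}(P\hat pP)\subseteq[0,1/4)\cup(3/4,1]$, and hence $\operatorname{spec}(T)\subseteq[0,1/4)\cup(3/4,1]$.

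It then only remains to do the functional calculus. Since $\operatorname{spec}(T)$ avoids $1/2$, the characteristic function $\chi$ of $(1/2,\infty)$ restricts to a continuous function on $\operatorname{spec}(T)$ with $\chi^2=\chi=\overline\chi$, so $\chi(T)$ is a well-defined projection in $\mathcal{M}$. For the last clause I would choose a continuous $f\colon[0,1]\to[0,1]$ with $f\equiv 0$ on $[0,1/4]$ and $f\equiv 1$ on $[3/4,1]$; then $f=\chi$ on $\operatorname{spec}(T)\cup\{0,1\}=\operatorname{spec}(T)\cup\operatorname{spec}(e)$, so $f(T)=\chi(T)$ and $f(e)=\chi(e)=e$, and as $f(0)=0$ and $T-e\in J$, approximating $f$ uniformly on $[0,1]$ by polynomials with vanishing constant term yields $\chi(T)-e=f(T)-f(e)\in J$, which is the claim (recalling that $e$ is the element ``$\begin{psmallmatrix}1&0\\0&0\end{psmallmatrix}$'' of the statement, viewed in $\mathcal{M}$). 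The version with $O_{n+1}$ omitted is proved verbatim the same way.

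The step I expect to be the real obstacle is the middle one: arranging that the cut-down map $\phi_p$ built from $p$ is an honest $(X,\epsilon)$-$*$-homomorphism — this is what the smallness of $\epsilon$ is for — and then carefully matching up the three operators $P\hat pP$, $\hat pP\hat p$ and $(\phi_p\otimes\text{id})(q)$, together with their spectra in the various corners, so that the spectral gap supplied by the $(X,\epsilon)$-compatibility of $q$ transfers to $T$.
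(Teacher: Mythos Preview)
Your proof is correct, and the overall architecture matches the paper's: build a ucp $(X,\epsilon)$-$*$-homomorphism out of $p$, invoke the $(X,\epsilon)$-compatibility of $q$ to get a spectral gap, then do functional calculus and the polynomial-approximation argument for the ideal clause. The execution differs in one interesting respect, though. The paper uses the one-sided compression $a\mapsto p(a\otimes 1_{M_2})p$ (which is an $(X,\epsilon)$-$*$-homomorphism by Example~\ref{xem ex}), obtains $\operatorname{spec}(pqp)\subseteq[0,1/4)\cup(3/4,1]$, and then expands $pqp$ to show it is within $6\epsilon$ of $eq+qbq$; this is where the hypothesis $\epsilon<1/24$ enters, to keep the spectrum of $eq+qbq$ away from $1/2$. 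Your choice of the two-sided pinch $\phi_p(a)=p(a\otimes 1)p+(1-p)(a\otimes 1)(1-p)$ has multiplicative defect exactly $-[\,\tilde x,p][\,\tilde y,p]$, hence of norm $<\epsilon^2$, so it is already an $(X,\epsilon)$-$*$-homomorphism; combined with the block decomposition $T=e(1-P)\oplus P\hat pP$ and the $ab/ba$ spectral identity, you get the sharp conclusion $\operatorname{spec}(T)\subseteq[0,1/4)\cup(3/4,1]$ with no perturbation step. This is a genuine (if minor) improvement: your argument in fact works for any $\epsilon\in(0,1)$, not only $\epsilon<1/24$.
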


\begin{proof}
For simplicity of notation, let us write $q$ for $1_{M_2(\C)}\otimes q$, $b$ for $b\otimes 1_{M_k(\C)\otimes O_{n+1}}$, $p$ for $p\otimes 1_{M_k(\C)\otimes O_{n+1}}$, and $e$ for $\begin{psmallmatrix} 1 & 0 \\ 0 & 0 \end{psmallmatrix}$.  Note first that as $q$ is $(X,\epsilon)$-compatible (see Definition \ref{x eps com}), we have by Example \ref{xem ex} that 
$$
\text{spectrum}(pqp)\subseteq [0,1/4)\cup (3/4,1].
$$
Writing $p=e+b$ and using that $e$ commutes with $q$, we see that 
$$
pqp=eq+bqe+eqb+bqb;
$$
as $\|[q,p]\|=\|[q,b]\|<\epsilon$, this is within $6\epsilon$ of 
\begin{equation}\label{ebq}
eq+q(be+eb+b^2)q.
\end{equation}
As $6\epsilon<1/4$, we thus see that $eq+q(be+eb+b^2)q$ has spectrum contained in $[0,1/2)\cup (1/2,1]$.  On the other hand, as $(e+b)^2=p^2=p=(e+b)$, we have that $be+eb+b^2=b$, and so the element in line \eqref{ebq} equals $eq+qbq$.   As $e(1-q)$ is orthogonal to this projection, we see that the spectrum of $e+qbq$ is contained in $[0,1/2)\cup (1/2,1]$; however, up to our notational simplifications, this is the element 
$$
\begin{psmallmatrix} 1 & 0 \\ 0 & 0 \end{psmallmatrix} + (1_{M_2(\C)}\otimes q)(b\otimes 1_{M_k(\C)\otimes O_{n+1}})(1_{M_2(\C)}\otimes q)
$$
appearing in the statement of the lemma.  Hence $\chi$ is continuous on the spectrum of this element, and the functional calculus gives that the element $\chi(e+qbq)$ appearing in the statement is a well-defined projection in $M_2(\mathcal{B}(H)\otimes M_k(\C)\otimes O_{n+1})$.  On the other hand, on the spectrum of $e+qbq$, $\chi$ is a uniform limit of a sequence $(f_k)_{k=1}^\infty$ of polynomials with $f_k(0)=0$ and $f_k(1)=1$.  For any such polynomial, $f_k(e)=e$, and one computes directly that $f_k(e)-f_k(e+qbq)$ lies in the ideal $M_2(\mathcal{K}(H)\otimes M_k(\C)\otimes O_{n+1})$ for each $k$.  Taking the limit in $k$ completes the proof.
\end{proof}

The next result is the main goal of this subsection.  At this point, the proof consists of direct checks that we leave to the reader.

\begin{proposition}\label{phi lem}
Let $A$ be a unital $C^*$-algebra, let $X$ be a finite subset of the unit ball of $A$ such that $X=X^*$, and let $\epsilon\in (0,1/100)$.  Let $p$ be an element of the set $\mathcal{P}_\epsilon(X)$ of Definition \ref{con k hom}.
Then with notation as in Lemma \ref{big Phi map 0}, the formula  
$$
\Phi^{(X,\epsilon)}(p):[q]\to [\langle p,q\rangle]-\begin{bsmallmatrix} 1 & 0 \\ 0 & 0 \end{bsmallmatrix}
$$
induces a well-defined homomorphism 
$$
\Phi^{(X,\epsilon)}(p):K_0^\epsilon(X;\Z/n)\to K_0(\mathcal{K}(H)\otimes O_{n+1})\cong K_0(\C;\Z/n)
$$
and similarly
$$
\Phi^{(X,\epsilon)}(p):K_0^\epsilon(X)\to K_0(\mathcal{K}(H))\cong K_0(\C).
$$
These homomorphisms fit together to give a well-defined homomorphism 
$$
\Phi^{(X,\epsilon)}:K^0_\epsilon(X)\to \text{Hom}_{\Lambda_0}(\underline{K}_0^\epsilon(X),\underline{K}_0(\C))
$$
that is continuous when the left hand side is equipped with the discrete topology, and the right hand side has the topology of pointwise convergence.  

Finally, taking inverse limits over the set $\mathcal{I}_A$ of Definition \ref{dir set} and applying Lemma \ref{dir lim k thy} gives a homomorphism
\begin{equation}\label{ck hom}
\Phi:{\displaystyle \lim_{\leftarrow}K^0_\epsilon(X)}\to \text{Hom}_{\Lambda_0}(\underline{K}_0(A),\underline{K}_0(\C)),
\end{equation}
which is moreover continuous when $\text{Hom}_{\Lambda_0}(\underline{K}_0(A),\underline{K}_0(\C))$ is equipped with the topology of pointwise convergence. \qed
\end{proposition}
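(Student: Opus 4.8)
The plan is to construct $\Phi$ in three stages: first fix a single pair $(X,\epsilon)\in\mathcal{I}_A$ and build $\Phi^{(X,\epsilon)}(p)$ coefficient-by-coefficient from the projection $\langle p,q\rangle$ of Lemma \ref{big Phi map 0}; then check that these assemble into a $\Lambda_0$-module map depending only on the path component of $p$; and finally pass to the inverse limit over $\mathcal{I}_A$ and invoke the identification $\kappa$ of Lemma \ref{dir lim k thy} to rewrite the target as $\text{Hom}_{\Lambda_0}(\underline{K}_0(A),\underline{K}_0(\C))$. So fix $(X,\epsilon)$ with $\epsilon\in(0,1/100)$, write $e=\begin{psmallmatrix}1&0\\0&0\end{psmallmatrix}$, and let $p=e+b\in\mathcal{P}_\epsilon(X)$. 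For $n\geq 2$ and an $(X,\epsilon)$-compatible projection $q\in A\otimes M_k(\C)\otimes O_{n+1}$, Lemma \ref{big Phi map 0} already gives that $\langle p,q\rangle$ is a projection with $\langle p,q\rangle-e\in M_2(\mathcal{K}(H)\otimes M_k(\C)\otimes O_{n+1})$, so $[\langle p,q\rangle]-[e]$ is a bona fide class in $K_0(\mathcal{K}(H)\otimes M_k(\C)\otimes O_{n+1})$, which by stability of $K$-theory I identify canonically with $K_0(\mathcal{K}(H)\otimes O_{n+1})\cong K_0(O_{n+1})=K_0(\C;\Z/n)$ (and with $K_0(\C)$ when $O_{n+1}$ is omitted).

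Next I would check that $[\langle p,q\rangle]-[e]$ depends only on the class of $q$ in $K^\epsilon_0(X;\Z/n)$, i.e. is invariant under the two relations of Definition \ref{x eps com 2}. Along a path of $(X,\epsilon)$-compatible projections the spectral estimate in the proof of Lemma \ref{big Phi map 0} is uniform in the parameter, so $t\mapsto\langle p,q_t\rangle$ is a norm-continuous path of projections and the class is unchanged; replacing $q$ by $q\oplus 0_m$ manifestly does not change it; and a block-sum computation — using that the representation $A\subseteq\mathcal{B}(H)$ is infinitely amplified, so that $e$ absorbs arbitrarily many unitarily equivalent copies of itself — yields additivity $[\langle p,q_1\oplus q_2\rangle]-[e]=\big([\langle p,q_1\rangle]-[e]\big)+\big([\langle p,q_2\rangle]-[e]\big)$. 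These three facts let the assignment $[q]\mapsto[\langle p,q\rangle]-[e]$ descend to an additive map on $\mathcal{V}^\epsilon(X;\Z/n)$ and hence to a group homomorphism $\Phi^{(X,\epsilon)}(p)\colon K^\epsilon_0(X;\Z/n)\to K_0(\C;\Z/n)$, and likewise with integer coefficients.

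Then I would verify that, as $n$ varies, these form a natural transformation $\underline{K}_0^\epsilon(X)\to\underline{K}_0(\C)$. By the argument of Lemma \ref{x eps lambda module}, a morphism $\alpha\in KK(B_n,B_m)$ acts on controlled $K$-theory via a unital $*$-homomorphism $f\colon B_n\to M_l(B_m)$ acting only on the last tensor factor; since the pairing formula \eqref{pair form} involves only $b$ and the $A$-slot of $q$, and $f$ commutes with $b$ and with the continuous functional calculus by $\chi$, one gets $\langle p,(\mathrm{id}\otimes f)(q)\rangle=(\mathrm{id}\otimes f)(\langle p,q\rangle)$ up to the usual amplification, so the squares intertwining the $\alpha$-actions with the two copies of $\Phi^{(X,\epsilon)}(p)$ commute and naturality is essentially automatic. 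I would then note that $p\mapsto\Phi^{(X,\epsilon)}(p)$ factors through $K^0_\epsilon(X)=\pi_0(\mathcal{P}_\epsilon(X))$ (a homotopy of $p$'s homotopes the projections $\langle p,q\rangle$) and is additive for the group law $[p]+[p']=[sps^*+tp't^*]$ of Definition \ref{con k hom}, by the same block-decomposition argument applied to the Cuntz isometries $s,t$ in the commutant of $A$; continuity is free since $K^0_\epsilon(X)$ is discrete. This produces the continuous homomorphism $\Phi^{(X,\epsilon)}\colon K^0_\epsilon(X)\to\text{Hom}_{\Lambda_0}(\underline{K}_0^\epsilon(X),\underline{K}_0(\C))$.

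Finally, for $(X,\epsilon)\leq(Y,\delta)$ the inclusions $\mathcal{P}_\delta(Y)\subseteq\mathcal{P}_\epsilon(X)$ and $\mathcal{Q}^\epsilon(X;\Z/n)\subseteq\mathcal{Q}^\delta(Y;\Z/n)$ of Remark \ref{dir set rem}\eqref{inc part} leave the formula \eqref{pair form} literally unchanged, so $\Phi^{(Y,\delta)}$ and $\Phi^{(X,\epsilon)}$ are intertwined by the forgetful map $\theta\colon K^0_\delta(Y)\to K^0_\epsilon(X)$ and the restriction-of-natural-transformations map $\text{Hom}_{\Lambda_0}(\underline{K}_0^\delta(Y),\underline{K}_0(\C))\to\text{Hom}_{\Lambda_0}(\underline{K}_0^\epsilon(X),\underline{K}_0(\C))$. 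Hence the $\Phi^{(X,\epsilon)}$ form a morphism of inverse systems and induce a continuous homomorphism $\lim_{\leftarrow}K^0_\epsilon(X)\to\lim_{\leftarrow}\text{Hom}_{\Lambda_0}(\underline{K}_0^\epsilon(X),\underline{K}_0(\C))$; composing with the homeomorphic isomorphism $\kappa$ of line \eqref{kappa def} (taken with $B=\C$) gives the map $\Phi$ of line \eqref{ck hom} with its continuity. I expect the only genuinely laborious step to be the bookkeeping in the second paragraph — tracking the auxiliary copies of $e$ when comparing $\langle p,q_1\oplus q_2\rangle$ with $\langle p,q_1\rangle$ and $\langle p,q_2\rangle$, and confirming the spectral-gap estimate of Lemma \ref{big Phi map 0} survives uniformly along every homotopy used; the $\Lambda_0$-naturality and the passage to limits are then formal.
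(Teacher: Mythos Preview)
Your proposal is correct and is essentially what the paper intends: the paper's own ``proof'' consists solely of the sentence ``At this point, the proof consists of direct checks that we leave to the reader,'' so you have supplied exactly those checks. The organization you give---well-definedness on classes of $q$, $\Lambda_0$-naturality via $*$-homomorphic realizations of $KK(B_n,B_m)$, dependence only on $[p]\in K^0_\epsilon(X)$ and additivity under the Cuntz-isometry group law, and finally compatibility with the inverse system and the identification $\kappa$---is the natural one and matches how the pairing is set up in Lemma~\ref{big Phi map 0} and Lemma~\ref{x eps lambda module}.
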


We will need one more piece of terminology and observation for later sections.

\begin{definition}\label{fin set tot hom}
Let $A$ be a unital $C^*$-algebra, and let $B_n$ be as in Definition \ref{lambda0}.  We define a \emph{$K$-datum} $P$ for $A$ to be a finite collection of projections $p$, each in some $A\otimes M_k(\C)\otimes B_{n(p)}$.  We let $N=N(P)$ for the least common multiple of $\{n(p)\mid p\in P\}$ as long as this set contains a non-zero integer, and zero if all $n(p)$ are zero.

With notation as in Remark \ref{l0n}, given a $K$-datum $P$, we let $\langle P\rangle$ be the $\Lambda_0^{(N)}$-submodule of $\underline{K}_0(A)$ that it generates as in Lemma \ref{l0 fg}, and we write $\text{Hom}_{\Lambda_0^{(N)}}(P,\underline{K}_0(B))$ for what should more properly be called $\text{Hom}_{\Lambda_0^{(N)}}(\langle P\rangle,\underline{K}_0(B))$.
\end{definition}

\begin{lemma}\label{partial kappa}
Let $A$ be a unital $C^*$-algebra, and let $B$ be another $C^*$-algebra.  For any $K$-datum $P$ as in Definition \ref{fin set tot hom} there exists an element $(X,\epsilon)$ of the directed set $\mathcal{I}_A$ of Definition \ref{dir set} with the following properties:
\begin{enumerate}[(i)]
\item \label{pk1} The canonical restriction map
$$
\text{Hom}_{\Lambda_0}(\underline{K}_0(A),\underline{K}_0(B)) \to \text{Hom}_{\Lambda_0^{(N)}}(P,\underline{K}_0(B))
$$
factors as the composition of the canonical homomorphism 
$$
\text{Hom}_{\Lambda_0}(\underline{K}_0(A),\underline{K}_0(B))\to \text{Hom}_{\Lambda_0}(\underline{K}_0^\epsilon(X),\underline{K}_0(B))
$$
arising from the isomorphism $\kappa$ of line \eqref{kappa def}, and a map 
$$
\kappa^{(X,\epsilon)}_P: \text{Hom}_{\Lambda_0}(\underline{K}_0^\epsilon(X),\underline{K}_0(\C))\to \text{Hom}_{\Lambda_0^{(N)}}(P,\underline{K}_0(\C)).
$$
\item \label{pk2} There exists $\gamma>0$ such that if $\phi,\psi:A\to M_n(\C)$ are ucp $(X,\epsilon)$-$*$-homomorphisms such that 
$$
\|\phi(x)-\psi(x)\|<\gamma
$$
for all $x\in X$, then with notation as in Lemma \ref{x eps lambda module} 
$$
\kappa^{(X,\epsilon)}_P(\phi_*)=\kappa^{(X,\epsilon)}_P(\psi_*).
$$
\end{enumerate}
\end{lemma}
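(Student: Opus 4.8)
The plan is to extract an appropriate $(X,\epsilon)$ by a compactness/approximation argument, exploiting that $P$ only involves finitely many $K$-theory classes (with finitely many coefficient systems). First I would fix the finite subset $F\subseteq \N$ underlying $P$, and for each $n\in F$ choose finitely many projections (over matrix amplifications of $A\otimes B_n$) representing the classes in $P_n$; since each such projection is a finite sum of elementary tensors up to small perturbation, there is, by the argument already used in the proof of Lemma \ref{dir lim k thy}, an element $(X_0,\epsilon_0)\in\mathcal{I}_A$ large enough that all these representatives, and chosen homotopies witnessing the relations among the Bockstein operations restricted to $P$, are $(X_0,\epsilon_0)$-compatible. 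For part \eqref{pk1} this already does the job: under the direct-limit isomorphism $\lim_\rightarrow \underline{K}_0^\epsilon(X)\cong \underline{K}_0(A)$ of Lemma \ref{dir lim k thy}, each class in $P_n$ lifts canonically to a class in $K_0^{\epsilon_0}(X_0;\Z/n)$ (resp.\ $K_0^{\epsilon_0}(X_0)$ when $n=1$), so evaluating a natural transformation in $\text{Hom}_{\Lambda_0}(\underline{K}_0^{\epsilon_0}(X_0),\underline{K}_0(B))$ on these lifted classes defines $\kappa^{(X_0,\epsilon_0)}_P$, and the factorization through the quotient map coming from $\kappa$ is then immediate from the compatibility of the direct-limit maps of line \eqref{dir lim maps} with $\kappa$ (line \eqref{kappa def}).

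For part \eqref{pk2}, the key point is that the value of $\kappa^{(X,\epsilon)}_P(\phi_*)$ on a class $[p]\in P_n$ depends, via Lemma \ref{func lem}, only on $\phi_*(p)=\chi\big((\phi\otimes\mathrm{id})(p)\big)$ for the finitely many fixed representatives $p$. Here $\chi$ is the characteristic function of $(1/2,\infty)$, which is continuous on the spectrum of $(\phi\otimes\mathrm{id})(p)$ precisely because $p$ is $(X,\epsilon)$-compatible (its spectrum avoids $[1/4,3/4]$). If $\phi$ and $\psi$ agree to within $\gamma$ on $X$, then $(\phi\otimes\mathrm{id})(p)$ and $(\psi\otimes\mathrm{id})(p)$ are close in norm (with closeness controlled by $\gamma$, the number and norms of the elementary tensors making up the representatives, and the matrix sizes involved — all of which are now fixed once $(X_0,\epsilon_0)$ and the representatives are chosen), so for $\gamma$ small enough the straight-line path $t\mapsto (1-t)(\phi\otimes\mathrm{id})(p)+t(\psi\otimes\mathrm{id})(p)$ stays in the region where $\chi$ is continuous and has spectrum avoiding $1/2$; applying $\chi$ along this path gives a homotopy of projections from $\phi_*(p)$ to $\psi_*(p)$, hence $[\phi_*(p)]=[\psi_*(p)]$ in $K_0(\C;\Z/n)$. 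Since this holds simultaneously for all representatives of all classes in the finite collection $P$, choosing $\gamma$ small enough for all of them at once yields $\kappa^{(X_0,\epsilon_0)}_P(\phi_*)=\kappa^{(X_0,\epsilon_0)}_P(\psi_*)$.

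The main obstacle, such as it is, is purely bookkeeping: one must check that perturbing $\phi$ to $\psi$ on the generating set $X$ keeps the relevant amplified element within the spectral gap required for $\chi$ to be continuous, uniformly over the (finitely many) representatives, and that the straight-line homotopy stays inside the relevant $(X,\epsilon)$-compatible region — this may force enlarging $(X_0,\epsilon_0)$ slightly (e.g.\ replacing $\epsilon_0$ by $\epsilon_0/2$ and padding $X_0$) so that there is room for the perturbation, which is harmless. I would also note that one should take $(X,\epsilon)$ in the statement to be this final enlarged pair, and that $\gamma$ is then chosen last, depending on $(X,\epsilon)$, $P$, and the fixed representatives; no genuinely new idea beyond the approximation techniques already invoked in the proof of Lemma \ref{dir lim k thy} is needed.
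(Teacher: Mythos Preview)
Your proposal is correct and follows essentially the same approach as the paper's proof: choose $(X,\epsilon)$ large enough that the finitely many representatives of classes in $P$ lift to the controlled group $\underline{K}_0^\epsilon(X)$ (giving part \eqref{pk1}), then for part \eqref{pk2} use that $(\phi\otimes\mathrm{id})(p)$ and $(\psi\otimes\mathrm{id})(p)$ are norm-close when $\phi,\psi$ agree on $X$, so the functional calculus projections $\chi((\cdot\otimes\mathrm{id})(p))$ coincide up to homotopy. The paper's version is terser and invokes Remark \ref{ucp alm mult} to handle the approximate multiplicativity when passing from $X$ to the subalgebra generated by the elementary tensors, whereas you implicitly absorb this by enlarging $X$ to contain the first tensor factors of a fixed approximation of each representative; both routes are fine, and your remark about possibly shrinking $\epsilon_0$ to leave room for the perturbation is exactly the ``choosing $\epsilon$ and $\gamma$ small enough'' in the paper's proof.
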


\begin{proof}
Let $P$ be as given.  The existence of $(X,\epsilon)$ as in part \eqref{pk1} follows directly from Lemma \ref{l0 fg}.  

For \eqref{pk2}, enlarging $X$, we may assume that for each $p\in P$ there are $k,n\in \N$ such that $p$ is in the $C^*$-subalgebra of $A\otimes M_k(\C)\otimes O_{n+1}$ generated by elementary tensors of the form $a\otimes m\otimes o$ with $a\in X$, $m\in M_k(\C)$ and $o\in O_{n+1}$ (or with the $O_{n+1}$ omitted as appropriate).  Hence for any such $p$ we can force 
$$
\|(\psi\otimes \text{id})(p)-(\phi\otimes \text{id})(p)\|<1/10
$$
by choosing $\epsilon$ and $\gamma$ small enough (here we use Remark \ref{ucp alm mult}).  Part \eqref{pk2} follows from this.
\end{proof}

\subsection{The approximate $K$-homology UCT}

The Kasparov product defines a pairing 
$$
K^0(A)\to \text{Hom}_{\Lambda_0}(\underline{K}_0(A),\underline{K}_0(\C))
$$
of the usual $K$-homology group of a $C^*$-algebra $A$ with its total $K$-theory as in Examples \ref{basic lambda 0}.  This is continuous when $\text{Hom}_{\Lambda_0}(\underline{K}_0(A),\underline{K}_0(\C))$ is equipped with the topology of pointwise convergence: indeed, this a special case of continuity of the Kasparov product \cite[Theorem 3.5]{Dadarlat:2005aa}.  As the topology of pointwise convergence on $\text{Hom}_{\Lambda_0}(\underline{K}_0(A),\underline{K}_0(\C))$ is Hausdorff, this pairing necessarily sends the closure $\overline{\{0\}}$ of the trivial subgroup to zero, and so descends to a pairing 
\begin{equation}\label{kl hom}
\Psi: KL^0(A) \to \text{Hom}_{\Lambda_0}(\underline{K}_0(A),\underline{K}_0(\C))
\end{equation}
of the $KL$-homology group of line \eqref{kl def} with total $K$-theory.

We now need to appeal to the universal coefficient theorem of Rosenberg and Schochet \cite{Rosenberg:1987bh}.  We will not need the UCT at its full strength, `just' the property in the next definition.

\begin{definition}\label{kh auct}
A separable unital $C^*$-algebra $A$ satisfies the \emph{approximate $K$-homology UCT} if the continuous homomorphism $\Psi$ from line \eqref{kl hom} is a homeomorphism.
\end{definition}

The following theorem is a special case of a result of Dadarlat \cite[Theorem 4.1]{Dadarlat:2005aa}. 

\begin{theorem}[Dadarlat]\label{uct the}
Let $A$ be a separable, unital $C^*$-algebra that satisfies the UCT.  Then $A$ satisfies the approximate $K$-homology UCT of Definition \ref{kh auct}. \qed
\end{theorem}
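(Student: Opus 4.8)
The plan is to deduce the statement directly from Dadarlat's general topological universal coefficient theorem \cite[Theorem 4.1]{Dadarlat:2005aa} by specializing the second variable to $\C$, and then to reconcile the resulting statement with Definition \ref{kh auct} using the elementary bookkeeping of Remark \ref{only zero}. No new estimates are needed; the content is entirely in matching up the various topologies and $\Lambda_0$-versus-$\Lambda$ conventions.

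First I would recall the precise form of Dadarlat's theorem: for a separable $C^*$-algebra $A$ satisfying the UCT and \emph{any} separable $C^*$-algebra $B$, the Kasparov product induces an isomorphism
$$
KL(A,B):=KK(A,B)/\overline{\{0\}}\;\xrightarrow{\ \cong\ }\;\text{Hom}_\Lambda(\underline{K}_*(A),\underline{K}_*(B)),
$$
and this isomorphism is a homeomorphism when $KL(A,B)$ carries the quotient of the (Brown--Douglas--Fillmore/Dadarlat) topology on $KK(A,B)$ and the right-hand side carries the topology of pointwise convergence. Taking $B=\C$, the group $KK(A,\C)$ is by definition the $K$-homology group $K^0(A)$ with its canonical topology, so $KL(A,\C)$ is exactly the group $KL^0(A)$ of line \eqref{kl def}, with matching topologies. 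Thus Dadarlat's theorem provides a homeomorphism $KL^0(A)\cong\text{Hom}_\Lambda(\underline{K}_*(A),\underline{K}_*(\C))$.

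It remains to compare this target with the one in line \eqref{kl hom}. Since $K_1(\C)=0$ and $K_1(\C;\Z/n)=K_1(O_{n+1})=0$ for all $n\geq 2$, the $\Lambda$-module $\underline{K}_*(\C)$ has vanishing odd part; hence the forgetful map
$$
\text{Hom}_\Lambda(\underline{K}_*(A),\underline{K}_*(\C))\to\text{Hom}_{\Lambda_0}(\underline{K}_0(A),\underline{K}_0(\C))
$$
of Remark \ref{only zero} is an algebraic isomorphism (any odd-degree component of a $\Lambda$-morphism into $\underline{K}_*(\C)$ is forced to be zero, and any $\Lambda_0$-morphism extends uniquely by zero in odd degree), and it is visibly a homeomorphism for the pointwise-convergence topologies, as it neither discards nor adds any evaluation data on $\underline{K}_0(A)$. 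Finally, $\Psi$ of line \eqref{kl hom} and the composite of Dadarlat's homeomorphism with this forgetful map are both induced by the Kasparov product, so they coincide; therefore $\Psi$ is a composite of two homeomorphisms, which is precisely the conclusion of Definition \ref{kh auct}.

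The only genuinely delicate point—and the step I would be most careful about—is the topological bookkeeping: one must verify that the topology Dadarlat places on $KL(A,B)$ restricts, for $B=\C$, to the topology on $KL^0(A)$ used here (both being the maximal Hausdorff quotient of the \emph{same} topology on $KK(A,\C)=K^0(A)$), and that continuity of the Kasparov product \cite[Theorem 3.5]{Dadarlat:2005aa} is invoked consistently so that $\Psi$ is well-defined and continuous before the homeomorphism claim is made. Everything else is a routine unwinding of definitions.
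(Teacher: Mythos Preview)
Your proposal is correct and takes essentially the same approach as the paper: both deduce the result directly from \cite[Theorem 4.1]{Dadarlat:2005aa} by specializing to $B=\C$. The paper in fact gives no proof beyond that citation, so your careful bookkeeping (matching $KL(A,\C)$ with $KL^0(A)$, invoking Remark \ref{only zero} to pass from $\text{Hom}_\Lambda$ to $\text{Hom}_{\Lambda_0}$, and checking the topologies agree) is more detail than the paper supplies.
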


\begin{remark}\label{auct rem}
The reader might compare the approximate $K$-homology UCT to the \emph{Approximate UCT} (AUCT) introduced by Lin in \cite{Lin:2005aa} as a bivariant version of Definition \ref{kh auct} above.  Dadarlat showed the AUCT to be equivalent to the full UCT for a nuclear $C^*$-algebra $A$ in \cite[Theorem 5.4]{Dadarlat:2005aa}.  Definition \ref{kh auct} can be thought of as the `AUCT for $K$-homology' (as opposed to for full bivariant $KK$-theory).  

Essentially the same argument as for Theorem \ref{uct the} shows that the approximate $K$-homology UCT is implied by Brown's $K$-homology UCT \cite{Brown:1984rx}, i.e.\ there are general implications
$$
\text{UCT} ~\Rightarrow~ \text{$K$-homology UCT} ~ \Rightarrow~ \text{approximate $K$-homology UCT}.
$$
We do not know if either implication is reversible (although it seems unlikely).  There are examples of $C^*$-algebras that do not satisfy the UCT: the first such examples come from \cite[Sections 4 and 5]{Skandalis:1988rr}, and these are still probably the most tractable known examples.  We do not know if there are $C^*$-algebras that do not satisfy the $K$-homology UCT.
\end{remark}

Having got through these preliminaries, our last aim in this subsection is to record the following result.  The proof is essentially bookkeeping, and more-or-less follows from results already in the literature; unfortunately, however, the isomorphism $\Theta$ of Theorem \ref{ck kl} is not defined very directly, so there is quite a lot of bookkeeping to do.

\begin{theorem}\label{kl ck hom}
Let $A$ be a separable unital $C^*$-algebra.  With $\Theta$, $\Phi$ and $\Psi$ as in Theorem \ref{ck kl}, Proposition \ref{phi lem}, and line \eqref{kl hom} respectively, the diagram below commutes
$$
\xymatrix{ KL^0(A) \ar[r]^-\Theta \ar[dr]_-{\Psi} & {\displaystyle \lim_{\leftarrow} K^0_\epsilon(X)} \ar[d]^-\Phi \\ & \text{Hom}_{\Lambda_0}(\underline{K}_0(A),\underline{K}_0(\C))}.
$$
Moreover, all the maps are continuous, and $\Theta$ is a homeomorphism.   If also $A$ satisfies the approximate $K$-homology UCT of Definition \ref{kh auct} then $\Phi$ and $\Psi$ are also homeomorphisms.
\end{theorem}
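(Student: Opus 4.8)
The plan is to peel off the purely topological assertions first, since they follow immediately from results already established, and then to devote the real work to commutativity of the triangle. For the topological statements: $\Theta$ is a homeomorphism by Theorem~\ref{ck kl}; $\Phi$ is continuous by Proposition~\ref{phi lem}; and $\Psi$ is continuous because, by its construction in line~\eqref{kl hom}, it is induced by the Kasparov product, which is continuous for the topology of pointwise convergence by \cite[Theorem~3.5]{Dadarlat:2005aa}. Granting that $\Phi\circ\Theta=\Psi$, the last sentence is then formal: if $A$ satisfies the approximate $K$-homology UCT then $\Psi$ is a homeomorphism by Definition~\ref{kh auct}, and hence $\Phi=\Psi\circ\Theta^{-1}$ is a composite of homeomorphisms, so also a homeomorphism.

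It therefore remains to prove $\Phi\circ\Theta=\Psi$. Two homomorphisms into $\text{Hom}_{\Lambda_0}(\underline{K}_0(A),\underline{K}_0(\C))$ are equal as soon as, for every $x$ in the source, their values at $x$ agree on every element of $\underline{K}_0(A)$. Fix then $x\in KL^0(A)$ and, using the direct limit identification $\underline{K}_0(A)=\lim_{\rightarrow}\underline{K}_0^\epsilon(X)$ of Lemma~\ref{dir lim k thy}, represent an arbitrary element of $K_0(A;\Z/n)$ (the case of integer coefficients being identical) by an $(X,\epsilon)$-compatible projection $q$; enlarging $(X,\epsilon)$ in the directed set $\mathcal{I}_A$ we may assume $\epsilon<1/100$, so that Lemma~\ref{big Phi map 0} and Proposition~\ref{phi lem} apply, and that $q$ lies in a $C^*$-subalgebra of $A\otimes M_k(\C)\otimes O_{n+1}$ generated by elementary tensors with first leg in $X$. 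Let $e:=\begin{psmallmatrix}1&0\\0&0\end{psmallmatrix}$ and let $p=e+b\in\mathcal{P}_\epsilon(X)$ represent the component $\Theta(x)_{(X,\epsilon)}\in K^0_\epsilon(X)$ of $\Theta(x)$. On one side, $\bigl((\Phi\circ\Theta)(x)\bigr)([q])=[\langle p,q\rangle]-[e]$ by Proposition~\ref{phi lem} and the inverse-limit construction of line~\eqref{ck hom}. On the other side, $\Psi(x)([q])$ is, by definition, the Kasparov product in $\Z/n=KK(\C,O_{n+1})$ of (any $K$-homology lift of) $x$ with $[q]\in K_0(A;\Z/n)=KK(\C,A\otimes O_{n+1})$.

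The comparison rests on two points. First, one must read off from the construction of $\Theta$ in \cite[Proposition~A.18]{Willett:2020aa} what the representative $p$ is: $\Theta$ sends a class in $K^0(A)$, presented by a cycle in the projection (``graph'') picture of $K$-homology, to the compatible family of classes of the almost-commuting compressions of that cycle, so $p$ is (a small perturbation of) such a cycle. Second, one uses the computation already carried out inside the proof of Lemma~\ref{big Phi map 0}: writing $p=e+b$, the element $\langle p,q\rangle=\chi(e+qbq)$ is the orthogonal sum $e(1-q)\oplus\chi(qpq)$, so that $[\langle p,q\rangle]-[e]=[\chi(qpq)]-[eq]$ is exactly the index of the Fredholm compression of $p$ by $q$; this is the standard formula for the index pairing of a $K$-homology class against a $K$-theory class, and hence equals the Kasparov product, the extra tensor factor $O_{n+1}$ being carried along passively throughout exactly as in the definition of the finite-coefficient product. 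Combining these two points gives $\bigl((\Phi\circ\Theta)(x)\bigr)([q])=\Psi(x)([q])$; since $x$ and $[q]$ were arbitrary, $\Phi\circ\Theta=\Psi$.

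The step I expect to be the main obstacle is the first of the two points above: the homeomorphism $\Theta$ of Theorem~\ref{ck kl} is extracted from a chain of intermediate identifications in \cite[Appendix~A]{Willett:2020aa} rather than given by a direct formula, so some care is needed to check that it genuinely intertwines the Kasparov pairing with the controlled pairing of Lemma~\ref{big Phi map 0}. Everything else is routine: the second point is essentially contained in the proof of Lemma~\ref{big Phi map 0}, and the topological assertions are formal consequences of Theorem~\ref{ck kl}, Proposition~\ref{phi lem}, \cite[Theorem~3.5]{Dadarlat:2005aa}, and Definition~\ref{kh auct}.
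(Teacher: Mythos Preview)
Your handling of the topological assertions is identical to the paper's, and your reduction to commutativity is correct. The genuine content is, as you say, establishing $\Phi\circ\Theta=\Psi$, and here your proposal and the paper diverge.

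You propose a direct comparison: unravel $\Theta$ to get a concrete almost-commuting projection $p$, then verify by hand that the controlled pairing $\langle p,q\rangle$ of Lemma~\ref{big Phi map 0} computes the Kasparov index pairing. You flag the first step as the obstacle, and you are right that it is one. The issue is that $\Theta$ is not given by a formula ``K-homology cycle $\mapsto$ its almost-commuting compression''; it is defined (see \cite[Theorem~4.14 and Proposition~A.18]{Willett:2020aa}) as a composite passing through the $K$-theory of localization algebras $C^*_{L,uc}(A)$ and $C^*_{L,c}(A)$, and the identification of these with analytic $K$-homology is itself a theorem of Dadarlat--Willett--Yu \cite{Dadarlat:2016qc}. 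So your ``first point'' is not a routine unpacking but requires tracking a pairing through that chain of isomorphisms, and your proposal does not actually do this.

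The paper does not attempt your direct route. Instead it writes out the full chain $K^0(A)\xrightarrow{\alpha_2}K_0(C^*_{L,uc}(A))\xrightarrow{\alpha_3}K_0(C^*_{L,c}(A))\xrightarrow{\alpha_4}\lim_\leftarrow K^0_\epsilon(X)$ defining $\Theta$, introduces intermediate pairing maps $\beta_2,\beta_3$ at the localization-algebra level (following Wang--Zhang--Zhou \cite{Wang:2023ab}), and checks commutativity of each small triangle separately. The crucial triangle $\beta_2\circ\alpha_2=\beta_1$ is not proved here but imported from \cite[Theorem~3.7]{Wang:2023ab}; the final triangle relating $\beta_3$ to $\Phi$ goes through an auxiliary computation with asymptotic algebras (Lemma~\ref{asy alg}). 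So where you hoped to argue directly, the paper outsources the hard comparison to an external reference and a lemma about $K_*(\mathfrak{A}(\mathcal{K}(H)))$.

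Your ``second point'' (that $[\langle p,q\rangle]-[e]$ is the Fredholm index pairing) is morally right but also not quite free: one still needs to identify this with the Kasparov product in the precise model being used, and the paper's detour through $\mathfrak{A}(\mathcal{K}(H))$ is effectively doing that. In short, your outline is reasonable and you diagnosed the gap honestly, but closing it by your route would amount to reproving results equivalent to \cite[Theorem~3.7]{Wang:2023ab}; the paper avoids this by citing it.
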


We need one more lemma before we get to the proof of this.

\begin{lemma}\label{asy alg}
Let $B$ be a stable\footnote{We warn the reader that something like the stability assumption is needed.  For example, the conclusion fails if $B=\C$.} $C^*$-algebra.  Let 
$$
\mathfrak{A}(B):=\frac{C_b([1,\infty),B)}{C_0([1,\infty),B)}
$$
be the asymptotic algebra in the sense of Connes-Higson \cite{Connes:1990if}.  Then the inclusion of $B$ in $\mathfrak{A}(B)$ as constant functions induces an isomorphism on $K$-theory.

Moreover, an inverse to this isomorphism can be defined as follows.  First, one shows that any element of $K_0(\mathfrak{A}(B))$ can be represented in the form $[p]-\begin{bsmallmatrix} 1 & 0 \\ 0 & 0 \end{bsmallmatrix}$, where $p$ is a projection in the $2\times 2$ matrices over the unitization of $\mathfrak{A}(B)$ whose difference with $\begin{psmallmatrix} 1 & 0 \\ 0 & 0 \end{psmallmatrix}$ is in $M_2(\mathfrak{A}(B))$.  Let $a$ be a self-adjoint element of the $2\times 2$ matrices over the unitization of $C_b([1,\infty),B)$ that lifts $p$ and is such that $a-\begin{psmallmatrix} 1 & 0 \\ 0 & 0 \end{psmallmatrix}$ is in $M_2(C_0([1,\infty),B)$.  Let $\chi$ be the characteristic function of $(1/2,\infty)$.  Then the inverse we want can be given by
$$
[p]-\begin{bsmallmatrix} 1 & 0 \\ 0 & 0 \end{bsmallmatrix}\mapsto [\chi (a(t))] - \begin{bsmallmatrix} 1 & 0 \\ 0 & 0 \end{bsmallmatrix} 
$$
for any sufficiently large $t\in [1,\infty)$.

If we instead define 
$$
\mathfrak{A}_u(B):=\frac{C_{ub}([1,\infty),B)}{C_0([1,\infty),B)}
$$
where $C_{ub}([1,\infty),B)$ denotes uniformly continuous and bounded functions from $[1,\infty)$ to $B$, then the same statements hold with $\mathfrak{A}_u(B)$ in place of $\mathfrak{A}(B)$.
\end{lemma}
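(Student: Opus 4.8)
The plan is to verify directly that the map $\beta$ described in the statement --- write a class as $[p]-[e]$ with $e=\begin{psmallmatrix}1&0\\0&0\end{psmallmatrix}$, pick a self-adjoint lift $a$ of $p$, and send it to $[\chi(a(t))]-[e]$ for $t$ large --- is a two-sided inverse to the homomorphism $\iota_*$ on $K_0$ induced by the constant embedding $\iota\colon B\to\mathfrak A(B)$, and then to obtain the odd half of the $K$-theory isomorphism either by the same argument with unitaries in place of projections or by a suspension reduction. The stability hypothesis enters at two places: it is used (together with standard $K_0$ normal-form manipulations) to represent every class of $K_0(\mathfrak A(B))$ in the asserted form, using that $B$, hence $\mathfrak A(B)$, absorbs matrix algebras ($M_k(\mathfrak A(B))\cong\mathfrak A(M_k(B))\cong\mathfrak A(B)$); and, more essentially, it is used in the surjectivity step below, in a way that genuinely fails for $B=\mathbb C$. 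I would also record at the outset the exact sequence $0\to C_0([1,\infty),B)\to C_b([1,\infty),B)\to\mathfrak A(B)\to 0$, noting that $C_0([1,\infty),B)$ is a cone over $B$ and hence $K$-trivial, so that self-adjoint lifts of projections (and unitaries) over $\mathfrak A(B)$ exist.

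First I would check $\beta$ is well defined. Lift $p$ to a self-adjoint $a$ over $M_2$ of the unitization of $C_b([1,\infty),B)$ with $a-e\in M_2(C_b([1,\infty),B))$. Since $a^2-a$ lands in $M_2(C_0([1,\infty),B))$ we have $\|a(t)^2-a(t)\|\to0$, so past some $T$ the spectrum of $a(t)$ omits $1/2$, the functional calculus gives an honest projection $\chi(a(t))$, and applying the quotient $M_2(B^+)\to M_2(\mathbb C)$ shows $\chi(a(t))-e\in M_2(B)$. The map $t\mapsto\chi(a(t))$ is norm-continuous on $[T,\infty)$, so (patching unitary trivializations over compact subintervals) the projections $\chi(a(t))$, $t\ge T$, are mutually Murray--von Neumann equivalent, and $[\chi(a(t))]-[e]$ is independent of the choice of large $t$. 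Two self-adjoint lifts of $p$ differ by an element of $M_2(C_0([1,\infty),B))$, hence agree at large $t$ up to a vanishing error and yield the same class. Additivity under direct sums is clear, so $\beta\colon K_0(\mathfrak A(B))\to K_0(B)$ is a homomorphism, and it is the map of the statement.

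The identity $\beta\circ\iota_*=\mathrm{id}_{K_0(B)}$ is immediate: a class $[q]-[e]\in K_0(B)$ has the constant function $q$ as a self-adjoint lift of $\iota(q)$, and this lift is already a projection, so $\chi(q(t))=q$ and $\beta$ returns the original class. The content is $\iota_*\circ\beta=\mathrm{id}$, that is, surjectivity of $\iota_*$. Given $[p]-[e]$, replace the lift $a$ by $t\mapsto\chi(a(t))$ for $t\ge T$ and a constant for $t\le T$; this is still a lift of $p$ and is now literally a norm-continuous path of projections $e_t$ ($t\ge T$). Writing $[T,\infty)$ as an increasing union of compact intervals and composing the local unitary trivializations of $(e_t)$ over them --- using that $e_se_t+(1-e_s)(1-e_t)$ is invertible and intertwines $e_t$ with $e_s$ once $\|e_s-e_t\|<1$ --- produces a norm-continuous, hence bounded, path of unitaries $w_t$ ($t\ge T$) with $w_T=1$ and $w_te_Tw_t^{*}=e_t$; extend it by $w_t=1$ for $t\le T$. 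Each $w_t$ lies in $1+M_2(B)$, so $t\mapsto w_t$ descends to a unitary $\bar w$ in $1+M_2(\mathfrak A(B))$ with $\bar w\,\iota(e_T)\,\bar w^{*}=p$. Since conjugate projections carry the same $K_0$-class, $[p]-[e]=[\iota(e_T)]-[e]=\iota_*\big([e_T]-[e]\big)=\iota_*\big(\beta([p]-[e])\big)$.

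For the odd half of the $K$-theory isomorphism I would run the analogous argument with unitary-valued lifts: a unitary over the unitization of $\mathfrak A(B)$ is lifted, polar-decomposed past some $T$ to an honest norm-continuous path of unitaries $u_t$, and one must show the germ at infinity of $(u_tu_T^{-1})_t$ is trivial in $K_1(\mathfrak A(B))$; here the projection-trivialization of the previous paragraph is replaced by an Eilenberg-swindle ``infinite repeat'' of the path, and it is exactly this step that requires stability of $B$ (it must, since the statement is false for $B=\mathbb C$, whose asymptotic algebra has $K_1\ne 0$). Alternatively, one applies the even-degree result to the again-stable suspension $SB=C_0(\mathbb R)\otimes B$, uses the natural comparison homomorphism $S\mathfrak A(B)\to\mathfrak A(SB)$ (which one checks is a $K$-equivalence for stable coefficients) together with Bott periodicity, and invokes naturality of $\iota$. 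Finally, for the $\mathfrak A_u(B)$ version every ingredient can be produced in uniformly continuous form: one starts from a uniformly continuous lift (these exist), the correction $t\mapsto\chi(a(t))$ and the patched path $t\mapsto w_t$ are uniformly continuous on $[T,\infty)$, and the interpolations on the bounded part may be taken uniformly continuous; alternatively one simply notes $\mathfrak A_u(B)\subseteq\mathfrak A(B)$ and that the formulas above already live there. The main obstacle is the surjectivity step of the previous paragraph and its odd-degree counterpart --- i.e.\ taming the a priori wild behaviour at infinity of an arbitrary lift --- and this is precisely where stability is indispensable.
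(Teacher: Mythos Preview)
Your $K_0$ argument is correct and takes a genuinely different route from the paper. You build $\beta$ explicitly, check $\beta\circ\iota_*=\mathrm{id}$ directly, and for $\iota_*\circ\beta=\mathrm{id}$ you lift $p$ to a continuous path of projections $(e_t)_{t\ge T}$ and patch the local intertwiners $e_se_t+(1-e_s)(1-e_t)$ into a bounded continuous unitary path $w_t\in 1+M_2(B)$ conjugating $\iota(e_T)$ to $p$ in $M_2(\widetilde{\mathfrak A(B)})$. The paper instead reduces everything to showing $K_*(I)=0$ for $I=\ker\big(\mathrm{ev}_1\colon C_b([1,\infty),B)\to B\big)$, decomposes $[1,\infty)$ into odd and even unit intervals to obtain a pullback square, and runs Mayer--Vietoris; stability enters through the identity $K_*\big(\prod_n A_n\big)\cong\prod_n K_*(A_n)$ for stable summands. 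Having established that $\iota_*$ is bijective this way, the paper then only needs $\beta$ to be a \emph{one}-sided inverse, and leaves that check to the reader.

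The gap in your proposal is the odd degree. Your $K_0$ trick has no $K_1$ analogue: unitary conjugacy of projections already yields equality in $K_0$, but for unitaries you must actually produce a homotopy in $U_\infty(\widetilde{\mathfrak A(B)})$ from $(v_t)_t$ (with $v_T=1$) to the identity, and the obvious reparametrizations fail to be sup-norm continuous at the endpoint. Your suspension alternative is circular: applying the five lemma to the comparison $S\mathfrak A(B)\to\mathfrak A(SB)$, after cancelling the contractible $C_0$ terms, reduces to showing that $SC_b([1,\infty),B)\to C_b([1,\infty),SB)$ is a $K_0$-isomorphism, i.e.\ that $K_1\big(C_b([1,\infty),B)\big)\cong K_1(B)$, which is equivalent to what you want. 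The swindle alternative is not spelled out, and the naive version fails: the infinite diagonal $\mathrm{diag}(v_t,v_t^*,v_t,\ldots)-1$ does not lie in $B$ (its entries do not decay), and $\mathfrak A(B)$ need not itself be stable even when $B$ is, so one cannot simply swindle inside $\widetilde{\mathfrak A(B)}$. This is exactly why the paper's Mayer--Vietoris route is valuable: it handles $K_0$ and $K_1$ uniformly, with stability doing its work through the product formula rather than through any direct homotopy construction.
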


\begin{proof}
The $C^*$-algebra $C_0([1,\infty),B)$ is contractible, so for the isomorphism result it suffices to show that the constant inclusion $B\to C_b([1,\infty),B)$ induces an isomorphism on $K$-theory.  For this, it suffices to show that the evaluation-at-one map $C_b([1,\infty),B)\to B$ induces an isomorphism on $K$-theory, and for this it suffices to show that $K_*(I)=0$, where $I:=\{f\in C_b([1,\infty),B)\mid f(1)=0\}$ is the kernel of that map.

Let then 
$$
I_O:=C_0((1,2],B)\prod_{n=1}^\infty C([2n+1,2n+2],B), \quad I_E:=\prod_{n=1}^\infty C([2n,2n+1],B)$$
and 
$$
I_Z:=\prod_{n=2}^\infty B.
$$
Then there is a pullback square of $C^*$-algebras and canonical restriction morphisms
\begin{equation}\label{pback}
\xymatrix{ I \ar[r] \ar[d] & I_O \ar[d] \\ I_E\ar[r] & I_Z }
\end{equation}
giving rise to a Mayer-Vietoris sequence 
\begin{equation}\label{mv}
\xymatrix{ K_0(I) \ar[r] &  K_0(I_O)\oplus K_0(I_E) \ar[r] & K_0(I_Z) \ar[d] \\
K_1(I_Z) \ar[u] & K_1(I_O)\oplus K_1(I_E) \ar[l] & K_1(I) \ar[l] }
\end{equation}
where the horizontal maps are induced by the morphisms in line \eqref{pback} (see for example \cite[Definition 2.7.14 and Proposition 2.7.15]{Willett:2010ay}).  On the other hand, using stability of $B$, the $K$-theory of the direct products splits 
$$
K_i(I_O)\cong K_i(C_0((1,2],B))\prod_{n=1}^\infty K_i(C([2n+1,2n+2],B))
$$
and similarly for $I_E$ and $I_Z$ (see for example \cite[Proposition 2.7.12]{Willett:2010ay}).  The required vanishing result follows readily from this and a computation based on the Mayer-Vietoris sequence in line \eqref{mv}.

For the construction of the inverse, it suffices to show that the given process induces a well-defined map on $K$-theory that is a one-sided inverse to the constant inclusion.  This is straightforward: we leave the details to the reader.

Finally, the uniformly continuous case follows from exactly the same argument (or an argument based on an Eilenberg swindle: compare \cite[page 229]{Willett:2010ay}).
\end{proof}

\begin{proof}[Proof of Theorem \ref{kl ck hom}]
Continuity of $\Psi$ is a special case of \cite[Theorem 3.5]{Dadarlat:2005aa} as already noted above. Continuity of $\Phi$ is contained in Proposition \ref{phi lem}.  Moreover, that $\Theta$ is a homeomorphism is covered in Theorem \ref{ck kl} above.  On the other hand, if $A$ satisfies the approximate $K$-homology UCT of Definition \ref{kh auct}, then $\Psi$ is a homeomorphism by definition.  Hence $\Phi$ will also be a homeomorphism in that case once we have established commutativity; this is the rest of the proof.

For commutativity, we use the following diagram
{\tiny \begin{equation}\label{2 tri}
\xymatrix{ KL^0(A) \ar@/^20pt/[rrrr]^-\Theta \ar@/_6pt/[drr]_-\Psi & K^0(A) \ar[l]^-{\alpha_1} \ar[r]_-{\alpha_2} \ar[dr]^-{\beta_1}  & K_0(C_{L,uc}^*(A)) \ar[r]_-{\alpha_3} \ar[d]^-{\beta_2} &  K_0(C^*_{L,c}(A)) \ar[r]_-{\alpha_4} \ar[dl]_-{\beta_3} &  {\displaystyle \lim_{\leftarrow} K^0_\epsilon(X)} \ar@/^6pt/[dll]^-\Phi \\
& & \text{Hom}_{\Lambda_0}(\underline{K}_0(A),\underline{K}_0(\C)) & &  }
\end{equation}}
that we explain now.  

We start with the top part of the diagram.  The map labeled $\alpha_1$ is the canonical quotient map from $K$-homology to $KL$-homology.  The $C^*$-algebra $C^*_{L,c}(A)$ (respectively, $C^*_{L,uc}(A)$) consists of bounded and continuous (respectively, uniformly continuous) functions $b:[1,\infty)\to \mathcal{K}(H)$ such that $[b(t),a]\to 0$ as $t\to\infty$ for all $a\in A$.  The map labeled $\alpha_2$ is defined and shown to be an isomorphism in \cite[Theorem 4.5]{Dadarlat:2016qc}\footnote{This is slightly off: the paper \cite{Dadarlat:2016qc} uses non-unital absorbing representations; it is not difficult to show that for unital $C^*$-algebras one may instead use a unital absorbing representation, and that this makes no difference on the level of $K$-theory, however.  This slight elision occurs at a few other points in the proof below, and is not a serious issue anywhere it appears.}.  The map labeled $\alpha_3$ is the canonical inclusion, and is shown to be an isomorphism in \cite[Theorem 3.4]{Willett:2020aa}.  The map labeled $\alpha_4$ is defined by first showing that every element of $C^*_{L,c}(A)$ can be represented in the form $[p]-\begin{bsmallmatrix} 1 & 0 \\ 0 & 0 \end{bsmallmatrix}$ for some projection $p$ in $M_2(C^*_{L,c}(A)^+)$ (here $C^*_{L,c}(A)^+$ is the unitization of $C^*_{L,c}(A)$) such that $p-\begin{psmallmatrix} 1 & 0 \\ 0 & 0 \end{psmallmatrix}\in M_2(C_{L,c}(A))$; then choosing $t_{X,\epsilon}\in [1,\infty)$ for each $(X,\epsilon)$ such that $\|[p_t,x]\|<\epsilon$ for all $t\geq t_{X,\epsilon}$ and all $x\in X$ and sending $[p]-\begin{bsmallmatrix} 1 & 0 \\ 0 & 0 \end{bsmallmatrix}$ to the element $([p_{t_{X,\epsilon}}])_{(X,\epsilon)\in \mathcal{I}_A}$ of the inverse limit.  

Now, the proofs of \cite[Theorem 4.14 and Proposition A.18]{Willett:2020aa} give that the map $\alpha_4\circ \alpha_3\circ \alpha_2:K^0(A)\to {\displaystyle \lim_{\leftarrow} K^0_\epsilon(X)}$ descends to an isomorphism on $KL$-homology; this is exactly the map we are calling $\Theta$, and therefore the top part of the diagram commutes. 

Let us now look at the lower triangles in diagram \eqref{2 tri}.  The map labeled $\beta_1$ is the standard pairing between $K$-homology and total $K$-theory; as the map $\Psi$ is by definition the map this induces on $KL$-homology, the left-most triangle in diagram \eqref{2 tri} commutes.  

The maps labeled $\beta_2$ and $\beta_3$ are special cases of the maps defined by Wang, Zhang, and Zhou in \cite[Definition 3.6]{Wang:2023ab} that we now spell out.  For notational simplicity, however, we just do the pairing with $K_0(A)$: the same argument works with ``$\,\cdot\,\otimes O_{n+1}$'' dragged through the proof for the groups with finite coefficients.  It is straightforward to see that \cite[Definition 3.6]{Wang:2023ab} is equivalent to the following description of the map $\beta_2$.  Any class in $K_0(C^*_{L,c}(A))$ can be represented in the form $[p]-\begin{bsmallmatrix} 1 & 0 \\ 0 & 0 \end{bsmallmatrix}$ where $p=\begin{psmallmatrix} 1 & 0 \\ 0 & 0 \end{psmallmatrix} +b$ for some $b\in M_2(C^*_{L,c}(A))$.  Let $q\in M_k(A)$ be a projection representing a class in $K_0(A)$.  Then the element $\begin{psmallmatrix} 1 & 0 \\ 0 & 0 \end{psmallmatrix}+1_{M2}\otimes qb\otimes 1_{M_k}$ defines a projection in $2\times 2$ matrices over the unitization of the asymptotic algebra $\mathfrak{A}_u(M_{k}(\mathcal{K}(H)))$ of Lemma \ref{asy alg} whose difference with $\begin{psmallmatrix} 1 & 0 \\ 0 & 0 \end{psmallmatrix}$ is in $\mathfrak{A}_u(M_{k}(\mathcal{K}(H)))$.  The map $\beta_2$ then takes $[p]-\begin{bsmallmatrix} 1 & 0 \\ 0 & 0 \end{bsmallmatrix}$ to the image of the class 
$$
\Big[\begin{psmallmatrix} 1 & 0 \\ 0 & 0 \end{psmallmatrix}+1_{M2}\otimes qb\otimes 1_{M_k}\Big]-\begin{bsmallmatrix} 1 & 0 \\ 0 & 0 \end{bsmallmatrix} \in K_0(\mathfrak{A}(\mathcal{K}(H))
$$
under the isomorphism $K_0(\mathfrak{A}_u(\mathcal{K}(H))\cong K_0(\mathcal{K}(H))\cong K_0(\C)$ from Lemma \ref{asy alg}.  The map $\beta_3$ is defined entirely analogously, but with $\mathfrak{A}_u(\mathcal{K}(H))$ replaced by $\mathfrak{A}(\mathcal{K}(H))$.  

Now the triangle in diagram \eqref{2 tri} involving $\alpha_3$, $\beta_2$ and $\beta_3$ commutes.  On the other hand, the triangle involving $\beta_1$, $\alpha_2$ and $\beta_2$ commutes by \cite[Theorem 3.7]{Wang:2023ab}.  It remains at this point to show that the diagram 
$$
\xymatrix{ K_0(C_{L,c}^*(A)) \ar[r]^-{\alpha_4} \ar[d]_-{\beta_3} & {\displaystyle \lim_{\leftarrow} K^0_\epsilon(X)} \ar[dl]^-\Phi \\
\text{Hom}(K_0(A),K_0(\C)) & }
$$
commutes (and similarly for the case with finite coefficients).   This follows from the description of the isomorphism of $K_0(\mathfrak{A}(\mathcal{K}(H)))\to K_0(\mathcal{K}(H))$ given in Lemma \ref{asy alg}, the description of $\beta_3$ above, and the definition of $\Phi$ from Lemma \ref{phi lem}, so we are done.
\end{proof}

\begin{remark}\label{homeo meaning}
As we will need it later, let us spell out a little more concretely what it means for the map
$$
\Phi:\lim_{\leftarrow} K^0_\epsilon(X) \to  \text{Hom}_{\Lambda_0}(\underline{K}_0(A),\underline{K}_0(\C))
$$
to be a homeomorphism; this turns out to be quite a powerful statement.  A neighborhood basis of the zero element in ${\displaystyle \lim_{\leftarrow} K^0_\epsilon(X)}$ is given by the subgroups $U_{Y,\delta}$ consisting of all elements in ${\displaystyle\lim_{\leftarrow} K^0_\epsilon(X)}$ that map to zero in $K^0_\delta(Y)$ as $(Y,\delta)$ ranges over the set $\mathcal{I}_A$ of Definition \ref{dir set}.  A neighborhood basis of the zero map in $\text{Hom}_{\Lambda_0}(\underline{K}_0(A),\underline{K}_0(\C))$ consists of the subsets $V_{P}$ of elements that restrict to the zero map in $\text{Hom}_{\Lambda_0^{(N)}}(P,\underline{K}_0(\C))$ as $P$ ranges over all $K$-data as in Definition \ref{fin set tot hom}.

Now, $\Phi$ being continuous means that for any $(Y,\delta)\in \mathcal{I}_A$ there exists a $K$-datum $P$ as above such that $\Phi$ descends to a well-defined map
$$
K^0_\delta(Y)\to \text{Hom}_{\Lambda_0^{(N)}}(P,\underline{K}_0(\C)).
$$
Indeed, the map above is the composition
$$
K^0_\delta(Y)\xrightarrow{\Phi^{(Y,\delta)}} \text{Hom}_{\Lambda_0}(\underline{K}_0^\delta(Y),\underline{K}_0(\C))\xrightarrow{\kappa^{(Y,\delta)}_P}\text{Hom}_{\Lambda_0^{(N)}}(P,\underline{K}_0(\C))
$$
of the maps of Proposition \ref{phi lem} and Lemma \ref{partial kappa}.

On the other hand, the fact that $\Phi^{-1}$ is continuous means that for any $K$-datum $P$ there exists $(Y,\delta)\in \mathcal{I}_A$ such that $\Phi^{-1}$ descends to a well-defined map 
$$
(\Phi^{-1})^P_{(Y,\delta)}:\text{Hom}_{\Lambda_0^{(N)}}(P,\underline{K}_0(\C))\to K^0_\delta(Y).
$$
Finally these maps are consistent with the inverse limit structures when they are defined, i.e.\ we have commutative diagrams of homomorphisms
$$
\xymatrix{ & K^0_\delta(Y) \ar[dr]^-{\kappa^{(Y,\delta)}_P\circ \Phi^{(Y,\delta)}} & \\
 \text{Hom}_{\Lambda_0^{(M)}}(Q,\underline{K}_0(\C))\ar[rr] \ar[ur]^-{(\Phi^{-1})^Q_{(Y,\delta)}} & & \text{Hom}_{\Lambda_0^{(N)}}(P,\underline{K}_0(\C))} 
$$
and
$$
 \xymatrix{ K^0_\delta(Y) \ar[dr]_-{\kappa^{(Y,\delta)}_P\circ \Phi^{(Y,\delta)}} \ar[rr] & & K^0_\epsilon(X) \\
&  \text{Hom}_{\Lambda_0^{(N)}}(P,\underline{K}_0(\C)) \ar[ur]_-{(\Phi^{-1})^P_{(X,\epsilon)}} & } 
$$
where defined (here the horizontal maps are forgetful maps).  

In summary, the fact that $\Phi$ is a homeomorphism on the inverse limits means that it comes from a coherent system of maps between the inverse systems.  For $\Phi$ this is essentially true by definition; the more difficult (and mysterious) fact is that it also holds for $\Phi^{-1}$. 
\end{remark}

\section{Stable uniqueness for representations of $C^*$-algebras}\label{su sec}

Our goal in this section is to establish a stable\footnote{``Stable'' is used in the sense of $K$-theory: i.e.\ `up to taking block sum with something'.  It is not directly connected to representation stability as in Definition \ref{stable}.} uniqueness theorem, following Dadarlat-Eilers \cite[Section 4]{Dadarlat:2002aa} and Lin \cite{Lin:2002aa}.  Very roughly, such a theorem says that given two ucp approximately multiplicative maps between $C^*$-algebras, if they agree on a large enough subset of $K$-theory, then they are approximately unitarily equivalent after taking block sum with a homomorphism.  %For our purposes in this paper, the important difference between our stable uniqueness theorem (see Theorem \ref{stab main} below) and those of \cite{Dadarlat:2002aa} and \cite{Lin:2002aa} is that we do not require any nuclearity or exactness assumptions on the domain $C^*$-algebra.  This is important for our applications as group $C^*$-algebras of non-amenable residually finite groups are never exact: see \cite[Proposition 3.7.11]{Brown:2008qy}. 

We have not attempted to state our stable uniqueness result in maximal generality.  For example, a version for the bivariant theory $KL(A,B)$ should be possible under appropriate assumptions\footnote{For our methods to work, these probably have to include nuclearity of $B$.}, but we will not pursue that here.

The following definition is a slight variant on \cite[Definition 3.7]{Dadarlat:2002aa}.

\begin{definition}\label{k0 trip}
Let $A$ be a unital $C^*$-algebra.  A \emph{$\underline{K}_0$-triple} for $A$ is a triple $(P,X,\epsilon)$ where:
\begin{enumerate}[(i)]
\item $X$ is a finite self-adjoint subset of the unit ball of $A$;
\item $\epsilon$ is a positive real number;
\item $P$ is a $K$-datum as in Definition \ref{fin set tot hom} such that the map 
$$
\kappa^{(X,\epsilon)}_P: \text{Hom}_{\Lambda_0}(\underline{K}_0^\epsilon(X),\underline{K}_0(\C))\to \text{Hom}_{\Lambda_0^{(N)}}(P,\underline{K}_0(\C))
$$
of Lemma \ref{partial kappa} part \eqref{pk1} above is defined.
\end{enumerate}
\end{definition}

The following definition will be technically convenient.

\begin{definition}\label{good rep}
Let $A$ be a $C^*$-algebra, and let $\mathcal{R}$ be a class of unitary representations of $A$.  

The collection is said to be \emph{good} if it consists only of finite-dimensional unital representations of $A$, is closed under finite direct sums and unitary equivalence, and if for any $a\in A$ there is $\pi$ in $\mathcal{R}$ with $\pi(a)\neq 0$\footnote{The last condition (i.e.\ that representations from $\mathcal{R}$ separate points in $A$) is the same as $\mathcal{R}$ being dense in the \emph{Fell topology}, i.e.\ the topology introduced by Fell and called the \emph{inner hull-kernel} topology in \cite[Section 2]{Fell:1962aa}}.
\end{definition}

\begin{example}\label{good ex}
We will apply Definition \ref{good rep} to the following two examples: the class $\mathcal{R}_d$ of all finite-dimensional representations of $A$; and if $A=C^*(\Gamma)$, the class $\mathcal{R}_q$ of all finite-dimensional representations of $A$ that (when considered as representations of $\Gamma$) factor through a finite quotient.  If $\mathcal{R}_d$ is good for $A$ is said to be \emph{residually finite dimensional} (RFD), and if $\mathcal{R}_q$ is good for $C^*(\Gamma)$, $\Gamma$ is said to have \emph{property FD} of Lubotzky-Shalom \cite{Lubotzky:2004xw}.
\end{example}

Here is the main theorem of this section.  

\begin{theorem}\label{stab main}
Let $A$ be a separable, unital $C^*$-algebra that satisfies the approximate $K$-homology UCT of Definition \ref{kh auct}.  Let $\mathcal{R}$ be a good class of representations of $A$ as in Definition \ref{good rep}\footnote{So in particular, $A$ is RFD as in Example \ref{good ex}.}.

Let $\mathcal{X}$ be a family of finite, symmetric subsets of the unit ball of $A$ that is closed under finite unions, and such that $\bigcup_{X\in \mathcal{X}}X$ generates $A$ as a $C^*$-algebra.  Then for any $X\in \mathcal{X}$ and $\epsilon>0$ there exists a $\underline{K}_0$-triple $(P,Y,\delta)$ as in Definition \ref{k0 trip} with $Y\in \mathcal{X}$ and with the following property.

Let $\phi^0,\phi^1:A\to M_n(\C)$ be ucp $(Y,\delta)$-$*$-homomorphisms as in Definition \ref{x eps hom}, and let 
$$
\kappa^{(Y,\delta)}_P(\phi^i_*)\in \text{Hom}_{\Lambda_0^{(N)}}(P,\underline{K}_0(\C)),\quad i\in \{0,1\}
$$
be the homomorphisms given by Lemma \ref{x eps lambda module}, Lemma \ref{partial kappa} and the definition of a $\underline{K}_0$-triple.  Assume moreover that $\kappa^{(Y,\delta)}_P(\phi^0_*)=\kappa^{(Y,\delta)}_P(\phi^1_*)$.

Then there exist a $*$-representation $\theta:A\to M_k(\C)$ in $\mathcal{R}$ and a unitary $u\in M_{n+k}(\C)$ such that 
$$
\|u(\phi^0(x)\oplus \theta(x))u^*-(\phi^1(x)\oplus \theta(x))\|<\epsilon
$$
for all $x\in X$.
\end{theorem}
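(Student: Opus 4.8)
The plan is to argue by contradiction: assemble the would-be finite-stage counterexamples into a single genuine $\ast$-homomorphism into an asymptotic (product) quotient $C^*$-algebra, upgrade the stagewise $K$-theoretic matching into an equality of controlled $K$-homology classes at every level using the approximate $K$-homology UCT, run the Voiculescu-type absorption argument there (which is available in the ucp-based controlled model without any exactness of $A$), and then pull the resulting approximate unitary equivalence back to finitely many stages — using that $A$ is RFD to guarantee that the auxiliary representation is finite dimensional.

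\textbf{Setup and the asymptotic algebra.} First I would fix $X\in\mathcal{X}$ and $\epsilon>0$ and suppose no $\underline{K}_0$-triple $(P,Y,\delta)$ with $Y\in\mathcal{X}$ works. Since $\mathcal{I}_A$ admits cofinal sequences (Remark \ref{dir set rem}), since $\mathcal{X}$ is closed under unions with $\bigcup\mathcal{X}$ generating $A$, and since Lemma \ref{partial kappa} produces, for each $K$-datum, a tolerance turning it into a $\underline{K}_0$-triple, one can choose an increasing sequence of $\underline{K}_0$-triples $(P_m,Y_m,\delta_m)$ with $Y_m\in\mathcal{X}$ increasing, $\delta_m\searrow 0$, $\bigcup_m Y_m$ generating $A$, and $\bigcup_m P_m$ exhausting all $K$-data. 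The failure of the conclusion then supplies, for each $m$, a ucp $(Y_m,\delta_m)$-$\ast$-homomorphism $\phi_m:A\to M_{n_m}(\C)$ and a representation $\pi_m:A\to M_{n_m}(\C)$ with $\pi_{m\ast}=\kappa^{(Y_m,\delta_m)}_{P_m}((\phi_m)_\ast)$ in $\text{Hom}_{\Lambda_0}(P_m,\underline{K}_0(\C))$, but for which there is no representation $\theta:A\to M_k(\C)$ and unitary $u\in M_{n_m+k}(\C)$ with $\|u(\phi_m(x)\oplus\theta(x))u^\ast-(\pi_m(x)\oplus\theta(x))\|<\epsilon$ for all $x\in X$. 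Put $M:=\prod_m M_{n_m}(\C)$, $M_0:=\bigoplus_m M_{n_m}(\C)$, $M_\infty:=M/M_0$, with quotient map $q$. Because each $\phi_m$ is ucp (hence contractive and $\ast$-preserving) and because $\delta_m\to 0$ while $\bigcup_m Y_m$ generates $A$, the map $\Phi:=q\circ(\phi_m)_m$ is multiplicative on a generating set, hence extends to a unital $\ast$-homomorphism $\Phi:A\to M_\infty$; similarly $\widehat\Pi:=(\pi_m)_m:A\to M$ is a genuine representation, with $\Pi:=q\circ\widehat\Pi$ the induced $\ast$-homomorphism into $M_\infty$.

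\textbf{From $K$-data to controlled $K$-homology.} Next I would promote the stagewise matching to an equality of controlled $K$-homology classes at every level. Since $A$ satisfies the approximate $K$-homology UCT, Theorem \ref{uct the} and Theorem \ref{kl ck hom} make $\Phi$ a homeomorphism, so Remark \ref{homeo meaning} applies: for any $(Y,\delta)\in\mathcal{I}_A$ there is a $K$-datum $P$ such that two elements of $\text{Hom}_{\Lambda_0}(\underline{K}_0^\delta(Y),\underline{K}_0(\C))$ agreeing on $P$ have equal images in $K^0_\delta(Y)$. By Stinespring and Example \ref{xem ex}, each $\phi_m$ and each $\pi_m$ determines a controlled $K$-homology class, and $(\phi_m)_\ast$ (resp.\ $\pi_{m\ast}$) restricted to $P_m$ is the image of that class under $\kappa^{(Y_m,\delta_m)}_{P_m}\circ\Phi^{(Y_m,\delta_m)}$. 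Hence, for $m$ large enough that $(Y_m,\delta_m)\geq(Y,\delta)$ and $P_m\supseteq P$, the hypothesis $\pi_{m\ast}=\kappa^{(Y_m,\delta_m)}_{P_m}((\phi_m)_\ast)$ on $P_m$ forces the images of $[\phi_m]$ and $[\pi_m]$ in $K^0_\delta(Y)$ to coincide. Thus: \emph{for every level $(Y,\delta)$ the controlled $K$-homology classes of $\phi_m$ and $\pi_m$ agree for all large $m$}, coherently in $(Y,\delta)$ and compatibly with the $\Lambda_0$-module structure (so, with finite coefficients included). This is exactly the input needed for absorption: the almost-commuting projections dilating $\Phi$ and $\Pi$ represent the same class at every controlled level.

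\textbf{Absorption and descent — the main obstacle.} The crux is carrying out the stable-uniqueness/absorption argument for $\Phi$ and $\Pi$ in $M_\infty$ and extracting \emph{finite-dimensional} auxiliary data. Here the ucp-based controlled model is essential: everything is phrased via ucp $(X,\epsilon)$-$\ast$-homomorphisms and Stinespring dilations, so one never needs $A$ exact, which is the entire point of avoiding the classical $KK$-machinery. Concretely, the levelwise vanishing of the difference of controlled $K$-homology classes, combined with a Voiculescu-type absorption, lets me connect — after block-summing with one sufficiently large representation — the almost-commuting projections representing $\Phi$ and $\Pi$ through a norm-continuous path, and convert that path into a unitary $U$ (in an amplification of $M_\infty$) and a representation $\Theta$ with $\|U(\Phi(x)\oplus\Theta(x))U^\ast-(\Pi(x)\oplus\Theta(x))\|$ arbitrarily small on $X$. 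The RFD hypothesis enters decisively here: the absorbing/stabilising representation can be built as a direct sum of finite-dimensional representations of $A$, so that $\Theta$ is realised \emph{inside} $M$ as $(\theta_m)_m$ with each $\theta_m:A\to M_{k_m}(\C)$ an honest finite-dimensional representation. Lifting $U$ to a unitary-valued sequence $(u_m)_m$ (enlarging $n_m$ to $n_m+k_m$) and using that $q$ is isometric on cosets, one concludes that $\|u_m(\phi_m(x)\oplus\theta_m(x))u_m^\ast-(\pi_m(x)\oplus\theta_m(x))\|<\epsilon$ for all $x\in X$ and all large $m$, contradicting the choice of $\phi_m,\pi_m$. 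I expect the two genuinely delicate points to be (a) the bookkeeping showing that matching on the finite $K$-data $P_m$ — rather than on all of $\underline{K}_0(A)$ — really suffices to drive the homotopy (the ``coherent system of inverse systems'' content of Remark \ref{homeo meaning}, where the approximate $K$-homology UCT is used), and (b) arranging the stabilising representation to be finite dimensional at every stage, which is precisely the role of the RFD assumption and the feature that distinguishes this theorem from its predecessors for exact $C^*$-algebras.
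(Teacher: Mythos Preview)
Your overall strategy correctly identifies the two key ingredients --- use the approximate $K$-homology UCT (via Remark~\ref{homeo meaning}) to upgrade agreement on a finite $K$-datum to equality of controlled $K$-homology classes, and use RFD to keep everything finite dimensional --- and this is indeed exactly what the paper does. But your execution takes a detour through $M_\infty$ that is both unnecessary and, as written, leaves a real gap. The paper's proof is \emph{direct}, not by contradiction: given a single $\phi$ and $\pi$, it chooses the fixed ambient representation $A\subseteq\mathcal{B}(H)$ to be an infinite direct sum of finite-dimensional representations that already contains $\pi$ as a subrepresentation; it then models $\phi$ by a finite-rank almost-commuting projection $p_\phi$ via Voiculescu (Lemma~\ref{ucp mult com}) and $\pi$ by the commuting projection $q_\pi$ onto its subspace; the $K$-data hypothesis forces $[p]=[q]$ in $K^0_{\epsilon'}(X)$; and then the homotopy in $\mathcal{P}_{\epsilon'}(X)$ is explicitly compressed to a finite-dimensional block (using compactness and the fact that the ambient representation is a direct sum of finite-dimensional pieces) and converted into a unitary by a concrete ``path-of-projections-to-conjugating-unitary'' lemma (Lemma~\ref{conj lem}). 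The auxiliary $\theta$ arises as a piece of the ambient representation, not as something chosen after the fact.

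The gap in your proposal is the ``Absorption and descent'' paragraph. You assert that the levelwise equality of controlled classes plus ``a Voiculescu-type absorption'' yields a path and then a unitary ``in an amplification of $M_\infty$'', but this conflates two different settings: the controlled $K$-homology classes and their connecting homotopy live in $M_2(\mathcal{B}(H))$ for the fixed ambient representation, not in $M_\infty$, and no absorption theorem for maps into $M_\infty$ is available here without exactness (which, as you note, is precisely what must be avoided). What you actually need is the paper's hands-on mechanism: cut the homotopy in $\mathcal{P}_{\epsilon'}(X)$ down to a finite-rank commuting projection $r$ (this is where RFD is really used --- it ensures such $r$ exist in the commutant), then apply the explicit Lemma~\ref{conj lem} inside the finite-dimensional algebra $M_2(\mathcal{B}(rH))$. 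Once you have that, the whole $M_\infty$ apparatus is superfluous: the argument already works for a single $\phi$ and $\pi$, so there is nothing to gain from packaging counterexamples into a sequence. If you strip out the asymptotic algebra and instead write down the homotopy-to-unitary conversion explicitly, your outline collapses to the paper's proof.
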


\begin{remark}
In general, one could take the collection $\mathcal{X}$ in Theorem \ref{stab main} to be the set of all finite symmetric subsets of the unit ball (or unit sphere) of $A$, and we could have just stated the theorem in this way.  The purpose of including the collection $\mathcal{X}$ in the statement is to strengthen the conclusion in special cases where a good choice of $\mathcal{X}$ is possible.  For example, if $A$ is finitely generated by some set symmetric subset $X$ of its unit ball, one can take $\mathcal{X}=\{X\}$.  Importantly for us, if $A=C^*(\Gamma)$ for a group $\Gamma$, we can take $\mathcal{X}$ to consist of all finite symmetric subsets of $\Gamma$.
\end{remark}

As already mentioned, Theorem \ref{stab main} is inspired by work of Lin \cite[Theorem 4.8]{Lin:2002aa} and Dadarlat-Eilers \cite[Theorem 4.15]{Dadarlat:2002aa}.  Theorem \ref{stab main} above has the advantage over these earlier results that it requires no nuclearity or exactness assumptions on $A$.  This is important for our applications as group $C^*$-algebras of non-amenable groups are rarely (and possibly never) exact: see \cite[Proposition 7.1]{Kirchberg:1993aa} or \cite[Proposition 3.7.11]{Brown:2008qy}.   On the other hand, Theorem \ref{stab main} has the disadvantage over the work of Lin and Dadarlat-Eilers that there is no control over the the `auxiliary representation' $\theta$: in their setting Lin and Dadarlat-Eilers are able to show that the choices of $\theta$ can be constrained to lie in a finite set that depends only on $X$ and $\epsilon$.

We need some technical lemmas before we get to the proof of Theorem \ref{stab main}.  Variants of the first are well-known; we could not find exactly what we need in the literature so give a proof.  Some of the argument is quite similar to that from Proposition \ref{ucp extend} part \eqref{approx mult dom}

\begin{lemma}\label{ucp mult com}
Let $A$ be a separable\footnote{One could remove the separability assumptions using Hadwin's generalization \cite{Hadwin:1981aa} of Voiculescu's theorem, and on replacing the condition ``$A\cap \mathcal{K}(H)=\{0\}$'' by an appropriate analogue.  However, we need the separability conditions for other ingredients of our main results, and the extra technicality did not seem worth it.}, unital $C^*$-algebra.  Let $H$ be a separable Hilbert space, and let $A$ be included in $\mathcal{B}(H)$ via a faithful unital representation such that $A\cap \mathcal{K}(H)=\{0\}$.   Let $\phi:A\to M_n(\C)$ be a ucp map.  The following hold:
\begin{enumerate}[(i)]
\item \label{isom cut} for any $\epsilon>0$ and finite subset $Y$ of $A$, there is an isometry $w:\C^n\to H$ such that 
$$
\|w^*aw-\phi(a)\|<\epsilon \quad \text{for all} \quad a\in Y;
$$
\item \label{cut com} if $\phi$ is in addition an $(X,\delta)$-$*$-homomorphism in the sense of Definition \ref{x eps hom} for some finite symmetric subset $X$ of the unit ball of $A$, $w$ is as in part \eqref{isom cut} for some $\epsilon>0$ and $Y\supseteq X\cup X^2$, and $p:=ww^*$, then 
$$
\|[p,a]\|<\sqrt{3\epsilon+\delta}\quad \text{for all} \quad a\in X.
$$
\end{enumerate}
\end{lemma}

\begin{proof}
Part \eqref{isom cut} is immediate from the finite-dimensional version of Voiculescu's theorem: see for example \cite[Proposition 1.7.1]{Brown:2008qy} or \cite[Proposition 3.6.7]{Higson:2000bs}.

For part \eqref{cut com}, define $p:=ww^*$ with $w$ satisfying the given conditions.  Then for any $a\in X$,
\begin{align}\label{3ed}
\|pa(1-p)a^*p\| & = \|w^*a(1-ww^*)a^*w\| \nonumber \\
& = \|w^*aa^*w-w^*aww^*a^*w\| \nonumber \\
& < \|\phi(aa^*)-\phi(a)\phi(a^*)\| +3\epsilon \nonumber \\
& < \delta+3\epsilon
\end{align}
where the inequalities use respectively the inequality from part \eqref{isom cut} and the fact that $\phi$ is $(X,\delta)$-multiplicative.  Using the identity $\|bb^*\|=\|b^*b\|$ with $b=pa^*(1-p)$ and interchanging the roles of $a$ and $a^*$ in line \eqref{3ed}, we see also that
\begin{equation}\label{3ed2}
\|(1-p)apa^*(1-p)\|<\delta+3\epsilon
\end{equation}
for all $a\in X$.  On the other hand, the $C^*$-identity and orthogonality imply that for any $a\in A$
\begin{align*}
\|[p,a]\|^2 & = \|pa(1-p)+(1-p)ap\|^2 \\
& = \|pa(1-p)a^*p+(1-p)apa^*(1-p)\| \\
& = \max\{\|pa(1-p)a^*p\|,\|(1-p)apa^*(1-p)\|\},
\end{align*}
so we are done by lines \eqref{3ed} and \eqref{3ed2}.
\end{proof}

The next result is again more-or-less folklore: a proof can be found in \cite[Lemma 2.20]{Willett:2021te}.

\begin{lemma}\label{alm proj com}
Let $\delta\in (0,1/2)$, let $a$ be a self-adjoint element in a $C^*$-algebra $A$ whose spectrum does not intersect $(\delta,1-\delta)$, and let $\chi$ be the characteristic function of $(1/2,\infty)$.  Then for any $b\in A$,
$$
\|[\chi(a),b]\|\leq \frac{1}{1-2\delta}\|[a,b]\|. \eqno\qed
$$
\end{lemma}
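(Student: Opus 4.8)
The plan is to realize $\chi(a)$ as a spectral projection with a genuine spectral gap and to exploit the Sylvester (commutator) equation that results. Represent $A$ faithfully on a Hilbert space $H$ (if $A$ is non-unital this is harmless, since $[\chi(a),b]$ lies in $A$ in any case). Since $\chi$ is the characteristic function of $(1/2,\infty)$ and the spectrum of $a$ misses $(\delta,1-\delta)\ni 1/2$, the operator $p:=\chi(a)$ is a projection commuting with $a$, and $pH$, $(1-p)H$ are reducing subspaces; the gap hypothesis forces the bounded self-adjoint operators $A_1:=a|_{pH}$ and $A_0:=a|_{(1-p)H}$ to satisfy $A_1\geq 1-\delta$ and $A_0\leq \delta$. (If $p=0$ or $p=1$ the statement is trivial, so we may assume both subspaces are nonzero.)

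First I would reduce $\|[p,b]\|$ to the norms of its off-diagonal corners. Writing $[p,b]=pb(1-p)-(1-p)bp=:x-y$, the identities $xy^*=pb(1-p)pb^*(1-p)=0$ and $yx^*=0$ give $(x-y)(x-y)^*=xx^*+yy^*$, and since $xx^*\in p\mathcal{B}(H)p$ and $yy^*\in (1-p)\mathcal{B}(H)(1-p)$ live in orthogonal corners, $\|[p,b]\|=\max\big(\|pb(1-p)\|,\|(1-p)bp\|\big)$. It therefore suffices to bound each corner by $\tfrac{1}{1-2\delta}\|[a,b]\|$; and since replacing $b$ by $b^*$ fixes $\|[a,b]\|$ (as $a=a^*$) and interchanges the two corners up to adjoint, it suffices to treat $z:=pb(1-p)\in\mathcal{B}((1-p)H,pH)$.

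The key step is the identity, obtained by a short manipulation using that $a$ commutes with $p$,
$$
p[a,b](1-p)=A_1 z-z A_0,
$$
which exhibits $z$ as the solution of a Sylvester equation whose coefficients have spectral gap $1-2\delta$. I would then invert the Sylvester operator explicitly by
$$
z=\int_0^\infty e^{-tA_1}\,\big(p[a,b](1-p)\big)\,e^{tA_0}\,dt ,
$$
the integral converging in norm because $\|e^{-tA_1}\|\leq e^{-t(1-\delta)}$ and $\|e^{tA_0}\|\leq e^{t\delta}$, so the integrand is dominated by $e^{-t(1-2\delta)}\|[a,b]\|$; differentiating under the integral sign verifies that $A_1 z-zA_0$ equals $p[a,b](1-p)$, and the domination bound yields $\|z\|\leq \tfrac{1}{1-2\delta}\|p[a,b](1-p)\|\leq \tfrac{1}{1-2\delta}\|[a,b]\|$. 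Combined with the reduction above this gives $\|[\chi(a),b]\|=\|[p,b]\|\leq \tfrac{1}{1-2\delta}\|[a,b]\|$.

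The only genuine content is the Sylvester identity and its explicit inversion; everything else is routine. The one point needing care is the "$\max$" reduction in the second paragraph: the crude estimate $\|[p,b]\|\leq\|pb(1-p)\|+\|(1-p)bp\|$ only produces the constant $\tfrac{2}{1-2\delta}$, so the orthogonality-of-corners computation is really needed to obtain the sharp constant.
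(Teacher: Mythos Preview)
The paper does not actually give a proof of this lemma: it describes the result as ``more-or-less folklore'' and simply cites \cite[Lemma 2.20]{Willett:2021te}, ending the statement with a \qed. There is therefore nothing in the paper to compare your argument against.

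That said, your proof is correct. The reduction $\|[p,b]\|=\max(\|pb(1-p)\|,\|(1-p)bp\|)$ via the block-off-diagonal structure is right, the Sylvester identity $p[a,b](1-p)=A_1z-zA_0$ is exactly what one gets from $ap=pa$, and the integral representation $z=\int_0^\infty e^{-tA_1}\,w\,e^{tA_0}\,dt$ (with $w=p[a,b](1-p)$) converges and gives the sharp constant because $\|e^{-tA_1}\|\leq e^{-t(1-\delta)}$ and $\|e^{tA_0}\|\leq e^{t\delta}$. One small point of phrasing: rather than ``differentiating under the integral sign verifies that $A_1z-zA_0=w$'', it is cleaner to note that $t\mapsto e^{-tA_1}ze^{tA_0}$ has derivative $-e^{-tA_1}we^{tA_0}$ and tends to zero as $t\to\infty$, so the fundamental theorem of calculus directly identifies the integral with $z$; your version implicitly appeals to uniqueness of Sylvester solutions, which is also fine since $\sigma(A_1)\cap\sigma(A_0)=\emptyset$. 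Your remark that the naive triangle inequality would only give $\tfrac{2}{1-2\delta}$, and that the orthogonality-of-corners computation is needed for the stated constant, is a nice observation.
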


The next lemma is a version of the basic fact from operator $K$-theory that nearby projections are unitarily equivalent (see for example \cite[Proposition 2.2.6]{Rordam:2000mz}), that also gives control on commutators.  The proof is the same as the standard one with a little extra work to keep track of commutator estimates throughout the construction.

\begin{lemma}\label{path to uni}
Let $A$ be a unital $C^*$-algebra, let $X$ be a finite, symmetric subset of the unit ball of $A$, and let $\epsilon>0$.  Let $p,q$ be projections in $A$ such that $\|p-q\|<1/4$, and such that $\|[p,x]\|<\epsilon$ and $\|[q,x]\|<\epsilon$ for all $x\in X$.  Then there is a unitary $u\in A$ such that $upu^*=q$ and $\|[u,x]\|< 28\epsilon$ for all $x\in X$.
\end{lemma}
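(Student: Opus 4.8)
The plan is to run the classical proof that nearby projections in a $C^*$-algebra are unitarily equivalent (see e.g.\ \cite[Proposition 2.2.6]{Rordam:2000mz}), keeping track of commutator bounds at every step. First, set $z:=qp+(1-q)(1-p)\in A$. One checks directly that $zp=qp=qz$, so $z$ intertwines $p$ and $q$; and since $q(1-q)=0$ the cross terms in $z^*z$ vanish, giving $z^*z=pqp+(1-p)(1-q)(1-p)$, which commutes with $p$. Writing $1=p+(1-p)$ one gets $z^*z-1=-p(p-q)p-(1-p)(q-p)(1-p)$, a sum of two elements lying in the orthogonal hereditary subalgebras $pAp$ and $(1-p)A(1-p)$; hence $\|z^*z-1\|=\max\{\|p(p-q)p\|,\|(1-p)(q-p)(1-p)\|\}\le\|p-q\|<1/4$. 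So $z^*z$ is positive and invertible with spectrum in $(3/4,5/4)$; in particular $z$ is invertible, $\|z\|\le\sqrt5/2$, and $\|(z^*z)^{-1/2}\|\le 2/\sqrt3$.

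Next, let $u:=z(z^*z)^{-1/2}$ be the unitary part of the polar decomposition of the invertible element $z$. Since $(z^*z)^{-1/2}$ is a continuous function of $z^*z$ it commutes with $p$, and since $z(z^*z)^{-1}z^*=1$ we get $upu^*=z\,p\,(z^*z)^{-1}z^*=qz(z^*z)^{-1}z^*=q$ (using $zp=qz$), which is the first assertion. For the commutator bound, fix $x\in X$. Expanding by the Leibniz rule, $[z,x]=q[p,x]+[q,x]p-(1-q)[p,x]-[q,x](1-p)$, so $\|[z,x]\|\le 4\epsilon$; and since $X$ is symmetric, $\|[z^*,x]\|=\|[z,x^*]\|\le 4\epsilon$, whence $\|[z^*z,x]\|\le\|z^*\|\,\|[z,x]\|+\|[z^*,x]\|\,\|z\|\le 8\|z\|\epsilon$. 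Finally I would bound $\|[(z^*z)^{-1/2},x]\|$ by a fixed multiple of $\|[z^*z,x]\|$: on a neighbourhood of $\mathrm{spectrum}(z^*z)$ the function $t\mapsto t^{-1/2}$ is given by the power series $\sum_{n\ge 0}\binom{-1/2}{n}(t-1)^n$, which converges absolutely because $\|z^*z-1\|<1/4$, and $\|[(z^*z-1)^n,x]\|\le n\|z^*z-1\|^{n-1}\|[z^*z,x]\|$, so $\|[(z^*z)^{-1/2},x]\|\le\big(\textstyle\sum_n n\,|\binom{-1/2}{n}|\,(1/4)^{n-1}\big)\|[z^*z,x]\|=\tfrac12(3/4)^{-3/2}\|[z^*z,x]\|$. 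Combining $[u,x]=[z,x](z^*z)^{-1/2}+z[(z^*z)^{-1/2},x]$ with $\|z\|\le\sqrt5/2$ and $\|(z^*z)^{-1/2}\|\le 2/\sqrt3$ then gives $\|[u,x]\|<28\epsilon$; the constant is far from optimized, as the crude estimates above already yield a bound of roughly $13\epsilon$.

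The only step that goes beyond a quantitative rerun of the standard argument is the functional-calculus estimate controlling $\|[(z^*z)^{-1/2},x]\|$ by $\|[z^*z,x]\|$, and that is where the promised "little extra work" lives; everything else is bookkeeping. One could instead deduce this estimate from a resolvent bound in the spirit of Lemma \ref{alm proj com}, using $(z^*z)^{-1/2}=\frac1\pi\int_0^\infty\lambda^{-1/2}(z^*z+\lambda)^{-1}\,d\lambda$ together with $\|[(z^*z+\lambda)^{-1},x]\|\le\|(z^*z+\lambda)^{-1}\|^2\|[z^*z,x]\|$ and $\|(z^*z+\lambda)^{-1}\|\le(3/4+\lambda)^{-1}$, which reproduces the same constant $\tfrac12(3/4)^{-3/2}$.
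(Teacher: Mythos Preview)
Your proof is correct and is essentially the same as the paper's: both build the intertwining element $z=qp+(1-q)(1-p)$ (your $z$ is the adjoint of the paper's $v=pq+(1-p)(1-q)$), take the unitary from its polar decomposition, and then track commutator estimates through the functional calculus $t\mapsto t^{-1/2}$. The only cosmetic differences are that you obtain the sharper bound $\|z^*z-1\|<1/4$ via the orthogonal splitting $pAp\oplus(1-p)A(1-p)$ (the paper just uses $\|1-v^*v\|<9/16$ from a triangle inequality), and you handle the commutator with $(z^*z)^{-1/2}$ via the binomial power series rather than the integral representation $t^{-1/2}=\tfrac{2}{\pi}\int_0^\infty(\lambda^2+t)^{-1}\,d\lambda$ that the paper uses---as you note yourself, both routes give the same constant $\tfrac12(3/4)^{-3/2}$.
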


\begin{proof}
Define $v:=pq+(1-p)(1-q)$ and note that $vq=pv$ and that 
\begin{equation}\label{v x 4}
\|[v,x]\|<4\epsilon \quad\text{for all $x\in X$}.  
\end{equation}
Moreover 
$$
\|1-v\|=\|(2q-1)(p-q)\|=\|p-q\|<1/4,
$$
whence $v$ is invertible, 
\begin{equation}\label{v 54}
\|v\|< 5/4
\end{equation}
and 
$$
\|1-v^*v\|\leq \|(1-v^*)v\|+\|1-v\|<9/16.
$$
Hence also
\begin{equation}\label{v*v est}
v^*v\geq 7/16
\end{equation}
and so by the functional calculus 
\begin{equation}\label{v*v 1/2 est}
\|(v^*v)^{-1/2}\|\leq 4/\sqrt{7}.
\end{equation}
Define now $u:=v(v^*v)^{-1/2}$, which is unitary.  As $p$ and $q$ are self-adjoint, we see that $qv^*=v^*p$ and so $v^*v$ commutes with $p$ and $q$.  Hence $(v^*v)^{-1/2}$ also commutes with $p$ and $q$, and so $upu^*=q$.  It remains to estimate $\|[u,x]\|$ for each $x\in X$.  For this, we have 
\begin{align}\label{com1}
\|[u,x]\| & \leq \|[v,x]\|\|(v^*v)^{-1/2}\|+\|v\|\|[x,(v^*v)^{-1/2}]\| \nonumber  \\
& < 4\epsilon\cdot (4/\sqrt{7})+(5/4)\cdot \|[x,(v^*v)^{-1/2}]\|
\end{align}
by lines \eqref{v*v 1/2 est} and \eqref{v 54}.  To estimate $\|[x,(v^*v)^{-1/2}]\|$ we use the identity 
$$
t^{-1/2}=\frac{2}{\pi} \int_0^\infty (\lambda^2+t)^{-1}d\lambda,
$$
valid for any $t>0$.  Using the identity $[x,a^{-1}]=a^{-1}[a,x]a^{-1}$ we see that
\begin{equation}\label{x v*v}
\|[x,(v^*v)^{-1/2}]\|\leq \frac{2}{\pi} \int_0^\infty \|(\lambda^2+v^*v)^{-1}\|\|[v^*v,x]\|\|(\lambda^2+v^*v)^{-1}\|d\lambda.
\end{equation}
Using line \eqref{v*v est}, we see that $\|(\lambda^2+v^*v)^{-1}\|\leq (\lambda^2+7/16)^{-1}$, and so line \eqref{x v*v} implies that 
$$
\|[x,(v^*v)^{-1/2}]\|\leq \frac{2\|[x,v^*v]\|}{\pi}\int_0^\infty (\lambda^2+7/16)^{-2}d\lambda.
$$
Computing, the integral in the line above equals $16\pi / 7^{3/2}$.  Moreover, lines \eqref{v x 4} and \eqref{v 54} imply that $\|[x,v^*v]\|< 10\epsilon$, so we get
$$
\|[x,(v^*v)^{-1/2}]\|< 320\epsilon / 7^{3/2}.
$$
Substituting this back into line \eqref{com1} gives $\|[u,x]\|< 28\epsilon$ as claimed.
\end{proof}

The next lemma is essentially contained in \cite[Section 4.4]{Willett:2021te}: see in particular \cite[Proposition 4.17]{Willett:2021te}.  As that reference does not contain exactly what we need, we sketch a proof.  

\begin{lemma}\label{conj lem}
Let $A$ be a unital $C^*$-algebra, let $X$ be a finite, symmetric subset of the unit ball of $A$, and let $\epsilon>0$.  Let $(p_t)_{t\in [0,1]}$ be a path of projections in $A$ such that $\|[p_t,x]\|<\epsilon$ for all $t\in [0,1]$.  Then there exists $n$ and a unitary $u\in M_{2n+1}(A)=A\otimes M_{2n+1}(\C)$ such that 
\begin{equation}\label{big conj}
u(p_0\oplus 1_n\oplus 0_n)u^*=p_1\oplus 1_n\oplus 0_n
\end{equation}
and $\|[u,x\otimes 1_{2n+1}]\|<4\cdot 10^4\epsilon$ for all $x\in X$.
\end{lemma}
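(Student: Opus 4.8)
The plan is to combine uniform continuity of the path with Lemma~\ref{path to uni} and an ``Eilenberg swindle'' that keeps the commutator bound independent of the number of subdivisions. First I would use that $t\mapsto p_t$ is continuous on the compact interval $[0,1]$ to choose a partition $0=t_0<t_1<\cdots<t_N=1$ with $\|p_{t_{i-1}}-p_{t_i}\|<1/4$ for all $i$; writing $q_i:=p_{t_i}$, we have $q_0=p_0$, $q_N=p_1$, $\|[q_i,x]\|<\epsilon$ for all $i$ and $x\in X$, and $\|q_{i-1}-q_i\|<1/4$. Applying Lemma~\ref{path to uni} to each consecutive pair yields unitaries $v_i\in A$ with $v_iq_{i-1}v_i^*=q_i$ and $\|[v_i,x]\|<28\epsilon$ for all $x\in X$.

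Naively $v_N\cdots v_1$ conjugates $p_0$ to $p_1$, but its commutator with $x$ is only bounded by $28N\epsilon$, and $N$ depends on the path, not on $\epsilon$; the role of the block sum with $1_n\oplus 0_n$ is exactly to circumvent this. Set $n:=N$ and $q_i^\perp:=1-q_i$, and work in $M_{2N+1}(A)=M_{2N+1}(\C)\otimes A$. The elementary point is that a scalar permutation matrix $1_A\otimes P$ commutes exactly with $x\otimes 1_{2N+1}$, and that if unitaries $U_1,\dots,U_k\in M_{2N+1}(A)$ act nontrivially only on pairwise disjoint sets of matrix coordinates, then $[\,\prod_j U_j,\,x\otimes 1_{2N+1}\,]$ is a sum of terms supported in mutually orthogonal corners, so $\|[\prod_j U_j,x\otimes 1_{2N+1}]\|=\max_j\|[U_j,x\otimes 1_{2N+1}]\|$ rather than the sum. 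Using these two facts I would run the following telescoping. Let $r:=\bigoplus_{i=1}^N\big(q_i\oplus q_i^\perp\big)$; the unitary $W$ obtained by composing a permutation that groups the $N$ ones and $N$ zeros of $1_N\oplus 0_N$ into pairs $1\oplus 0$ with $\bigoplus_{i=1}^N\begin{psmallmatrix}q_i & q_i^\perp\\ q_i^\perp & q_i\end{psmallmatrix}$ satisfies $W(1_N\oplus 0_N)W^*=r$ and $\|[W,x\otimes 1_{2N}]\|<2\epsilon$. Next, a permutation $\sigma_1$ rearranges $q_0\oplus r$ as $\big(\bigoplus_{i=1}^N(q_{i-1}\oplus q_i^\perp)\big)\oplus q_N$ (this is a genuine coordinate permutation since both diagonals have the same multiset of entries $\{q_0,\dots,q_N,q_1^\perp,\dots,q_N^\perp\}$); the unitary $V:=\big(\bigoplus_{i=1}^N(v_i\oplus 1_A)\big)\oplus 1_A$ conjugates this to $\big(\bigoplus_{i=1}^N(q_i\oplus q_i^\perp)\big)\oplus q_N$ with $\|[V,x\otimes 1_{2N+1}]\|<28\epsilon$ (here the $v_i$ sit in disjoint blocks, which is the whole point); and a further permutation $\sigma_2$ puts this in the form $q_N\oplus r$.

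Assembling, $u:=(1_A\oplus W^*)\,\sigma_2\,V\,\sigma_1\,(1_A\oplus W)\in M_{2N+1}(A)$ is unitary and satisfies $u(p_0\oplus 1_N\oplus 0_N)u^*=q_N\oplus W^*rW=p_1\oplus 1_N\oplus 0_N$; and since the commutator norm is subadditive over products of unitaries, and the two permutations contribute nothing, $\|[u,x\otimes 1_{2N+1}]\|<2\epsilon+28\epsilon+2\epsilon=32\epsilon<4\cdot 10^4\epsilon$ for all $x\in X$, as required. I expect the only genuine obstacle to be precisely the one the block sum is designed to overcome: arranging the conjugation so that the single ``expensive'' unitary $V$, which absorbs all $N$ of the maps $v_i$, is block-diagonal with respect to \emph{disjoint} coordinates, so its commutator is the maximum and not the sum of the $\|[v_i,x]\|$; once this is set up, checking that the intermediate rearrangements really are scalar permutations (hence commutator-free) is routine bookkeeping. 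The generous constant $4\cdot 10^4$ leaves ample room for the slightly less streamlined route of \cite[Proposition 4.17]{Willett:2021te}.
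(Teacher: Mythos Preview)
Your argument is correct and in fact yields a sharper constant than the paper's. The two proofs use the block sum $\oplus\,1_n\oplus 0_n$ for genuinely different purposes. The paper first invokes \cite[Lemma 4.15]{Willett:2021te} to replace the given path by a $16$-Lipschitz path of projections in $M_{2n+1}(A)$ (with commutator bound $21\epsilon$); the Lipschitz constant being absolute forces at most $65$ subdivisions of step $<1/4$, after which Lemma~\ref{path to uni} is applied to each step and the resulting unitaries are simply multiplied together, so the commutator bound is a \emph{sum} over $65$ terms of size $28\cdot 21\epsilon$, giving roughly $3.8\cdot 10^4\epsilon$. Your route avoids the external Lipschitzification lemma entirely: you subdivide the original path into an uncontrolled number $N$ of steps, but then exploit the extra matrix room to place the step-unitaries $v_i$ in pairwise disjoint diagonal slots, so that the commutator of the block-diagonal $V$ with $x\otimes 1_{2N+1}$ is the \emph{maximum} rather than the sum of the $\|[v_i,x]\|$. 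The telescoping via the ``swap'' unitaries $\begin{psmallmatrix}q_i & q_i^\perp\\ q_i^\perp & q_i\end{psmallmatrix}$ and scalar permutations is the standard Eilenberg-swindle bookkeeping, and your commutator accounting ($2\epsilon+28\epsilon+2\epsilon=32\epsilon$) is correct. What your approach buys is a self-contained argument with a much smaller constant; what the paper's approach buys is a reusable Lipschitz path (useful elsewhere in \cite{Willett:2021te}), at the cost of citing that lemma and a looser bound.
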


\begin{proof}
Following \cite[Lemma 4.15]{Willett:2021te}, there is $n$ and a $16$-Lipschitz path $(q_t)_{t\in [0,1]}$ of projections between $p_0\oplus 1_n\oplus 0_n$ and $p_1\oplus 1_n\oplus 0_n$ in $M_{2n+1}(A)$ with the property that $\|[x,q_t]\|< 21\epsilon$ for all $x\in X$ (here and throughout we abuse notation and write ``$x$'' in place of ``$x\otimes 1_{2n+1}$'').  Hence we may find $0=t_0<t_1<\cdots <t_{65}=1$ such that $\|q_{t_i}-q_{t_{i+1}}\|<1/4$ for all $i\in \{0,...,64\}$.  Lemma \ref{path to uni} then gives unitaries $u_i$ such that $u_iq_{t_i}u_i^*=q_{t_{i+1}}$ and so that 
\begin{equation}\label{ui x}
\|[u_i,x]\|<588 \epsilon
\end{equation}
for all $x\in X$ and all $i\in \{0,...,64\}$.  Define $u=u_{64}u_{63}\cdots u_1u_0$.  Then $u$ is a unitary that satisfies the equation in line \eqref{big conj}, and we have that for any $x\in X$,
$$
\|[u,x]\|\leq \sum_{i=0}^{64} \|[u_i,x]\|
$$ 
by repeated use of the Leibniz rule.  Combined with line \eqref{ui x}, this implies the desired estimate.
\end{proof}

\begin{proof}[Proof of Theorem \ref{stab main}]
Let $X$ and $\epsilon$ be as in the statement; we may assume $\epsilon<1$.  The discussion in Remark \ref{homeo meaning} implies that there exists a $\underline{K}_0$-triple $(P,Y,\delta)$ such that if $[p],[q]\in K^0_{2\sqrt{\delta}}(Y)$ are such that (with notation as in Proposition \ref{phi lem} and Lemma \ref{partial kappa})
$$
\kappa^{(Y,2\sqrt{\delta})}_P(\Phi^{(Y,2\sqrt{\delta})}(p))=\kappa^{(Y,2\sqrt{\delta})}_P(\Phi^{(Y,2\sqrt{\delta})}(q))
$$
then the images of $[p]$ and $[q]$ under the forgetful map
$$
K^0_{2\sqrt{\delta}}(Y)\to K^0_{\epsilon/(8\cdot 10^5)}(X)
$$
of Remark \ref{dir set rem}, part \eqref{inc part} are the same.   Using that the collection $\{(Z,\gamma)\mid Z\in \mathcal{X},0<\gamma<\epsilon/4\}$ is cofinal in the directed set $\mathcal{I}_A$ of Definition \ref{dir set}, we may moreover assume that $Y$ is in $\mathcal{X}$ and $\delta<\epsilon/4$.  We claim this $Y$ and $\delta$ have the property in the theorem.  

Let $\phi^0,\phi^1:A\to M_n(\C)$ be ucp $(Y,\delta)$-$*$-homomorphisms as in the statement.  In particular, if we use that $2\sqrt{\delta}\geq \delta$ to think of each $\phi^i$ as a $(Y,2\sqrt{\delta})$-$*$-homomorphism then
\begin{equation}\label{pi and phi}
\kappa^{(Y,2\sqrt{\delta})}_P(\phi^0_*)=\kappa^{(Y,2\sqrt{\delta})}_P(\phi^1_*).
\end{equation}

As $A$ is separable, there is a countable family of representations from $\mathcal{R}$ that separates points.  Take the direct sum of this family, then take the direct sum of that representation with itself countably many times.  We use this representation to identify $A$ with a unital $C^*$-subalgebra of $\mathcal{B}(H)$ (with $H$ the separable Hilbert space underlying the representation we just built), and to define the controlled $K$-homology groups of Definition \ref{con k hom}.  

Let $\gamma>0$ be as in the conclusion of Lemma \ref{partial kappa} part \eqref{pk2} for $(Y,2\sqrt{\delta})$; we may assume that $\gamma<\delta$.  For $i\in \{0,1\}$, Lemma \ref{ucp mult com} gives an isometry $w_i:\C^n\to H$ such that if $p_i:=w_iw_i^*$ then $\|w_i^*a  w_i-\phi^i(a)\|<\gamma$ and $\|[p_i,a]\|<2\sqrt{\delta}$ for all $a\in Y$.  As the representation $A\subseteq \mathcal{B}(H)$ is a direct sum of finite-dimensional representations, we may assume on perturbing  $w_i$ that $p_i$ is dominated by a finite rank projection in the commutant of $A$ with image a representation from $\mathcal{R}$, and that the perturbed operators still satisfy $p_i=w_iw_i^*$ and 
\begin{equation}\label{pphi est}
\|[p_i,a]\|<2\sqrt{\delta} \quad \text{and}\quad \|w^*_iaw_i-\phi^i(a)\|<\gamma \quad \text{for all}\quad a\in Y.
\end{equation}

For $i\in \{0,1\}$, define 
\begin{equation}\label{phi' def}
\psi^i:A\to \mathcal{B}(p_i H),\quad a\mapsto p_i a p_i
\end{equation}
to be the ucp map given by compression by $p_i$.  Using line \eqref{pphi est}, the choice of $\gamma$, and unitary invariance of $K$-theory, we have that 
\begin{equation}\label{phi and phi'}
\kappa^{(Y,2\sqrt{\delta})}_P(\phi^i_*)=\kappa^{(Y,2\sqrt{\delta})}_P(\psi^i_*).
\end{equation}
Define 
$$
q_i:=\begin{pmatrix} 1 & 0 \\ 0 & p_i\end{pmatrix}\in M_2(\mathcal{B}(H))
$$
and note that by line \eqref{pphi est}, $q_i$ defines an element of the set $\mathcal{P}_{5\sqrt{\delta}}(Y)$ of Definition \ref{con k hom}.   Note also that  \begin{equation}\label{psi and q}
\kappa^{(Y,2\sqrt{\delta})}_P(\Phi^{(Y,2\sqrt{\delta})}(q_i))=\kappa^{(Y,2\sqrt{\delta})}_P(\psi^i_*(q_i))
\end{equation}
for $i\in \{0,1\}$ (by definition - see Proposition \ref{phi lem} and line \eqref{pair form} above).  Combining lines \eqref{pi and phi}, \eqref{phi and phi'}, and \eqref{psi and q} we have that 
$$
\kappa^{(Y,2\sqrt{\delta})}_P(\Phi^{(Y,2\sqrt{\delta})}(q_0))=\kappa^{(Y,2\sqrt{\delta})}_P(\Phi^{(Y,2\sqrt{\delta})}(q_1))
$$
in $\text{Hom}_{\Lambda_0^{(N)}}(P,\underline{K}_0(\C))$ and therefore by choice of $(Y,\delta)$ in the first paragraph of the proof, the images of $[q_0]$ and $[q_1]$ under the forgetful map
$$
K^0_{2\sqrt{\delta}}(Y)\to K^0_{\epsilon/(8\cdot 10^5)}(X)
$$
of Remark \ref{dir set rem}, part \eqref{inc part} are the same.

Hence with notation as in Definition \ref{con k hom} there is a homotopy $(q_t)_{t\in [0,1]}$ passing through the space $\mathcal{P}_{\epsilon/8\cdot 10^5}(X)$ of Definition \ref{con k hom} and connecting $q_0$ to $q_1$.  As the image of this homotopy is a compact subset of $M_2(\mathcal{K}(H))+\begin{psmallmatrix} 1 & 0 \\ 0 & 0 \end{psmallmatrix}$, and as the original representation on $H$ is a direct sum of finite-dimensional representations from $\mathcal{R}$, and as $p_0$ and $p_1$ are dominated by finite rank projections in the commutant of $A$ with image representations from $\mathcal{R}$, there is a finite rank projection $r\in \mathcal{B}(H)$ that commutes with $A$, so that compression by $r$ is a representation from $\mathcal{R}$, so that $r$ dominates $p_0$ and $p_1$, and so that $r$ also satisfies 
$$
\Bigg\|\begin{pmatrix} r & 0 \\ 0 & r \end{pmatrix} \Bigg(q_t-\begin{pmatrix} 1 & 0 \\ 0 & 0 \end{pmatrix}\Bigg) -  \Bigg(q_t-\begin{pmatrix} 1 & 0 \\ 0 & 0 \end{pmatrix}\Bigg)\Bigg\|<\frac{\epsilon}{4\cdot 10^5}
$$
for all $t\in [0,1]$.   Rearranging the above inequality, we see that for all $t$
$$
\Bigg\|\begin{pmatrix} 1-r & 0 \\ 0 & 1-r \end{pmatrix} q_t - \begin{pmatrix} 1-r & 0 \\ 0 & 0 \end{pmatrix}\Bigg\|<\frac{\epsilon}{4\cdot 10^5}.
$$
Taking adjoints gives 
$$
\Bigg\|q_t\begin{pmatrix} 1-r & 0 \\ 0 & 1-r \end{pmatrix} - \begin{pmatrix} 1-r & 0 \\ 0 & 0 \end{pmatrix}\Bigg\|<\frac{\epsilon}{4\cdot 10^5}
$$
and combining the last two displayed lines gives
$$
\Bigg\|q_t\begin{pmatrix} 1-r & 0 \\ 0 & 1-r \end{pmatrix}-\begin{pmatrix} 1-r & 0 \\ 0 & 1-r \end{pmatrix} q_t\Bigg\|<\frac{\epsilon}{2\cdot 10^5}.
$$
Hence $\|[q_t,\begin{psmallmatrix} r & 0 \\ 0 & r \end{psmallmatrix}]\|<\epsilon/(2\cdot 10^5)$, and so if $a_t:=\begin{psmallmatrix} r & 0 \\ 0 & r \end{psmallmatrix} q_t\begin{psmallmatrix} r & 0 \\ 0 & r \end{psmallmatrix}$, then $\|a_t^2-a_t\|<\epsilon/(2\cdot 10^5)<1/4$.  In particular, if $\chi$ is the characteristic function of $(1/2,\infty)$, then $\chi$ is continuous on the spectrum of $a_t$.  Moreover, as $r$ commutes with $A$ and as each $q_t$ commutes with all elements of $X$ up to $\epsilon/(2\cdot 10^5)$, Lemma \ref{alm proj com} gives that $\chi(a_t)$ commutes with all elements of $X$ up to $\epsilon/10^5$.  Define $C:=M_2(\mathcal{B}(rH))$, and let $\sigma:A\to C$ be the representation (in $\mathcal{S}$) given by compression by $r\oplus r$.  We thus have a homotopy $(\chi(a_t))_{t\in [0,1]}$ between $r\oplus p_0$ and $r\oplus p_1$ in $M_2(\mathcal{B}(rH))$ that commutes with $\sigma(a)$ up to $\epsilon/10^5$ for all $a\in X$.  

%Now, for ease of notation, set $C$ to be the unital $C^*$-algebra $M_2(\mathcal{B}(rH))$, define $p_{i,C}:=\begin{psmallmatrix} r & 0 \\ 0 & p_i \end{psmallmatrix}$ for $i\in \{0,1\}$.  
Now, flipping the order of $r\oplus p_i$ to $p_i\oplus r$, Lemma \ref{conj lem} applied to the projections $p_i\oplus r$ gives $l\in \N$ and a unitary $v\in M_{1+2l}(C)$ such that 
\begin{equation}\label{vpq}
v(p_0\oplus r\oplus 1_l \oplus 0_l)v^* = (p_1\oplus r\oplus 1_l\oplus 0_l)
\end{equation}
(where $1_l$ and $0_l$ denote the unit and zero element respectively in $M_l(C)$) and moreover so that if $\widetilde{\sigma}:A\to M_{1+2l}(C)$ is the amplification of $\sigma$, then  
\begin{equation}\label{va small}
\|[v,\widetilde{\sigma}(a)]\|<\epsilon/2\quad \text{for all} \quad a\in X.
\end{equation}

Define now $s_i:=p_i\oplus r\oplus 1_l\oplus 0_l$ for $i\in \{0,1\}$.  Let $k=\text{rank}(r)(l+1)$, and let $x:\C^k\to rH\oplus rH\oplus (rH)^{\oplus l}\oplus (rH)^{\oplus l}$ be any isometry with range the middle two direct summands $rH\oplus (rH)^{\oplus l}$.  Define $\theta:A\to M_k(\C)$ by $\theta(a):=x^*\widetilde{\sigma}(a)x$, which is a representation of $A$ in the class $\mathcal{R}$.  Finally, define 
$$
u\in M_{n+k}(\C),\quad u=(w_1^*\oplus x^*)v(w_0\oplus x)\in M_{n+k}(\C),
$$
where $w_i$ is as in line \eqref{pphi est}, but adjusted so that its image is the range of $p_i$ considered as a subspace of the first summand of $rH\oplus rH\oplus (rH)^{\oplus l}\oplus (rH)^{\oplus l}$.  We claim that this $u$ and $\theta$ have the desired properties.

Indeed, for any $a\in X$,
\begin{align*}
\|u(\phi^0(a)& \oplus \theta(a))u^*-\phi^1(a)\oplus \theta(a)\| \\& = \|v(w_0\phi_0(a)w_0^*\oplus x\theta(a)x^*)v^* - w_1\phi_1(a)w_1^*\oplus x\theta(a)x^*\|.
\end{align*}
Using the estimate in line \eqref{pphi est}, we thus have that 
\begin{align*}
\|u(\phi^0(a) & \oplus \theta(a))u^*-\phi^1(a)\oplus \theta(a)\| \\ & < \|v(p_0ap_0\oplus x\theta(a)x^*)v^* - (p_1ap_1\oplus x\theta(a)x^*)\|+2\gamma.
\end{align*}
Using the definition of $s_i$, that $vs_0=s_1v^*$, and that $\gamma<\delta<\epsilon/4$, we thus have that 
\begin{align*}
\|u(\phi^0(a) \oplus \theta(a))u^*-\phi^1(a)\oplus \theta(a)\| & < \|v(s_0\widetilde{\sigma}(a)s_0v^* - (s_1\widetilde{\sigma}(a)s_1)\| +\epsilon/2 \\ 
& =\|s_1(v\widetilde{\sigma}(a)v^*-\widetilde{\sigma}(a))s_1\|+\epsilon/2 \\
&\leq \|v\widetilde{\sigma}(a)v^*-\widetilde{\sigma}(a)\|+\epsilon/2.
\end{align*}
Finally, using line \eqref{va small}, we conclude that 
$$
\|u(\phi^0(a) \oplus \theta(a))u^*-\phi^1(a)\oplus \theta(a)\| <\epsilon
$$
and are done.
\end{proof}

\section{Stable uniqueness for representations of groups}\label{main sec}

Before getting to the main result on representation stability we one more technical lemma.  

%\begin{lemma}\label{same hom}
%Let $A$ be a $C^*$-algebra, and $\mathcal{X}$ be a family of finite, symmetric subsets of the unit ball of $A$ that is closed under finite unions, and such that $\bigcup_{X\in \mathcal{X}}X$ generates $A$ as a $C^*$-algebra.  Let $X\in \mathcal{X}$, and let $\epsilon>0$, and let $P$ be a $K$-datum as in Definition \ref{fin set tot hom} such that the homomorphism 
%$$
%\kappa^{(X,\epsilon)}_P: \text{Hom}_{\Lambda_0}(\underline{K}_0^\epsilon(X),\underline{K}_0(\C))\to \text{Hom}_{\Lambda_0^{(N)}}(P,\underline{K}_0(\C)) 
%$$
%of Lemma \ref{partial kappa} is defined.

%Then there exist $Y\in \mathcal{X}$ and $\gamma>0$ such that if $\phi,\psi:A\to M_n(\C)$ are ucp $(X,\epsilon)$-$*$-homomorphisms such that $\|\phi(y)-\psi(y)\|<\gamma$ for all $y\in Y$, then the elements
%$$
%\phi_*,\psi_*\in \text{Hom}_{\Lambda_0}(\underline{K}_0^\epsilon(X),\underline{K}_0(\C))
%$$
%of Lemma \ref{x eps lambda module} satisfy $\kappa^{(X,\epsilon)}_P(\phi_*)=\kappa^{(X,\epsilon)}_P(\psi_*)$ in $\text{Hom}_{\Lambda_0^{(N)}}(P,\underline{K}_0(\C))$. \qed
%\end{lemma}

\begin{lemma}\label{to ucp}
Let $\Gamma$ be a countable discrete group with the weak matricial LLP as in Definition \ref{llp}.  Let $S$ be a finite symmetric subset of $\Gamma$, let $\epsilon>0$, and let $P$ be a $K$-datum as in Definition \ref{fin set tot hom} such that the homomorphism 
$$
\kappa^{(S,\epsilon)}_P: \text{Hom}_{\Lambda_0}(\underline{K}_0^\epsilon(S),\underline{K}_0(\C))\to \text{Hom}_{\Lambda_0^{(N)}}(P,\underline{K}_0(\C)) 
$$
of Lemma \ref{partial kappa} is defined\footnote{To apply Lemma \ref{partial kappa}, we take $A=C^*(\Gamma)$ and consider $S$ also as a finite subset of $A$.}.  Then there is a finite subset $T$ of $\Gamma$, $\delta>0$, and $\gamma\in (0,\epsilon)$ with the following property.

Assume $\psi:\Gamma\to M_n(\C)$ is a $(T,\delta)$-representation as in Definition \ref{quasi rep}.  Then exists a ucp $(S,\epsilon)$-representation $\phi:\Gamma\to M_n(\C)$ such that 
\begin{equation}\label{td con}
\|\phi(s)-\psi(s)\|<\gamma \quad \text{for all} \quad s\in S,
\end{equation}
and moreover so that for any two ucp $(S,\epsilon)$-representations $\phi,\phi':\Gamma\to M_n(\C)$ satisfying the estimate in line \eqref{td con}, the elements 
$$
\phi_*,\phi'_*\in \text{Hom}_{\Lambda_0}(\underline{K}_0^\epsilon(S), \underline{K}_0(\C))
$$
defined using Proposition \ref{ucp extend} part \eqref{phi ext} and Lemma \ref{x eps lambda module} satisfy that 
$$\kappa^{(S,\epsilon)}_P(\phi_*)=\kappa^{(S,\epsilon)}_P(\phi'_*)$$ 
as elements of $\text{Hom}_{\Lambda_0^{(N)}}(P,\underline{K}_0(\C))$.
\end{lemma}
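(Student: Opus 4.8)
The plan is to assemble the statement from two results already established: Proposition \ref{llp approx}, which lets one approximate an arbitrary quasi-representation by a ucp one, and Lemma \ref{partial kappa} part \eqref{pk2}, which says that two nearby ucp $(S,\epsilon)$-$*$-homomorphisms of $C^*(\Gamma)$ induce the same element of $\text{Hom}_{\Lambda_0}(P,\underline{K}_0(\C))$. The LLP enters only through the (trivial) observation that it implies the weak matricial LLP of Definition \ref{llp} needed to invoke Proposition \ref{llp approx}; compare Remark \ref{weak llp}. Beyond quoting these, the work is translating between the group-representation language of Definition \ref{quasi rep} and the $C^*$-algebraic language of Definition \ref{x eps hom}, plus a routine chase of tolerances.

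First I would apply Lemma \ref{partial kappa} part \eqref{pk2} to $A=C^*(\Gamma)$, $B=\C$, the given $K$-datum $P$, and the pair $(S,\epsilon)$ (which by hypothesis is a pair for which that lemma delivers its conclusions, so $\kappa^{(S,\epsilon)}_P$ is the homomorphism it produces), obtaining a constant $\gamma_0>0$ such that any two ucp $(S,\epsilon)$-$*$-homomorphisms $\alpha,\beta:C^*(\Gamma)\to M_n(\C)$ with $\|\alpha(s)-\beta(s)\|<\gamma_0$ for all $s\in S$ satisfy $\kappa^{(S,\epsilon)}_P(\alpha_*)=\kappa^{(S,\epsilon)}_P(\beta_*)$, with $\alpha_*,\beta_*$ as in Lemma \ref{x eps lambda module}. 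Set $\gamma:=\min\{\gamma_0/2,\,\epsilon/2\}$, so $\gamma\in(0,\epsilon)$ and $2\gamma\leq\gamma_0$. Then I would apply Proposition \ref{llp approx} with the finite symmetric set $S$ and with $\gamma$ in place of $\epsilon$; this produces the finite subset $T\subseteq\Gamma$ and the number $\delta>0$ that we output.

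It remains to check the two conclusions for a $(T,\delta)$-representation $\psi:\Gamma\to M_n(\C)$. For the existence assertion, Proposition \ref{llp approx} directly supplies a ucp $(S,\gamma)$-representation $\phi$ with $\|\phi(s)-\psi(s)\|<\gamma$ for all $s\in S$, and since $\gamma<\epsilon$ this $\phi$ is in particular a ucp $(S,\epsilon)$-representation satisfying \eqref{td con}. For the $K$-theoretic assertion, let $\phi,\phi'$ be any two ucp $(S,\epsilon)$-representations each satisfying \eqref{td con}; then $\|\phi(s)-\phi'(s)\|<2\gamma\leq\gamma_0$ for all $s\in S$. Extend $\phi$ and $\phi'$ to ucp maps $C^*(\Gamma)\to M_n(\C)$ by Proposition \ref{ucp extend} part \eqref{phi ext}. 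These extensions are ucp $(S,\epsilon)$-$*$-homomorphisms in the sense of Definition \ref{x eps hom}: the inequality $\|\phi(s^*)-\phi(s)^*\|<\epsilon$ holds trivially because ucp maps are $*$-preserving, while $\|\phi(st)-\phi(s)\phi(t)\|<\epsilon$ for $s,t\in S$ is exactly the defining inequality of an $(S,\epsilon)$-representation (here one uses that $S$ is symmetric, so $S=S^*$ as a subset of the unit ball of $C^*(\Gamma)$ and every product of a pair of elements of $S$ is accounted for). Hence the form of Lemma \ref{partial kappa} part \eqref{pk2} extracted above applies and gives $\kappa^{(S,\epsilon)}_P(\phi_*)=\kappa^{(S,\epsilon)}_P(\phi'_*)$; this is precisely the claimed equality, since the $\phi_*$ of the statement is by definition the element of $\text{Hom}_{\Lambda_0}(\underline{K}_0^\epsilon(S),\underline{K}_0(\C))$ furnished by Lemma \ref{x eps lambda module}.

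The only point requiring care is the first step: Lemma \ref{partial kappa} produces the pair $(X,\epsilon)$ together with $\gamma_0$, and its part \eqref{pk2} a priori needs $X$ somewhat larger than is needed merely for $\kappa^{(X,\epsilon)}_P$ to be well-defined (one must be able to represent the classes in $P$ by projections lying in the $C^*$-subalgebra generated by elementary tensors whose $A$-components come from $X$). Accordingly, the hypothesis ``$\kappa^{(S,\epsilon)}_P$ is defined'' has to be read as: $(S,\epsilon)$ is a pair for which Lemma \ref{partial kappa} yields its full conclusions for the datum $P$ — and in every application of this lemma $S$ will have been chosen that way. Everything else is the tolerance bookkeeping above and presents no obstacle.
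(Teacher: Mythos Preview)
Your proof is correct and follows the same approach as the paper's: invoke Lemma~\ref{partial kappa}~\eqref{pk2} to obtain the stability constant, then Proposition~\ref{llp approx} to get $T$ and $\delta$. You are in fact more careful than the paper, which simply takes $\gamma$ directly from Lemma~\ref{partial kappa}~\eqref{pk2} without halving it for the triangle inequality or explicitly forcing $\gamma<\epsilon$; your bookkeeping with $\gamma=\min\{\gamma_0/2,\epsilon/2\}$ makes the argument watertight, and your final paragraph correctly flags the implicit reading of the hypothesis.
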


\begin{proof}
Let $\gamma$ be as in Lemma \ref{partial kappa} part \eqref{pk2}.  Propositon \ref{llp approx} gives a finite subset $T$ of $\Gamma$ and $\delta>0$ such that if $\psi:\Gamma\to \mathcal{B}(H)_1$ is a $(T,\delta)$-representation, then there exists a ucp $(S,\epsilon)$-representation $\phi:C^*(\Gamma)\to M_n(\C)$ such that $\|\psi(s)-\phi(s)\|<\gamma$ for all $s\in S$.  This gives the properties in the statement. 
\end{proof}

\begin{definition}\label{rep phi*}
With notation as in Lemma \ref{to ucp}, for a fixed $(S,\epsilon)$ let $(T,\delta)$ be as in the conclusion, and let $\psi:\Gamma\to M_n(\C)$ be a $(T,\delta)$-representation.  Then we write $\psi_*\in \text{Hom}_{\Lambda_0^{(N)}}(P,\underline{K}_0(\C))$ for the map $\kappa^{(S,\epsilon)}_P(\phi_*)$ for any $\phi_*$ as in the conclusion of Lemma \ref{to ucp}. 
\end{definition}

Here is our main theorem.  It is essentially just Theorem \ref{stab main} specialized to the case where $A$ is a group $C^*$-algebra, and $\mathcal{X}$ consists of unitary group elements, although there is a little more to say in the LLP case.

\begin{theorem}\label{gp main}
Let $\Gamma$ be a countable discrete group such that $C^*(\Gamma)$ satisfies the approximate $K$-homology UCT of Definition \ref{kh auct}.  Let $\mathcal{R}$ be a class of good representations of $\Gamma$ as in Definition \ref{good rep}\footnote{The existence of such a class $\mathcal{R}$ forces $C^*(\Gamma)$ to be RFD as in Example \ref{good ex}.  We will only apply the theorem to the class $\mathcal{R}_q$ from Example \ref{good ex}, but state it in extra generality in case this is useful elsewhere.}.  Let $\mathcal{S}$ be a collection of finite symmetric subsets of $\Gamma$ that is closed under finite unions, and with the property that $\bigcup_{S\in \mathcal{S}}S$ generates $\Gamma$\footnote{The most interesting case is when $\Gamma$ is finitely generated and $\mathcal{S}=\{S\}$ for some finite symmetric generating set $S$; the reader should bear this case in mind.}.  Then for any $S\in \mathcal{S}$ and $\epsilon>0$ there exist a $\underline{K}_0$-triple $(P,T,\delta)$ as in Definition \ref{k0 trip} with $T\in \mathcal{S}$ and with the following property.

Let $\phi,\psi:\Gamma\to M_n(\C)$ be ucp $(T,\delta)$-representations in the sense of Definition \ref{quasi rep} with the property that (with notation as in as in Lemmas \ref{x eps lambda module} and \ref{partial kappa})
$$
\kappa^{(S,\epsilon)}_P(\phi_*) = \kappa^{(S,\epsilon)}_P(\psi_*),
$$
in $\text{Hom}_{\Lambda_0^{(N)}}(P,\underline{K}_0(\C))$.

Then there is a unitary representation $\theta:\Gamma\to M_k(\C)$ in $\mathcal{R}$ and a unitary $u\in M_{n+k}(\C)$ such that 
$$
\|u(\phi(s)\oplus \theta(s))u^*-(\psi(s)\oplus \theta(s))\|<\epsilon
$$
for all $s\in S$.

If moreover $\Gamma$ has the weak matricial LLP of Definition \ref{llp} and if $\mathcal{S}$ is the collection of all finite subsets of $\Gamma$, then one can drop the assumption that $\phi.\psi$ are ucp and use Definition \ref{rep phi*} for the definition of $\phi_*$ and $\psi_*$.
\end{theorem}

\begin{proof}
Note that Proposition \ref{ucp extend} part \eqref{phi ext} implies that a ucp representation $\phi:\Gamma\to M_n(\C)$ extends uniquely by linearity and continuity to a ucp map $\phi:C^*(\Gamma)\to M_n(\C)$, and that if $\phi:\Gamma\to M_n(\C)$ is a ucp $(T,\delta)$-representation, then the extended map $\phi:C^*(\Gamma)\to M_n(\C)$ is a $(T,\delta)$-$*$-homomorphism as in Definition \ref{x eps hom} (note that ucp maps are automatically $*$-preserving).  Having made this observation, the first two paragraphs of Theorem \ref{gp main} are just Theorem \ref{stab main} specialized to the case that $A=C^*(\Gamma)$ and the set $\mathcal{X}$ from Theorem \ref{stab main} consists of the sets of unitaries coming from $\mathcal{S}$.

The part with the additional assumption that $C^*(\Gamma)$ satisfies the LLP follows from the first part and Lemma \ref{to ucp}.
\end{proof}

\begin{remark}
The condition in the theorem is in some sense also necessary for the conclusion to hold.  To be more precise, the following holds.  

``Let $P$ be a $K$-datum as in Definition \ref{fin set tot hom}.  Then there exists a finite subset $S$ of $\Gamma$ and $\epsilon>0$ with the following property.  Let $T$ be a finite subset of $\Gamma$ and $\delta>0$ be such that the various components of $P$ are contained in the image of the maps $K_0^\delta(T;\cdot)\to K_0(A;\cdot )$ of line \eqref{dir lim maps} above; such exist by Lemma \ref{dir lim k thy}.   Then if $\phi:\Gamma\to M_n(\C)$ is a ucp $(T,\delta)$-representation and $\pi:\Gamma\to M_n(\C)$ is a unitary representation such that $\phi_*$ and $\pi_*$ do not agree on $P$, then for any representation $\theta:\Gamma\to M_k(\C)$ and any unitary $u\in M_{n+k}(\C)$,
$$
\|u(\phi(s)\oplus \theta(s))u^*-(\pi(s)\oplus \theta(s))\|\geq \epsilon
$$
for all $s\in S$.''

This follows from basic continuity properties of $K$-theory and the fact that $K$-theory allows cancellations of block sums.  This observation underlies many of the known obstructions to representation stability in operator norm: see for example \cite{Dadarlat:384aa} for far-reaching results in this direction.
\end{remark}

In the next three remarks we say a little bit more about the known range of validity of the assumptions in Theorem \ref{gp main}.

\begin{remark}\label{llp rem}
We first look at the weak matricial LLP assumption from Theorem \ref{gp main}.  

We do not know of any examples of a group that satisfies this assumption but do not satisfy the stronger LLP; conversely, most of the known examples of groups without the LLP come from work of Ioana-Spaas-Wiersma \cite{Ioana:2020aa}, and their techniques also show that these groups fail the weak matricial LLP.  We thus focus on the LLP, as this is the much more prominent property.

The LLP was codified\footnote{There are antecedents: see for example \cite[Theorem 3.2]{Effros:1985aa}.}  for general $C^*$-algebras by Kirchberg \cite[Section 2]{Kirchberg:1993aa}, and a wealth of information about the property and its connection to several deep conjectures can be found in the recent textbook \cite{Pisier:2020aa}.  The recent paper \cite{Enders:2024aa} also contains a good summary of known facts from a $C^*$-algebraic point of view (as well as its new results).

Specializing to groups, the most important class of examples with the LLP are amenable groups: for countable amenable groups this follows from the Choi-Effros lifting theorem \cite{Choi:1976aa} (see \cite[Section 3]{Arveson:1977aa} or \cite[Theorem C.3]{Brown:2008qy} for simpler proofs) and nuclearity of the group $C^*$-algebra \cite[Theorem 4.2]{Lance:1973aa} (see for example \cite[Theorem 2.6.8]{Brown:2008qy} for a textbook treatment).  This implies the LLP even for uncountable amenable groups, as any finite-dimensional subspace of $C^*(\Gamma)$ is contained in the $C^*$-subalgebra $C^*(\Gamma_0)$ for some countable subgroup.

The fundamental class of non-amenable groups with the LLP are free groups: this was established by Kirchberg in \cite[Lemma 3.3]{Kirchberg:1994aa}.  The class of groups with the LLP is closed under free products with finite amalgam \cite[Comments below Proposition 3.21]{Ozawa:2004ab}, and direct products with amenable groups: this follows from \cite[Corollary 2.6 (iv)]{Kirchberg:1993aa} and semidirect products with amenable groups (see \cite[Theorem 7.2 and Proposition 5.10]{Buss:2022aa} or \cite[Corollary 8.5]{Enders:2024aa}); this implies in particular that free-by-cyclic groups (compare Theorem \ref{fbc intro}) have the LLP.

The state of the art on examples of groups with the LLP at time of writing is surveyed in \cite{Fournier-Facio:2026aa}.  This also shows some new results: in particular that fundamental groups of manifolds of dimension at most three (compare Theorems \ref{intro surf the} and \ref{intro 3man the} from the introduction), and the class of one-relator groups from Theorem \ref{o r intro} all have the LLP.  It is open whether all one-relator groups have the LLP, but it is known for a very large class: as well as those from Theorem \ref{o r intro}, the LLP is known for one-relator groups with torsion, and for Baumslag-Solitar groups, for example.

On the other hand, many groups are known not to have the LLP.  The first existence proofs for groups without the LP are due to Ozawa \cite[Corollary 5]{Ozawa:2004aa}, and Thom gave the first explicit examples in \cite[page 198]{Thom:2010aa}.  Further examples without the LLP were given by Buss, Echterhoff, and the author \cite[Corollary 4.8]{Buss:2018nm}; these are again not explicit.  The most important class of examples is due to Ioana, Spaas, and Wiersma \cite{Ioana:2020aa}; the latter examples include natural and well-studied groups like $\Z^2\rtimes SL(2,\Z)$ and $SL(3,\Z)$.  It seems to be open whether there is any property (T) group with the LLP. 
\end{remark}

\begin{remark}\label{uct rem}
We now say some more about the approximate $K$-homology UCT assumption in Theorem \ref{gp main}.  

The UCT for $K$-homology (which is all we really use in this paper) was introduced by Brown in \cite{Brown:1984rx}.  The stronger UCT for $KK$-theory was introduced later by Rosenberg and Schochet \cite{Rosenberg:1987bh}; following standard practice in the literature, when we say ``UCT'' we mean the Rosenberg-Schochet $KK$-theory version.  We will focus here on the UCT: while this is stronger than the assumptions we need, it is much better understood; in particular, we do not know any examples of group $C^*$-algebras that satisfy the approximate $K$-homology UCT from Definition \ref{kh auct} without also satisfying the $KK$-theory UCT.

The most prominent class of groups known to satisfy the UCT are the (countable) a-T-menable groups\footnote{Also called groups with the \emph{Haagerup property} due to the influence of \cite{Haagerup:1979rq}.}: this is a consequence of the work of Higson and Kasparov \cite{Higson:2001eb} on the Baum-Connes conjecture for such groups, as shown by Tu \cite[Proposition 10.7]{Tu:1999bq} (in a more general context).  The class of a-T-menable groups includes all the groups covered by Theorems \ref{intro surf the}, \ref{fbc intro}, \ref{o r intro} and \ref{intro 3man the} from the introduction: this is explained in \cite[Remark 4.4]{Fournier-Facio:2026aa}, and is a consequence of several deep results in geometry and geometric group theory.

Using known permanence results, the class of groups for which $C^*(\Gamma)$ satisfies the UCT can be pushed a bit further.  One important source of examples is Tu's \emph{condition (BC$\,'$)} \cite[Definition 2.5]{Tu:1999aa}: this is recorded in \cite[Theorem 3.5]{Matthey:2008aa}\footnote{There seems to be a slight gap in the statement given there: as well as (BC$'$), one also needs the $C^*$-algebra $\mathcal{A}$ appearing in the definition of (BC$'$) to be locally induced from $C^*$-algebras satisfying the UCT.  This is satisfied in all the examples we give.}.  Examples of groups satisfying (BC$'$) include all one-relator groups \cite[Theorem 4.5]{Tu:1999aa}; we believe a-T-menability is open at this level of generality.  There are also interesting examples that are known not to be a-T-menable: for example $\Gamma=\Z^2\rtimes SL(2,\Z)$ then $C^*(\Gamma)$ satisfies the UCT\footnote{This does not appear to be in the literature, but essentially the same proof as for $C^*(SL(2,\Z))$ works, once one has observed that the crossed product of $C^*(\Z^2)$ by any finite group is type I.}, even though $\Gamma$ has relative property (T) with respect to the infinite subgroup $\Z^2$ and therefore cannot be a-T-menable.

As far as we are aware, there are no examples of groups for which $C^*(\Gamma)$ is known to not satisfy the UCT.  However, it seems very likely that the UCT fails for $C^*(\Gamma)$ for some (and maybe all) infinite property (T) groups: the computations Skandalis carries out in \cite[Section 4]{Skandalis:1991aa} to show that the \emph{reduced} group $C^*$-algebras of certain hyperbolic property (T) groups do not satisfy the UCT strongly suggest the same happens for the corresponding \emph{maximal} group $C^*$-algebras (see also \cite{Skandalis:1991aa} and \cite[Sections 6.2 and 6.3]{Higson:2004la}), but we were not able to show this.

We do not know of any (separable) $C^*$-algebras that fail the approximate $K$-homology UCT of Definition \ref{kh auct}, which is all we actually need in this paper. 
\end{remark}

\begin{remark}\label{rfd rem}
We now say some more about the assumption on a good class of representations in Theorem \ref{gp main}.

First, note that if a good class of representations exists at all, then $C^*(\Gamma)$ is RFD as in Example \ref{good ex}.  Recall that a group is \emph{maximally almost periodic} (MAP) if it has a separating family of finite-dimensional unitary representations: for finitely generated groups, this is equivalent to residual finiteness by Mal'cev's theorem (see for example \cite[Theorem 6.4.12]{Brown:2008qy}), but not in general as shown by $\Gamma=\Q$.  If $\Gamma$ is RFD, it is clearly MAP.  For amenable groups the converse is true \cite[Proposition 1]{Bekka:1999kx}, but not in general: for example Bekka shows in  \cite{Bekka:1999kx} that many arithmetic groups such as $SL(n,\Z)$ with $n> 2$ are not RFD; they are, however, residually finite (whence MAP) by Mal'cev's theorem again.

In general, the RFD property is known to be preserved under direct products with MAP amenable groups (this is straightforward), free products \cite{Exel:1992aa}, certain free products amalgamated over finite subgroups \cite[Theorem 2]{Li:2012aa}, and certain free products amalgamated over central subgroups and HNN extensions \cite{Shulman:2022aa}.   A large class of one-relator groups are shown to be RFD in \cite[Theorem 3.11]{Hadwin:2018aa}.  

On the other hand, for $F_2$ the free group on two generators, whether $F_2\times F_2$ is RFD is equivalent to Kirchberg's QWEP conjecture: this equivalence is implicit in \cite[Proof of Proposition 8.1]{Kirchberg:1993aa}, and made explicit in \cite[Proposition 3.19]{Ozawa:2004ab}.  Thanks to the recent negative solution of the QWEP conjecture (see \cite{Salle:2023aa} for a survey), $F_2\times F_2$ is therefore not RFD.  The difficulty of the problem for $F_2\times F_2$ suggests the difficulty of determining the RFD property in general.

A stronger property than being RFD that is both interesting in its own right and a good source of examples is \emph{property FD} of Lubotzky and Shalom \cite{Lubotzky:2004xw}: recall from Example \ref{good ex} that $\Gamma$ has FD if the family of all finite-dimensional unitary representations that factor through finite quotients is good.  Property FD was shown to hold for fundamental groups of surfaces \cite[Theorem 2.8 (2)]{Lubotzky:2004xw} and free by cyclic groups (i.e.\ groups of the form $F\rtimes \Z$ for $F$ a finitely generated free group) \cite[Theorem 2.8 (1)]{Lubotzky:2004xw} in the original paper.  Thanks to known structural results on one-relator groups, this also implies FD for the class of groups in Theorem \ref{o r intro}: compare \cite[Section 6.2]{Fournier-Facio:2026aa}.  Property FD for fundamental groups of compact three-manifolds was established in \cite[Theorem 6.1]{Fournier-Facio:2026aa}; this requires many deep results from geometry and geometric group theory.
\end{remark}

Finally, let us give some remarks about other techniques that seem relevant and that one might guess could be used to strengthen Theorem \ref{gp main}.  We would be very interested to see progress along these lines.

\begin{remark}\label{kub rem}
Inspired by work of Kubota \cite[Section 5.2]{Kubota:2019aa}, it is tempting to work not with $C^*(\Gamma)$, but with the $C^*$-algebra $C^*_{fd}(\Gamma)$ one gets by completing $\C[\Gamma]$ in the direct sum of all finite-dimensional representations.  This has the advantage that $C^*_{fd}(\Gamma)$ is automatically RFD; one could also do something similar with representations that factor through finite quotients.  However, there are two important disadvantages.  

First, even if $\Gamma$ is a-T-menable, it is not clear that the UCT holds for $C^*_{fd}(\Gamma)$.  Indeed, this would hold for any completion  of the group algebra that comes from a so-called \emph{correspondence functor} by \cite[Theorem 6.6]{Buss:2014aa}.  However, any completion of $\C[\Gamma]$ coming from a correspondence functor has the property that its dual space is an ideal in the dual $C^*(\Gamma)^*$ of $C^*(\Gamma)$ \cite[Corollary 5.7]{Buss:2014aa}.  The trivial representation is the unit of $C^*(\Gamma)^*$; as the trivial representation is an element of $C^*_{fd}(\Gamma)^*$, we see that $C^*_{fd}(\Gamma)$ comes from a correspondence functor if and only if it equals $C^*(\Gamma)$.  

The second problem with $C^*_{fd}(\Gamma)$ is that it is not clear which ucp quasi-representations extend to it, unlike for $C^*(\Gamma)$ where this is automatic by Proposition \ref{ucp extend} part \eqref{phi ext} above.
\end{remark}

\begin{remark}\label{bost rem}
Following the work of Dadarlat \cite{Dadarlat:2011kx} it is also very tempting to work not with $C^*(\Gamma)$ but with the group convolution algebra $\ell^1(\Gamma)$.  This would have three big advantages.  First, it is always RFD when $\Gamma$ is MAP.  Second, $(X,\epsilon)$-$*$-homomorphisms always extend to it, so one does not have to worry about ucp maps and analogues of Proposition \ref{ucp extend}; thus the LLP is likely not at all relevant.  Third, one might be able to work directly with the Bost assembly map \cite[Section 1.2]{Aparicio:2020aa} in place of the maximal Baum-Connes assembly map for explicit computations (see Section \ref{bc sec} below).  The Bost assembly map is known to be an isomorphism in quite a lot more generality than the maximal Baum-Connes assembly map thanks to deep work of Lafforgue \cite{Lafforgue:2002zl}.

The ingredient that is missing in this case is an analogue of Theorem \ref{kl ck hom}.  Indeed, it is crucial for our methods that homomorphisms between $K$-theory groups can be realized by elements of bivariant $KK$-theory, and that these elements of $KK$-theory can be modeled by almost multiplicative maps.  Lafforgue's Banach algebra $KK$-theory \cite{Lafforgue:2002qm} might help here, but this does not seem obvious.
\end{remark}

\section{Index theory and the Baum-Connes conjecture}\label{bc sec}

In order to be able to apply Theorem \ref{gp main}, we need to be able to say more about the $K$-theory of $C^*(\Gamma)$.  The key tools here are the assembly maps of Baum-Connes \cite[Section 3]{Baum:1994pr} and Kasparov \cite[Section 6]{Kasparov:1988dw}: these relate the $K$-homology of the classifying space $B\Gamma$ to the $K$-theory of the group $C^*$-algebra $C^*(\Gamma)$.  In this section, we use the Baum-Connes assembly map to compute the map
$$
\pi_*\in \text{Hom}_{\Lambda_0}(\underline{K}_0(C^*(\Gamma)),\underline{K}_0(\C))
$$
of Example \ref{basic lambda 0} induced by a finite-dimensional unitary representation $\pi:\Gamma\to M_n(\C)$, under appropriate assumptions.   These computations will be used to apply our abstract main result - Theorem \ref{gp main} above - to concrete examples.  

To summarize the rest of this section, Subsection \ref{basic bc subsec} gives properties of the map $\pi_*:K_0(C^*(\Gamma))\to K_0(\C)$ induced by a unitary representation $\pi:\Gamma\to M_n(\C)$ on $K$-theory with integer coefficients; this is all folklore and we are just summarizing what we need for the reader's convenience.   Subsection \ref{fc subsec} computes the maps $\pi_*:K_0(C^*(\Gamma);\Z/n)\to K_0(\C;\Z/n)$ induced by the representation $\pi$ on $K$-theory with finite coefficients in terms of the \emph{relative eta invariants} of Atyiah, Patodi, and Singer \cite{Atiyah:1975aa}, at least in some special cases.  As far as we know, this material is new, but we suspect at least some of it is known to experts.

\subsection{The Baum-Connes-Kasparov assembly map}\label{basic bc subsec}

In this subsection we recall background on the Baum-Connes-Kasparov assembly map, use it to compute the map 
$$
\pi_*\in \text{Hom}(K_0(C^*(\Gamma)),K_0(\C))
$$
arising from a finite-dimensional unitary representation, and collect some related facts. The computations in this subsection are essentially part of the folklore of the subject, as will be clear from the references below.

Let $X$ be a CW complex.  We define the \emph{representable $K$-homology} of $X$ by 
\begin{equation}\label{rep k hom}
RK_*(X):=\lim_{\to} K_*(Y),
\end{equation}
where the direct limit is taken over all finite subcomplexes $Y\subseteq X$ and $K_*(Y)=KK_*(C(Y),\C)$ is analytic $K$-homology.  We will mainly apply this to the case when $X$ is the classifying space $B\Gamma$ of a group.  

Assume now that $\Gamma$ is a countable discrete group equipped with a CW complex model for the  classifying space $B\Gamma$.    The \emph{Baum-Connes-Kasparov (BCK) assembly map} (see \cite[Sections 3 and 7]{Baum:1994pr} and \cite[Definition 6.2]{Kasparov:1988dw}; the earliest reference is \cite[Section 8]{Kasparov:1975ht}) is a graded group homomorphism 
\begin{equation}\label{max bc}
\mu:RK_*(B\Gamma)\to K_*(C^*(\Gamma)).
\end{equation}
To be explicit: we define the map $\mu$ above to be the same as the map $\beta$ of \cite[Definition 6.2]{Kasparov:1988dw}, i.e.\ (up to a small technicality with direct limits) it is given by Kasparov product with the explicit element\footnote{Sometimes called the \emph{Miscenko bundle}.}
\begin{equation}\label{mis bun}
[\beta_\Gamma]\in RKK(B\Gamma;\C,C^*(\Gamma))
\end{equation}
defined there.  

In the next section, we will also need the BCK assembly map with finite coefficients, and record this now.  For $n\geq 2$, we define \emph{representable $K$-homology with finite coefficients} for a CW complex $X$ to be 
$$
RK_*(X;\Z/n):=\lim_{\to} KK_*(C(Y),O_{n+1})
$$
where the limit is taken over finite subcomplexes $Y$ of $X$, $O_{n+1}$ is as usual the Cuntz algebra, and $KK$ is Kasparov's bivariant $KK$-theory \cite{Kasparov:1988dw}.  The \emph{BCK assembly map with coefficients in $\Z/n$}
\begin{equation}\label{bc fc}
\mu:RK_*(B\Gamma;\Z/n)\to K_*(C^*(\Gamma);\Z/n)
\end{equation}
is again defined by Kasparov product with the element in line \eqref{mis bun} above.

\begin{lemma}\label{bck fc lem}
Assume that the BCK assembly map of line \eqref{max bc} is an isomorphism.  Then for each $n\geq 2$ the BCK assembly map with finite coefficients of line \eqref{bc fc} is also an isomorphism.
\end{lemma}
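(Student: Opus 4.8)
The plan is to exhibit $K$-theory with $\Z/n$ coefficients as the middle term of a natural Bockstein-type six-term exact sequence on both sides of the assembly map, check that the assembly map is compatible with these two sequences, and then conclude by the five lemma.

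First I would set up the coefficient sequences. Recall (e.g.\ from Schochet \cite{Schochet:1984ab}, or directly from the mapping cone of $n\cdot\mathrm{id}_\C$ together with the fact that $O_{n+1}$ is $KK$-equivalent to a suspension of that mapping cone, both being bootstrap $C^*$-algebras with $K_0=\Z/n$ and $K_1=0$) that for every separable $C^*$-algebra $B$ there is a natural six-term exact sequence
$$
\cdots \longrightarrow K_i(B) \overset{\times n}{\longrightarrow} K_i(B) \overset{r}{\longrightarrow} K_i(B;\Z/n) \overset{\beta}{\longrightarrow} K_{i-1}(B) \overset{\times n}{\longrightarrow} K_{i-1}(B) \longrightarrow \cdots,
$$
where $r$ is reduction of coefficients and $\beta$ is the Bockstein. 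Applying $KK_*(C(Y),-)$ to the underlying distinguished triangle, for each finite subcomplex $Y\subseteq B\Gamma$ one gets a six-term exact sequence relating $KK_*(C(Y),\C)$ and $KK_*(C(Y),O_{n+1})$; taking the direct limit over finite subcomplexes as in \eqref{rep k hom} (filtered colimits are exact) produces the sequence
$$
\cdots \longrightarrow RK_i(B\Gamma) \overset{\times n}{\longrightarrow} RK_i(B\Gamma) \overset{r}{\longrightarrow} RK_i(B\Gamma;\Z/n) \overset{\beta}{\longrightarrow} RK_{i-1}(B\Gamma) \longrightarrow \cdots,
$$
and tensoring the same triangle with $C^*(\Gamma)$ gives the analogous sequence for $K_*(C^*(\Gamma))$ and $K_*(C^*(\Gamma);\Z/n)$.

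Second I would check that the BCK assembly maps \eqref{max bc} and \eqref{bc fc} intertwine these two sequences. Both are Kasparov product with the single class $[\beta_\Gamma]$ of \eqref{mis bun}, and the maps $\times n$, $r$, $\beta$ are each Kasparov product with fixed classes in the coefficient variable (living in $KK(\C,\C)$, $KK(\C,O_{n+1})$, and $KK_*(O_{n+1},\C)$ respectively); by associativity of the Kasparov product these commute with product by $[\beta_\Gamma]$. Equivalently, $-\otimes[\beta_\Gamma]$ is a natural transformation of functors in the coefficient $C^*$-algebra, hence commutes with the long exact sequence attached to any distinguished triangle in that variable. This yields a morphism of six-term exact sequences whose every ``integral'' vertical arrow $RK_*(B\Gamma)\to K_*(C^*(\Gamma))$ is the assembly map \eqref{max bc}, an isomorphism by hypothesis.

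Finally, extracting the five-term stretch around the $\Z/n$-term,
$$
RK_i(B\Gamma) \longrightarrow RK_i(B\Gamma) \longrightarrow RK_i(B\Gamma;\Z/n) \longrightarrow RK_{i-1}(B\Gamma) \longrightarrow RK_{i-1}(B\Gamma),
$$
in which the four outer vertical maps are integral assembly isomorphisms, the five lemma forces $\mu\colon RK_i(B\Gamma;\Z/n)\to K_i(C^*(\Gamma);\Z/n)$ to be an isomorphism for each $i$. The five lemma step is formal; I expect the only genuine work to be the bookkeeping in the second step --- verifying that the assembly map commutes \emph{on the nose} (not just up to sign) with $r$ and $\beta$, and correctly pinning down the degree shifts --- which is handled cleanly by fixing once and for all the exact triangle realizing $\Z/n$-coefficients and tensoring it (resp.\ pulling it back) on the two sides simultaneously.
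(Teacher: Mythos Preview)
Your proposal is correct and takes essentially the same approach as the paper: set up the Bockstein/coefficient exact sequence on both sides, check compatibility with assembly via associativity of the Kasparov product, and apply the five lemma. The only cosmetic difference is that the paper splices the six-term sequence into the short exact form $0\to {}^nK_0\to K_0(\,\cdot\,;\Z/n)\to {}_nK_1\to 0$ before invoking the five lemma, whereas you work directly with the long exact sequence; these are equivalent.
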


\begin{proof}
We focus on the even case of the assembly map $RK_0(B\Gamma;\Z/n)\to K_0(C^*(\Gamma);\Z/n)$; the odd case is essentially the same.  For an abelian group $G$, let $^nG$ and $_nG$ denote respectively the cokernel and kernel of the multiplication by $n$ map $\times n:G\to G$; equivalently, $^nG=G\otimes \Z/n$ and $_nG$ is the $n$-torsion subgroup of $G$.  The universal coefficient exact sequence\footnote{\label{uct fn}Potentially confusingly, there are (at least) two distinct short exact sequences that go by the name ``universal coefficient exact sequence'' in $K$-theory; the one we are using here should not be confused with the UCT assumption from \cite{Rosenberg:1987bh} that we have used at several other points in this paper.} in $K$-theory and $K$-homology (see \cite[Proposition 1.8]{Schochet:1984ab} for $K$-theory, and the comments in \cite[Section 5]{Schochet:1984ab} for $K$-homology) give a diagram
$$
\xymatrix{ 0\ar[r] & ^nRK_0(B\Gamma) \ar[r] \ar[d] & RK_0(B\Gamma;\Z/n) \ar[r] \ar[d] & _nRK_1(B\Gamma) \ar[d] \ar[r] & 0 \\
0\ar[r] & ^nK_0(C^*(\Gamma)) \ar[r]  & K_0(C^*(\Gamma);\Z/n) \ar[r] & _nK_1(C^*(\Gamma))  \ar[r] & 0 }
$$
where the bottom and top lines are the exact universal coefficient sequences (more precisely, for the top line we take the exact universal coefficient sequence for each finite subcomplex $Y$, then take a direct limit), and the vertical maps are those induced by the BCK assembly map.  All the maps are induced by Kasparov product with appropriate elements of $KK$-theory (see for example \cite[Section 1.2]{Dadarlat:1996aa} for the horizontal maps and \cite[Definition 6.2]{Kasparov:1988dw} for the BCK assembly maps) whence the diagram commutes by associativity of the Kasparov product.  The result follows from the five lemma.  
\end{proof}

For our main theorems, we will need to assume that the BCK assembly map is an isomorphism.  Let us say a little more about this assumption, via its relation with the closely related \emph{Baum-Connes assembly map}.

\begin{remark}\label{bc rem}
Recall that the \emph{reduced group $C^*$-algebra} $C^*_r(\Gamma)$ is the completion of the complex group algebra in its image under the left regular representation on $\ell^2(\Gamma)$.
The \emph{Baum-Connes assembly map} \cite[Line (3.15)]{Baum:1994pr} for a discrete group $\Gamma$ is a group homomorphism 
\begin{equation}\label{bcc ass}
\mu:K_*^\Gamma(\underline{E}\Gamma)\to K_*(C^*_r(\Gamma))
\end{equation}
from the equivariant topological $K$-homology of the classifiying space $\underline{E}\Gamma$ for proper actions to the $K$-theory of the reduced group $C^*$-algebra.  If $\Gamma$ is torsion-free $\underline{E}\Gamma$ equals the universal cover $E\Gamma$ of the classifying space $B\Gamma$, and the assembly map in line \eqref{bcc ass} reduces to a map
\begin{equation}\label{rk bg}
\mu:RK_*(B\Gamma)\to K_*(C^*_r(\Gamma)).
\end{equation}
The assembly map in line \eqref{rk bg} is consistent with the BCK assembly map of line \eqref{max bc} in the sense that it is the composition of the BCK assembly map, and the map on $K$-theory induced by the canonical quotient map 
\begin{equation}\label{max to red}
C^*(\Gamma)\to C^*_r(\Gamma).
\end{equation}

We will mainly be interested in cases where the BCK assembly map of line \eqref{max bc} is an isomorphism\footnote{Or at least surjective, but this does not seem to be known in any significantly greater generality than isomorphism.}.  This is obstructed in two important ways connected to the definition of the Baum-Connes assembly map in line \eqref{bcc ass}.

First, the BCK assembly map is never surjective if $\Gamma$ has non-trivial torsion elements.  Indeed, let 
$$
\text{tr}:\C[\Gamma]\to \C,\quad \sum_{g\in \Gamma} a_g g\mapsto a_e
$$ 
be the canonical trace on the complex group algebra.  This extends to a trace on $C^*(\Gamma)$, and this trace induces a map $\text{tr}_*:K_0(C^*(\Gamma))\to \R$.  Surjectivity of the maximal BCK assembly map implies that $\text{tr}_*$ is integer-valued: this is essentially Atiyah's covering index theorem \cite{Atiyah:1976th} (see for example \cite[Section 10.1]{Willett:2010ay} for a textbook treatment of this integrality result, and \cite{Luck:2002lk} for a further-reaching analysis).  However, if $g\in \Gamma$ is a non-trivial element of order $n$ for some $n\geq 2$, then $p:=\frac{1}{n}\sum_{k=0}^{n-1} g^k$ defines a projection in the group ring such that $\text{tr}_*[p]=1/n$.  This is the basic reason for preferring the equivariant $K$-homology of $\underline{E}\Gamma$ (see \cite[Sections 1 and 2]{Baum:1994pr}) to the $K$-homology of $B\Gamma$ in the definition of the Baum-Connes assembly map as in line \eqref{bcc ass}.

Second, property (T) obstructs surjectivity of the BCK assembly map due to the existence of \emph{Kazhdan projections} $p\in C^*(\Gamma)$ whose $K$-theory classes cannot be in the image of the assembly map; the essential point is again Atiyah's covering index theorem as explained in \cite[Section 5]{Higson:1998qd}.  This is one of the motivations for using the reduced group $C^*$-algebra $C^*_r(G)$ on the right hand side of the Baum-Connes conjecture as in line \eqref{bcc ass}.  

Nonetheless, the BCK assembly map in line \eqref{max bc} is known to be an isomorphism in many interesting cases.  In all the cases we know this to hold, it happens for the conjunction of three reasons: the Baum-Connes assembly map as in line \eqref{bcc ass} is known to be an isomorphism; the group $\Gamma$ is torsion-free, whence $K_*^\Gamma(\underline{E}\Gamma)$ agrees with $RK_*(B\Gamma)$; and the group $\Gamma$ is \emph{$K$-amenable} in the sense of Cuntz \cite[Theorem 2.1 and Definition 2.2]{Cuntz:1983jx}, which implies that the canonical map in line \eqref{max to red} induces an isomorphism on $K$-theory.  

The most prominent case where these three conditions hold is that of torsion-free a-T-menable groups \cite[Theorems 1.1 and 1.2]{Higson:2001eb}, and in particular for torsion-free amenable groups.  Another important case is torsion-free one-relator groups as in \cite[Theorems 3 and 4]{Beguin:2999aa}.   A useful general criterion for the conditions above to hold is when $\Gamma$ is torsion-free and satisfies Tu's \emph{condition (BC$\,'$)} \cite[Definition 2.5 and Lemma 2.6]{Tu:1999aa}.  %Examples of groups satisfying (BC$'$) include all one-relator groups again \cite[Theorem 4.5]{Tu:1999aa}, and fundamental groups of closed three-manifolds \cite[Theorem 3.5]{Matthey:2008aa}.  See also \cite[Theorem A]{Beguin:2999aa} and \cite[Theorem 4.6]{Tu:1999aa} for the case of Haken three-manifolds.
\end{remark}

Going back to generalities, we need a slightly ad-hoc\footnote{We say ``ad-hoc'' as it is perhaps more natural to use \emph{representable $K$-theory} as in \cite[Definition 2.19]{Kasparov:1988dw} or \cite[Section 5]{Segal:1970aa}.  The inverse limit version described here is more convenient for us; the two are related by a Steenrod-Milnor sequence (see \cite[Proposition 4.1]{Atiyah:1969aa}).} definition, based on \cite[Section 4]{Atiyah:1961uq}\footnote{More precisely, in \cite[Section 4]{Atiyah:1961uq}, Atiyah defines $\mathcal{K}^*(X)$ to be the inverse limit ${\displaystyle \lim_{\leftarrow} K^*(X^{(n)})}$, where $X^{(n)}$ is the $n$-skeleton of $X$; however, Atiyah is working with CW complexes with finitely many cells in each dimension; the definition we give here seems the correct generalization.}.

\begin{definition}\label{inv lim k}
Let $X$ be a CW complex.  We define the \emph{limit $K$-theory} of $X$ to be the inverse limit $\mathcal{K}^*(X):={\displaystyle \lim_{\leftarrow} K^*(Y)}$ over all finite subcomplexes $Y$ of $X$
\end{definition}

Note that with this definition the usual pairings\footnote{See for example \cite[Section 7.2]{Higson:2000bs}, or treat it as a special case of the Kasparov product \cite[Theorem 2.14]{Kasparov:1988dw}.} $K^*(Y)\otimes K_*(Y)\to \Z$ between $K$-theory and $K$-homology of a compact space $Y$ induce a canonical pairing 
\begin{equation}\label{pair}
\mathcal{K}^*(X)\otimes RK_*(X)\to \Z
\end{equation}
defined for any CW complex $X$.  Analogously, the Kasparov product 
\begin{equation}\label{kas 1 pair}
KK_i(\C,C(Y))\otimes KK_i(C(Y),O_{n+1})\to KK(\C,O_{n+1})=K_0(O_{n+1})=\Z/n
\end{equation}
for $i\in \{0,1\}$ gives rise to a pairing 
\begin{equation}\label{pair n}
\mathcal{K}^i(X)\otimes RK_i(X;\Z/n)\to \Z/n.
\end{equation}

For the next result, note that a vector bundle $E$ over a CW complex $X$ defines a class in $\mathcal{K}^0(X)$ in a canonical way: indeed, the restrictions of $E$ to all finite subcomplexes define an element of the inverse limit.  Recall moreover that if $\pi:\Gamma\to M_m(\C)$ is a unitary representation and $E\Gamma$ is the universal cover of the classifying space $B\Gamma$ then the associated \emph{flat bundle} is defined by 
\begin{equation}\label{fb}
E_{\pi}:=(E\Gamma\times \C^m) / \Gamma
\end{equation}
where $\Gamma$ acts diagonally via deck transformations on $E\Gamma$, and via $\pi$ on $\C^m$.  Then the projection map $E_\pi\to E\Gamma/\Gamma=B\Gamma$ makes $E_\pi$ into a rank $m$ vector bundle over $B\Gamma$.

\begin{lemma}\label{flat pair}
Let $\Gamma$ be a discrete group, let $\pi:\Gamma\to M_m(\C)$ be a unitary representation, and let $E_\pi$ be the corresponding flat bundle over $B\Gamma$.  Let $[E_\pi]\in \mathcal{K}^0(B\Gamma)$ be the corresponding class, and let $p_\pi:RK_0(B\Gamma)\to \Z$ be the operation of pairing with $E_\pi$ as in line \eqref{pair} above.  Then the diagram 
$$
\xymatrix{ RK_0(B\Gamma) \ar[d]^-{p_\pi} \ar[r]^-\mu & K_0(C^*(\Gamma)) \ar[d]^-{\pi_*} \\
\Z \ar@{=}[r] & \Z}
$$
commutes.  Similarly, the for any $n\geq 2$, the diagram 
$$
\xymatrix{ RK_0(B\Gamma;\Z/n) \ar[d]^-{p_\pi} \ar[r]^-\mu & K_0(C^*(\Gamma);\Z/n) \ar[d]^-{\pi_*} \\
\Z/n \ar@{=}[r] & \Z/n}
$$
commutes.
\end{lemma}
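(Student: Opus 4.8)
The plan is to reduce both sides of the square to the same twisted Dirac index on the manifolds underlying Baum--Douglas cycles. First I would represent an arbitrary class in $RK_0(B\Gamma)$ by a geometric $K$-cycle $(M,S,f)$; since $M$ is compact, $f$ takes image in some finite subcomplex $Y$, so this is legitimate, and under the geometric-to-analytic isomorphism of line \eqref{geo ana} the class is $f_*[D]$, where $D=D^{M,S}$ is the associated Dirac operator of line \eqref{dir op}. It suffices to check $\pi_*\mu(f_*[D])=p_\pi(f_*[D])$ on such cycles, as these generate $RK_0(B\Gamma)$.

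On the topological side, naturality of the pairing of line \eqref{pair} gives $p_\pi(f_*[D])=\langle [E_\pi],f_*[D]\rangle=\langle f^*[E_\pi],[D]\rangle$, and by definition the pairing of $K$-theory with $K$-homology of the compact space $M$ is the index pairing, so $p_\pi(f_*[D])=\mathrm{Ind}(D\otimes f^*E_\pi)\in\Z$, the index of $D$ twisted by the pullback of the flat bundle of line \eqref{fb}. On the analytic side, Kasparov's description of $\mu$ as Kasparov product with the Miscenko element $[\beta_\Gamma]$ of line \eqref{mis bun} identifies $\mu(f_*[D])$ with the Mishchenko--Fomenko index $\mathrm{Ind}(D\otimes f^*\mathcal{L}_\Gamma)\in K_0(C^*(\Gamma))$, the index of $D$ twisted by the pullback of the Miscenko bundle $\mathcal{L}_\Gamma:=(E\Gamma\times C^*(\Gamma))/\Gamma$ of rank-one free $C^*(\Gamma)$-modules over $B\Gamma$; this identification is folklore and can be assembled from \cite[Definition 6.2]{Kasparov:1988dw} together with standard facts about the Mishchenko--Fomenko index (compare the treatment in \cite[Section 3]{Higson:2008qb}).

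Now the unitary representation $\pi$ extends, by the universal property of $C^*(\Gamma)$, to a unital $*$-homomorphism $\bar\pi:C^*(\Gamma)\to M_n(\C)$, and $\pi_*:K_0(C^*(\Gamma))\to K_0(\C)$ is the map it induces on $K_0$ (after the standard identification $K_0(M_n(\C))=K_0(\C)=\Z$). The key point is that applying $\bar\pi$ to coefficients commutes with the Mishchenko--Fomenko index, i.e.\ $\bar\pi_*\,\mathrm{Ind}(D\otimes f^*\mathcal{L}_\Gamma)=\mathrm{Ind}\big(D\otimes f^*(\mathcal{L}_\Gamma\otimes_{\bar\pi}\C^n)\big)$; this is functoriality of the index under $*$-homomorphisms of the coefficient algebra, equivalently associativity of the Kasparov product. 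Finally, the associated-bundle computation $\mathcal{L}_\Gamma\otimes_{\bar\pi}\C^n=(E\Gamma\times_\Gamma C^*(\Gamma))\otimes_{C^*(\Gamma),\bar\pi}\C^n\cong E\Gamma\times_\Gamma\C^n=E_\pi$ identifies the twisting bundle with $E_\pi$. Combining the displayed identities, $\pi_*\mu(f_*[D])=\mathrm{Ind}(D\otimes f^*E_\pi)=p_\pi(f_*[D])$, as required; this is essentially the argument of \cite[Lemma 4.2]{Ramras:2012fk} with the auxiliary space there taken to be a point.

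I expect the main obstacle to be purely organizational: pinning down precisely the identification of Kasparov's $\mu$ with the Mishchenko--Fomenko twisted index and the naturality of that index under $\bar\pi$, while keeping the direct limits over finite subcomplexes consistent on both sides. A more abstract route bypasses index theory: working entirely in $KK$-theory one has $\mu(\,\cdot\,)=\,\cdot\otimes_\C[\beta_\Gamma]$, $\pi_*(\,\cdot\,)=\,\cdot\otimes_{C^*(\Gamma)}[\bar\pi]$, and $p_\pi(\,\cdot\,)=\,\cdot\otimes_\C[E_\pi]$, so the square commutes by associativity of the Kasparov product once one verifies the single identity $[\beta_\Gamma]\otimes_{C^*(\Gamma)}[\bar\pi]=[E_\pi]$ in $K_L^0(B\Gamma)$ --- which is again the associated-bundle computation above, now read at the level of $KK$-classes.
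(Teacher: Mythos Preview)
Your proposal is correct and essentially coincides with the paper's approach: the paper does not give its own proof but simply invokes \cite[Lemma 4.2]{Ramras:2012fk} (specialized to the case where the auxiliary space there is a point), and your argument is precisely an unpacking of that lemma via the Mishchenko--Fomenko index and the associated-bundle identification $\mathcal{L}_\Gamma\otimes_{\bar\pi}\C^n\cong E_\pi$. Your alternative $KK$-theoretic route, reducing everything to the identity $[\beta_\Gamma]\otimes_{C^*(\Gamma)}[\bar\pi]=[E_\pi]$ and associativity of the Kasparov product, is in fact the cleanest way to organize the argument and avoids the bookkeeping you flag.
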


\begin{proof}
This can be established in the same way as \cite[Lemma 4.2]{Ramras:2012fk} (in the special case that the auxiliary space $X$ appearing there is a point): we leave the details to the reader.
\end{proof}

For the next result, let $\tau:C^*(\Gamma)\to \C$ be the trivial representation.  As $\tau$ splits the unit inclusion $\C\to C^*(\Gamma)$, there is a canonical splitting
\begin{equation}\label{red k hom}
K_0(C^*(\Gamma))=\Z[1]\oplus \widetilde{K}_0(C^*(\Gamma))
\end{equation}
where $[1]\in K_0(C^*(\Gamma))$ is the class of the unit, and $\widetilde{K}_0(C^*(\Gamma))$ is the kernel of $\tau_*:K_0(C^*(\Gamma))\to \Z$.  Let also $\widetilde{RK}_0(B\Gamma)$ be the kernel of the map $RK_0(B\Gamma)\to \Z$ induced by collapsing $B\Gamma$ to a single point, and let $[pt]\in RK_0(B\Gamma)$ be the class induced by the inclusion of any point in $B\Gamma$, which also gives rise to a direct sum decomposition
\begin{equation}\label{red k hom 2}
RK_0(B\Gamma)=\Z[pt]\oplus \widetilde{RK}_0(B\Gamma)
\end{equation}

\begin{corollary}\label{red cor}
Let $\Gamma$ be a discrete group.  With notation as in the paragraph above, the BCK assembly map of line \eqref{max bc} splits as a direct sum 
$$
\widetilde{\mu}\oplus \text{id}_\Z:\widetilde{RK}_0(B\Gamma)\oplus \Z[pt] \to \widetilde{K}_0(C^*(\Gamma))\oplus \Z[1].
$$
\end{corollary}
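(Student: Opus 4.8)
The plan is to check that $\mu$ carries each summand of $RK_0(B\Gamma)$ in the decomposition \eqref{red k hom 2} into the corresponding summand of $K_0(C^*(\Gamma))$ in \eqref{red k hom}, and then to note that a group homomorphism compatible with fixed direct sum decompositions on source and target is automatically the direct sum of its restrictions. So for $x=x'+n[pt]$ with $x'\in\widetilde{RK}_0(B\Gamma)$ one gets $\mu(x)=\widetilde{\mu}(x')+n[1]$ with $\widetilde{\mu}(x')\in\widetilde{K}_0(C^*(\Gamma))$ and $n[1]\in\Z[1]$, which is exactly the assertion $\mu=\widetilde{\mu}\oplus\mathrm{id}_\Z$. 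Thus everything reduces to two computations.

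First I would show $\mu([pt])=[1]$. This I would deduce from naturality of the BCK assembly map in the group variable: the inclusion of the trivial subgroup $\{e\}\hookrightarrow\Gamma$ induces on classifying spaces the inclusion of a point $\{pt\}=B\{e\}\hookrightarrow B\Gamma$ and on group $C^*$-algebras the unital inclusion $\C=C^*(\{e\})\hookrightarrow C^*(\Gamma)$; by functoriality of the Kasparov product with the Miscenko elements $[\beta_{\{e\}}]$ and $[\beta_\Gamma]$ of line \eqref{mis bun} these assemble into a commuting square with the assembly map $\mu_{\{e\}}$ on top and $\mu_\Gamma$ on the bottom. Since $\mu_{\{e\}}\colon RK_0(pt)\to K_0(\C)$ is the canonical identification $\Z=\Z$ (it is Kasparov product with $[\beta_{\{e\}}]=[1]\in KK(\C,\C)$), chasing the generator $[pt]$ around the square gives $\mu_\Gamma([pt])=[1]$. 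Hence $\mu$ restricts to an isomorphism $\Z[pt]\xrightarrow{\ \cong\ }\Z[1]$ which, under the chosen identifications of both with $\Z$, is $\mathrm{id}_\Z$.

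Second I would show $\mu(\widetilde{RK}_0(B\Gamma))\subseteq\widetilde{K}_0(C^*(\Gamma))$, i.e.\ (since $\widetilde{K}_0(C^*(\Gamma))=\ker\tau_*$ and $\widetilde{RK}_0(B\Gamma)$ is the kernel of the collapse map $c_*\colon RK_0(B\Gamma)\to RK_0(pt)=\Z$) that $\tau_*\circ\mu=c_*$. Here I would apply Lemma \ref{flat pair} with $\pi=\tau$ the one-dimensional trivial representation: its flat bundle $E_\tau=(E\Gamma\times\C)/\Gamma$ is the trivial line bundle $B\Gamma\times\C$, whose class $[E_\tau]\in K^0_L(B\Gamma)$ is the pullback $c^*1$ of $1\in\Z=K^0_L(pt)$; naturality of the pairing \eqref{pair} (equivalently, the index of a Dirac operator twisted by a trivial bundle equals the untwisted index) gives $p_\tau=c_*$. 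Lemma \ref{flat pair} then yields $\tau_*\circ\mu=p_\tau=c_*$, so $\mu$ maps $\widetilde{RK}_0(B\Gamma)$ into $\widetilde{K}_0(C^*(\Gamma))$; call the restriction $\widetilde{\mu}$, and combine with the previous paragraph to conclude. The only point I expect to require care rather than pure unwinding of definitions is pinning down the group-variable naturality of $\mu$ at the level of the explicit Kasparov-product definition used here, including the direct-limit bookkeeping in \eqref{mis bun}; Lemma \ref{flat pair} disposes of the rest.
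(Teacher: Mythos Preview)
Your argument is correct and is essentially the same as the paper's: both use Lemma \ref{flat pair} with $\pi=\tau$ to identify $\tau_*\circ\mu$ with the collapse map $c_*$ (whence $\mu$ preserves the reduced summands), and both invoke naturality of the assembly map with respect to the inclusion of the trivial subgroup to see that $\mu[pt]=[1]$. The only difference is cosmetic ordering of the two steps.
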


\begin{proof}
Lemma \ref{flat pair} implies that the diagram below
$$
\xymatrix{ \text{Kernel}(p_\tau) \ar[d]  \ar[r]^-\mu & \widetilde{K_0}(C^*(\Gamma)) \ar[d] \\ RK_0(B\Gamma) \ar[d]^-{p_\tau} \ar[r]^-\mu & K_0(C^*(\Gamma)) \ar[d]^-{\tau_*} \\
\Z \ar@{=}[r] & \Z}
$$
commutes.  The map $p_\tau$ is the operation of pairing with the trivial line bundle: this agrees with the map $RK_0(B\Gamma)\to \Z$ defined by collapsing $B\Gamma$ to a point, so $\text{Kernel}(p_\tau)=\widetilde{RK}_0(B\Gamma)$.  The lower two vertical maps $p_\tau$ and $\tau_*$ are split by the inclusion of a point, and the inclusion of the unit respectively, and the corresponding diagram still commutes by naturality of the assembly map (the bottom horizontal arrow can be thought of as the BCK assembly map for the trivial group).  The result follows from these observations.
\end{proof}

The next lemma is again part of the folklore of the subject: see for example \cite[Theorem 2.4.3]{Keswani:2000aa} or \cite[Proposition 3.2]{Dadarlat:2011uq} for a proof.

\begin{lemma}\label{kes same}
Let $\Gamma$ be a discrete group, and let $\mu$ be the BCK assembly map of line \eqref{max bc}.  Let $\pi:\Gamma\to M_m(\C)$ be a finite-dimensional unitary representation.  Then with respect to the splitting in line \eqref{red k hom 2}, $\pi_*(\mu(\widetilde{RK}_0(B\Gamma)))=0$ and $\pi_*(\mu[pt])=m$. \qed
\end{lemma}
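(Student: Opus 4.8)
The plan is to reduce the statement to Lemma \ref{flat pair} and the Atiyah--Singer index theorem, using that the bundle $E_\pi$ is flat.

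By Lemma \ref{flat pair}, $\pi_*\circ\mu$ equals the homomorphism $p_\pi:RK_0(B\Gamma)\to\Z$ given by pairing with $[E_\pi]\in K^0_L(B\Gamma)$. Write $p_\tau$ for the corresponding pairing with the trivial line bundle, i.e.\ with the flat bundle $E_\tau$ of the trivial representation $\tau$. The proof of Corollary \ref{red cor} identifies $p_\tau$ with the map $RK_0(B\Gamma)\to\Z$ induced by collapsing $B\Gamma$ to a point; in particular $\widetilde{RK}_0(B\Gamma)=\ker p_\tau$, and, with $[pt]$ as in \eqref{red k hom 2}, $p_\tau[pt]=1$ (the collapse map composed with the inclusion of a point is the identity of $RK_0(\mathrm{pt})=\Z$). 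So it suffices to prove
$$
p_\pi \;=\; n\cdot p_\tau \qquad\text{as homomorphisms } RK_0(B\Gamma)\to\Z ,
$$
since then $p_\pi$ vanishes on $\ker p_\tau=\widetilde{RK}_0(B\Gamma)$ and $p_\pi[pt]=n\cdot p_\tau[pt]=n$, which is exactly the assertion.

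To establish $p_\pi=n\cdot p_\tau$, I would work on the level of Baum--Douglas cycles. Every class in $RK_0(B\Gamma)$ is a $\Z$-linear combination of classes $f_*[D^{M,S}]$ coming from (even-dimensional) $K$-cycles $(M,S,f)$ as in Definition \ref{k cycle}, and under the identification \eqref{geo ana} of geometric with analytic $K$-homology the pairing of such a class with a vector bundle $W$ over $B\Gamma$ equals $\mathrm{index}(D^{M,S}\otimes f^*W)$. Since $E_\pi$ admits a flat connection, Chern--Weil theory gives $\mathrm{ch}(E_\pi)=n\in H^0(B\Gamma;\Q)$ (all Chern classes of positive degree vanish rationally), hence $\mathrm{ch}(f^*E_\pi)=n$ as well. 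The Atiyah--Singer index theorem expresses $\mathrm{index}(D^{M,S}\otimes W)$, for any auxiliary bundle $W$, as the evaluation of $\nu(M,S)\cup\mathrm{ch}(f^*W)$ on the fundamental class of $M$, where the index density $\nu(M,S)\in H^*(M;\Q)$ (built from $\widehat A(M)$ and the twisting data of $S$) does not depend on $W$. Taking $W=E_\pi$ with $\mathrm{ch}(f^*E_\pi)=n$ and $W=E_\tau$ with $\mathrm{ch}(f^*E_\tau)=1$ therefore gives $\mathrm{index}(D^{M,S}\otimes f^*E_\pi)=n\cdot\mathrm{index}(D^{M,S})$; summing over the cycles in a representative yields $p_\pi=n\cdot p_\tau$, completing the proof.

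The computation is essentially routine once phrased this way; the only mild subtleties are the bookkeeping of the even-dimensional variant of the geometric $K$-homology model (the excerpt spells out only the odd case) and the Chern--Weil input that flatness kills the full positive-degree rational Chern character, so I do not anticipate a genuine obstacle. One may alternatively avoid cycles entirely: since $p_\pi$ and $p_\tau$ take values in the torsion-free group $\Z$ they factor through $RK_0(B\Gamma)/(\text{torsion})\hookrightarrow RK_0(B\Gamma)\otimes\Q$, and on the rationalized group the identity $p_\pi=n\cdot p_\tau$ is immediate from $\mathrm{ch}(E_\pi)=n\cdot\mathrm{ch}(E_\tau)$ via the Chern character. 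A proof along essentially these lines appears in \cite[Theorem 2.4.3]{Keswani:2000aa} and \cite[Proposition 3.2]{Dadarlat:2011uq}.
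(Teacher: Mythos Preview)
Your proposal is correct and is exactly the kind of argument the paper defers to by citing \cite[Theorem 2.4.3]{Keswani:2000aa} and \cite[Proposition 3.2]{Dadarlat:2011uq}: the paper gives no proof of its own, and your reduction via Lemma~\ref{flat pair} to $p_\pi=n\cdot p_\tau$, together with the Chern--Weil/Atiyah--Singer computation that flat bundles have rank-only rational Chern character, is the standard folklore argument those references encode.
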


The next corollary is immediate from Corollary \ref{red cor} and Lemma \ref{kes same}.

\begin{corollary}\label{fd triv}
Let $\Gamma$ be a discrete group.  If the BCK assembly map $\mu:RK_*(B\Gamma)\to K_*(C^*(\Gamma))$ is surjective then for any unitary representation $\pi:\Gamma\to M_m(\C)$ we have
$$
\pi_*( \widetilde{K}_0(C^*(\Gamma)))=0 \quad \text{and}\quad \pi_*([1])=m.\eqno\qed
$$
\end{corollary}

\begin{remark}
The formulas in Corollary \eqref{fd triv} do not need the BCK assembly map for their statement, and one might hope that they admit a more elementary proof.  Indeed, the equality ``$\pi_*([1])=m$'' is true for any group $\Gamma$ and unitary representation $\pi$, and straightforward to prove.  However, the statement ``$\pi_*( \widetilde{K}_0(C^*(\Gamma)))=0$'' is not true without further assumptions as we now explain using the theory of \emph{Kazhdan projections}: see for example \cite[Section 3.7]{Higson:2000bs} for background.

Indeed, let $\Gamma$ be an infinite property (T) group that admits a non-trivial finite-dimensional irreducible representation $\pi:\Gamma\to M_n(\C)$ (for example, $\Gamma=SL(3,\Z)$).  Then there is a \emph{Kazhdan projection} $p\in C^*(\Gamma)$ such that $\pi(p)=1_{M_n(\C)}$ but $\tau(p)=0$ for $\tau$ the trivial representation.  Hence in particular, $[p]\in  \widetilde{K_0}(C^*(\Gamma))$ and $\pi_*([p])\neq 0$, so the first equation in line \eqref{fd triv} fails.  
\end{remark}

We conclude this subsection with a general result: it should be thought of as characterizing when an element of $\text{Hom}_{\Lambda_0}(\underline{K}_0(C^*(\Gamma)),\underline{K}_0(\C))$ can be approximated by the class of an honest representation, up to taking an integer multiple.  In the next subsection we will be able to give results that do not need one to take integer multiples, but we can only show these under more stringent hypotheses.

We need one more technical lemma.   For an abelian group $G$, let $^nG$ and $_nG$ denote respectively the cokernel and kernel of the multiplication by $n$ map $\times n:G\to G$ (equivalently, $^nG=G\otimes \Z/n$ and $_nG$ is the $n$-torsion subgroup of $G$).   Recall also (see for example \cite[Proposition 1.6]{Schochet:1984ab} or \cite[line (1.6)]{Dadarlat:1996aa}) that that for a $C^*$-algebra $A$ and $n\geq 2$ there is a \emph{Bockstein six-term exact sequence}
\begin{equation}\label{bs}
\xymatrix{ K_0(A) \ar[r]^-{\times n} & K_0(A) \ar[r]^-{\rho_n} & K_0(A;\Z/n) \ar[d]^{\beta_n} \\
K_1(A;\Z/n) \ar[u]^-{\beta_n} & K_1(A) \ar[l]^-{\rho_n} & K_1(A) \ar[l]^-{\times n} }.
\end{equation}
The maps are all induced by Kasparov products with particular elements of $KK$-groups; we will not need the precise descriptions, but note that the maps labeled $\rho_n$ are induced by morphisms from the category $\Lambda_0$ of Definition \ref{lambda0}.

\begin{lemma}\label{zn split}
Let $A$ be a $C^*$-algebra.  Then for each $n\geq 2$, $^nK_0(A)$ canonically identifies with the image of the group $K_0(A)$ inside $K_0(A;\Z/n)$ under the map $\rho_n$ of line \eqref{bs}.  Moreover, one can choose splittings  
$$
K_0(A;\Z/n)\cong \, ^nK_0(A)\oplus \, _nK_1(A),
$$
that respect this subgroup, and that are compatible with the $\Lambda_0$-module structure on $\underline{K}_0(A)$. 
 
If moreover $A=C^*(\Gamma)$ for a discrete group $\Gamma$ and $\tau:C^*(\Gamma)\to \C$ is the trivial representation, then we may choose the splitting so that in addition $_nK_1(A)$ is contained in the kernel of the induced map $\tau_*:K_0(C^*(\Gamma);\Z/n)\to K_0(\C;\Z/n)$.
\end{lemma}

\begin{proof}
Splitting up the six-term exact sequence of line \eqref{bs} along the maps labeled ``$\times n$'' gives the short exact universal coefficient theorem sequence
$$
0\to\,^n\!K_0(A)\xrightarrow{\rho_n} K_0(A;\Z/n)\xrightarrow{\beta_n} \,_n K_1(A)\to 0
$$  
(compare the proof of Lemma \ref{bck fc lem}).  As in \cite[Lemma A.5]{Carrion:2020aa}, this sequence admits a splitting 
$$
s_0:\,_n K_1(A)\to K_0(A;\Z/n)
$$
that is compatible with the operations making up the category $\Lambda_0$\footnote{It is not possible to choose such a splitting that is natural for $*$-homomorphisms between $C^*$-algebras.} from Definition \ref{lambda0}.  

Specialize now to $A=C^*(\Gamma)$, and consider the commutative diagram
\begin{equation}\label{part uct}
\xymatrix{ 0\ar[r] & ^n K_0(C^*(\Gamma)) \ar[d]^{\tau_*} \ar[r]^-{\rho_n}  & K_0(C^*(\Gamma);\Z/n) \ar[d]^{\tau_*} \ar[r]^-{\beta_n} & _nK_1(C^*(\Gamma)) \ar[r] & 0 \\ & \Z/n \ar@{=}[r] &  \Z/n & & }.
\end{equation}
The vertical maps are split by the unit inclusion, so we have a direct sum decomposition
$$
K_0(C^*(\Gamma);\Z/n)\cong \text{Ker}(\tau_*)\oplus (\Z/n)[1]
$$
and in particular, there is an idempotent map $p:K_0(C^*(\Gamma);\Z/n)\to K_0(C^*(\Gamma);\Z/n)$ with image $\text{Ker}(\tau_*)$.  Define 
\begin{equation}\label{s split}
s:\,_n K_1(C^*(\Gamma))\to K_0(C^*(\Gamma);\Z/n),\quad s:=p\circ s_0.
\end{equation}
Using the diagram in line \eqref{part uct}, one sees that this is still a splitting for $\beta_n:K_0(C^*(\Gamma);\Z/n)\to\, _n K_1(C^*(\Gamma))$, and thus we get a direct sum decomposition 
$$
K_0(C^*(\Gamma);\Z/n)\cong \, ^nK_0(C^*(\Gamma))\oplus \, _nK_1(C^*(\Gamma))
$$
with the properties in the statement.
\end{proof}

\begin{proposition}\label{match the 0}
Let $\Gamma$ be a discrete group for which the BCK assembly map $\mu$ of line \eqref{max bc} is an isomorphism.  

With notation as in Definition \ref{lambda0}, Example \ref{basic lambda 0}, and Definition \ref{fin set tot hom}, let $P$ be a $K$-datum for $\underline{K}_0(C^*(\Gamma))$ with associated integer $N=N(P)$.

Let $P_0$ (respectively, $P_n$ for $n\geq 2$ and $n|N$) denote the (finitely generated) subgroup of $K_0(C^*(\Gamma))$ (respectively, $K_0(C^*(\Gamma);\Z/n)$ that forms part of the $\Lambda_0^{(N)}$-module $\langle P\rangle$ generated by $P$.  As $P$ contains the class of the unit of $C^*(\Gamma)$, we have a splitting 
\begin{equation}\label{g split}
P_0=\Z[1]\oplus \widetilde{P_0}
\end{equation}
where $\widetilde{P_0}:=P_0\cap \widetilde{K}_0(C^*(\Gamma))$.  We moreover write $\widetilde{P_n}$ for the intersection of $P_n$ and the subgroup $\rho_n(\widetilde{K}_0(C^*(\Gamma)))$ of $K_0(C^*(\Gamma);\Z/n)$ from Lemma \ref{zn split}.  Let $M\in \N$ be large enough so that multiplication by $M$ annihilates all classes in $P_n/\widetilde{P_n}$ for all $n\geq 2$ with $n|N$.  

Let $\alpha\in \text{Hom}_{\Lambda_0^{(N)}}(P,\underline{K}_0(\C))$, and write $\alpha_n:P_n\to K_0(\C;\Z/n)$ for the induced homomorphism for each $n$ such that $n=0$, or $n\geq 2$ and $n|N$.   Then the following are equivalent:
\begin{enumerate}[(i)]
\item \label{m01} with respect to the decomposition in line \eqref{g split}, $\alpha_0[1]> 0$ and $M\alpha_n(\widetilde{P_n})=\{0\}$ for all $n$ with $n=0$, or $n\geq 2$ and $n|N$;
\item \label{m02} there is a finite dimensional representation $\pi$ of $\Gamma$ such that $M\alpha=\pi_*$ in $\text{Hom}_{\Lambda_0^{(N)}}(P,\underline{K}_0(\C))$.
\end{enumerate}
\end{proposition}

\begin{proof}
Assume first that \eqref{m02} holds.  Then Corollary \ref{fd triv} implies that for any finite-dimensional representation $\pi$, the element $\pi_*\in \text{Hom}(P_0,\Z)$ satisfies $\pi_*[1]=\text{dim}(\pi)>0$, and $\pi_*(\widetilde{P_0})=\{0\}$.  As $\pi_*$ is moreover compatible with the morphism $\rho_n$ coming from $\Lambda_0$, we also have that $\rho_n(\widetilde{P_n})=\{0\}$ for all $n\geq 2$.  Hence we see that the conditions in part \eqref{m01} must hold.

Conversely, assume condition \eqref{m01} holds.  We claim that if $\tau:\Gamma\to \C$ is the trivial representation, then taking $\pi$ to be the direct sum of $M\cdot \alpha_0[1]$ copies of $\tau_*$ works.  Indeed, we must show that $\pi_*-M\alpha$ is zero on $P_0$ and $P_n$ for all $n\geq 2$ with $n|N$.  The observations in the first part of the proof and the assumptions from \eqref{m01} show that $\pi_*-M\alpha$ vanishes on $P_0$, and also on the subgroup $\widetilde{P_n}$ of $P_n$.  Hence as a map on $P_n\to \Z/n$, $\pi_*-M\alpha$ descends to a map from a subgroup $Q_n=P_n/\rho_n(P_0)$ of the direct summand $_nK_1(C^*(\Gamma))$ to $\Z/n$ (compare Lemma \ref{zn split}).  The map $\tau_*$ is zero on this direct summand by choice of the splitting in Lemma \ref{zn split}.  On the other hand, $M\alpha$ vanishes on $Q_n$ by choice of $M$, so we are done.
\end{proof}

\subsection{Finite coefficients and eta invariants}\label{fc subsec}

In Subsection \ref{basic bc subsec}, we essentially used the BCK assembly map of line \eqref{max bc} to compute the map
$$
\pi_*\in \text{Hom}(K_0(C^*(\Gamma)),K_0(\C))
$$
induced by a unitary representation $\pi:\Gamma\to M_m(\C)$.  In this section, we aim to compute the maps 
$$
\pi_*\in \text{Hom}(K_0(C^*(\Gamma);\Z/n),K_0(\C;\Z/n))
$$
on $K$-theory with finite coefficients induced by $\pi$.  This is more subtle: it turns out to be more convenient to compute the difference 
\begin{equation}\label{rep diff}
\tau_*-\pi_*:\text{Hom}(K_0(C^*(\Gamma);\Z/n),K_0(\C;\Z/n))
\end{equation}
where $\tau$ is a trivial representation of the same dimension as $\pi$; we are then able to do this in terms of the \emph{relative eta invariant} of Atiyah, Patodi, and Singer \cite{Atiyah:1975aa}.   

We will follow the approach of Higson and Roe to relative eta invariants from \cite{Higson:2008qb}.  For this, we need to recall the Baum-Douglas geometric model of $K$-homology \cite{Baum:1980pt}; more specifically, we will use the variant of this discussed by Higson and Roe in \cite[Section 3]{Higson:2008qb}, which is in turn based on work of Keswani \cite[Section 2.2]{Keswani:1999aa}.  Keswani's version of the geometric model of $K$-homology is shown to be equivalent to the original Baum-Douglas version in  \cite[Section 2.3]{Keswani:1999aa}.  We briefly recall the cycles for the odd geometric $K$-homology group $K_1(X)$ of a compact second countable space $X$ in this model; the even group is defined analogously, but we will not need this.

\begin{definition}\label{k cycle}
An \emph{odd $K$-cycle} for a compact second countable space $X$ is a triple $(M,S,f)$ where:
\begin{enumerate}[(i)]
\item $M$ is a smooth, closed, odd-dimensional, orientable, Riemannian manifold (it need not be connected, and the connected components need not have the same dimension as long as all are odd-dimensional);
\item $S$ is a Dirac bundle over $M$ in the sense of \cite[Definition 3.1]{Higson:2008qb} (the precise details of what this means will not be important to us);
\item $f:M\to X$ is a continuous map.
\end{enumerate}
\end{definition}

The odd geometric $K$-homology group $K_1(X)$ is then defined to consist of $K$-cycles modulo the equivalence relation defined in \cite[Definition 3.11]{Higson:2008qb} (again, the exact details are not important to us).  The groups $K_1(X)$ are covariantly functorial: if $g:X\to Y$ is a continuous map then $g_*:K_1(X)\to K_1(Y)$ is the map induced on $K$-cycles by 
\begin{equation}\label{bd func}
(M,S,f)\mapsto (M,S,g\circ f).
\end{equation}

We will also need Dirac operators associated to $K$-cycles.  Given an odd $K$-cycle $(M,S,f)$ one can associate a \emph{Dirac operator} 
\begin{equation}\label{dir op}
D=D^{M,S}
\end{equation}
as in \cite[Definition 3.5]{Higson:2008qb}: this is a first order elliptic partial differential operator acting on smooth sections of the bundle $S$.  The Dirac operator $D$ is not uniquely determined -- it depends on a choice of `compatible connection' as in \cite[Definition 3.4]{Roe:1998ad} -- but the choices involved will not be important for us.

There is a well-defined map from geometric $K$-homology to analytic $K$-homology defined on $K$-cycles by
\begin{equation}\label{geo ana}
(M,S,f)\mapsto f_*[D],
\end{equation}
where $D$ is a choice of Dirac operator as in line \eqref{dir op}, and $[D]$ is the class it defines in analytic $K$-homology (see for example \cite[Chapter 10]{Higson:2000bs}).  This map defines an isomorphism between geometric and analytic $K$-homology for finite CW complexes: see \cite[Section 4]{Jakob:1998aa}, \cite[Chapter 4]{Raven:2004aa}, or \cite{Baum:2009hq} for a proof.  Note that if $X$ is a possibly infinite CW complex and $RK_1(X)$ is defined via a direct limit as in line \eqref{rep k hom}, then classes in $RK_1(X)$ can still be described as triples $(M,S,f)$ with $f:M\to X$ a continuous map as in Definition \ref{k cycle} above: indeed, as $f:M\to X$ is continuous and $M$ is compact, $f$ takes image in a finite subcomplex of $X$.  

Let now $M$ be a closed, odd-dimensional, Riemannian manifold, equipped with a Dirac bundle $S$ as in Definition \ref{k cycle} and a choice of Dirac operator $D$ on $S$.  In their first paper on the subject \cite{Atiyah:1975ab}, Atiyah, Patodi, and Singer use spectral data to associate a real number $\eta(D)$ to $D$ called the \emph{eta invariant}.  Very roughly, $\eta(D)$ is the difference between the number of positive eigenvalues of $D$ and the number of negative eigenvalues; however, both numbers are typically infinite so this does not make literal sense, and $\eta(D)$ is actually defined using a sort of `zeta function regularization'.

The invariant $\eta(D)$ is quite delicate: it depends on $D$, not only on the underlying topological data from the manifold $M$ and bundle $S$.  In the second paper \cite{Atiyah:1975aa} of their series, Atiyah, Patodi, and Singer use $\eta(D)$ to define a more robust \emph{relative eta invariant} as follows.  Assume that $M$ is equipped with a homomorphism $\pi_1(M)\to \Gamma$, and let $\pi:\Gamma\to M_m(\C)$ be a finite dimensional unitary representation.  This data defines a rank $m$ flat bundle $E_\pi$ over $M$ analogously to line \eqref{fb} above, and we write $D_\pi$ for the Dirac operator $D$ twisted\footnote{\label{twist fn}See for example \cite[pages 138-9]{Jr.:1989uq} or \cite[Example 3.24]{Roe:1998ad} for a Dirac operator twisted by (or equivalently, `with coefficients in') a vector bundle.} by this bundle.  On the other hand, let $D_m$ denote $D$ twisted by the rank $m$ trivial bundle $\C^m$ on $M$.  Let $k_\pi$ (respectively, $k_m$) denote the dimension of the kernel of $D_\pi$ (respectively, of $D_m$) acting on smooth sections of the associated bundle tensored by $E_\pi$ (respectively, by $\C^m$).  Then the \emph{relative eta invariant}\footnote{We warn the reader that we use the notation ``$\rho$'' by analogy with \cite[Definition 2.3]{Higson:2008qb}, but Atiyah-Patodi-Singer instead use ``$\widetilde{\xi}_\pi(0)$''; what we call ``$\rho_\pi$'' is not the same as what Atiyah-Patodi-Singer call ``$\rho_\pi$'' in \cite[Theorem 2.4]{Atiyah:1975aa}.} is defined in \cite[Line (3.2)]{Atiyah:1975aa} by 
\begin{equation}\label{rel eta 0}
\rho_\pi(D):=\frac{k_\pi+\eta(D_\pi)}{2}-\frac{k_m+\eta(D_m)}{2}\in \R/\Z.
\end{equation}
The reason for considering $\rho_\pi(D)$ as an element of $\R/\Z$ rather than $\R$ is that is has stronger invariance properties this way.   From a modern point of view, the invariance properties of $\rho_\pi$ are well-summarized by the following theorem, which is \cite[Theorem 6.1]{Higson:2008qb}; the reader might also usefully compare this to the material in \cite[Section 6]{Antonini:2014aa} which gives an approach based on von Neumann algebras rather than the Baum-Douglas geometric model for $K$-homology.

\begin{theorem}[Higson-Roe]\label{rel eta the}
Let $\Gamma$ be a countable group, let $(M,S,f)$ be an odd $K$-cycle for $B\Gamma$ as in Definition \ref{k cycle}, and $D=D^{M,S}$ a choice of associated Dirac operator as in line \eqref{dir op}.   Equip $\pi_1(M)$ with the homomorphism $f_*:\pi_1(M)\to \pi_1(B\Gamma)=\Gamma$ induced by $f$, and let $\pi:\Gamma\to M_m(\C)$ be a finite dimensional unitary representation. Then the assignment 
$$
(M,S,f)\mapsto \rho_\pi(D)
$$
descends to a well-defined homomorphism 
$$
\rho_\pi:RK_1(B\Gamma)\to \R/\Z.\eqno\qed
$$
\end{theorem}

Our first main goal in this section is to relate the map in line \eqref{rep diff} to the relative eta homomorphism $\rho_\pi$ of Theorem \ref{rel eta the}.  We need some more $K$-theoretic ingredients that we now describe.  

There is a Bockstein six-term exact sequence in (representable) $K$-homology, analogous to the Bockstein sequence in line \eqref{bs}
\begin{equation}\label{kh bs}
\xymatrix{ RK_0(B\Gamma) \ar[r]^-{\times n} & RK_0(B\Gamma) \ar[r]^-{\rho_n} & RK_0(B\Gamma;\Z/n) \ar[d]^{\beta_n} \\
RK_1(B\Gamma;\Z/n) \ar[u]^-{\beta_n} & RK_1(B\Gamma) \ar[l]^-{\rho_n} & RK_1(B\Gamma) \ar[l]^-{\times n} }
\end{equation}
defined using Kasparov product with the same $KK$ elements mentioned above, and then taking a direct limit over finite subcomplexes of $B\Gamma$ (here we use that direct limits preserve exactness).  The maps $\beta_n$ in line \eqref{bs} or \eqref{kh bs} are usually called \emph{Bockstein homomorphisms}.

Here is the first main result of this section.  We suspect it may be know to some experts, but are not aware of it appearing in the literature before.

\begin{theorem}\label{eta inv}
Let $\Gamma$ be a discrete group.  Let $\pi:\Gamma\to M_m(\C)$ be a unitary representation of $\Gamma$ such that the restriction of $\pi$ to any finitely generated subgroup factors through a finite quotient.  Let $\tau=\tau^{(m)}:\Gamma\to M_m(\C)$ be the $m$-dimensional trivial representation.  Let $n\geq 2$.  Then the following diagram commutes
$$
\xymatrix{ RK_0(B\Gamma;\Z/n) \ar[d]^{\beta_n} \ar[r]^-\mu & K_0(C^*(\Gamma);\Z/n) \ar[rr]^-{\tau_*-\pi_*}  & & \Z/n \ar[d]  \\
RK_1(B\Gamma) \ar[rrr]^-{\rho_{\pi}} & & & \R/\Z }
$$
where: the top right horizontal arrow is the map of line \eqref{rep diff}; the top left horizontal arrow is the BCK assembly map with finite coefficients of line \eqref{bc fc}; the left vertical arrow is the Bockstein homomorphism appearing in the exact sequence of line \eqref{kh bs}; the right vertical arrow sends the canonical generator $1\in \Z/n$ to $1/n$; and the bottom horizontal arrow $\rho_\pi$ is the relative eta invariant homomorphism of Theorem \ref{rel eta the}.
\end{theorem}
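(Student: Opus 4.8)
The plan is to prove commutativity by representing classes geometrically and applying the Atiyah--Patodi--Singer index theorem, with the crucial factor of $n$ on the right-hand vertical map coming from the Bockstein description of $K$-theory with $\mathbb Z/n$ coefficients. First I would record two reductions. Both composites in the diagram are group homomorphisms $RK_0(B\Gamma;\Z/n)\to\R/\Z$ (the top one followed by $\Z/n\cong\tfrac1n\Z/\Z\hookrightarrow\R/\Z$), so it suffices to verify the identity on a fixed $x\in RK_0(B\Gamma;\Z/n)$. By exactness of the Bockstein sequence \eqref{kh bs}, $\beta_n(x)$ lies in the kernel of $\times n$ on $RK_1(B\Gamma)$, so it is $n$-torsion and is represented by a $K$-cycle $(N,S,g)$ with $n[(N,S,g)]=0$ in geometric $K$-homology. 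In the Baum--Douglas model this means that the disjoint union $n\cdot N$ of $n$ copies of $(N,S,g)$ is the boundary of a compact manifold $W$ carrying a Dirac bundle $S_W$ and a map $F\colon W\to B\Gamma$ restricting to $(N,S,g)$ on each boundary component; together with this identification of $\partial W$ with $n$ copies of $N$, the data $(W,S_W,F)$ is a $\Z/n$-cycle representing some preimage $x'\in\beta_n^{-1}(\beta_n(x))$. The two classes $x$ and $x'$ differ by an element of the image of $\rho_n\colon RK_0(B\Gamma)\to RK_0(B\Gamma;\Z/n)$; since the left composite kills this image (as $\beta_n\rho_n=0$) and the right composite kills it as well (by Lemma \ref{kes same}, using $\dim\pi=\dim\tau$, together with $\mu\rho_n=\rho_n\mu$ and $(\tau_*-\pi_*)\rho_n=\rho_n(\tau_*-\pi_*)$), I may and do assume $x=x'$ is the class of the $\Z/n$-cycle $(W,S_W,F)$.

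Next I would run the index computation on $W$. The representations $\pi$ and $\tau$ pull back along $F$ to flat rank-$m$ bundles $E_\pi,E_\tau$ over $W$, restricting on each boundary copy of $N$ to the corresponding flat bundles; let $D_{W,\pi},D_{W,\tau}$ be $D^{W,S_W}$ twisted by these, with APS boundary conditions (after arranging product structure near $\partial W$). The APS index theorem gives
\[
\operatorname{ind}_{\mathrm{APS}}(D_{W,\pi})=\int_W\omega\,\operatorname{ch}(E_\pi)-n\cdot\frac{\eta(D_{N,\pi})+h_{N,\pi}}{2},
\]
where $\omega$ is the interior integrand built from $(W,S_W)$ and the factor $n$ is because $\partial W$ is $n$ disjoint copies of $N$; the analogous formula holds for $\tau$. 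Because $E_\pi$ and $E_\tau$ are flat of the same rank $m$, their Chern characters are both equal to $m$ in de Rham cohomology, so the interior terms agree and subtract out, leaving
\[
\operatorname{ind}_{\mathrm{APS}}(D_{W,\pi})-\operatorname{ind}_{\mathrm{APS}}(D_{W,\tau})=-n\Big(\tfrac{\eta(D_{N,\pi})+h_{N,\pi}}{2}-\tfrac{\eta(D_{N,\tau})+h_{N,\tau}}{2}\Big).
\]
By the definition of the relative eta invariant in line \eqref{rel eta 0}, the bracketed real number is a lift to $\R$ of $\rho_\pi(D^{N,S})=\rho_\pi(\beta_n(x))\in\R/\Z$ (using Theorem \ref{rel eta the}). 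Since both APS indices are integers, this shows $\rho_\pi(\beta_n(x))$ is the image of $\operatorname{ind}_{\mathrm{APS}}(D_{W,\tau})-\operatorname{ind}_{\mathrm{APS}}(D_{W,\pi})\in\Z$, reduced modulo $n$, under $\Z/n\cong\tfrac1n\Z/\Z\hookrightarrow\R/\Z$.

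It then remains to identify $(\tau_*-\pi_*)(\mu(x))\in\Z/n$ with $\operatorname{ind}_{\mathrm{APS}}(D_{W,\tau})-\operatorname{ind}_{\mathrm{APS}}(D_{W,\pi})$ modulo $n$. For this I would use the geometric description of the BCK assembly map with finite coefficients: on the $\Z/n$-cycle $(W,S_W,F)$ the class $\mu(x)\in K_0(C^*(\Gamma);\Z/n)$ is the $\Z/n$-index of $D^{W,S_W}$ twisted by the Mishchenko bundle $\mathcal L=F^*[\beta_\Gamma]$ (this uses the comparison between Deeley's geometric model for $K$-homology with coefficients and the model $\lim_{\to}KK_*(C(Y),O_{n+1})$ used in the paper, together with Lemma \ref{bck fc lem} and associativity of the Kasparov product). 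Applying $\pi_*$ then replaces $\mathcal L$ by the associated flat bundle $E_\pi=\mathcal L\otimes_{C^*(\Gamma)}\C^m$, by functoriality of the index under the coefficient homomorphism $C^*(\Gamma)\to M_m(\C)$; so $\pi_*(\mu(x))\in K_0(\C;\Z/n)=\Z/n$ is the $\Z/n$-index of the $\Z/n$-manifold $W$ equipped with $D_{W,\pi}$, which by the Freed--Melrose mod-$n$ index theorem equals $\operatorname{ind}_{\mathrm{APS}}(D_{W,\pi})$ reduced mod $n$ (APS boundary conditions imposed on the $n$ boundary copies, the choices involved altering the integer only by multiples of $n$). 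Likewise $\tau_*(\mu(x))=\operatorname{ind}_{\mathrm{APS}}(D_{W,\tau})\bmod n$, and subtracting gives the required identity.

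The main obstacle I anticipate is this last step: carefully matching the $KK$-theoretic model of finite-coefficient assembly used in the paper with the $\Z/n$-manifold description of $\mu(x)$, establishing the finite-coefficient refinement of Lemma \ref{flat pair} (that $\pi_*\circ\mu$ is pairing with the flat bundle $E_\pi$), and invoking Freed--Melrose in the twisted (coefficient-bundle) setting — each of these is folklore but needs to be assembled and cited correctly. The hypothesis that $\pi$ factors through a finite quotient enters both here and in the choice of geometric representatives: it guarantees that $E_\pi$ and $E_\tau$ are pulled back from a finite CW complex, so that the geometric $K$-homology and index-theory machinery (set up over finite complexes) applies with no extra passage to limits, and it is also what makes the resulting $\R/\Z$-valued invariant rational, consistent with the right-hand vertical map landing in $\tfrac1n\Z/\Z$.
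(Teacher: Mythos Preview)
Your argument is essentially sound and the overall strategy---representing $x$ by a $\Z/n$-manifold $(W,S_W,F)$, applying APS on $W$ to see that the interior terms cancel because $E_\pi$ and $E_\tau$ are flat of equal rank, and then matching the boundary term with $\rho_\pi(\beta_n(x))$---is correct. Your reduction to $\Z/n$-cycles via Lemma~\ref{kes same} is clean and worth keeping. The place where real work remains is exactly where you say: identifying $\pi_*\mu(x)$ with the Freed--Melrose mod-$n$ index requires aligning Deeley's geometric model with the $KK$-model the paper uses for $RK_0(B\Gamma;\Z/n)$ and for $\mu$, and this is not a one-line citation.

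The paper takes a different route that sidesteps this model-comparison. Rather than working on the $K$-homology side with $\Z/n$-manifolds, it stays on the $K$-theory side: after using (a finite-coefficient version of) Lemma~\ref{flat pair} to replace $(\tau_*-\pi_*)\circ\mu$ by the pairing $p_\pi$ with $[E_\pi]-[\C^m]\in K^0_L(B\Gamma)$, it uses associativity of the Kasparov product to move $\beta_n$ from $K$-homology to $K$-theory, reducing the problem to exhibiting a class $y\in K^1_L(B\Gamma;\Q/\Z)$ with $\beta(y)=[E_\pi]-[\C^m]$ and $y\cdot z=\rho_\pi(z)$ for all $z\in RK_1(B\Gamma)$. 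Here the finite-quotient hypothesis does essential work: for a finite group $\Lambda$ the Bockstein map $\beta:K^1_L(B\Lambda;\Q/\Z)\to\widetilde K^0_L(B\Lambda)$ is an \emph{isomorphism} (because $B\Lambda$ has torsion $K$-theory), so one can define the APS class $y_{\Lambda,\pi}=\beta^{-1}([E_\pi]-[\C^m])$ and pull it back to $B\Gamma$. The identification $y_{\Gamma,\pi}\cdot z=\rho_\pi(z)$ is then a direct appeal to \cite[Theorem~5.3]{Atiyah:1976aa} combined with Kasparov's Poincar\'e duality \cite{Kasparov:2016aa} to translate between $K$-homology and symbol classes on $TM$.

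In short: your approach is geometric and uses APS~I plus Freed--Melrose on $\Z/n$-manifolds; the paper's is $K$-theoretic and uses APS~III plus Poincar\'e duality, with the finite quotient entering to invert the Bockstein on $K^1_L$. Your route is arguably more conceptual and, as you half-notice, does not seem to use the finite-quotient hypothesis in any essential way---your explanation of its role is the weakest part of your write-up. The paper's route avoids the Deeley/Freed--Melrose machinery entirely but pays for it with the APS-class construction, which genuinely needs the finite quotient.
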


This is almost implict in the papers of Atiyah, Patodi, and Singer \cite{Atiyah:1975aa,Atiyah:1976aa}.  However, those authors do not work with $K$-homology (which was not available in a convenient model at that time), but rather with $K$-theory of tangent bundles.  The two are related by Kasparov's Poincar\'{e} duality: see \cite[Section 4]{Kasparov:2016aa} for the version we will use.  The proof will consist largely of combining these ingredients, but we must first recall some background.

The first collection of facts we need is based on \cite[Section 5]{Atiyah:1975aa}.  Let $A$ be a (unital) $C^*$-algebra, and for each $n,m\geq 1$, let 
\begin{equation}\label{kappa nm}
\kappa_{nm,n}:K_*(A;\Z/n)\to K_*(A;\Z/(nm))
\end{equation} 
be the natural transformation induced by any unital $*$-homomorphism $O_{n+1}\to M_m(O_{nm+1})$ inducing the canonical injection $\Z/n\to \Z/(nm)$ defined by sending $1$ to $m$; such a homomorphism exists by the Kirchberg-Phillips classification theory as for example in \cite[Theorem A]{Gabe:2019ws}.  Let $\mathcal{N}$ denote the set $\{n\in \N\mid n\geq 2\}$ equipped with the partial order where $n\leq m$ if $n$ divides $m$.  Then the collection 
\begin{equation}\label{dir sys qz}
(K_*(A;\Z/n))_{n\in \mathcal{N}}
\end{equation}
is a directed system with the maps $\kappa_{nm,n}$ as connecting maps.  The following definitions are based on \cite[Section 5]{Atiyah:1975aa}.

\begin{definition}\label{qz def}
Let $A$ be a $C^*$-algebra.  We define the \emph{$K$-theory of $A$ with $\Q/\Z$ coefficients} as the direct limit  
$$
K_*(A;\Q/\Z):=\lim_{\to} K(A;\Z/n)
$$
over the directed set of line \eqref{dir sys qz}.  If $X$ is a possibly infinite CW complex, we define 
$$
\mathcal{K}^*(X;\Q/\Z):=\lim_{\leftarrow} K^*(Y;\Q/\Z)
$$
where the inverse limit is taken over all finite subcomplexes $Y$ of $X$ analogously to Definition \ref{inv lim k}.
\end{definition}

Analogously to line \eqref{pair n} above for any compact metric space $Y$ and each $n\geq 2$, the Kasparov product induces a pairing 
$$
KK_1(\C,C(Y)\otimes O_{n+1})\otimes KK_1(C(Y),\C)\to KK(\C,O_{n+1})
$$
or in other words 
$$
K^1(Y;\Z/n)\otimes K_1(Y)\to K_0(O_{n+1})=\Z/n.
$$
Taking the direct limit over the directed system in line \eqref{dir sys qz} then gives a pairing
\begin{equation}\label{base pair qz}
K^1(Y;\Q/\Z)\otimes K_1(Y)\to \lim_{n\in \mathcal{N}} K_0(O_{n+1})=\Q/\Z.
\end{equation}
Finally, if $X$ is a general CW complex, these pairings induce a pairing 
\begin{equation}\label{lim qz pair}
\mathcal{K}^1(X;\Q/\Z)\otimes RK_1(X)\to \Q/\Z
\end{equation}
by taking limits over finite CW subcomplexes.

Now, the Bockstein exact sequences of line \eqref{bs} for $n$ and $nm$ fit into diagrams 
\begin{equation}\label{bs sys}
\xymatrix{ \cdots \ar[r]^-{\times n} &  K_1(A) \ar[r]^-{\rho_n}  \ar[d]^-{\times m} &  K_1(A;\Z/n) \ar[d]^-{\kappa_{nm,n}} \ar[r]^-{\beta_n} & K_0(A) \ar[d]^{\text{id}} \ar[r]^-{\times n}  & K_0(A) \ar[r]^-{\rho_n} \ar[d]^-{\times m}& \cdots \\
\cdots \ar[r]^-{\times nm} &  K_1(A) \ar[r]^-{\rho_{nm}} &  K_1(A;\Z/(nm))  \ar[r]^-{\beta_{nm}} & K_0(A)  \ar[r]^-{\times nm} & K_0(A) \ar[r]^-{\rho_{nm}} & \cdots }
\end{equation}
(the top and bottom lines are `unrolled' versions of the Bockstein six-term sequences from line \eqref{bs}); these commute by the identities in \cite[Proposition 2.1]{Schochet:1984ab}, plus the essential uniqueness of the natural transformations involved as in \cite[Lemma A.4]{Carrion:2020aa}.  Taking the direct limit of the six-term exact sequences in line \eqref{bs sys} over the directed set $\mathcal{N}$ defined just above line \eqref{dir sys qz} gives a six-term exact sequence
\begin{equation}\label{bs2}
\xymatrix{ K_0(A) \ar[r]^-\iota & K_0(A)\otimes \Q \ar[r]^-{\rho} & K_0(A;\Q/\Z) \ar[d]^-\beta \\
K_1(A;\Q/\Z) \ar[u]^-{\beta} & K_1(A)\otimes \Q \ar[l]^-{\rho} & K_1(A) \ar[l]^-\iota }
\end{equation}
where the arrows labeled $\iota$ are induced by the canonical inclusion $\Z\to \Q$, and the arrows labeled $\rho$ and $\beta$ are the direct limits of the arrows $\rho_n$ and $\beta_n$ respectively.  Moreover, if $A=C(Y)$ is the continuous functions on a connected compact Hausdorff space, then passing to reduced $K^0$ groups $\widetilde{K}^0$ (and noting that the reduced $K^1$ groups are the same as the usual $K^1$ groups) the sequence 
\begin{equation}\label{red bln}
\xymatrix{ \widetilde{K}^0(Y) \ar[r]^-\iota & \widetilde{K}^0(Y)\otimes \Q \ar[r]^-{\rho} & \widetilde{K}^0(Y;\Q/\Z) \ar[d]^-\beta \\
K^1(Y;\Q/\Z) \ar[u]^-{\beta} & K^1(Y)\otimes \Q \ar[l]^-{\rho} & K^1(Y) \ar[l]^-\iota }
\end{equation}
is a direct summand of the sequence in line \eqref{bs2}, and in particular still exact.  

Finally, if $X$ is a possibly infinite connected CW complex, we write 
\begin{equation}\label{lim beta map}
\beta: \mathcal{K}^1(X;\Q/\Z) \to \widetilde{\mathcal{K}}^0(X).
\end{equation}
for the map induced by taking the inverse limits of the left hand vertical maps in line \eqref{red bln} as $Y$ ranges over finite subcomplexes of $X$ (this makes sense by naturality of $\beta$).  Here we let $\widetilde{\mathcal{K}}^0(X)$ be the kernel of the map $\mathcal{K}^0(X)\to \Z$ induced by the inclusion of a point.

The next lemma is implicit in \cite[Section 5]{Atiyah:1975aa}; we provide a proof for the reader's convenience.  Recall first that if $\Lambda$ is a finite group then there is a CW model for its classifying space that is finite in each dimension -- for example, the infinite join construction of the classifying space from \cite[Section 3]{Milnor:1956aa} has this property -- but if $\Lambda$ is not trivial then there is no finite model for $B\Lambda$ (see for example \cite[Corollary VIII.2.5]{Brow:1982rt}).

\begin{lemma}\label{fin gp}
Let $\Lambda$ be a finite group and let $B\Lambda$ be a CW complex model for its classifying space that is finite in each dimension.  Then the map $\beta$ of line \eqref{lim beta map} is an isomorphism for $X=B\Lambda$.
\end{lemma}

\begin{proof}
Let $B\Lambda^{(n)}$ denote the $n$-skeleton of $B\Lambda$; as this is finite for all $n$, $\mathcal{K}^0(B\Lambda)$ is the inverse limit of the inverse system $(K^0(B\Lambda^{(n)}))_{n=1}^\infty$.  Similarly, $\mathcal{K}^1(B\Lambda;\Q/\Z)$ is the inverse limit of the system $(K^1(B\Lambda^{(n)};\Q/\Z))_{n=1}^\infty$.  

As $B\Lambda$ has trivial rational cohomology\footnote{This is well-known, but we were unable to find a good reference.  One can for example show this purely algebraically by showing that $\Q$ with the trivial $\Lambda$-action is a flat $\Z\Lambda$-module.}, considering cellular cohomology shows that $\widetilde{H}^k(B\Lambda^{(n)};\Q)$ is zero if $k\neq n$.  As the Chern character is a rational isomorphism, $\widetilde{K}^0(B\Lambda^{(n)})\otimes \Q=0$ for $n$ odd, and $K^1(B\Lambda^{(n)})\otimes \Q=0$ for $n$ even.  Now, consider the short exact sequence for $n$ odd
\begin{equation}\label{ml ses}
0\to \frac{K^1(B\Lambda^{(n)})\otimes \Q}{\iota(K^1(B\Lambda^{(n)}))}  \stackrel{\rho}{\to} K^1(B\Lambda^{(n)};\Q/\Z) \stackrel{\beta}{\to} \widetilde{K}^0(B\Lambda^{(n)}) \to 0
\end{equation}
induced from line \eqref{red bln} and the above observations.  As $K^1(B\Lambda^{(n)})\otimes \Q=0$ for $n$ even, the connecting maps for the inverse system 
$$
\Bigg(\frac{K^1(B\Lambda^{(n)})\otimes \Q}{\iota(K^1(B\Lambda^{(n)}))}\Bigg)_{n\text{ odd}}
$$
are all zero.  In particular, this inverse system satisfies the Mittag-Leffler condition so the inverse limit of the short exact sequence in line \eqref{ml ses} is still exact (see for example \cite[Pages 31-2]{Atiyah:1961uq} or \cite[Section 3.5]{Weibel:1995ty}), giving a short exact sequence
$$
0\to K  \stackrel{\rho}{\to} \mathcal{K}^1(B\Lambda;\Q/\Z) \stackrel{\beta}{\to} \widetilde{\mathcal{K}}^0(B\Lambda) \to 0.
$$
The group $K$ on the left is an inverse limit of a system with zero connecting maps, so is trivial, and we are done.
\end{proof}

We need one more lemma before the proof of Theorem \ref{eta inv}.

\begin{lemma}\label{pair commute}
Let $\Gamma$ be a countable group, let $y\in \mathcal{K}^1(B\Gamma;\Q/\Z)$, and let $n\geq 2$.  Then the diagram below commutes 
$$
\xymatrix{ RK_0(B\Gamma;\Z/n) \ar[r]^-{p_{\beta(y)}} \ar[d]^-{\beta_n} & \Z/n \ar[d] \\
RK_1(B\Gamma) \ar[r]^-{p_y} & \Q/\Z }
$$
where: the left hand map $\beta_n$ is the right vertical map from line \eqref{kh bs}; $\beta(y)$ is the image of $y$ under the map $\beta$ of line \eqref{lim beta map} and $p_{\beta(y)}$ is the operation of pairing with it as in line \eqref{pair n}; $p_y$ is the operation of pairing with $y$ as in line \eqref{lim qz pair}; and the right hand vertical arrow is the canonical inclusion determined by $1\mapsto 1/n$.
\end{lemma}

\begin{proof}
Thanks to the definitions of the pairings in lines \eqref{pair n} and \eqref{lim qz pair}, it suffices to show that if $Y$ is a finite CW subcomplex of $B\Gamma$ and $z$ is the image of $y$ under the canonical map $\mathcal{K}^1(B\Gamma;\Q/\Z)\to K^1(Y;\Q/\Z)$, then the following diagram commutes 
$$
\xymatrix{ KK(C(Y),O_{n+1}) \ar[r]^-{p_{\beta(z)}} \ar[d]^-{\beta_n} & KK(\C,O_{n+1})=\Z/n \ar[d] \\
KK_1(C(Y),\C) \ar[r]^-{p_{z}} & \lim_m KK(\C,O_{m+1})= \Q/\Z }
$$
where: $\beta$ is the left hand vertical arrow in line \eqref{bs2} for $A=C(Y)$ and $p_{\beta(z)}$ is the result of pairing with $\beta(z)\in KK_1(\C,C(Y))$ as in line \eqref{kas 1 pair}; $p_{z}$ is the result of pairing with $z\in \lim_m KK(\C,C(Y)\otimes O_{m+1})$ as in line \eqref{base pair qz}; and the two vertical maps are defined as before.  Let now $m\in \N$ be such that $n$ divides $m$, and such that $z$ is the image of an element $z_m$ under the map $KK(\C,C(Y)\otimes O_{m+1})\to \lim_m KK(\C,C(Y)\otimes O_{m+1})$.  Then it suffices to show that the diagram below commutes
\begin{equation}\label{red y m}
\xymatrix{ KK(C(Y),O_{n+1}) \ar[r]^-{p_{\beta_m(z_m)}} \ar[d]^-{\beta_n} & KK(\C,O_{n+1})=\Z/n \ar[d]^{\kappa_{m,n}} \\
KK_1(C(Y),\C) \ar[r]^-{p_{z_m}} & KK(\C,O_{m+1})= \Z/m }
\end{equation}
where $\kappa_{m,n}\in KK(O_{n+1},O_{m+1})$ is as in line \eqref{kappa nm}.  Let $b_m\in KK_1(O_{m+1},\C)$ be the element inducing the map $\beta_m$, and similarly for $b_n$.  Then the image of $x\in KK(C(Y),O_{n+1})$ under the right-down composition in line \eqref{red y m} is given by 
$$
((z_m\cdot b_m)\cdot  x)\cdot \kappa_{m,n}
$$
while the down-right composition is given by 
$$
z_m\cdot (x\cdot b_n);
$$
using associativity of the Kasparov product, we therefore must show that 
$$
b_m\cdot x \cdot \kappa_{m,n}=x\cdot b_n
$$
in $KK_1(O_{m+1}\otimes C(Y),O_{m+1})$.  Moreover, $b_m\cdot x=x\cdot b_m$, so it suffices to show that $b_m\cdot \kappa_{m,n}=b_n$ in $KK_1(O_{n+1},\C)$.  It follows from \cite[Proposition 2.1]{Schochet:1984ab} that $b_m\cdot \kappa_{m,n}$ and $b_n$ induce the same natural transformations on $K$-theory with finite coefficients, from which the identity $b_m\cdot \kappa_{m,n}=b_n$ follows (compare for example \cite[Remark 8.8]{Rosenberg:1987bh}).
\end{proof}

%\begin{definition}\label{aps class}
%Let $\Lambda$ be a finite group, let $\pi:\Lambda\to M_m(\C)$ be a unitary representation, and let $E_\pi$ be the corresponding flat bundle over $B\Lambda$ as in line \eqref{fb}.  The \emph{APS class} associated to $\pi$ is the class 
%$$
%y_{\Lambda,\pi}:=\beta^{-1}([E_\pi]-[\C^m])\in \mathcal{K}^1(B\Lambda;\Q/\Z).
%$$
%More generally, if $\Gamma$ is a group and $\pi:\Gamma\to M_n(\C)$ a unitary representation that factors through a finite quotient $\Lambda$, then we define the \emph{APS class} $y_{\Gamma,\pi}\in \mathcal{K}^1(B\Gamma;\Q/\Z)$ to be the pullback of the class $y_{\Lambda,\pi}\in \mathcal{K}^1(B\Lambda;\Q/\Z)$ under the pullback map on $K$-theory induced by the quotient map $\Gamma\to \Lambda$.  
%\end{definition}

We are now ready for the proof of Theorem \ref{eta inv}.

\begin{proof}[Proof of Theorem \ref{eta inv}]
The diagram appearing in Theorem \ref{eta inv} is continuous as one takes direct limits over groups.  Thus we may assume that $\Gamma$ is finitely generated, and that $\pi$ factors through a finite quotient of $\Gamma$.  

Let $E_\pi$ be the flat bundle over $B\Gamma$ corresponding to $\pi$ as in line \eqref{fb} above, and let $\C^m$ be the corresponding trivial bundle of the same rank.  Let 
$$
p_\pi :RK_0(B\Gamma;\Z/n)\to \Z/n,\quad x\mapsto ([E_\pi]-[\C^m])\cdot x 
$$
be the operation of pairing with $[E_\pi]-[\C^m]\in \widetilde{\mathcal{K}}^0(B\Gamma)$ as in line \eqref{pair n} above.  Then the diagram
$$
\xymatrix{ RK_0(B\Gamma;\Z/n) \ar[d]^-{p_\pi} \ar[r]^\mu & K_0(C^*(\Gamma);\Z/n) \ar[d]^-{\pi_*-\tau_*} \\
\Z/n \ar@{=}[r] & \Z/n }
$$
commutes by Lemma \ref{flat pair}.  It thus suffices to show that 
\begin{equation}\label{stp com}
\xymatrix{ RK_0(B\Gamma;\Z/n) \ar[r]^-{p_\pi} \ar[d]^-{\beta_n} & \Z/n \ar[d]  \\
RK_1(B\Gamma) \ar[r]^-{\rho_{\pi}} & \R/\Z }
\end{equation}
commutes.  

Let $q:\Gamma\to \Lambda$ be a finite quotient of $\Gamma$ through which $\pi$ factors.  Let $[E_\pi^\Lambda]-[\C^m]\in \widetilde{\mathcal{K}}^0(B\Lambda)$ be the corresponding class where $E_{\pi}^\Lambda$ is the flat bundle associated to $\pi$ (considered as a representation of $\Lambda$) as in line \eqref{fb}.  Let $q:B\Gamma\to B\Lambda$ also denote the induced map on classifying spaces.  Then the diagram 
\begin{equation}\label{q beta}
\xymatrix{ \widetilde{\mathcal{K}}^0(B\Lambda) \ar[r]^-{q^*} & \widetilde{\mathcal{K}}^0(B\Gamma) \\
\mathcal{K}^1(B\Lambda;\Q/\Z) \ar[u]^-{\beta} \ar[r]^-{q^*} & \mathcal{K}^1(B\Gamma;\Q/\Z) \ar[u]^-\beta} 
\end{equation}
commutes by naturality of the map $\beta$ from line \eqref{lim beta map} above.  The left hand map $\beta$ in line \eqref{q beta} above is an isomorphism by Lemma \ref{fin gp}, so we may define $y\in \mathcal{K}^1(B\Gamma;\Q/\Z)$ by 
$$
y:=q^*(\beta^{-1}([E_\pi^\Lambda]-[\C^m]))\in \mathcal{K}^1(B\Gamma;\Q/\Z)
$$
(compare \cite[page 431]{Atiyah:1975aa}).   Then by commutativity of the diagram in line \eqref{q beta}, $\beta(y)=[E^\pi]-[\C^m]\in \widetilde{\mathcal{K}}^0(B\Gamma)$.  Hence we have an equality of maps 
\begin{equation}\label{pi beta y}
p_{\beta(y)}=p_\pi:RK_0(B\Gamma;\Z/n)\to \Z/n.
\end{equation}
where the map $p_{\beta(y)}$ is as in Lemma \ref{pair commute}.  As Lemma \ref{pair commute} gives us the commutative diagram
$$
\xymatrix{ RK_0(B\Gamma;\Z/n) \ar[r]^-{p_{\beta(y)}} \ar[d]^-{\beta_n} & \Z/n \ar[d] \\
RK_1(B\Gamma) \ar[r]^-{p_y} & \Q/\Z }
$$
and as we have the identity in line \eqref{pi beta y}, to show that the diagram in line \eqref{stp com} commutes (recall this is our goal!) if suffices to show an identity of maps
\begin{equation}\label{desid 2}
\rho_{\pi}=p_y:RK_1(B\Gamma)\to \Q/\Z.
\end{equation}
This is essentially the Atiyah-Patodi-Singer index theorem for flat bundles, as we explain in the remainder of the proof.

Indeed, let $z\in RK_1(B\Gamma)$ be an arbitrary class represented by a $K$-cycle $(M,S,f)$ as in Definition \ref{k cycle}.  Let $[D]\in K_1(M)$ be the class of an associated Dirac operator in analytic $K$-homology, and let $[\sigma_{D}]$ be the symbol class of $D$ in $K^1(TM)$.  Then on \cite[page 87]{Atiyah:1976aa}, Atiyah-Patodi-Singer construct an \emph{analytical index map} $\text{ind}_\pi:K^1(TM)\to \R/\Z$ such that the equality
\begin{equation}\label{aps 3}
\text{ind}_\pi([\sigma_{D}])=\rho_\pi(z)
\end{equation}
holds by definition (compare Theorem \ref{rel eta the} above).  On the other hand on \cite[page 87]{Atiyah:1976aa} again, Atiyah-Patodi-Singer define a \emph{topological index map} $\text{Ind}_\pi:K^1(TM)\to \R/\Z$ by the formula 
\begin{equation}\label{aps 4}
\text{Ind}_\pi([\sigma_{D}]):=\text{Ind}(f^*(y)\cdot_{TM} [\sigma_{D}])
\end{equation}
where: the product $\cdot_{TM}$ on the right hand side is the module action of $K^1(M;\Q/\Z)$ on $K^1(TM)$ 
$$
K^1(M;\Q/\Z)\otimes K^1(TM)\to K^0(TM)
$$
(a variation incorporating suspensions and coefficients of the module action of $K^0(M)$ on $K^0(TM)$ defined on \cite[page 491]{Atiyah:1968ul}); and $\text{Ind}$ is the \emph{Atiyah-Singer topological index map} \cite[Section 3]{Atiyah:1968ul}
$$
\text{Ind}:K^0(TM;\Q/\Z)\to K^0(pt;\Q/\Z)=\Q/\Z
$$
adapted for $\Q/\Z$ coefficients.  Using \cite[Theorem 5.3]{Atiyah:1976aa} (compare also \cite[Remark 2 on page 87]{Atiyah:1976aa}), the left hand sides of lines \eqref{aps 3} and \eqref{aps 4} agree, and so 
\begin{equation}\label{rho pi ind}
\rho_\pi(z)=\text{Ind}(f^*(y)\cdot_{TM} [\widetilde{\sigma_{D}}]).
\end{equation}
To establish the formula in line \eqref{desid 2}, we follow Kasparov's approach to index theory as exposited in \cite[Sections 2-4]{Kasparov:2016aa}.  

Now, analogously to Kasparov \cite[page 1318]{Kasparov:2016aa}, we may consider the symbol $\sigma_D$ of $D$ as defining a class in $KK_1(C(M),C_0(TM))$\footnote{Kasparov just calls this class $[\sigma_D]$, and considers it as an element of a slightly more complicated group $\mathcal{R}K_1(M;C_0(TM))$, where the latter group would be denoted $\mathcal{R}KK_1(M;C(M),C_0(TM))$ in \cite[2.19]{Kasparov:1988dw} (compare \cite[page 1306]{Kasparov:2016aa}).   We only need the image of Kasparov's class under the forgetful map $\mathcal{R}K_1(M;C(M),C_0(TM))\to KK_1(C(M),C_0(TM))$, so just work there for simplicity.}, which we denote $_M[\sigma_D]$.  In the language of the Kasparov product, we have 
$$
f^*(y)\cdot_{TM} [\sigma_{D}]=f^*(y)\cdot\, _M[\sigma_{D}].
$$
Thus we may rewrite line \eqref{aps 4} as 
\begin{equation}\label{aps 5}
\text{Ind}_\pi([\sigma_{D}])=\text{Ind}(f^*(y)\cdot \,_M[\sigma_{D}]).
\end{equation}
Let $[\mathcal{D}_{M}]\in K_0(TM)=KK(C_0(TM),\C)$ denote Kasparov's \emph{Dolbeault $K$-homology class} as in \cite[Definition 2.8]{Kasparov:2016aa}.  Then we have \emph{Kasparov's Poincar\'{e} duality} (see \cite[Theorem 4.2]{Kasparov:2016aa}) 
\begin{equation}\label{kas pd}
_M[\sigma_{D}]\cdot [\mathcal{D}_{M}] = [D] \in K_1(M)
\end{equation}
and (still by \cite[Theorem 4.2]{Kasparov:2016aa}, or more precisely, a slight variant with coefficients) that for any $w$ in
$$
K^0(TM;\Q/\Z)=\lim_n KK(\C,C_0(TM)\otimes O_{n+1})
$$
we have that 
\begin{equation}\label{kas as}
\text{Ind}(w)=[w]\cdot  [\mathcal{D}_{M}].
\end{equation}
Hence if we write $[f]\in \lim_Y KK(C(Y),C(M))$ for the $KK$-class associated to $f$ (the limit is taken over all finite subcomplexes $Y$ of $B\Gamma$), then starting with lines \eqref{rho pi ind} and \eqref{aps 5}, we have
\begin{align*}
\rho_\pi(z) & = \text{Ind}(f^*(y)\cdot\, _M[\sigma_{D}])\\
& =((y\cdot[f])\cdot \, _M[\sigma_{D}]) \cdot [\mathcal{D}_{M}]
\end{align*}
where we have used that $f^*(y)=y\cdot [f]$ (by definition) and line \eqref{kas as}.  Using associativity of the Kasparov product and Poincar\'{e} duality as in line \eqref{kas pd}, we thus get the first equality in the chain below 
\begin{align*}
\rho_\pi(z) =y\cdot [f]\cdot [D]  = y\cdot f_*[D]  = y\cdot z  = p_y(z).
\end{align*}
The other equalities are justified by: the second equality is by definition of $f_*[D]$; the third follows as $f_*[D]$ is the analytic $K$-homology class associated to the $K$-cycle $(M,S,f)$ underlying $z$ (compare line \eqref{geo ana} above); and the last follows by definition of $p_y$ (see Lemma \ref{pair commute} above).  This is the desired identity from line \eqref{desid 2}, so we are done.
\end{proof}

For our concrete examples, we will need the following explicit computation.  Again, it is essentially due to Atiyah, Patodi, and Singer.

\begin{proposition}\label{circle eta}
Let $\Gamma$ be a discrete group.  Let $[S^1,\C,f]\in RK_1(B\Gamma)$ be the class associated to a continuous map $f:S^1\to B\Gamma$.  Let $\pi:\Gamma\to \C$ be a one-dimensional unitary representation.  Let $f_*:\Z=\pi_1(S^1)\to \Gamma$ be the map induced on fundamental groups by $f$, and assume that $\pi\circ f_*$ takes $1\in \Z$ (identified with the class of a loop of winding number one) to $e^{2\pi i q}$ for some $q\in (0,1)$.  Then 
$$
\rho_\pi([S^1,\C,f])=-q \in \R/\Z.
$$
\end{proposition}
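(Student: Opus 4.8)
The plan is to reduce the assertion to the classical eta-invariant computation for the circle, exploiting that $S^1$ is itself a $1$-dimensional $K(\Z,1)$ and that the class $[S^1,\C,f]$ is pushed forward from the generator of $RK_1(S^1)$. First I would observe that by functoriality of the relative eta invariant (Theorem \ref{rel eta the}) and of the assembly map, one has $\rho_\pi(\mu([S^1,\C,f])) = \rho_{\pi\circ f_*}(\mu([S^1,\C,\mathrm{id}]))$, where on the right $\mu$ is the assembly map for $\Z = \pi_1(S^1)$ and $\pi\circ f_*:\Z\to \C$ is the one-dimensional representation sending the generator to $e^{2\pi i q}$. Concretely, the flat bundle $E_{\pi\circ f_*}$ over $S^1$ is the mapping-torus line bundle with holonomy $e^{2\pi i q}$, so it suffices to compute the relative eta invariant of the Dirac operator on $S^1$ twisted by this flat line bundle, relative to the trivial line bundle.

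The core computation is then the standard one from \cite{Atiyah:1975ab,Atiyah:1975aa}: on the circle $S^1 = \R/\Z$ the Dirac operator is (up to choice of orientation and metric normalization) $D = \tfrac{1}{i}\tfrac{d}{dx}$, and twisting by the flat line bundle with holonomy $e^{2\pi i q}$ shifts the spectrum of $D$ from $\{2\pi k : k\in\Z\}$ to $\{2\pi(k+q) : k\in\Z\}$ (identifying sections of the twisted bundle with functions $\psi$ on $\R$ satisfying $\psi(x+1)=e^{2\pi i q}\psi(x)$, whose Fourier modes are $e^{2\pi i (k+q)x}$). The eta function $\sum_{\lambda\neq 0}\mathrm{sign}(\lambda)|\lambda|^{-s}$ of the twisted operator, together with the kernel-dimension correction in formula \eqref{rel eta 0}, is evaluated by analytic continuation (a Hurwitz-zeta computation); the trivial bundle contributes an even, hence vanishing modulo the symmetry, piece, and the net answer for $\tfrac{k_\pi+\eta(D_\pi)}{2}-\tfrac{k_n+\eta(D_n)}{2}$ comes out to $-q \bmod \Z$ for $q\in(0,1)$. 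This is exactly the content of the example in \cite[Section 3 and Theorem 3.3]{Atiyah:1975aa} (the APS $\tilde\xi_\pi(0)$ for the circle), and I would simply cite it rather than reproducing the zeta-regularization. One must be careful about the sign and orientation conventions, since $\rho_\pi$ as defined in \eqref{rel eta 0} differs from the APS $\rho_\pi$ and the orientation of the Dirac bundle in Definition \ref{k cycle} fixes a sign; keeping track of these conventions consistently so that one lands on $-q$ rather than $q$ or $1-q$ is the delicate bookkeeping point.

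The main obstacle will be precisely this sign/normalization matching: verifying that the orientation of $S^1$, the particular Dirac bundle used in the Higson--Roe formalism of \cite{Higson:2008qb}, the chosen metric, and the identification of the holonomy of $E_{\pi\circ f_*}$ with $e^{2\pi i q}$ (as opposed to $e^{-2\pi i q}$) all conspire to give $-q$ and not its negative or $1-q$. One clean way to pin this down, if I wanted to avoid re-deriving APS conventions, would be to cross-check via Theorem \ref{eta inv}: take an explicit class $x\in RK_0(B\Gamma;\Z/n)$ with $\beta_n(x) = \mu([S^1,\C,f])$ (available when $q = p/n$ is rational, using the Bockstein sequence \eqref{kh bs}), compute $(\tau_*-\pi_*)\circ\mu$ on the corresponding $K$-theory-with-coefficients class directly from the flat-bundle pairing of Lemma \ref{flat pair}, and confirm it equals $-p/n \in \Z/n \hookrightarrow \R/\Z$; density of rational $q$ and continuity would then force the general answer. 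Either way, the mathematical content is entirely classical and the write-up should consist of the functoriality reduction, a citation to the circle computation in \cite{Atiyah:1975aa}, and a careful paragraph reconciling orientation conventions.
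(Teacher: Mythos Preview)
Your proposal is correct and follows essentially the same route as the paper: reduce by naturality of $\rho_\pi$ to the case $\Gamma=\Z$ and the identity $K$-cycle on $S^1$, then invoke the Atiyah--Patodi--Singer circle computation. The paper is slightly more explicit at the end: rather than leaving the sign bookkeeping as a concern to be cross-checked, it simply reads off from \cite[page 411]{Atiyah:1975aa} the four numbers $k_\sigma=0$, $k_1=1$, $\eta(D_\sigma)=1-2q$, $\eta(D_1)=0$ (for $D=i\frac{d}{dx}$, the opposite sign convention to yours) and substitutes into \eqref{rel eta 0} to get $-q$ directly; your proposed cross-check via Theorem~\ref{eta inv} is unnecessary once those values are quoted.
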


\begin{proof}
The relative eta invariant map of Theorem \ref{rel eta the} is natural, as follows directly from the description of functoriality for $K$-cycles in line \eqref{bd func}.  Hence the diagram below commutes
$$
\xymatrix{ K_1(S^1) \ar[d]^{f_*} \ar[r]^-{\rho_{\pi\circ f_*}} & \R/\Z \ar@{=}[d]\\
RK_1(B\Gamma) \ar[r]^{\rho_{\pi}} & \R/\Z }.
$$
It thus suffices to show that if $\sigma:\pi_1(S^1)=\Z\to \C$ is the one-dimensional unitary representation taking $1$ to $e^{2\pi i q}$, and 
$$
\rho_\sigma: K_1(B\Z)\to \R/\Z
$$ 
is the homomorphism of Theorem \ref{rel eta the}, then (having identified $S^1$ with $B\Z$), $\rho_\sigma[S^1,\C,\text{id}]=-q$.  Now, a canonical choice of Dirac operator associated to the $K$-cycle $(S^1,\C,\text{id})$ is $D=i\frac{d}{dx}$ acting on the trivial line bundle (here we treat $S^1$ as $\R/\Z$ and $x$ as the usual variable).   With notation as in line \eqref{rel eta 0} above, we thus need to show that 
$$
\frac{k_\sigma+\eta(D_\sigma)}{2}-\frac{k_1+\eta(D_1)}{2} = -q
$$
The necessary direct computations for this are contained in \cite[page 411]{Atiyah:1975aa}.  Indeed, for $0<q<1$, \cite[page 411]{Atiyah:1975aa} shows that $D_\sigma$ has trivial kernel, and the kernel of $D_1$ is spanned by the constant functions; hence $k_\sigma=0$ and $k_1=1$.  On the other hand, it is also computed in \cite[page 411]{Atiyah:1975aa} that $\eta(D_\sigma)=1-2q$ and that $\eta(D_1)=0$.  The result follows.
\end{proof}

The next theorem puts together our work in Section \ref{bc sec}; it was motivated partly by comments of Marius Dadarlat.

\begin{theorem}\label{match the}
Let $\Gamma$ be a discrete group, for which the BCK assembly map $\mu$ of line \eqref{max bc} is an isomorphism.  Assume moreover that any torsion class in $RK_1(B\Gamma)$ can be represented by a $K$-cycle of the form $(S^1,\C,f)$ for some continuous map $f:S^1\to B\Gamma$.  

With notation as in Definition \ref{lambda0}, Example \ref{basic lambda 0}, and Definition \ref{fin set tot hom}, let $P$ be a $K$-datum for $\underline{K}_0(C^*(\Gamma))$, with associated integer $N=N(P)$.

Let $P_0$ (respectively, $P_n$ for $n\geq 2$ and $n|N$) denote the (finitely generated) subgroup of $K_0(C^*(\Gamma))$ (respectively, $K_0(C^*(\Gamma);\Z/n)$ that forms part of the $\Lambda_0^{(N)}$-module $\langle P\rangle$ generated by $P$.  As $P$ contains the class of the unit of $C^*(\Gamma)$, we have a splitting 
\begin{equation}\label{g split}
P_0=\Z[1]\oplus \widetilde{P_0}
\end{equation}
where $\widetilde{P_0}:=P_0\cap \widetilde{K}_0(C^*(\Gamma))$.  We moreover write $\widetilde{P_n}$ for the intersection of $P_n$ and the subgroup $\rho_n(\widetilde{K}_0(C^*(\Gamma)))$ of $K_0(C^*(\Gamma);\Z/n)$ from Lemma \ref{zn split}.

Let $\alpha\in \text{Hom}_{\Lambda_0^{(N)}}(P,\underline{K}_0(\C))$, and write $\alpha_n:P_n\to K_0(\C;\Z/n)$ for the induced homomorphism for each $n$ with $n=0$, or $n\geq 2$ and $n|N$.  Then the following are equivalent:
\begin{enumerate}[(i)]
\item \label{2m01} with respect to the decomposition in line \eqref{g split}, $\alpha_0[1]\geq  0$ and $\alpha_n(\widetilde{P_n})=\{0\}$ for all $n$ with $n=0$, or $n\geq 2$ and $n|N$;
\item \label{2m02} there is a finite dimensional representation $\pi$ of $\Gamma$ and a one-dimensional representation $\sigma:\Gamma\to \C$ with torsion image such that $\alpha+\sigma_*=\pi_*$ in $\text{Hom}_{\Lambda_0^{(N)}}(P,\underline{K}_0(\C))$.
\end{enumerate}
\end{theorem}

\begin{proof}
Assume first that \eqref{2m02} holds.  Then line \eqref{fd triv} implies that for any finite-dimensional representation $\pi$ (in particular, for the character $\sigma$), the element $\pi_*\in \text{Hom}(P_0,\Z)$ satisfies $\pi_*[1]=\text{dim}(\pi)\geq 1$, and $\pi_*(\widetilde{P_n})=\{0\}$ for all $n$.  Hence if $\alpha$ is such that $\alpha+\sigma_*=\pi_*$ in $\text{Hom}_{\Lambda_0^{(N)}}(P,\underline{K}_0(\C))$, then $\alpha_0$ must satisfy $\alpha_0[1]\geq 0$, and for any $g\in \mathcal{P}_0$, $\alpha_0(g)=0$.

Conversely, assume condition \eqref{2m01} holds.  For each $n\geq 2$, choose splittings 
\begin{equation}\label{bock dec}
K_0(C^*(\Gamma);\Z/n)\cong \, ^nK_0(C^*(\Gamma))\oplus \, _nK_1(C^*(\Gamma))
\end{equation} 
with the properties in Lemma \ref{zn split}.  Using our assumption that the BCK assembly map is an isomorphism (which implies that the BCK assembly map with any $\Z/n$ coefficients is an isomorphism by Lemma \ref{bck fc lem}), we pull this back to a decomposition
\begin{equation}\label{bock dec kh}
RK_0(B\Gamma;\Z/n)\cong \, ^nRK_0(B\Gamma)\oplus \, _nRK_1(B\Gamma)
\end{equation}
that is compatible with $\Lambda_0$ and with the BCK assembly map $\mu$.

Now, let $P$ be a $K$-datum for  $\underline{K}_0(C^*(\Gamma))$ %Write $P_1$ for the part of $P$ in $K_0(C^*(\Gamma))$, and $P_n$ for the part in $K_0(C^*(\Gamma);\Z/n)$ for $n\geq 2$; all but finitely many of the $P_n$ will be empty.  For each $n\geq 2$, write $P_{n,0}$ for the projection of $P_n$ onto the direct summand $^nK_0(C^*(\Gamma))$ in line \eqref{bock dec}.  Let $Q$ be a finite subset of the group $\widetilde{K_0}(C^*(\Gamma))$ of line \eqref{red k hom} that is large enough so that $Q+ \Z[1]$ contains $P_1$, and so that for each $n$, the image of $Q+\Z[1]$ under the map 
%$$
%\rho_n:K_0(C^*(\Gamma))\to K_0(C^*(\Gamma);\Z/n)
%$$
%of line \eqref{bs} contains $P_{n,0}$.  We claim that $Q$ has the property in the statement.
and assume then that we are given $\alpha\in \text{Hom}_{\Lambda_0^{(N)}}(P,\underline{K}_0(\C))$ such that $\alpha_0[1]\geq0$, and so that $\alpha_0(\widetilde{P_0})=0$.  We need to define $\sigma$ and $\pi$ with the properties in the statement.

To do this, for each $n\geq 2$ let $q_n:K_0(C^*(\Gamma;\Z/n))\to _n\!\!K_1(C^*(\Gamma))$ be the projection determined by the choice of splittings in line \eqref{bock dec}.  For $n\geq 2$ with $n|N$, define $Q_n:=q_n(P_n)$.  

As $\alpha_n(\widetilde{P_n})=\{0\}$, each $\alpha_n$ descends to a map $\alpha_n:Q_n\to \Z/n$.  In particular, we get $\alpha_N:Q_n\to \Z/N$.  %For each $m\geq 2$ compatibility of $\alpha$, and of the direct sums decompositions in line \eqref{bock dec}, with the operations in $\Lambda_0$ implies that we have a commutative diagram
%\begin{equation}\label{qn diag}
%\xymatrix{ Q_n \ar[d] \ar[r]^-{\alpha_n} & \Z/n \ar[d]  \\
%Q_{nm} \ar[r]^-{\alpha_{nm}} & \Z/(nm) }
%\end{equation}
%where the vertical maps are the canonical inclusions (compare the commutative diagram in \cite[Line (A.28)]{Carrion:2020aa}).  Taking the direct limit of these diagrams over all $n$ in the directed set $\mathcal{N}$ defined just above line \eqref{dir sys qz} therefore gives us a homomorphism 
%$$
%\alpha_\infty:Q_\infty\to \Q/\Z.
%$$
On the other hand, %, we have a commutative diagrams
%$$
%\xymatrix{ _nK_1(C^*(\Gamma)) \ar[d] \ar[r]^-{\alpha_n} & \Z/n \ar[d]  \\
%_{nm}K_1(C^*(\Gamma)) \ar[r]^-{\alpha_{nm}} & \Z/(nm) }
%$$
%where again the vertical maps are injective, and thus 
we have that $Q_N$ identifies with a subgroup of $_NK_1(C^*(\Gamma))$ and therefore of the torsion subgroup $\text{Tor}(K_1(C^*(\Gamma)))$ of $K_1(C^*(\Gamma))$.  Embedding $\Z/N$ into $\Q/\Z$ as multiplies of $1/N$ in the usual way, and using injectivity (i.e.\ divisibility - see for example \cite[Corollary 2.3.2]{Weibel:1995ty}) of the abelian group $\Q/\Z$, we thus see that $\alpha_N$ extends to a map 
$$
\alpha_{tor}:\text{Tor}(K_1(C^*(\Gamma)))\to \Q/\Z.
$$
Let $\Gamma_{ab}$ be the abelianization of $\Gamma$, and define $\sigma_0$ to the composition 
$$
\text{Tor}(\Gamma_{ab})\to \text{Tor}(K_1(C^*(\Gamma))) \xrightarrow{\alpha_{tor}} \Q/\Z
$$
where the first map is induced by the map $\Gamma\to K_1(C^*(\Gamma))$ sending group elements to the canonical unitaries.  Using injectivity again, $\sigma_0$ extends to a map $\sigma_1:\Gamma_{ab}\to \Q/\Z$.  We define $\sigma:\Gamma\to \C$ to be the complex conjugate of the composition 
$$
\Gamma\to \Gamma_{ab}\xrightarrow{\sigma_1}\Q/\Z\to \C
$$ 
where the first map is the canonical quotient, and the last map is $x\mapsto e^{2\pi i x}$.  Define now also $\pi:=(\alpha_0[1]+1)\tau$, where $\tau:\Gamma\to \C$ is the trivial representation. We claim that this $\pi$ and $\sigma$ have the properties in part \eqref{2m02}.

Indeed, $\alpha_0$ agrees with $\alpha[1]\tau_*$ on $P_0\subseteq K_0(C^*(\Gamma))$; moreover, $\sigma_*$ agrees with $\tau_*$ on this summand by Lemma \ref{kes same}.  Similarly for each $n\geq 2$ with $n|N$, $\alpha_n$ agrees with $\alpha[1]\tau_*$ on $\widetilde{P_n}$ by Lemma \ref{kes same} again and compatibility of all these maps with $\Lambda_0$.  Moreover, $\tau_*$ vanishes on the summand $_nK_1(C^*(\Gamma))$ in line \eqref{bock dec} by choice of the splitting in line \eqref{bock dec}.  To complete the proof, it thus suffices to show that for each $n$ with $n\geq 2$ and $n|N$, the map induced by $\alpha_n$ on $Q_n$ agrees with $-\sigma_*$; it moreover suffices to show that it agrees with $\tau_*-\sigma_*$.  

For this, we use commutativity of the diagram 
$$
\xymatrix{ RK_0(B\Gamma;\Z/n) \ar[d]^{\beta_n} \ar[r]^-\mu & K_0(C^*(\Gamma);\Z/n) \ar[rr]^-{\tau_*-\sigma_*}  & & \Z/n \ar[d]  \\
RK_1(B\Gamma) \ar[rrr]^-{\rho_{\sigma}} & & & \R/\Z }
$$
from Theorem \ref{eta inv}.  Using our assumption that the BCK assembly map $\mu$ is an isomorphism (which implies that it is also an isomorphism with all finite coefficients by Lemma \ref{bck fc lem}), and also naturality of $\mu$ with respect to $\beta_n$, it thus suffices to show that we have an equality 
\begin{equation}\label{desid 10}
\alpha_n\circ \mu=\rho_\sigma:\mu^{-1}(Q_n)\to \R/\Z
\end{equation}
for each $n$ (we use here that the map from $RK_0(B\Gamma;\Z/n)$ to the summand $_nRK_1(B\Gamma)$ as in line \eqref{bock dec kh} is $\beta_n$).  

Fix then $n\geq 2$ with $n|N$, and let $z\in \mu^{-1}(Q_n)\subseteq  \,_n RK_1(B\Gamma)$ be an arbitrary class.  Our assumption on torsion classes in $RK_1(B\Gamma)$ implies that $z=[S^1,\C,f]$ for some continuous map $f:S^1\to B\Gamma$.  Proposition \ref{circle eta} thus gives that 
$$
\rho_{\sigma}(z)=q
$$
where $\sigma\circ f_*(1)=e^{-2\pi i q}$.  On the other hand, by definition of $\sigma$,
$$
\sigma(f_*(1))=e^{-2\pi i \alpha_{tor}([f_*(1)])}=e^{-2\pi i \alpha_n[f_*(1)]}
$$
where $[f_*(1)]$ is the class of $f_*(1)$ in $K_1(C^*(\Gamma))$.  Hence to show that the identity in line \eqref{desid 10} holds, it suffices to show that 
$$
[f_*(1)]=\mu[S^1,\C,f].
$$
To see this note that the identification $B\Z=S^1$ and naturality of the assembly map gives us a commutative diagram
$$
\xymatrix{RK_1(S^1) \ar[d]^-{f_*} \ar[r]^-\mu & K_1(C^*(\Z)) \ar[d]^-{f_*} \\
RK_1(B\Gamma) \ar[r]^-\mu & K_1(C^*(\Gamma)) }.
$$
The two groups on the top line are isomorphic to $\Z$, and are generated by $[S^1,\C,\text{id}]$ and $[1]$ respectively.  The left hand vertical map takes $[S^1,\C,\text{id}]$ to $[S^1,\C,f]$ (compare line \eqref{bd func} above), and the right hand vertical map takes $[1]$ to $[f_*(1)]$, so we are done.
\end{proof}

\section{Concrete examples}\label{example sec}

In this section, we use the results of Sections \ref{main sec} and \ref{bc sec} to do explicit computations for some concrete, mainly low-dimensional, examples.

We first start with a fairly general, abstract, theorem.

\begin{theorem}\label{high main}
Let $\Gamma$ be a countable discrete group.   Assume that $\Gamma$ satisfies the following properties:
\begin{enumerate}[(i)]
\item the weak matricial LLP of Definition \ref{llp}
\item property FD from Example \ref{good ex};
\item $C^*(\Gamma)$ satisfies the approximate $K$-homology UCT of Definition \ref{kh auct};
\item the BCK assembly map of line \eqref{max bc} is an isomorphism.
\end{enumerate}

Let $\mathcal{S}$ be the collection of finite symmetric subsets of $\Gamma$.  Let $e\in \{1,2,3,...,\infty\}$ be the exponent of the torsion subgroup of $K_1(C^*(\Gamma))$.  

Then for any $S\in \mathcal{S}$ and $\epsilon>0$ there exist $T\in \mathcal{S}$, $\delta>0$, a finite subset $C$ of $\widetilde{K}_0(C^*(\Gamma))$, and $M\in \N\cap [1,e]$ with the following properties.

Let $\phi:\Gamma\to M_n(\C)$ be a $(T,\delta)$ representation.  Then there is a map $\phi_*$ as in Lemma \ref{to ucp} such that $\phi_*(c)$ is defined for all $c\in C$, and moreover so that if $\phi_*(c)=0$ for all $c\in C$ then there are unitary representations $\theta:\Gamma\to M_k(\C)$ and $\pi:\Gamma\to M_{Mn+k}(\C)$ that factor through a finite quotient of $\Gamma$ and that satisfy 
$$
\|\phi(s)^{\oplus M}\oplus \theta(s)-\pi(s)\|<\epsilon
$$
for all $s\in S$.  
\end{theorem}

\begin{proof}
Let $(S,\epsilon)$ be given, and let $(P,T,\delta)$ be as in the conclusion of Theorem \ref{gp main}, with $N=N(P)$.  With notation as in Proposition \ref{match the 0}, let $C$ be a finite subset of $\widetilde{K}_0(C^*(\Gamma))$ that is large enough to generate $P_0$, and also so that $\rho_n(C)$ generates $\widetilde{P_n}$ for all $n\geq 2$ with $n|N$.  Enlarging $T$ and shrinking $\delta$ is necessary, we may (using Lemma \ref{to ucp}) assume that any $(T,\delta)$-homomorphism gives a well-defined map on the $\Lambda_0^{(N)}$ module generated by $C$ as in Definition \ref{rep phi*}.  Let $M$ be as in Theorem \ref{match the 0}, and note that we may as well assume that $M\leq e$.  

Assume then that $\phi:\Gamma\to M_n(\C)$ is a $(T,\delta)$-representation such that $\phi_*(c)=0$ for all $c\in C$.   Theorem \ref{match the 0} then gives a representation $\sigma:\Gamma\to M_{Mn}(\C)$ such that $\kappa^{S,\epsilon}_P(M\phi_*)=\kappa^{S,\epsilon}_P(\sigma_*)$.  Hence the conclusion of Theorem \ref{gp main} implies that there exists a representation $\theta:\Gamma\to M_{k}(\C)$ with finite image and a unitary $u\in M_{Mn+k}(\C)$ such that 
$$
\|u(\phi(s)^{\oplus M}\oplus \theta(s))u^*-\sigma(s)\oplus \theta(s)\|<\epsilon
$$
for all $s\in S$.  Setting $\pi(g)=u^*(\sigma(g)\oplus \theta(g))u$, we are done.
\end{proof}

\begin{example}
Theorem \ref{high main} applies in particular to torsion-free, residually finite, amenable groups.  For example, let $\Gamma=\Z^n$.  Then $K_1(C^*(\Gamma))$ is torsion-free, so the number $e$ in the theorem is one.  Moreover, we may take $S$ to be a generating set of $\Z^n$, and we may assume that $C$ is a set of generators of $\widetilde{K}_0(C^*(\Gamma))$.  Thus the result says that for any $\epsilon>0$ there is a finite symmetric subset $T$ of $\Z^n$ and $\delta>0$ such that for any $(T,\delta)$ representation $\phi:\Gamma\to M_n(\C)$, if $\phi_*$ vanishes on $C$ then there are unitary representations $\theta:\Gamma\to M_k(\C)$ and $\pi:\Gamma\to M_{n+k}(\C)$ that factor through a finite quotient of $\Gamma$ and that satisfy 
$$
\|\phi(s)\oplus \theta(s)-\pi(s)\|<\epsilon
$$
for all $s\in S$.  This says that $\Z^n$ is $\mathcal{R}_q$-stable conditionally on vanishing of $\phi_*(c)$ for all $c\in C$ in the sense of Definition \ref{ws} and Example \ref{good ex}.

On the other hand, for $n\geq 3$, the result of \cite[Theorem 4.2]{Gong:1998aa} shows that one has to add an auxiliary representation $\theta$ i.e.\  $\Z^n$ is not stable (conditional on vanishing on $C$) for $n\geq 3$.
\end{example}

We now start looking at low-dimensional examples in more detail.  We need some preliminary results that let us reformulate $K$-theoretic conditions in more familiar homological language.  The first result we need comes from Matthey's paper \cite{Matthey:2002aa}.   We warn the reader that Matthey uses ``$K_*$'' for what we call ``$RK_*$'' (see line \eqref{rep k hom} above).

\begin{proposition}[Matthey]\label{bc low}
For any connected CW complex $X$ there are homomorphisms 
$$
\beta_i^X:H_i(X)\to RK_{i\text{ mod }2}(X)
$$
for $i=0,1,2$ that are natural for continuous maps between CW-complexes.  

The map $\beta_0^X$ has the property that $\beta_0^X[pt]_H=[pt]_K$ where $[pt]_H\in H_0(X)$ (respectively, $[pt]_K\in RK_0(X)$) is the class coming from the inclusion of any point.  The map $\beta_1^X$ has the property that if $f:S^1\to X$ describes an element of $\pi_1(X)$ and $[f]\in H_1(B\Gamma)$ is the image of this element under the Hurewicz map $\pi_1(X)\to H_1(X)$, then 
$$
\beta_1^X([f])=[S^1,\C,f]
$$
(here we use notation for $K$-cycles as in Definition \ref{k cycle}).

Moreover, if $X$ is a CW complex of dimension at most three, then the direct sum map 
$$
\beta_0^X\oplus \beta_2^X:H_0(X)\oplus H_2(X)\to RK_0(X)
$$
is an isomorphism, and there is an isomorphism 
$$
\beta_{od}^X:H_1(X)\oplus H_3(X)\to RK_3(X).
$$
that restricts to $\beta_1^X$ on $H_1(X)$.
\end{proposition}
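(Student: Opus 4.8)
The plan is to obtain everything from the Atiyah--Hirzebruch spectral sequence (AHSS) for representable $K$-homology, $E^2_{p,q}=H_p(X;K_q(\mathrm{pt}))\Rightarrow RK_{p+q}(X)$, together with naturality of its edge homomorphisms. Since $K_q(\mathrm{pt})=\Z$ for $q$ even and $0$ for $q$ odd, one has $E^2_{p,q}=H_p(X)$ when $q$ is even and $0$ otherwise. (Here I use that the Baum--Douglas/Keswani model of $RK_*$ used in the paper agrees with Kasparov analytic $K$-homology, hence with the representable $K$-homology carrying this spectral sequence, via the equivalence recalled around line \eqref{geo ana}; both $H_*$ and $RK_*$ commute with the colimit over finite subcomplexes in line \eqref{rep k hom}.) I would then \emph{define} $\beta_0^X,\beta_1^X$ to be the edge homomorphisms $H_i(X)=E^2_{i,0}\twoheadrightarrow E^\infty_{i,0}=F_iRK_i(X)\hookrightarrow RK_i(X)$ for $i=0,1$, and $\beta_2^X$ to be the composite $H_2(X)=\widetilde H_2(X)=\widetilde E^2_{2,-2}\twoheadrightarrow\widetilde E^\infty_{2,-2}\hookrightarrow\widetilde{RK}_0(X)\subseteq RK_0(X)$ coming from the \emph{reduced} AHSS, for which $\widetilde E^\infty_{0,0}=\widetilde E^\infty_{1,-1}=0$, so that $\widetilde E^\infty_{2,-2}$ is a genuine subgroup of $\widetilde{RK}_0(X)$ and a quotient of $\widetilde H_2(X)$. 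Naturality of all three maps in $X$ is immediate from naturality of the AHSS.

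Next I would pin down the two normalizations by reducing to test spaces. For $\beta_0^X$, naturality applied to the collapse $\varepsilon\colon X\to\mathrm{pt}$ and the inclusion of a point $\iota\colon\mathrm{pt}\to X$, together with $\beta_0^{\mathrm{pt}}=\mathrm{id}_\Z$ and connectedness of $X$, shows that $\beta_0^X$ is injective with image $\iota_*RK_0(\mathrm{pt})$, and then comparing generators (and fixing the sign via $\varepsilon_*$) gives $\beta_0^X[pt]_H=[pt]_K$. For $\beta_1^X$, I would first observe for $X=S^1$ that the edge homomorphism $H_1(S^1)=\Z\to RK_1(S^1)=\Z$ is the standard isomorphism sending the fundamental class, i.e.\ the Hurewicz image of $\mathrm{id}_{S^1}$, to $[S^1,\C,\mathrm{id}]$; then for a general $f\colon S^1\to X$, naturality gives $\beta_1^X([f])=f_*\beta_1^{S^1}[\mathrm{id}_{S^1}]=f_*[S^1,\C,\mathrm{id}]=[S^1,\C,f]$.

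Finally, suppose $\dim X\le 3$. The even-indexed differentials vanish for parity reasons (they shift the even/odd grading off the coefficients), and among the odd-indexed ones a dimension count leaves only $d_3\colon E_{3,q}\to E_{0,q+2}$, i.e.\ a map $H_3(X)\to H_0(X)$. This too vanishes: its image equals $\ker\big(H_0(X)\twoheadrightarrow E^\infty_{0,0}\big)$, and that surjection is injective because it is a factor of the (split) injective map $\beta_0^X$ established above. Hence the AHSS collapses at $E^2$ in the relevant range. The filtration of $RK_0(X)$ then gives a short exact sequence $0\to H_0(X)=E^\infty_{0,0}\to RK_0(X)\to E^\infty_{2,-2}=H_2(X)\to 0$, which is split by the point inclusion (whose image is precisely $F_0RK_0(X)=H_0(X)$, with retraction $\varepsilon_*$), so $\beta_0^X\oplus\beta_2^X\colon H_0(X)\oplus H_2(X)\to RK_0(X)$ is an isomorphism. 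Likewise the filtration of $RK_1(X)$ gives $0\to H_1(X)=E^\infty_{1,0}\to RK_1(X)\to E^\infty_{3,-2}=H_3(X)\to 0$; since $X$ is a $3$-complex, $H_3(X)=\ker(\partial_3\colon C_3(X)\to C_2(X))$ is a subgroup of a free abelian group, hence free, so this sequence splits. Composing with Bott periodicity $RK_1(X)\cong RK_3(X)$ yields an isomorphism $\beta_{od}^X\colon H_1(X)\oplus H_3(X)\to RK_3(X)$ restricting, under $RK_1\cong RK_3$, to $\beta_1^X$ on $H_1(X)$.

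I expect the main subtlety to be the vanishing of the single surviving differential $d_3\colon H_3(X)\to H_0(X)$. The naturality-of-augmentation argument above avoids the more classical route through instability of $\mathrm{Sq}^2$ and the $KU$ Postnikov $k$-invariant, but one has to be careful to use exactly the edge-map factorization of $\beta_0^X$ that detects the relevant kernel; a secondary, purely bookkeeping point is to match the geometric model of $RK_*$ and its Bott isomorphism with the AHSS formalism, which is routine given the model equivalences cited. An alternative I would keep in mind is Matthey's own construction in \cite{Matthey:2002aa}, which builds $\beta_2^X$ and the low-degree splittings directly by obstruction theory along the skeletal filtration and sidesteps the spectral sequence.
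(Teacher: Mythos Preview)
Your proposal is correct. The paper itself does not prove this proposition from scratch: it simply cites \cite{Matthey:2002aa}, pointing to Propositions~2.1, 3.2, 3.3, and 3.4 there for the existence, naturality, normalizations, and the low-dimensional isomorphisms respectively. Your argument is therefore genuinely different from (and more self-contained than) what appears in the paper.

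A brief comparison. Matthey's construction (as you note at the end) proceeds by obstruction theory along the skeletal filtration, building the maps $\beta_i^X$ by hand and checking compatibility directly; the paper inherits this wholesale. Your route packages the same skeletal information into the Atiyah--Hirzebruch spectral sequence and extracts the $\beta_i$ as edge maps, which makes naturality automatic and reduces the low-dimensional isomorphism to a degeneration statement. The one place your argument does real work beyond bookkeeping is the vanishing of $d_3\colon H_3(X)\to H_0(X)$, and your trick of reading it off from the split injectivity of $\beta_0^X$ (rather than identifying $d_3$ with a Steenrod operation) is clean and avoids importing any stable-homotopy facts. The trade-off is that Matthey's hands-on construction gives slightly finer control over the maps $\beta_2^X$ (e.g.\ explicit cycle-level descriptions), whereas your edge maps are characterized only up to the spectral-sequence formalism. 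For the purposes of this paper, which only needs existence, naturality, and the two normalizations stated, either approach suffices; the one point to keep straight is the sign in $\beta_1^{S^1}[S^1]=[S^1,\C,\mathrm{id}]$, which you should pin down once and for all by fixing orientation conventions.
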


\begin{proof}
The existence and naturality of $\beta_0^X$, plus the fact that $\beta_0^X[pt]_H=[pt]_K$ is contained in \cite[Proposition 2.1 (i)]{Matthey:2002aa}.  Existence and naturality of $\beta_1^X$ and $\beta_2^X$ is contained in \cite[Proposition 3.2]{Matthey:2002aa}.  The explicit description of $\beta_1^X$ in terms of loops is contained in \cite[Proposition 3.4]{Matthey:2002aa}.  Finally, the isomorphism statement follows from \cite[Proposition 2.1 (ii) and Proposition 3.3]{Matthey:2002aa}.
\end{proof}

We next need to recall a result due to Dadarlat \cite[Theorem 1.1]{Dadarlat:2022aa}.  To state it, we recall that $H_2(B\Gamma)$ identifies with the second group homology $H_2(\Gamma)$, essentially by definition.  Choose a (possibly infinite) presentation $\Gamma=\langle S\mid R\rangle$, so there is a short exact sequence 
$$
\{e\}\to \langle \langle R\rangle \rangle \to F_S\to \Gamma\to \{e\}
$$
where $F_S$ is the free group on $S$, and $\langle \langle R\rangle \rangle$ is the normal subgroup of $F_S$ generated by the words in $R$.  Hopf (see \cite[Theorem 5.3]{Brow:1982rt}) showed that 
$$
H_2(\Gamma)\cong\frac{\langle \langle R\rangle \rangle\cap [F,F]}{[F,\langle \langle R\rangle \rangle]},
$$
where $[A,B]$ denotes the subgroup generated by commutators $[a,b]$ with $a\in A$, $b\in B$ and $A$, $B$ subgroups of some ambient group $G$.  Following Dadarlat, an element $c$ of $H_2(B\Gamma)$ can therefore be represented as a product 
$$
\prod_{i=1}^g [a_i,b_i]
$$
where $a_i,b_i\in F_S$, and the image $\prod_{i=1}^g [\overline{a_i},\overline{b_i}]$ in $\Gamma$ is trivial.  Let $\phi:\Gamma\to \mathcal{B}(\C^n)$ be a quasi-representation.  Then as long as $\prod_{i=1}^g [\phi(\overline{a_i}),\phi(\overline{b_i})]$ is within distance one of the identity, the path 
\begin{equation}\label{path for wn}
[0,1]\owns t\mapsto \text{det}\Big((1-t)+t\prod_{i=1}^g [\phi(\overline{a_i}),\phi(\overline{b_i})]\Big)
\end{equation}
passes only through $\C^\times :=\{z\in \C\mid \neq 0\}$, and starts and ends at $1$.  

\begin{definition}\label{dad wn}
Let $\Gamma$ be a countable group, let $c\in H_2(B\Gamma)$ be given by 
$$
\prod_{i=1}^g [a_i,b_i]
$$
as above, and let $\phi:\Gamma\to M_n(\C)_1$ be a quasi-representation such that 
$$
\Bigg\|\prod_{i=1}^g [\phi(\overline{a_i}),\phi(\overline{b_i})]-1 \Bigg\|<1.
$$
We write $w(\phi,c)$ for the winding number of the path in line \eqref{path for wn} above.
\end{definition}

Dadarlat's theorem \cite[Theorem 1.1]{Dadarlat:2022aa} is more general than the statement we give below in that it works without the ucp assumption.  We state it like this just to fit it more cleanly into our framework.

\begin{theorem}[Dadarlat]\label{dad 2d}
Let $\Gamma$ be a countable discrete group, and let $c\in H_2(B\Gamma)$.  Then for any $K$-datum $P$ as in Definition \ref{fin set tot hom} that contains $\mu(\beta_2^{B\Gamma}(c))$ there exists a finite subset $S$ of $\Gamma$ and $\epsilon>0$ such that $(P,S,\epsilon)$ is a $\underline{K}_0$-triple in the sense of Definition \ref{k0 trip}, such that $w(\phi,c)$ makes sense, and with the following property.  

If $\phi:\Gamma\to M_n(\C)$ is a ucp $(S,\epsilon)$-representation in the sense of Definition \ref{quasi rep} and 
$$
\kappa^{(S,\epsilon)}_P:\text{Hom}_{\Lambda_0}(\underline{K}_0^\epsilon(S),\underline{K}_0(\C))\to \text{Hom}_{\Lambda_0^{(N)}}(P,\underline{K}_0(\C))
$$
is as in Lemma \ref{partial kappa} above, then 
$$
\kappa^{(S,\epsilon)}_P(\phi_*)(\mu(\beta_2^{B\Gamma}(c)))=w(c,\phi). \eqno \qed
$$
\end{theorem}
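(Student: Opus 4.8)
The plan is to exhibit an explicit idempotent over $C^*(\Gamma)$, built polynomially from the canonical unitaries and involving only the ``surface relation'' expressing $c$, that represents $\mu(\beta_2^{B\Gamma}(c))$, and then to recognise the effect of $\phi_*$ on it as the Exel--Loring round-off construction, whose index is by definition the winding number. First I would fix the data: represent $c$ via Hopf's formula by a word $r=\prod_{i=1}^{g}[a_i,b_i]\in F_S$ whose images $\bar a_i,\bar b_i\in\Gamma$ satisfy $\prod_{i=1}^{g}[\bar a_i,\bar b_i]=e$, and enlarge $S$ so that it contains all $\bar a_i^{\pm 1},\bar b_i^{\pm 1}$ and every subword of $r$ needed below. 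Taking $\epsilon$ small relative to the word length of $r$ and using parts \eqref{phi ext} and \eqref{approx mult dom} of Proposition~\ref{ucp extend} together with Remark~\ref{ucp alm mult}, any ucp $(S,\epsilon)$-representation $\phi$ extends to a ucp map on $C^*(\Gamma)$ that is multiplicative up to $O(\epsilon)$ on the relevant products, so $\|\prod_i[\phi(\bar a_i),\phi(\bar b_i)]-1\|<1$ and $w(c,\phi)$ is defined via the path in \eqref{path for wn}; shrinking $(S,\epsilon)$ further and invoking Lemma~\ref{partial kappa} (and the hypothesis $\mu(\beta_2^{B\Gamma}(c))\in P$) makes $(P,S,\epsilon)$ a $\underline{K}_0$-triple as in Definition~\ref{k0 trip}.

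Next I would describe $\mu(\beta_2^{B\Gamma}(c))\in K_0(C^*(\Gamma))$ concretely. The homomorphism $\psi\colon\Sigma_g\to\Gamma$ from the closed orientable genus-$g$ surface group $\Sigma_g=\langle x_1,y_1,\dots,x_g,y_g\mid\prod[x_i,y_i]\rangle$, given by $x_i\mapsto\bar a_i$, $y_i\mapsto\bar b_i$ (well defined because $\prod[\bar a_i,\bar b_i]=e$), sends the fundamental class $[\Sigma_g]\in H_2(\Sigma_g)$ to $c$; by naturality of $\beta_2$ (Proposition~\ref{bc low}) and of the assembly map $\mu$, the class $\mu(\beta_2^{B\Gamma}(c))$ is the image under $\psi_*\colon K_0(C^*(\Sigma_g))\to K_0(C^*(\Gamma))$ of $\mu(\beta_2^{\Sigma_g}([\Sigma_g]))$. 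On the surface, $B\Sigma_g=\Sigma_g$ is a two-complex, $\beta_2^{\Sigma_g}([\Sigma_g])$ is its $K$-homology fundamental class, and its image under assembly (Kasparov product with the Mishchenko bundle of \eqref{mis bun}) is represented, via the presentation two-cell of $\Sigma_g$, by an idempotent $e\in M_N(C^*(\Sigma_g))$ that is a fixed polynomial in $u_{x_i}^{\pm1},u_{y_i}^{\pm1}$ whose defining formula makes sense whenever $\prod[u_{x_i},u_{y_i}]=1$ holds (for $g=1$ this is Loring's Bott projection for $C(\T^2)$ \cite{Loring:1985ud}; the higher-genus version is in Dadarlat~\cite[Section~4]{Dadarlat:2011kx}). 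Pushing forward, $\psi_*(e)$ --- still denoted $e$ --- is the same polynomial in $u_{\bar a_i}^{\pm1},u_{\bar b_i}^{\pm1}$, and $\mu(\beta_2^{B\Gamma}(c))=[e]-[p_0]$, where $p_0\in M_N(\C)\subseteq M_N(C^*(\Gamma))$ is the scalar projection obtained by setting all unitaries equal to $1$; it has some rank $N_0$, and $[e]-[p_0]$ lies in $\widetilde K_0(C^*(\Gamma))$ by Corollary~\ref{red cor} and Lemma~\ref{kes same}.

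Then I would compute the pairing. By Lemmas~\ref{func lem} and~\ref{x eps lambda module}, once $(S,\epsilon)$ is chosen so that $e$ is $(S,\epsilon)$-compatible in the sense of Definition~\ref{x eps com} (so that $(\phi\otimes\mathrm{id}_{M_N})(e)$ has spectrum bounded away from $1/2$), one has $\kappa^{(S,\epsilon)}_P(\phi_*)(\mu(\beta_2^{B\Gamma}(c)))=[\chi((\phi\otimes\mathrm{id}_{M_N})(e))]-[p_0\otimes 1_n]\in K_0(\C)=\Z$, where $\chi$ is the characteristic function of $(1/2,\infty)$ and $n$ is the dimension of $\phi$. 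Since $e$ is a polynomial in $u_{\bar a_i}^{\pm1},u_{\bar b_i}^{\pm1}$, Remark~\ref{ucp alm mult} shows that $(\phi\otimes\mathrm{id}_{M_N})(e)$ is --- up to an error controlled by $\epsilon$ and the word length of $r$ --- the same polynomial evaluated at $\phi(\bar a_i)^{\pm1},\phi(\bar b_i)^{\pm1}$, i.e.\ exactly the Exel--Loring round-off of the higher-genus Bott projection applied to the almost-representation. The Exel--Loring determinant identity (\cite{Exel:1991aa} for $g=1$; its higher-genus form, as in Dadarlat~\cite[Section~4]{Dadarlat:2011kx} and \cite{Dadarlat:2022aa}) then identifies the resulting integer with the winding number of $t\mapsto\det\big((1-t)+t\prod_i[\phi(\bar a_i),\phi(\bar b_i)]\big)$, which is $w(c,\phi)$ by Definition~\ref{dad wn}. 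Combining the three steps gives the claim.

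The hardest part is the combination of the last two steps: writing down the explicit polynomial idempotent representing the assembled fundamental class of a higher-genus surface, and proving the higher-genus Exel--Loring identity relating the rank of its round-off to the winding number of the commutator-product determinant --- for $g=1$ this is classical, and for general $g$ it is precisely the content of Dadarlat's argument. One also has to be careful with the quantitative bookkeeping: how small $\epsilon$ must be relative to the word length of $r$ so that near-multiplicativity, near-unitarity of the $\phi(\bar a_i)$, the spectral gap of $(\phi\otimes\mathrm{id}_{M_N})(e)$ at $1/2$, and the $(S,\epsilon)$-compatibility of $e$ all hold at once; that the resulting integer is independent of the auxiliary choices ($N$, the precise polynomial form of $e$, and the rounding); and that the orientation/sign conventions on $\beta_2$, on $\mu$, and on the winding number match, so that one obtains $w(c,\phi)$ on the nose rather than $\pm w(c,\phi)$.
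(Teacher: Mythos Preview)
The paper does not prove this statement; it is attributed to Dadarlat \cite[Theorem 1.1]{Dadarlat:2022aa} and closed with a \qed{} immediately after the statement. Your proposal is a faithful outline of Dadarlat's argument: represent $c$ by a surface relation via Hopf's formula, pull back to the genus-$g$ surface group, use the explicit polynomial idempotent representing the assembled fundamental class (Loring's Bott projection for $g=1$, the higher-genus analogue from \cite[Section 4]{Dadarlat:2011kx} and \cite{Dadarlat:2022aa}), and identify the rank of its round-off under $\phi$ with the winding number via the Exel--Loring determinant computation. There is nothing to compare against in the present paper, and your sketch is correct.
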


Here is our main general theorem for groups with low-dimensional classifying space.  Recall the class $\mathcal{R}_q$ of representations from Example \ref{good ex}.

\begin{theorem}\label{low main}
Let $\Gamma$ be a finitely generated discrete group.   Assume that $\Gamma$ satisfies the following properties:
\begin{enumerate}[(i)]
\item the weak matricial LLP of Definition \ref{llp}
\item property FD from Example \ref{good ex};
\item $C^*(\Gamma)$ satisfies the approximate $K$-homology UCT of Definition \ref{kh auct};
\item the BCK assembly map of line \eqref{max bc} is an isomorphism.
\item the classifying space $B\Gamma$ is realized as a CW complex of dimension at most three. 
\end{enumerate} 

Let $\mathcal{S}$ be the collection of finite symmetric subsets of $\Gamma$.  Then for any $S\in \mathcal{S}$ and $\epsilon>0$ there exist $T\in \mathcal{S}$, $\delta>0$ and a finite subset $C$ of $H_2(B\Gamma)$ with the following property.

For any $(T,\delta)$ representation $\phi:\Gamma\to M_n(\C)$ the winding number invariants $w(\phi,c)$ make sense for $c\in C$.  Moreover, if $w(\phi,c)=0$ for all $c\in C$, then there are unitary representations $\theta:\Gamma\to M_k(\C)$ and $\pi:\Gamma\to M_{n+k}(\C)$ that factor through a finite quotient of $\Gamma$ such that  
$$
\|\phi(s)\oplus \theta(s)-\pi(s)\|<\epsilon
$$
for all $s\in S$.  
\end{theorem}

\begin{proof}
Let $(S,\epsilon)$ be given, and let $(P,T,\delta)$ be as in the conclusion of Theorem \ref{gp main}.  Let $\widetilde{P_n}$ be as in the conclusion of  Theorem \ref{match the} for $n=0$ or $n\geq 2$ and $n|N$ for the $K$-datum $P$.  Using Proposition \ref{bc low}, the assumption that the BCK assembly map is an isomorphism, and Corollary \ref{red cor}, there is a finite subset $C$ of $H_2(B\Gamma)$ such that $\mu(\beta_2^{B\Gamma}(C))$ is large enough to generate the $\widetilde{P}_0$, and so that its image in each $K_0(C^*(\Gamma);\Z/n)$ generates the groups $\widetilde{P_n}$ for $n\geq 2$.  Expanding $T$ and shrinking $\delta$ if necessary, we may assume that $w(c,\phi)$ is defined for all $c\in C$ whenever $\phi:\Gamma\to M_n(\C)$ is a $(T,\delta)$-representation.

Assume then that $\phi:\Gamma\to M_n(\C)$ is a $(T,\delta)$-representation such that the winding number invariants $w(\phi,c)$ are zero for all $c\in C$.    Theorem \ref{dad 2d} implies that with $\kappa^{(S,\epsilon)}_P$ as in Lemma \ref{partial kappa}, we have that $\kappa^{(S,\epsilon)}_P(\phi_*)$ vanishes on all $\widetilde{P_n}$.  Note that Proposition \ref{bc low} implies that $RK_1(B\Gamma)\cong H_1(B\Gamma)\oplus H_3(B\Gamma)$.  As $B\Gamma$ is at most three-dimensional, $H_3(B\Gamma)$ is torsion free, and so all torsion in $RK_1(B\Gamma)$ comes from $\beta_1^{B\Gamma}(H_1(B\Gamma))$.  Therefore by Proposition \ref{bc low} again, every torsion class in $RK_1(B\Gamma)$ is of the form $[S^1,\C,f]$ for some continuous map $f:S^1\to B\Gamma$.  Hence by Theorem \ref{match the} there is a character $\sigma:\Gamma\to \C$ with torsion image and a representation $\pi_0:\Gamma\to M_{n+1}(\C)$ such that 
$$
\kappa^{(S,\epsilon)}_P(\phi_*)+\sigma_*=\pi_*
$$
as elements of $\text{Hom}_{\Lambda_0^{(N)}}(P,\underline{K_0}(\C))$.  Note that as $\Gamma$ is finitely generated, the image of $\sigma$ is in fact finite.

We may now apply Theorem \ref{gp main} to conclude that there exists a representation $\theta_0:\Gamma\to M_{k_0}(\C)$ with finite image and a unitary $u\in M_{n+k_0+1}(\C)$ such that 
$$
\|u(\phi(s)\oplus \sigma(s)\oplus \theta_0(s))u^*-\pi_0(s)\oplus \theta_0(s)\|<\epsilon
$$
for all $s\in S$.  Setting $k=k_0+1$, $\theta(g)=\sigma(g)\oplus \theta_0(g)$, and $\pi(g)=u^*(\pi_0(g)\oplus \theta_0(g))u$, we are done.  
\end{proof}

\subsection{Free-by-cyclic groups}

In this subsection we discuss \emph{free-by-cyclic} groups, i.e.\ groups of the form $F\rtimes_\phi \Z$ where $F$ is a finitely generated free group and $\Z$ acts on $F_n$ by an automorphism $\phi$.  This class of groups is large and well-studied.  Note that it contains $\Z^2$ and the Klein bottle group $\Z\rtimes \Z$ as special cases, and that these (and the degenerate case $\Gamma=\Z$) are the only amenable examples.

The following result summarizes well-known facts about free-by-cyclic groups.

\begin{proposition}\label{fbc props}
Let $\Gamma=F\rtimes_\phi \Z$ be a free-by-cyclic group where $F$ is a finitely generated free group, and $\phi$ is an automorphism of $F$.  Then $\Gamma$ is UCT, FD, LLP, and the BCK assembly map for $\Gamma$ is an isomorphism.  

Moreover, $\Gamma$ admits a finite CW complex $B\Gamma$ of dimension two for its classifying space, and  $H_2(B\Gamma)$ is a free-abelian group with rank equal to the multiplicity of the eigenvalue one for the map
$$
\phi_*:H_1(F;\C)\to H_1(F;\C)
$$
induced by $\phi$ on homology with complex coefficients.
\end{proposition}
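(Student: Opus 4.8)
The plan is to prove Proposition \ref{fbc props} in two blocks: first the four ``acronym''-type properties (UCT, RFD, LLP, and the BCK assembly isomorphism), then the concrete claims about the classifying space and $H_2$.

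For the first block, I would cite the literature systematically, as almost nothing here is genuinely new. For the \textbf{UCT}: since $\Gamma=F\rtimes_\phi\Z$ is a torsion-free one-relator\footnote{Strictly, $F\rtimes_\phi\Z$ is one-relator only when $F$ has rank one; in general one should instead invoke that such $\Gamma$ satisfies Tu's condition (BC$'$), or is a-T-menable as it acts properly on a product of trees / an $\mathbb{R}$-tree.} group, or more robustly because $\Gamma$ is a-T-menable (it acts properly isometrically on a finite-dimensional CAT(0) cube complex, being an ascending HNN extension of a free group only in special cases --- the safest route is to note $\Gamma$ has a free subgroup of index... no: better, cite that $\Gamma$ satisfies Tu's (BC$'$) as in Remark \ref{uct rem}), Higson--Kasparov \cite{Higson:2001eb} together with Tu \cite{Tu:1999bq} gives that $C^*(\Gamma)$ is UCT. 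For \textbf{RFD}: this is exactly property FD of Lubotzky--Shalom \cite[Theorem 2.8 (1)]{Lubotzky:2004xw} for free-by-cyclic groups, as recalled in Remark \ref{rfd rem}; property FD implies RFD. For \textbf{LLP}: $\Gamma$ is a semidirect product of the amenable group $\Z$ with... no, it is $F\rtimes_\phi\Z$, a semidirect product of the \emph{non-amenable} $F$ with the \emph{amenable} $\Z$; by the permanence result cited in Remark \ref{llp rem} (closure of the LLP under semidirect products with amenable groups, via \cite[Theorem 7.2 and Proposition 5.10]{Buss:2022aa}), and since free groups have the LLP (their group $C^*$-algebras are well-known to have the LLP --- indeed $C^*(F_n)$ is even the universal example, cf. \cite{Kirchberg:1993aa}), $C^*(\Gamma)$ has the LLP. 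For the \textbf{BCK assembly isomorphism}: as discussed in Remark \ref{bc rem}, it suffices that $\Gamma$ is torsion-free, $K$-amenable, and has Baum--Connes; all three hold because $\Gamma$ satisfies Tu's (BC$'$) \cite[Definition 2.5 and Lemma 2.6]{Tu:1999aa} (one-relator groups satisfy (BC$'$) by \cite[Theorem 4.5]{Tu:1999aa}, and more generally $F\rtimes_\phi\Z$ does by the permanence of (BC$'$) under extensions), and $\Gamma$ is torsion-free.

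For the second block, the classifying space: $F$ has a classifying space which is a finite wedge of circles $\bigvee^n S^1$ (a one-dimensional complex), and $B\Gamma$ for $\Gamma=F\rtimes_\phi\Z$ can be built as the mapping torus of a homotopy self-equivalence of $\bigvee^n S^1$ realizing $\phi$, giving a finite two-dimensional CW complex. Equivalently, from the short exact sequence $1\to F\to\Gamma\to\Z\to1$ there is a fibration $B\Gamma\to B\Z=S^1$ with fiber $BF$, and the total space is a finite $2$-complex. For the homology computation I would use the Wang exact sequence associated to this fibration (or equivalently the long exact sequence of the extension): it reads
\begin{equation}\label{wang}
\cdots\to H_i(F)\xrightarrow{\phi_*-\mathrm{id}} H_i(F)\to H_i(\Gamma)\to H_{i-1}(F)\xrightarrow{\phi_*-\mathrm{id}} H_{i-1}(F)\to\cdots
\end{equation}
Since $H_i(F)=0$ for $i\geq2$, the relevant segment is
\begin{equation}\label{wang2}
0\to H_2(\Gamma)\to H_1(F)\xrightarrow{\phi_*-\mathrm{id}} H_1(F),
\end{equation}
so $H_2(\Gamma)\cong\ker(\phi_*-\mathrm{id}: H_1(F)\to H_1(F))$, where $H_1(F)\cong\Z^n$. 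Thus $H_2(\Gamma)$ is free abelian (being a subgroup of $\Z^n$), and its rank equals the rank of the kernel of $\phi_*-\mathrm{id}$ over $\Z$, which by flatness of $\Q$ (or $\C$) over $\Z$ equals $\dim_\C\ker(\phi_*-\mathrm{id}: H_1(F;\C)\to H_1(F;\C))$, i.e. the multiplicity of the eigenvalue $1$ of $\phi_*$ acting on $H_1(F;\C)$. I should also note $H_2(B\Gamma)=H_2(\Gamma)$ canonically since $B\Gamma$ is a model for the classifying space.

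The main obstacle here is not any single computation --- the Wang sequence argument is routine --- but rather pinning down clean, citable statements for the LLP and the BCK isomorphism for the \emph{full} range of free-by-cyclic groups (not just one-relator ones). The cleanest uniform mechanism is: $F\rtimes_\phi\Z$ is a semidirect product of a free group with $\Z$, so (i) for the LLP invoke closure under semidirect products by amenable groups plus LLP for $C^*(F_n)$; (ii) for UCT and BCK invoke that the class of groups satisfying Tu's (BC$'$) (respectively, a-T-menability) is closed under extensions with free kernel and amenable quotient, together with the fact that free groups and $\Z$ satisfy these. I would make sure each of these permanence statements is genuinely available in the references cited in Remarks \ref{llp rem}, \ref{uct rem}, \ref{bc rem}, and flag explicitly (as the paper already does elsewhere) that the one-relator case $n=1$ recovers the torus and Klein bottle groups.
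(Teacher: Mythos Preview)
Your proposal is essentially correct and, for the second block (classifying space and $H_2$), essentially identical to the paper: both build $B\Gamma$ as the mapping torus of a cellular self-map of a wedge of circles realizing $\phi$, check asphericity via the long exact sequence for the fibration over $S^1$, and read off $H_2(\Gamma)\cong\ker(\mathrm{id}-\phi_*\colon H_1(F)\to H_1(F))$ from the mapping-torus long exact sequence (your ``Wang sequence'' is the same thing).

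For the first block, the paper's route is cleaner than yours. You oscillate between one-relator arguments (which, as you correctly note, do not cover general $F\rtimes_\phi\Z$), a-T-menability, and Tu's condition (BC$'$) with permanence results, and you end up leaning on (BC$'$) permanence statements that you are not sure are cleanly citable. The paper instead simply cites that $F\rtimes_\phi\Z$ has the Haagerup property (i.e.\ is a-T-menable) --- this is \cite[Example 6.1.6]{Cherix:2001fk} --- and then both the UCT and the BCK isomorphism follow immediately from the Higson--Kasparov machinery as recorded in Remarks~\ref{uct rem} and~\ref{bc rem}. Your instinct to reach for a-T-menability was right; you just needed the reference rather than retreating to (BC$'$). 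For RFD and LLP your citations (Lubotzky--Shalom for property FD, and closure of LLP under semidirect products by amenable groups together with LLP for free groups) match the paper's.
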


\begin{proof}
The group $\Gamma$ is clearly torsion-free.  It has the Haagerup property by \cite[Example 6.1.6]{Cherix:2001fk}.  As discussed in Remarks \ref{uct rem} and \ref{bc rem} respectively, the Haagerup property implies that $\Gamma$ is UCT and that the BCK assembly map is an isomorphism.  The FD and LLP properties are covered by Remarks \ref{rfd rem} and \ref{llp rem} respectively.  

Let now $BF$ be the classifying space for $F$, which we realize in the usual way as a wedge of $n$ circles where $n$ the rank of $F$.  Let $\phi:BF\to BF$ be a map that induces $\phi$ on fundamental groups (such exists and is unique up to homotopy).  We may assume that $\phi$ is cellular (see \cite[Theorem 4.8]{Hatcher:2002ud}), whence the mapping torus defined by 
$$
M_\phi:=(BF\times [0,1])/((x,0)\sim (1,\phi(x)))
$$
can be given the structure of a finite, two-dimensional CW complex.  It moreover admits a fibration $BF\to M_\phi\to S^1$, and is thus aspherical by the long exact sequence of homotopy groups for a fibration (see for example \cite[Theorem 4.41]{Hatcher:2002ud}).  The fundamental group $\pi_1(M_\phi)$ agrees with $\Gamma$ by the Seifert-van Kampen theorem, and therefore $M_\phi$ is the requited model for $B\Gamma$.  The statement on $H_2$ follows from the long exact sequence 
$$
0\to H_2(M_\phi)\to H_1(BF)\xrightarrow{\text{id}-\phi_*} H_1(BF) \to H_1(M_\phi) \to H_0(BF)\to 0
$$
for the homology of the mapping torus as in \cite[Example 2.48]{Hatcher:2002ud}.  
\end{proof}

The next result follows directly from Proposition \ref{fbc props} and Theorem \ref{low main}.  For the definition of $\mathcal{R}_q$-stable, see Definition \ref{ws} and Example \ref{good ex}.

\begin{theorem}\label{fbc the}
Let $\Gamma=F\rtimes_\phi \Z$ be a free-by cyclic group.  Let $n$ be the rank of $F$, and let $\phi_*:\R^n\to \R^n$ be the map induced by $\phi$ on the first homology group.  Let $m$ be the rank of one as an eigenvalue of $\phi_*$, and let $c_1,...,c_m$ be a corresponding basis for $H_2(B\Gamma)$.  Then $\Gamma$ is $\mathcal{R}_q$-stable, conditional on vanishing of the winding number invariants $w(c_i,\phi)$ for all $i\in \{1,...,m\}$.  

In particular, if $\phi_*$ does not have one as as an eigenvalue, then $\Gamma$ is $\mathcal{R}_q$-stable. \qed
\end{theorem}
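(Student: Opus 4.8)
The plan is to deduce Theorem \ref{fbc the} directly by feeding Proposition \ref{fbc props} into Theorem \ref{low main}. First I would check that all hypotheses of Theorem \ref{low main} are met. Proposition \ref{fbc props} tells us that for a free-by-cyclic group $\Gamma = F\rtimes_\phi \Z$, the maximal group $C^*$-algebra $C^*(\Gamma)$ is RFD and UCT (hence satisfies the approximate $K$-homology UCT of Definition \ref{kh auct} by Theorem \ref{uct the}), that the BCK assembly map of line \eqref{max bc} is an isomorphism, and that $\Gamma$ admits a finite, two-dimensional CW-complex model for $B\Gamma$. Since $\dim B\Gamma \le 2 \le 3$, the dimension hypothesis of Theorem \ref{low main} holds; moreover $H_3(B\Gamma) = 0$ for a two-dimensional complex, so in particular $H_3(B\Gamma)$ is torsion free. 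Proposition \ref{fbc props} also gives the LLP for $\Gamma$, which is what allows us to drop the ``ucp'' from the conclusion.

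Next I would take $\mathcal{S}$ to be the collection of all finite symmetric subsets of $\Gamma$ (this is closed under finite unions and its union is all of $\Gamma$, so it is an allowed choice), and apply Theorem \ref{low main} in the LLP case. This produces, for any finite $S\subseteq \Gamma$ and $\epsilon>0$, a finite $T$, a $\delta>0$, and a finite subset $C\subseteq H_2(B\Gamma)$ such that every $(T,\delta)$-representation $\phi:\Gamma\to M_n(\C)$ with $w(\phi,c)=0$ for all $c\in C$ is, after a block sum with an auxiliary representation $\theta$, within $\epsilon$ of an honest representation on $S$. This is precisely the assertion that $\Gamma$ is weakly stable, conditional on vanishing of the winding number invariants attached to the classes in $C$ (in the sense of Definition \ref{ws}).

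The one genuinely substantive point to address is identifying the invariants: the statement of Theorem \ref{fbc the} refers to winding number invariants $w(c_i,\phi)$ for a \emph{basis} $c_1,\dots,c_m$ of $H_2(B\Gamma)$, with $m$ equal to the multiplicity of the eigenvalue $1$ of $\phi_*$, whereas Theorem \ref{low main} only hands us \emph{some} finite subset $C$ of $H_2(B\Gamma)$. Here I would use the second part of Proposition \ref{fbc props}, which says $H_2(B\Gamma)$ is free abelian of rank $m$. Since the winding number invariant $w(\cdot,\phi)$ is additive in the first variable (it is, up to the Dadarlat identification of Theorem \ref{dad 2d}, pairing $\phi_*$ against $\mu(\beta_2^{B\Gamma}(c))$, and both $\mu$ and $\beta_2^{B\Gamma}$ are homomorphisms), the vanishing of $w(\phi,c)$ for all $c$ in a finite set $C$ is equivalent to vanishing on the subgroup $\langle C\rangle$ generated by $C$, and in particular is implied by vanishing on any generating set of $H_2(B\Gamma)$ — such as the chosen basis $c_1,\dots,c_m$. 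So after possibly enlarging $C$ to contain $c_1,\dots,c_m$ (harmless, since Theorem \ref{low main} allows any finite $C$ containing the one it produces — one re-runs the argument with $P$ enlarged to contain the corresponding $\mu(\beta_2^{B\Gamma}(c_i))$, which is exactly what Theorem \ref{dad 2d} permits), the hypothesis ``$w(\phi,c_i)=0$ for all $i$'' forces ``$w(\phi,c)=0$ for all $c\in C$'', and the conclusion of Theorem \ref{low main} applies.

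I expect the main obstacle to be purely bookkeeping rather than conceptual: making sure the finite $K$-datum $P$, the finite set $Q\subseteq \widetilde{K}_0(C^*(\Gamma))$ from Theorem \ref{match the}, and the finite set $C\subseteq H_2(B\Gamma)$ are chosen consistently so that $\mu(\beta_2^{B\Gamma}(C)) = Q$ and so that $C$ simultaneously spans $H_2(B\Gamma)$ — but since $H_2(B\Gamma)$ is finitely generated (indeed free of rank $m$), one can simply take $C$ to be a finite generating set containing a preimage of $Q$ together with the basis $c_1,\dots,c_m$, and the argument of Theorem \ref{low main} goes through verbatim with this enlarged $C$. The final sentence of Theorem \ref{fbc the} (``if $\phi_*$ does not have one as an eigenvalue'') is then immediate: in that case $m=0$, so $H_2(B\Gamma)=0$, the set $C$ is empty, there are no winding number conditions to impose, and $\Gamma$ is weakly stable outright.
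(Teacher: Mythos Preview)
Your proposal is correct and is exactly the approach the paper takes: the paper states Theorem \ref{fbc the} with a \qed and no further argument, saying only that it ``follows directly from Proposition \ref{fbc props} and Theorem \ref{low main}.'' Your write-up simply makes explicit the bookkeeping the paper omits --- verifying the hypotheses of Theorem \ref{low main} from Proposition \ref{fbc props}, invoking the LLP clause to drop ``ucp'', and using additivity of the winding number (via Theorem \ref{dad 2d}) to pass from the finite set $C$ produced by Theorem \ref{low main} to the fixed basis $c_1,\dots,c_m$ of the free group $H_2(B\Gamma)$.
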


We do not know if $\Gamma$ as in Theorem \ref{fbc the} is (conditionally) stable.  This seems an interesting question.  It also seems plausible that there is a more elementary proof of Theorem \ref{fbc the}, but this is not immediately obvious to us.

\subsection{One relator groups}

A finitely generated group $\Gamma$ is called a \emph{one relator group} if it admits a presentation of the form $\Gamma=\langle S \mid r\rangle$ with $S$ finite and where $r$ is a single word.  Important examples include the fundamental group of an orientable surface of genus $g$ 
\begin{equation}\label{os gp}
\Sigma_g:=\Bigg\langle a_1,...,a_g,b_1,...,b_g\mid \prod_{i=1}^g [a_i,b_i]\Bigg\rangle,
\end{equation}
the fundamental group of a non-orientable surface of (non-orientable) genus $g$
\begin{equation}\label{nos gp}
N_g:=\Bigg\langle a_1,...,a_{g}\mid \prod_{i=1}^{g} a_i^2\Bigg\rangle
\end{equation}
and the Baumslag-Solitar groups 
\begin{equation}\label{bs gp}
BS(m,n):=\langle a,b\mid ba^mb^{-1}=a^n\rangle.
\end{equation}
All one-relator groups other than $BS(1,m)$ for some $m$, and (the degenerate case of) cyclic groups are non-amenable: see \cite[page 338]{Ceccherini-Silbertstein:1997aa}.  One-relator groups always satisfy the UCT and the BCK assembly map is always an isomorphism in the torsion-free case as noted in Remarks \ref{uct rem} and \ref{bc rem} respectively.  There are many examples of one-relator groups with the LLP and property FD; whether they all have the LLP is open in general, but there are known examples without property FD: see Remarks \ref{rfd rem} and \ref{llp rem} .

We need to recall some information about homology and classifying spaces of one relator groups: see for example \cite[Section II.4, example 3]{Brow:1982rt} for more information on what follows.  Let $\Gamma=\langle S\mid r\rangle$ be a one-relator group.  Then $\Gamma$ is torsion free if and only if $r$ is not of the form $u^n$ for some word $u$ and $n\geq 2$.  Moreover, if $\Gamma$ is torsion-free then the presentation two-complex (i.e.\ the two-dimensional complex constructed for example in \cite[Corollary 1.28]{Hatcher:2002ud}) is a $B\Gamma$. The homology of $\Gamma$ is (therefore) given by $H_0(B\Gamma)=\Z$, $H_1(B\Gamma)=\Gamma_{ab}$ the abelianization of $\Gamma$, and 
$$
H_2(B\Gamma)=\left\{\begin{array}{ll} \Z, & r\in [\Gamma,\Gamma] \\ 0, & \text{otherwise}\end{array}\right.
$$
(here $[\Gamma,\Gamma]$ is the commutator subgroup of $\Gamma$).  All the higher homology vanishes.  Thus we get the following result from Theorem \ref{low main}.  For the definition of $\mathcal{R}_q$-stable, see Definition \ref{ws} and Example \ref{good ex}.

\begin{theorem}\label{one r}
Let $\Gamma=\langle S\mid r\rangle$ be a one-relator group, and $F_S$ be the free group on the generators of $S$.   Assume moreover that $\Gamma$ is torsion-free, LLP, and FD.  Then the following hold.
\begin{enumerate}[(i)]
\item \label{one r i} If $r$ is not in the commutator subgroup of $F_S$, then $\Gamma$ is $\mathcal{R}_q$-stable.
\item \label{one r ii} If $r$ is in the commutator subgroup of $F_S$, let $w(r,\phi)$ be the associated winding number invariant of a quasi-representation $\phi$ as in Definition \ref{dad wn}.  Then $\Gamma$ is $\mathcal{R}_q$-stable, conditional on vanishing of $w(r,\phi)$.   \qed
\end{enumerate}
\end{theorem}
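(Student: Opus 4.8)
The plan is to read Theorem~\ref{one r} off as a special case of Theorem~\ref{low main}. Fix a torsion-free, RFD one-relator group $\Gamma = \langle S\mid r\rangle$, and let $F_S$ be the free group on $S$.

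First I would check the hypotheses of Theorem~\ref{low main}. The group is RFD by assumption. Since every one-relator group satisfies Tu's condition (BC$'$) by \cite[Theorem 4.5]{Tu:1999aa}, $C^*(\Gamma)$ satisfies the UCT (Remark~\ref{uct rem}), hence the approximate $K$-homology UCT of Definition~\ref{kh auct} by Theorem~\ref{uct the}; and, $\Gamma$ being torsion-free, the BCK assembly map of line~\eqref{max bc} is an isomorphism by the discussion in Remark~\ref{bc rem} (for instance via \cite[Theorems 3 and 4]{Beguin:2999aa}, or via $K$-amenability together with (BC$'$)). As recalled in the discussion preceding the theorem, torsion-freeness forces $r$ not to be a proper power, so the presentation two-complex is a model for $B\Gamma$; in particular $B\Gamma$ is a CW complex of dimension two (hence at most three), and $H_3(B\Gamma)=0$ is torsion free. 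Thus Theorem~\ref{low main} applies, and I would take the family $\mathcal{S}$ there to be the collection of all finite symmetric subsets of $\Gamma$ (any finite subset lies inside a finite symmetric one, which suffices to obtain weak ucp stability in the sense of Definition~\ref{ws}), and likewise invoke the LLP clause of Theorem~\ref{low main} for the final sentence.

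Next I would unwind the homology. By Hopf's formula (as recalled around Definition~\ref{dad wn}), $H_2(B\Gamma)=H_2(\Gamma)=(\langle\langle r\rangle\rangle\cap[F_S,F_S])/[F_S,\langle\langle r\rangle\rangle]$. In case~\eqref{one r i}, $r\notin[F_S,F_S]$, so $H_2(B\Gamma)=0$; the finite subset $C\subseteq H_2(B\Gamma)$ supplied by Theorem~\ref{low main} is then trivial, the winding-number hypothesis is vacuous, and $\Gamma$ is weakly ucp stable. In case~\eqref{one r ii}, $r\in[F_S,F_S]$, so $H_2(B\Gamma)\cong\Z$, generated by the class $c_0$ of $r$ (written as a product of commutators in $F_S$ whose image in $\Gamma$ is trivial); by construction $w(r,\phi)$ is precisely $w(c_0,\phi)$ in the notation of Definition~\ref{dad wn}. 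For a quasi-representation $\phi$ sufficiently close to a representation, $c\mapsto w(c,\phi)$ is a homomorphism $H_2(B\Gamma)\to\Z$: this is visible from Theorem~\ref{dad 2d}, which identifies it with $\kappa^{(S,\epsilon)}_P(\phi_*)\circ\mu\circ\beta_2^{B\Gamma}$, a composite of group homomorphisms. Since $C\subseteq\Z c_0$, each $c\in C$ has the form $kc_0$ with $w(kc_0,\phi)=k\,w(r,\phi)$, so $w(r,\phi)=0$ forces $w(c,\phi)=0$ for all $c\in C$; hence Theorem~\ref{low main} yields weak ucp stability conditional on $w(r,\phi)=0$. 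The LLP sentence follows verbatim from the corresponding clause of Theorem~\ref{low main}.

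The only step that is not pure bookkeeping is the identification of $B\Gamma$ with the aspherical two-dimensional presentation complex and the resulting vanishing of $H_3$ — this is the classical fact about torsion-free one-relator groups that the excerpt already recalls; with it in hand, everything else is assembling cited properties of one-relator groups (UCT, the BCK isomorphism, Hopf's formula) and quoting Theorems~\ref{low main} and~\ref{dad 2d}.
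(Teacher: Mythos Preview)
Your proposal is correct and follows essentially the same route as the paper: verify the hypotheses of Theorem~\ref{low main} using the known facts about one-relator groups (UCT and BCK isomorphism from Remarks~\ref{uct rem} and~\ref{bc rem}, two-dimensional aspherical presentation complex, Hopf's computation of $H_2$), then read off the two cases from the structure of $H_2(B\Gamma)$. The paper's own proof is a two-sentence appeal to Theorem~\ref{low main}; your version simply spells out the bookkeeping, including the point that in case~\eqref{one r ii} the finite set $C\subseteq H_2(B\Gamma)\cong\Z$ consists of multiples of the generator $c_0=[r]$, so vanishing of $w(r,\phi)$ forces vanishing on all of $C$ via the additivity coming from Theorem~\ref{dad 2d}.
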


Let us discuss some specific cases where the groups in question are known to be RFD in more detail.

\begin{example}\label{surf ex}
Perhaps the most interesting case where Theorem \ref{one r} applies is to fundamental groups of surfaces, i.e.\ the groups $\Sigma_g$ and $N_g$ from lines \eqref{os gp} and \eqref{nos gp} above.  There have property FD by \cite[Theorem 2.8 (2)]{Lubotzky:2004xw}, and are LLP by \cite[Remark 4.11]{Fournier-Facio:2026aa}.  Hence Theorem \ref{one r} applies: this is Theorem \ref{intro surf the} from the introduction.  This in some sense gives a complement to the non-stability result of \cite[Theorem 4.8]{Eilers:2018ab}, and the discussion of \cite[Section 4]{Dadarlat:2011kx}.
\end{example} 

\begin{example}
Let us make some comments about the Baumslag-Solitar groups $BS(n,m)$ of line \eqref{bs gp}.  

First, note that these groups are rarely (R)FD: in fact, they are not even residually finite unless $|m|=|n|$, or one of $|m|$ or $|n|$ equals one \cite[Theorem C]{Meskin:1972aa}.  On the other hand, $BS(n,n)$ is isomorphic to a free-by-cyclic group (see \cite[Example 4.23]{Fournier-Facio:2026aa}), and so RFD and LLP as already observed.  On the other hand, the subgroup of $BS(n,-n)=\langle a,b\mid ab^na^{-1}b^n\rangle$ generated by $a^2$ and $b$ has index two, and is isomorphic to $BS(n,n)$.  As both the LLP and property FD pass to finite index supergroups (see \cite[Corollary 4.1]{Fournier-Facio:2026aa} and \cite[Corollary 2.5]{Lubotzky:2004xw} respectively), $BS(n,-n)$ is LLP and has property FD too 

%On the other hand, $BS(n,n)$ can be written as $\Z^2*_\Z \Z$, where the amalgamation subgroup $\Z$ is included in $\Z^2=\Z\oplus \Z$ as the first summand, and is included in the second summand as $n\Z$.  
In the case of $BS(n,n)$ the defining relation is a commutator, and we are in the situation of Theorem \ref{one r} part \eqref{one r ii}.  Thus $BS(n,n)$ is $\mathcal{R}_q$-stable, conditional on vanishing of the winding number invariant $w(ab^na^{-1}b^{-n},\cdot)$ of Definition \ref{dad wn}.  On the other hand, in the case of $BS(n,-n)$, the defining relation is not a commutator, and we are in the situation of Theorem \ref{one r} part \eqref{one r i}.  Thus $BS(n,-n)$ is $\mathcal{R}_q$-stable.  These results partly generalize the case of the fundamental groups of the torus and the Klein bottle covered in Theorem \ref{torus and klein}, which are $BS(1,1)$ and $BS(1,-1)$ respectively.  

Note that the case of $BS(n,n)$ was discussed in \cite[Theorem 4.10]{Eilers:2018ab} where it is shown that these groups are not stable due to the non-vanishing of the associated winding number invariant; our result here provides a sort of converse.  Compare also \cite[Question 2 in Section 6]{Eilers:2018ab}.

The groups $BS(1,n)$ are amenable and residually finite, so our results also apply in that case: as the defining relation is not in the commutator subgroup, these groups are $\mathcal{R}_q$-stable.
\end{example}

\begin{remark}\label{o r summary}
In general, many one-relator groups are known to be virtually free-by-cyclic, i.e.\ to have a finite index free-by-cyclic subgroup; this is also true generically in a reasonable sense.  This was studied in detail by Kielak and Linton \cite{Kielak:2024aa}, and known results are summarized in \cite[Sections 4.3 and 6.2]{Fournier-Facio:2026aa}.  As both the LLP and property FD pass to finite index supergroups (see \cite[Corollary 4.1]{Fournier-Facio:2026aa} and \cite[Corollary 2.5]{Lubotzky:2004xw} respectively), torsion-free one-relator groups in this class satisfy the hypotheses of Theorem \ref{one r}.
\end{remark}

%\begin{example}
%In \cite[Theorem 11]{Hadwin:2018aa} Hadwin and Shulman show that if $\Gamma$ is a one-relator group with non-trivial center and with abelianization $\Gamma_{ab}$ that is not free abelian of rank two, then $\Gamma$ is RFD.  This gives another class of groups to which Theorem \ref{one r} may apply, at least once one has computed the homology: this should be possible (for example) using the explicit presentations in \cite{Pietrowski:1974aa} and Smith normal form.  We leave the details to the interested reader.
%\end{example}

\subsection{Three manifold groups}

The following theorem specializes to Theorem \ref{intro 3man the} from the introduction.  For the definition of $\mathcal{R}_q$-stable, see Definition \ref{ws} and Example \ref{good ex}.

\begin{theorem}\label{three man}
Let $\Gamma$ be the fundamental group of a compact, connected, aspherical three-manifold $M$, possibly with boundary.  Write $H_2(M)=F\oplus T$ where $F$ is a finitely generated free abelian group and $T$ is a torsion group.  Let $c_1,...,c_m$ be a basis for $F$.  Then $\Gamma$ is $\mathcal{R}_q$-stable, conditional on vanishing of $w(c_i,\phi)$ for all $i$.  

In particular, if $H_2(M)$ is torsion, then $\Gamma$ is $\mathcal{R}_q$-stable. 
\end{theorem}

\begin{proof}
We first note that $\Gamma$ has the LLP by \cite[Example 4.10]{Fournier-Facio:2026aa} and property FD by \cite[Theorem 6.1]{Fournier-Facio:2026aa}.  It is moreover a-T-menable (see \cite[Remark 4.4 and Example 4.10]{Fournier-Facio:2026aa}) whence UCT, and the BCK assembly map is an isomorphism.  As $M$ is aspherical, we may take $M$ to be a classifying space for $\Gamma$; it admits the structure of a CW complex of dimension three.

%To see that $\Gamma$ has classifying space of dimension at most three, note that by \cite[Proposition 2.2]{Kielak:2024ab} there is a finite-index subgroup $\Lambda < \Gamma$ that is a free product of a free group and finitely many fundamental groups of compact aspherical three-manifolds.  Hence $\Lambda$ has cohomological dimension at most three.  It then follows from Serre's Theorem \cite[Th{\'e}or{\`e}me 1.7.1]{Serre:1971aa} that $\Gamma$ also has cohomological dimension at most three.  Finally, note that having three-dimensional classifying space is equivalent to the cohomological dimension being at most three by the Eilenberg-Ganea theorem \cite{Eilenberg:1957aa} (see also \cite[Theorem VIII.7.1]{Brow:1982rt} for a textbook proof of what we need).

The result now follows from Theorem \ref{low main}.
\end{proof}

The following remark gives more detail on the form $H_2(M)$; it was inspired by a comment of Francesco Fournier-Facio.

\begin{remark}
Assume for simplicity that $M$ is a closed, connected, aspherical three manifold with fundamental group $\Gamma$.  Let $r$ be the rank of the abelianization of $\Gamma$, i.e.\ the first Betti number of $M$.  As in \cite[Exercise 24 on page 259]{Hatcher:2002ud}\footnote{In the orientable case, this can be deduced by Poincar\'{e} duality; in the non-orientable case, it can be deduced from \cite[Corollary 2.38]{Hatcher:2002ud}, Poincar\'{e} duality for $\Z/2$-coefficients, and the universal coefficient theorems for homology and cohomology.}, we have $H_2(M)\cong \Z^r$ if $M$ is orientable, and $H_2(M)\cong \Z^{r-1}\oplus \Z/2$ if $M$ is non-orientable.  Hence if $M$ is closed, the number $m$ appearing in Theorem \ref{three man} is the first Betti number of $M$ in the orientable case, and one less than this in the non-orientable case.  In particular, $\Gamma$ is $\mathcal{R}_q$-stable if\footnote{And only if: this follows for example from the main result of \cite{Dadarlat:384aa}.} either $M$ is orientable and has  first Betti number zero, or is non-orientable and has first Betti number one.
\end{remark}

\begin{example}\label{flat ex}
Perhaps the simplest class of groups covered by Theorem \ref{three man} are the fundamental groups of closed flat manifolds.  For example, this class includes $\Z^3$, the fundamental group of the $3$-torus.  In general, there are ten such groups: see for example \cite[Theorems 3.5.5 and 3.5.9]{Wolf:2011aa} for the classification.  

Fundamental groups of closed flat manifolds are amenable and residually finite, so are LLP and FD by classical results (compare Remarks \ref{rfd rem} and \ref{llp rem}).   Theorem \ref{three man} implies that the corresponding groups are $\mathcal{R}_q$-stable, conditional on vanishing of the winding number invariants coming from their two-homology.  In particular, if the two-homology is torsion, then the corresponding group is $\mathcal{R}_q$-stable: this is the case for the fundamental group of the (non-orientable) manifold called $\mathcal{B}_4$ in \cite[Theorem 3.5.9]{Wolf:2011aa} as already pointed out by Dadarlat \cite[Example 2.7]{Dadarlat:384aa}.  It also follows for the (orientable) manifold $M$ called $\mathcal{G}_6$\footnote{Also called the \emph{Hantzche-Wendt manifold}.} in \cite[Theorems 3.5.5]{Wolf:2011aa}: indeed, by \cite[Corollary 3.5.10]{Wolf:2011aa}, we have $H_1(M)=\Z/4\oplus \Z/4$, whence $H_2(M)$ is torsion by the universal coefficient theorem for (co)homology, and Poincar\'{e} duality.

Note, however, that fundamental groups of closed flat three manifolds are not stable, as pointed out in \cite[Theorem 3.13]{Eilers:2018ab}.  In the case of $\Gamma=\Z^3$, this group is not even stable conditional on vanishing of the relevant winding number invariants: this follows from \cite[Theorem 4.2]{Gong:1998aa}.  It seems likely that this phenomenon -- conditional $\mathcal{R}_q$-stability holding, but conditional stability failing -- is an issue for many other groups with three-dimensional classifying space.

We do not claim much originality in this flat manifold case -- the result of Theorem \ref{three man} can be deduced from the same methods as \cite[Theorem 1.5]{Dadarlat:384aa} for amenable groups -- but hope that summarizing the main points above might be useful.
\end{example}

We now give a concrete example of a hyperbolic three-manifold that Theorem \ref{three man} shows to be $\mathcal{R}_d$-stable.  Thanks to Asaf Hadari for explaining some of the ideas in the following example to me.

\begin{example}\label{3 man wk}
Let $M$ be a closed, aspherical three manifold with $H_2(M)$ torsion.  Then Theorem \ref{three man} implies that $\Gamma=\pi_1(M)$ is $\mathcal{R}_q$-stable.  To show that fairly direct computations are possible, here is a concrete construction of a three-manifold fibered over the circle, with non-amenable fundamental group, and such that $H_2(M)$ is torsion. 

Let $S$ be a closed orientable surface of genus $g=2$ as pictured
\begin{center}
\includegraphics[width=3cm]{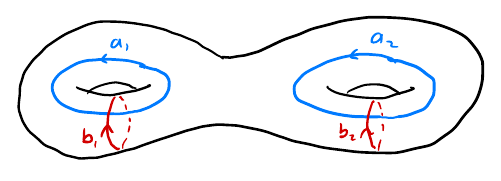}
\end{center}
with the curves $\{a_1,b_1,a_2,b_2\}$ as an ordered basis for $H_1(S)\cong \Z^4$.  Note that the standard symplectic form (compare \cite[Section 6.1.2]{Farb:2012aa}) with respect to this basis is given by 
$$
J=\begin{psmallmatrix} 0 & 1 & 0 & 0 \\ -1 & 0 & 0 & 0 \\ 0 & 0 & 0 & 1 \\ 0 & 0 & -1 & 0 \end{psmallmatrix}
$$ 
Now, let $B=\begin{psmallmatrix} 5 & 3 \\ 3 & 2\end{psmallmatrix}$ and $A=\begin{psmallmatrix} B & 0 \\ 0 & B\end{psmallmatrix}$, which is an element of $M_4(\Z)$ with determinant one such that  $A^TJA=J$, i.e.\ $A$ is an element of the symplectic group $Sp(4,\Z)$.  Using \cite[Theorem 6.4 (and Section 2.1 for the definition)]{Farb:2012aa}, there is an orientation-preserving diffeomorphism $\phi_0:S\to S$ that induces the map $A$ on homology.  Let moreover $\rho:S\to S$ be the left-right reflection around a plane separating the two holes as in the picture above, so that $\rho$ induces the following map on homology
$$
R=\begin{psmallmatrix} 0 & 0 & -1 & 0 \\ 0 & 0 & 0 & 1 \\ -1 & 0 & 0 & 0 \\ 0 & 1 & 0 & 0 \end{psmallmatrix}.
$$
Let $\phi=\rho\circ \phi_0$, and let 
$$
M_\phi:=S\times [0,1] / ((x,0)\sim (\phi(x),1))
$$
be the mapping torus of $\phi$, which is a closed three manifold with (non-amenable) fundamental group $\Gamma=\pi_1(S)\rtimes \Z$, where the action of $\Z$ is via $\phi$.  Note that $M_\phi$ is aspherical by the same argument using the long exact sequence for a fibration that we used in the proof of Proposition \ref{fbc props}.  We claim that $H_2(M_\phi)=\Z/2$, whence $\Gamma$ will be $\mathcal{R}_q$-stable.

Indeed, as in \cite[Example 2.8]{Hatcher:2002ud}, the relevant part of the long exact homology sequence for a mapping torus looks like
$$
\cdots\to H_2(S)\xrightarrow{\text{id}-\phi_*} H_2(S)\to H_2(M_\phi) \to H_1(S) \xrightarrow{\text{id}-\phi_*} H_1(S)\to \cdots.
$$
As $S$ is orientable, $H_2(S)\cong \Z$; as $\phi$ reverses orientation, the map $\phi_*:H_2(S)\to H_2(S)$ is multiplication by $-1$.  On the other hand, the map $\phi_*:H_1(S)\to H_1(S)$ is given by the matrix $RA$; one readily checks that this matrix does not have one as an eigenvalue whence $\text{id}-\phi_*:H_1(S)\to H_1(S)$ is injective.  The result follows.
\end{example}

There are many other examples of three-manifolds with torsion, or even trivial, second homology to which Theorem \ref{three man} applies.  For example, an interesting infinite family of closed hyperbolic three manifolds (which are aspherical as the universal cover is hyperbolic $3$-space) with the same homology as the $3$-sphere are given in \cite{Brock:2015aa}.  Note that the fundamental groups of these manifolds are perfect and non-amenable; Theorem \ref{three man} implies that their fundamental groups are all $\mathcal{R}_q$-stable.

\bibliography{Generalbib}

\end{document}